\documentclass[11pt]{article}
\usepackage{amsmath, amsthm, amsfonts, amssymb, color}
\usepackage{mathabx}
%%%%%%mathabx is just for \widecheck
%%%%%
\usepackage[pagebackref]{hyperref}
\usepackage{graphicx}
\usepackage{bm}
\usepackage{esint}

\usepackage{bbm, dsfont}
\usepackage[utf8x]{inputenc}
\usepackage{ulem}
\usepackage{enumitem}

%%%%%%%%%%%%%%%%%%%%%%%%%%%%%%%%%%%%%%%%%%
%% Below is to group todo by sections
%%%%%%%%%%%%%%%%%%%%%%%%%%%%%%%%%%%%%%%%%%
\usepackage{xpatch}
\def\thissectiontitle{}
\def\thissectionnumber{}
\newtoggle{noTodos}
\makeatletter
% redefinition of \@sect so \section glbally stores its name and number 
\def\@sect#1#2#3#4#5#6[#7]#8{%
\ifnum #2>\c@secnumdepth
\let\@svsec\@empty
\else
\refstepcounter{#1}%
\protected@edef\@svsec{\@seccntformat{#1}\relax}%
\fi
\@tempskipa #5\relax
\ifdim \@tempskipa>\z@
\begingroup
#6{%
\@hangfrom{\hskip #3\relax\@svsec}%
\interlinepenalty \@M #8\@@par}%
\endgroup
\csname #1mark\endcsname{#7}%
\addcontentsline{toc}{#1}{%
\ifnum #2>\c@secnumdepth \else
\protect\numberline{\csname the#1\endcsname}%
\fi
#7}%
\else
\@xsect
\def\@svsechd{%
#6{\hskip #3\relax
\@svsec #8}%
\csname #1mark\endcsname{#7}%
\addcontentsline{toc}{#1}{%
\ifnum #2>\c@secnumdepth \else
\protect\numberline{\csname the#1\endcsname}%
\fi
#7}}%
\fi
\@xsect{#5}%
\ifnum#2=1\relax
\global\def\thissectiontitle{#8}
\global\def\thissectionnumber{\thesection}
\fi%
}
\pretocmd{\section}{\global\toggletrue{noTodos}}{}{}
% the \todo command does the job: the first time it is used after a \section command, 
% it writes the information of the section to the list of todos
\AtBeginDocument{%
  \xpretocmd{\todo}{%
    \iftoggle{noTodos}{
     \addtocontents{tdo}{\protect\contentsline {section}%
        {\protect\numberline{\thissectionnumber}{\thissectiontitle}}{}{} }
      \global\togglefalse{noTodos}
        }{}
    }{}{}%
  }
\makeatother
%%%%%%%%%%%%%%%%%%%%%%%%%%%%%%%%%%%%%%%%%%

%%%%%%%%%%%%%%%%%%%%%%%%%%%
%%%%%%%%%%%%%%%%%%%%%%%%%%%
%%%%%%%%%%%%%%%%%%%%%%%%%%%
%%%%%%%%%%%%%%%%%%%%%%%%%%%
%%%%%%%%%%%%%%%%%%%%%%%%%%%
%%%%%%%%%%%%%%%%%%%%%%%%%%%%%%%%%%%%%%
%%%%%%%%%Macros_that_are_used_while_wring_but_to_be_deleted

%\usepackage[inline]{showlabels}

%%%%%%%%%%%%%%%%%%%%%%%%%%%%%%%%%%%%%%%%%%%%%%%%%%%%%%%%%%%%%
%%%%%%%%%%%%%%%%%%%%%%%%%%%
%%%%%%%%%%%%%%%%%%%%%%%%%%%
%%%%%%%%%%%%%%%%%%%%%%%%%%%%%%%%%%%%%%%%%%%%%%%%%%%%%%

%%%%%%%%%%%%%%%%%%%%%%%%%%%
%%%%%%%%%%%%%%%%%%%%%%%%%%%
%%%%%%%%%%%%%%%%%%%%%%%%%%%
%%%%%%%%%%%%%%%%%%%%%%%%%%%%%%%%%
%%%%%Macros commonly used

\def\mc{\mathcal}
\def\mf{\mathfrak}
\def\lesim{\lesssim}

\def\C{\mathbb{C}}
\def\D{\mathbb{D}}

\def\N{\mathbb{N}}
\def\B{\mathbb{B}}
\def\R{\mathbb{R}}
\def\T{\mathbb{T}}
\def\W{\mathbb{W}}
\def\Z{\mathbb{Z}}

\newcommand{\bfx}{{\bf x}}
\newcommand{\bfy}{{\bf y}}
\newcommand{\bfz}{{\bf z}}

\newcommand{\bfT}{{\bf T}}

\newcommand{\bfY}{{\bf Y}}

%%%%%%%%%%%%%%%%%%%%%

%___________________________________________________

%___________________________________________________

%___________________________________________________

\newcommand\avsuminner[2]{%
  {\sbox0{$\m@th#1\sum$}%
   \vphantom{\usebox0}%
   \ooalign{%
     \hidewidth
     \smash{\vrule height\dimexpr\ht0+1pt\relax depth\dimexpr\dp0+1pt\relax}%
     \hidewidth\cr
     $\m@th#1\sum$\cr
   }%
  }%
}

%___________________________________________________

\makeatletter

\newcommand*\barredprod{%
  \DOTSB\mathop{%
      \@rodriguez@mathpalette \@rodriguez@overprint@bar \prod
    }\slimits@
}
%___________________________________________________

% A home-brewed version of "\mathpalette" that also supplies the font 
% selector (e.g., "\textfont"):
\newcommand*\@rodriguez@mathpalette[2]{%
  % #1 := macro doing the actual job, which expects as its own arguments
  %         - #1, a style selector (e.g., "\displaystyle")
  %         - #2, a font selector (e.g., "\textfont")
  %         - #3, a custom argument (not truly necessary, here!)
  % #2 := custom argument that should be passed as #3 to macro #1
  \mathchoice
    {#1\displaystyle      \textfont         {#2}}%
    {#1\textstyle         \textfont         {#2}}%
    {#1\scriptstyle       \scriptfont       {#2}}%
    {#1\scriptscriptstyle \scriptscriptfont {#2}}%
}

\newcommand*\@rodriguez@overprint@bar[3]{%
  % #1 := style selector (e.g., "\displaystyle")
  % #2 := font selector (e.g., "\textfont")
  % #3 := base symbol
  \sbox\z@{$#1#3$}%
  \dimen@   = \ht\z@   \advance \dimen@   \p@
  \dimen@ii = \dp\z@   \advance \dimen@ii \p@
  \dimen4 = 1.25\fontdimen 8 #2\thr@@ \relax
  \ooalign{% the resulting box has the same...
    \@rodriguez@bar \dimen@ \z@ \cr   % ... height as the first row
    $\m@th #1#3$\cr
    \@rodriguez@bar \z@ \dimen@ii \cr % ... depth as the last row
  }%
}

\newcommand*\@rodriguez@bar[2]{%
  \hidewidth \vrule \@width \dimen4 \@height #1\@depth #2\hidewidth
}

\makeatother

% %%%%%%%%%%%%%%%%%%%%%
% %%The following is for creating a todo list
% %%%%%%%%%%%%%%%%%%%%%
% \usepackage{xcolor}
% \usepackage{lipsum}% just to generate text

% \newcounter{todo}
% \newcommand\todo[1]{\refstepcounter{todo}\marginpar{\color{red}{#1}}\addcontentsline{tod}{subsection}{#1~\thetodo}}%

% \makeatletter
% \newcommand\todoname{todo}
% \newcommand\listtodoname{List of todos}
% \newcommand\listoftodos{%
%   \section*{\listtodoname}\@starttoc{tod}}
% \makeatother

%%%%%%%%%%%%%%%%%%%%%
\def\pnorm#1{ \Big(  #1 \Big) }
\def\bnorm#1{ \Big[  #1 \Big] }
\def\anorm#1{ \Big|  #1 \Big| }
\def\Norm#1{ \Big\|  #1 \Big\| }
\def\norm#1{\big\|  #1 \big\| }
\def\inn#1#2{\langle#1,#2\rangle}
\def\set#1{ \Big\{ #1 \Big\} }
%%%%%%%%%%%%%%%%%%%%%%%%%
\def\supp{\mathrm{supp}}
\def\vector{\vec}
\def\dim{\text{dim}}
\def\rank{\mathrm{rank}}
\newcommand{\ang}{\measuredangle}
\def\spa{\mathrm{span}}
\def\dist{\mathrm{dist}}
\def\hessian{\mathrm{Hessian}}

\def\sing{\mathrm{Sing}}
%\def\span{\mathrm{Span}}
%%%%%%%%%%%%%%%%%%%%%%%%%%%%%%%%%%%%
%%%%%%%%%%%%%%%%%%%%%%%%%%%%%%%%%%%%%%%%%%%%%%%%%%%%%%%%%%%%%%%%%%%%%%%%

%%%%%%%%%%%%%%%%%%%%%%%%%%%
%%%%%%%%%%%%%%%%%%%%%%%%%%%%%%%%%%%%%%%%%%%%%%%%%%%%%%%%%%%%%%%%%%%%%%%%%%%%%%%%%
%%%%%%%%%%%%%%%%%%%%%%%%%%%%%%%%%%
%%%%%Macros_comments

%%%%%%%%%%%%%%%%%%%%%%%%%%%%%%%%%%
%%%%%%%%%%%%%%%%%%%%%%%%%%%%%%%%%%
%%%%%%%%%%%%%%%%%%%%%%%%%%%%%%%%%%

%%%%%%%%%%%%%%%%%%%%%%%%%%%%%%%%%%
%%%%%Macros_current_paper
%\newcommand{\tildex}{\widetilde{x}}
%\newcommand{\tildet}{\widetilde{t}}
%
%\newcommand{\bbeta}{\bm{\beta}}
%\newcommand{\btheta}{\bm{\theta}}
%\newcommand{\blambda}{\bm{\lambda}}
%\newcommand{\bgamma}{\bm{\gamma}}
\newcommand{\bomega}{\bm{\Omega}}

\newcommand{\BLka}{\mathrm{BL}_{k, A}}
\newcommand{\operat}{T^{\lambda}}

\newcommand{\cell}{\mathrm{cell}}
\newcommand{\trans}{\mathrm{trans}}
\newcommand{\tang}{\mathrm{tang}}
\newcommand{\mfy}{\mathfrak{Y}}

\newcommand{\omegatwo}{\Omega^{(2)}}
\newcommand{\omegaone}{\Omega^{(1)}}
\newcommand{\avetwo}{L^2_{\mathrm{avg}}}
%%%%%%%%%%%%%%%%%%%%%%%%%%%%%%%%%%
%%%%%%%%%%%%%%%%%%%%%%%%%%%%%%%%%%
%%%%%%%%%%%%%%%%%%%%%%%%%%%%%%%%%%

\setcounter{secnumdepth}{5}
\numberwithin{equation}{section}
\theoremstyle{plain}
\newtheorem{theorem}{Theorem}[section]

\newtheorem{corollary}[theorem]{Corollary}
\newtheorem{lemma}[theorem]{Lemma}
\newtheorem{definition}[theorem]{Definition}
\newtheorem{remark}[theorem]{Remark}

\newtheorem{claim}[theorem]{Claim}

\begin{document}
	
%%%%%%%%%%%%%%%%%%%%%%%%%%%%%%%%%%%%%%%%%%%%%%%%%%%%Title_abstract%%%%%%%%%%%%%%%%%%%%%%%%%%%%%%%%%%%%
\title{A dichotomy for H\"ormander-type \\
oscillatory integral operators}

\author{Shaoming Guo, Hong Wang, Ruixiang Zhang}

%	
%\begin{abstract}
%
%\end{abstract}
\date{ }	
\maketitle

\begin{abstract}
    In this paper, we first generalize the work of Bourgain \cite{MR1132294} and state a curvature condition for H\"ormander-type oscillatory integral operators, which we call Bourgain's condition. This condition is notably satisfied by the phase functions for the Fourier restriction problem and the Bochner-Riesz problem. We conjecture that for H\"ormander-type oscillatory integral operators satisfying Bourgain's condition, they satisfy the same $L^p$ bounds as in the Fourier Restriction Conjecture. To support our conjecture, we  show that whenever Bourgain's condition fails, then the $L^{\infty} \to L^q$ boundedness always  %$L^p$ bounds for H\"ormander-type oscillatory integral operators 
    fails for some $q= q(n) > \frac{2n}{n-1}$, extending Bourgain's three-dimensional result \cite{MR1132294}. On the other hand, if Bourgain's condition holds, then we prove $L^p$ bounds for H\"ormander-type oscillatory integral operators for a range of $p$ that extends the currently best-known range for the Fourier restriction conjecture in high dimensions, given by Hickman and Zahl \cite{hickman2020note}. This gives new progress on the Fourier restriction problem and the Bochner-Riesz problem.
\end{abstract}

%\date{\vspace{-5ex}}
%%%%%%%%%%%%%%%%%%%%%%%%%%%%%%%%%%%%%%%%%%%%%%%%%%%%%%%%%%%%%%%%%%%%%%%%
%\setcounter{tocdepth}{1}
%\tableofcontents

%\listoftodos

\section{Introduction}

Let us recall H\"ormander's problem in \cite{MR340924}. Let $n\ge 2$. Let $B^n$ be the unit ball in $\R^n$ and $a: \R^n\times \R^{n-1}\mapsto \R$ be a smooth function supported on $B^n\times B^{n-1}$. Let $\phi: B^n\times B^{n-1}\to \R$ be a smooth function satisfying the following conditions: 
\begin{enumerate}
\item[(H1)] $\rank\ \nabla_{\bfx}\nabla_{\xi}\phi(\bfx; \xi)=n-1$ for all $(\bfx; \xi)\in B^{n}\times B^{n-1}$;
\item[(H2)] with the map $G_0: B^n\times B^{n-1}\to \R^{n}$ defined by 
\begin{equation}
G_0(\bfx; \xi):=\bigwedge_{j=1}^{n-1} \partial_{\xi_j}\nabla_{\bfx} \phi(\bfx; \xi),
\end{equation}
the curvature condition 
\begin{equation}\label{curvcondeq}
\det \nabla_{\xi}^2 \inn{\nabla_{\bfx}\phi(\bfx; \xi)}{G_0(\bfx; \xi_0)}\big|_{\xi=\xi_0}\neq 0
\end{equation}
holds for all $(\bfx; \xi_0)\in \supp(a)$. 
\end{enumerate}
 Define the oscillatory integral operator
\begin{equation}\label{220221e2.3}
T_N f(\bfx):=\int e^{iN\phi(\bfx; \xi)} f(\xi)a(\bfx; \xi)d\xi.
\end{equation}
If one takes $\phi(\bfx; \xi)=\inn{x}{\xi}+t|\xi|^2$ where $\bfx=(x, t)$, then one can check easily that Hypothesis (H1) and (H2) are satisfied, and $T_N$ becomes the standard Fourier extension operator for the paraboloid. H\"ormander \cite{MR340924} asked the question whether $T_N$ satisfies similar $L^p$-boundedness properties to those of the Fourier extension operator. To be more precise, he asked whether it holds that 
\begin{equation}\label{211003e1.4}
\|T_N f\|_q \lesim_{n, q} N^{-n/q} \|f\|_{\infty},
\end{equation}
for all $q>\frac{2n}{n-1}$. \\

In dimension $n=2$, the answer to H\"ormander's question is affirmative, see for instance Carleson and Sj\"olin \cite{MR361607}, H\"ormander \cite{MR340924} and Fefferman \cite{MR320624}. 
However, in dimension $n=3$, Bourgain \cite{MR1132294} showed that if one takes 
\begin{equation}
\phi(\bfx; \xi)=x_1\xi_1+x_2\xi_2+t\xi_1\xi_2+\frac{1}{2}t^2\xi_1^2,\ \ \bfx=(x_1, x_2, t),
\end{equation}
then \eqref{211003e1.4} may fail for every $q<4$. Indeed, he showed that even if one replaces (H2) by the following stronger assumption 
\begin{equation}
(\mathrm{H2}^+)\ \  \nabla_{\xi}^2 \inn{\nabla_{\bfx}\phi(\bfx; \xi)}{G_0(\bfx; \xi_0)}\big|_{\xi=\xi_0} \text{ is positive definite,}
\end{equation}
the estimate \eqref{211003e1.4} may still fail for some $q>3$, and it may even fail generically. Let us be more precise about this generic failure. By some elementary change of variables, phase functions $\phi(\bfx; \xi)$ satisfying (H1) and (H2) can be taken to be
\begin{equation}\label{211003e1.7}
\phi(\bfx; \xi)=\inn{x}{\xi}+t\inn{A\xi}{\xi}+O(|t||\xi|^3+|\bfx|^2 |\xi|^2),
\end{equation}
where $A$ is a symmetric non-degenerate matrix. Condition $(\mathrm{H2}^+)$ amounts to that $A$ is positive definite. The form \eqref{211003e1.7} is called a normal form of the phase function $\phi(\bfx; \xi)$ at the origin (see \cite[page 323]{MR1132294}). Bourgain \cite{MR1132294} proved in dimension $n=3$ that, if 
\begin{equation}\label{211003e1.7zzz}
\nabla_{\xi}^2\partial_t^2\phi\big|_{\bfx=0, \xi=0} \text{ is not a multiple of } \nabla_{\xi}^2\partial_t\phi\big|_{\bfx=0, \xi=0}
\end{equation}
where $\phi(\bfx; \xi)$ is as in \eqref{211003e1.7}, then \eqref{211003e1.4} fails for every $q< \frac{118}{39}$, which is $>3$.

\normalem On the other hand, for phase functions $\phi(\bfx; \xi)$ satisfying (H1) and $(\mathrm{H2}^+)$, Guth, Hickman and Iliopoulou \cite{MR4047925} (basing on earlier work Guth \cite{guth2018}) obtained the \emph{optimal} range of $q$ for which \eqref{211003e1.4} holds. Denote 
\begin{equation}\label{220717e1_9}
    q_{n, \mathrm{GHI}}:=
    \begin{cases}
    \frac{2(3n+1)}{3n-3}, & \hfill \text{ if $n$ is odd,} \\
    \frac{2(3n+2)}{3n-2}, & \hfill \text{ if $n$ is even.} 
    \end{cases}
\end{equation}
Then \eqref{211003e1.4} holds for all $q>q_{n, \mathrm{GHI}}$.\\

In the first result of the paper, we show that generic failure in the spirit of Bourgain \cite{MR1132294} occurs in every dimension $n\ge 3$. Let us first introduce some terminology. At a given point $(\bfx_0; \xi_0)$, consider a new phase function $\phi'(\bfx; \xi):=\phi(\bfx_0+\bfx; \xi_0+\xi)$. Let us use $\phi''(\bfx; \xi)$ to denote a normal form of $\phi'(\bfx; \xi)$ at the origin $\bfx=0, \xi=0$. We say that Bourgain's condition holds at $(\bfx_0; \xi_0)$ if 
\begin{equation}\label{Bourgaincond}
\nabla_{\xi}^2\partial_t^2\phi''\big|_{\bfx=0, \xi=0} \text{ is a multiple of } \nabla_{\xi}^2\partial_t\phi''\big|_{\bfx=0, \xi=0},
\end{equation}
where the implicit constant is allowed to depend on $\bfx_0$ and $\xi_0$. Otherwise, we say that Bourgain's condition fails at this point. As normal forms are not unique, we need to show that Bourgain's condition is well-defined, and this is done in Corollary \ref{bourgain_defined}. 
\begin{theorem}[Generic failure]\label{main_thm_1}
Let $n\ge 3$ and
\begin{equation}
    q_{n, 1}:=\frac{2(2n^2+n-1)}{2n^2-n-2}.
\end{equation}
Let $\phi: B^n\times B^{n-1}\to \R$ be a smooth function satisfying (H1) and (H2). If Bourgain's condition fails at some $(\bfx_0; \xi_0)\in \supp(a)$, then \eqref{211003e1.4} may fail for every $q<q_{n, 1}$. 
\end{theorem}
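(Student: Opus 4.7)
The plan is to construct a Kakeya-type counterexample in the spirit of Bourgain~\cite{MR1132294}, exploiting the failure of the proportionality between $\nabla_\xi^2\partial_t\phi''$ and $\nabla_\xi^2\partial_t^2\phi''$ at the designated point to compress the associated wave-packet tubes into an unusually small set. The target exponent $q_{n,1}$ will come out of an arithmetic balance against a tube-compression volume $V\approx N^{-1/(2n+1)}$.

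First I would translate $(\bfx_0;\xi_0)$ to the origin and replace $\phi$ by its shifted normal form $\phi''$, invoking Corollary~\ref{bourgain_defined} to see that the failure of Bourgain's condition does not depend on the choice of normal form, so that
\[
\phi''(\bfx;\xi) = \inn{x}{\xi} + t\inn{A\xi}{\xi} + \tfrac{1}{2}t^2\inn{B\xi}{\xi} + O\!\bigl(|t||\xi|^3 + |\bfx|^2|\xi|^2\bigr),
\]
with $A$ symmetric non-degenerate and $B$ symmetric but, by hypothesis, \emph{not} a scalar multiple of $A$. After simultaneous diagonalization and an orthogonal change of $\xi$-variable, I may assume $A=\mathrm{diag}(a_1,\ldots,a_{n-1})$ and $B=\mathrm{diag}(b_1,\ldots,b_{n-1})$ with $b_1/a_1\neq b_2/a_2$. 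Setting $\delta = N^{-1/2}$, I partition a small neighborhood of $\xi=0$ into disjoint $\delta$-caps $\{\theta_j\}$, about $\delta^{-(n-1)}$ in number, and let $f_j$ be a smooth bump supported on $\theta_j$ with $\|f_j\|_\infty\lesim 1$. A standard stationary-phase analysis shows $|T_N f_j|\gtrsim \delta^{n-1}$ on a curved tube $T_j$ of width $\approx \delta$ centered on the curve $\gamma_{\xi_j}(t) = \bigl(-2tA\xi_j - t^2B\xi_j + \text{l.o.t.},\, t\bigr)$.

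The heart of the proof is a compression claim: the $\delta^{-(n-1)}$ tubes $\{T_j\}$ can be arranged---via a judicious choice of $\xi$-centers and translation parameters for the wave packets---so that $\bigcup_j T_j\subset E$ with $|E|\approx N^{-1/(2n+1)}$. Failure of Bourgain's condition enters here: taking the $\xi_j$'s along a short arc of length $\rho$ in the plane spanned by the first two $\xi$-coordinates and restricting $t$ to an interval of length $\tau$, the spatial trajectories $\gamma_{\xi_j}(t)$ deviate from a common $(n{-}1)$-dimensional affine slab by only $\approx \tau^2\rho\,|b_1/a_1 - b_2/a_2|$, while the remaining $\xi$-directions are used to fill out a full $(n-1)$-parameter family of tubes. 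A two-scale optimization of $\rho$ and $\tau$ against $\delta = N^{-1/2}$ produces the exponent $\alpha = 1/(2n+1)$ in $V=N^{-\alpha}$. Testing \eqref{211003e1.4} on $f=\sum_j \epsilon_j f_j$ with random signs, Khintchine's inequality together with the pigeonhole multiplicity bound $\sum_j\mathbbm{1}_{T_j}\gtrsim V^{-1}$ on $E$ give
\[
\|T_Nf\|_q \gtrsim \delta^{n-1}\,V^{1/q-1/2}, \qquad \|f\|_\infty \lesim 1,
\]
and a direct computation shows that the arithmetic inequality $\tfrac{q(n-1)-2n}{q-2} < \tfrac{1}{2n+1}$---equivalent to $q<q_{n,1}$---forces \eqref{211003e1.4} to fail.

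The main obstacle is certifying the geometric compression rigorously: showing that, under the failure of Bourgain's condition, one can really pack $\delta^{-(n-1)}$ curved tubes into a set of volume $\approx N^{-1/(2n+1)}$ in every dimension $n\ge 3$. This requires a delicate simultaneous choice of $\xi$-centers and translation parameters, and a verification that the higher-order error terms $O(|t||\xi|^3 + |\bfx|^2|\xi|^2)$ of the normal form do not disturb the compression at the working scales $\delta,\rho,\tau$. A secondary technical point is to ensure that the Khintchine-based lower bound survives the smoothing of hard caps $\theta_j$ into genuine Schwartz wave packets, and that the normal-form change of variables preserves (H1) and (H2) on the support of the transformed amplitude.
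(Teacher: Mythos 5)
Your high-level template (random-sign wave packets, Khintchine, tube compression) matches the paper's, but the specific geometric and quantitative claims have genuine gaps. First, you cannot assume $A$ and $B$ are simultaneously diagonalizable under (H2): $A$ is only non-degenerate, not positive-definite, so there are pairs (e.g.\ $A=\mathrm{diag}(1,-1)$, $B$ the off-diagonal symmetric matrix) that admit no simultaneous orthogonal diagonalization. The paper handles this by a Jordan-form case analysis (its \eqref{220609e3_54}). Second, and more seriously, your quantitative setup is internally inconsistent: you restrict $t$ to an interval of length $\tau$, yet your Khintchine step uses the multiplicity bound $\sum_j\mathbbm{1}_{T_j}\gtrsim V^{-1}$, which requires total $L^1$ mass $\approx 1$, i.e.\ full-length tubes. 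For truly full-length tubes the map $\xi\mapsto X_t(\xi)$ has Jacobian of size $\simeq t^{n-1}$ for $t\simeq 1$ (since $A$ is non-degenerate), so the tubes fan out and the union has measure $\simeq 1$, making $V\approx N^{-1/(2n+1)}$ unachievable; the arithmetic match with $q_{n,1}$ is therefore a coincidence built on unrealizable hypotheses.

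The paper's actual mechanism, which your proposal does not contain, has two ingredients you will need. (i) A recentering function $\bomega$ so that the solution $X_t(\xi)$ of $\nabla_\xi\phi=\bomega(\xi)$ has $|\det\nabla_\xi X_t|=O(|(t,\xi)|^n)$ rather than the generic $O(|t|^{n-1})$ (Lemma \ref{211005lem3.1}); without this extra order of vanishing, supplied by the failure of Bourgain's condition together with the freedom to translate wave packets, your ``deviation $\tau^2\rho\,|b_1/a_1-b_2/a_2|$'' heuristic only delivers a second-order compression in the $\xi_1,\xi_2$-plane and does not account for the remaining directions. (ii) A semialgebraic surface-volume bound (Claim \ref{220130claim2.4}, proved via Tarski--Seidenberg, Gromov's lemma and a Wongkew-type estimate) showing that the $\delta$-neighborhood of $X(B)$ contributes an extra term $\delta^{1+(n-1)\lambda}$; balancing this against the Jacobian term $\delta^{2n\lambda}$ is exactly what forces $\lambda=1/(n+1)$ and produces $q_{n,1}$. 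Your proposal offers no mechanism that would single out this length scale, so the ``two-scale optimization of $\rho$ and $\tau$'' you invoke is not merely unproved but unanchored. I would redirect the proof along the dual formulation with short tubes of length $\delta^{1/(n+1)}$, with the recentering $\bomega$ as the central technical step.
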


Note that when $n=3$, $q_{n, 1}=40/13$, which is slightly better than Bourgain's exponent $118/39$. \\

Based on the above theorem, we think it is very natural to conjecture that \eqref{211003e1.4} holds for every $q>\frac{2n}{n-1}$ if Bourgain's condition holds at every point. The following positive results provide some further evidence for such a conjecture.

For $\delta>0$, we define $\delta$-tubes. Fix a dyadic cube $\theta\subset B^{n-1}$ of side length $\delta$ and let $\xi_{\theta}$ be the center of $\theta$. For $v\in B^{n-1}$ with $v\in \delta\Z^{n-1}$, let $X_{t}(\xi_{\theta}, v)\in B^{n-1}$ denote the unique solution in the $x$ variable to 
\begin{equation}
\nabla_{\xi}\phi(x, t; \xi_{\theta})=v.
\end{equation}
By a $\delta$-tube, we mean 
\begin{equation}
T_{\theta, v}:=\{(x, t): |x-X_{t}(\xi_{\theta}, v)|\le \delta, |t|\le 1\}.
\end{equation}
For a collection $\T$ of tubes $\{T_{\theta, v}\}$, we say that the tubes in $\T$ point in different directions if for two different tubes $T_{\theta_1, v_1}$ and $T_{\theta_2, v_2}$ from $\T$, we always have $\theta_1\neq \theta_2$.

\begin{theorem}[Polynomial Wolff Axiom for $\phi$]\label{main_thm_2}
Let $n\ge 3$. If Bourgain's condition holds for the phase function $\phi$ at every $(\bfx_0; \xi_0)\in \supp(a)$, then the following polynomial Wolff axiom for $\phi$ holds: Let $E\ge 2$ be an integer. For every $\epsilon>0$, there exists $C(n, E, \epsilon)>0$ such that for every collection $\T$ of $\delta$-tubes pointing in different directions, 
\begin{equation}
\#\{T\in \T: T \subset S\}\le C(n, E, \epsilon)|S| \delta^{1-n-\epsilon}
\end{equation}
whenever $S \subset B^n$ is a semialgebraic set of complexity $\le E$.
\end{theorem}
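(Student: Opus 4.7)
The plan is to adapt the approach of Guth-Hickman-Iliopoulou \cite{MR4047925}, who established the same axiom under the stronger hypothesis $(\mathrm{H2}^+)$. Their strategy uses positive-definite curvature to construct a smooth diffeomorphism that straightens the cores $\gamma_{\theta, v}(t) := X_t(\xi_\theta, v)$ of the tubes $T_{\theta, v}$, after which one invokes the Katz-Rogers polynomial Wolff axiom for straight tubes in the new coordinates. I would check that Bourgain's condition is precisely the algebraic input needed to perform this straightening when $A$ is only non-degenerate rather than positive definite.

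By compactness of $\supp(a)$ and a partition-of-unity argument, it suffices to prove the axiom when $a$ is supported in a small neighborhood of a single base point, which after translation I take to be the origin. A smooth change of variables in $(\bfx, \xi)$ then puts $\phi$ into the normal form \eqref{211003e1.7} with remainder $R = O(|\bfx|^2|\xi|^2 + |t||\xi|^3)$; this preserves tube-direction distinctness and the complexity of semialgebraic sets up to constants depending on $n$ and $E$. Writing $\phi = \langle x, \xi\rangle + t\langle A\xi, \xi\rangle + t^2 Q(\xi) + (\text{lower-impact terms})$, Bourgain's condition reads $\nabla_\xi^2 Q(0) = cA$ for some scalar $c$, so the quadratic-in-$\xi$ part up to order $t^2$ factors as $\widetilde{t}\langle A\xi, \xi\rangle$ with $\widetilde{t} = t + ct^2$; the reparametrization $t \mapsto \widetilde{t}$ kills this obstruction. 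Iterating the argument using that Bourgain's condition holds at every point of $\supp(a)$, one produces a smooth diffeomorphism $\Psi$ of a neighborhood of the origin for which $\Psi \circ \gamma_{\theta, v}$ is straight to within error $O(\delta^{n+10})$, with the remaining $|\bfx|^2|\xi|^2$ and $|t||\xi|^3$ terms absorbed by further $\bfx$-adjustments.

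Approximating $\Psi$ by a polynomial map $\widetilde{\Psi}$ of degree $d = d(n, E)$ with $\|\Psi - \widetilde{\Psi}\|_{C^1} \ll \delta$, one sends the curved tubes to straight $O(\delta)$-tubes in distinct directions and the set $S$ to a semialgebraic set $\widetilde{\Psi}(S)$ of complexity $\leq C(n, E)$; Katz-Rogers in the new coordinates then yields the bound $\#\{T\in\T : T\subset S\} \leq C(n, E, \epsilon)|S|\delta^{1-n-\epsilon}$. The main obstacle will be verifying order-by-order that Bourgain's condition cancels the obstructions to straightening — in effect, that the pointwise identity $\nabla_\xi^2 \partial_t^2 \phi|_0 \parallel \nabla_\xi^2 \partial_t \phi|_0$, propagated to every point of $\supp(a)$, gives rise to the higher-order algebraic identities needed to globally conjugate $\phi$ into a phase with straight tube cores. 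Uniformity as the base point varies over $\supp(a)$ follows from compactness and the continuous dependence of the normal form on the base point.
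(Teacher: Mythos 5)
Your proposal takes a fundamentally different route from the paper, and the central step is a gap. You propose to construct a diffeomorphism $\Psi$ that straightens all of the curve cores $\gamma_{\theta,v}$ simultaneously and then apply Katz--Rogers to the resulting straight tubes. But this is precisely the step that is not available, and it is the reason a new proof is needed here at all. Your premise that Guth--Hickman--Iliopoulou established the polynomial Wolff axiom for general H\"ormander phases by a straightening argument is incorrect: they did not prove a polynomial Wolff axiom for general $\phi$ (the paper explicitly frames Theorem~\ref{main_thm_2} as the first such generalization of Katz--Rogers, and Section~\ref{220728section3} is devoted to proving it from scratch). A straightening diffeomorphism would require the $(2n-2)$-parameter family of cores to be the geodesics of a projectively flat structure, which is a far stronger condition than Bourgain's condition. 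You identify the ``main obstacle'' as showing that Bourgain's condition at every point produces all the higher-order identities needed for straightening, but you do not supply this argument, and I do not believe it is true in the generality claimed: Bourgain's condition only constrains $\nabla_\xi^2\partial_t^2\phi$ against $\nabla_\xi^2\partial_t\phi$, and your time-reparametrization trick $t \mapsto t + ct^2$ only fixes the obstruction at one order. There is no mechanism provided for absorbing the remaining $|\bfx|^2|\xi|^2$ and $|t||\xi|^3$ pieces, which couple $\bfx$ and $\xi$ and cannot be removed by a pure reparametrization of time.

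The paper's argument avoids straightening entirely. It first proves a curvilinear abstraction, Theorem~\ref{GPWAthm} (the Generalized Polynomial Wolff Axiom), in which the input tube family is given by a smooth map $\Phi(v,t,\xi)$ and the only hypothesis is the averaged Jacobian lower bound~\eqref{averagelargenessofJacobianeq}: for every $(n-1)\times(n-1)$ matrix $M$,
\begin{equation*}
\int_{-\delta^{\epsilon}}^{\delta^{\epsilon}} \bigl|\det\bigl(\nabla_{v}\Phi\cdot M+\nabla_{\xi}\Phi\bigr)\bigr|\,\mathrm{d} t \gtrsim_{\epsilon} \delta^{C\epsilon}.
\end{equation*}
The proof of Theorem~\ref{GPWAthm} runs Katz--Rogers' Gromov/Tarski--Seidenberg argument directly on the curved family, replacing the trivial Jacobian computation for straight lines by this averaged lower bound. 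To finish, the paper verifies~\eqref{averagelargenessofJacobianeq} for $\Phi$ coming from $\phi$: setting $A(t;\xi)=\nabla_\xi^2\phi(X_t(\xi),t;\xi)$, Bourgain's condition (through Lemma~\ref{211016rem2.2}) shows that $\partial_t^2 A(t;\xi)$ is parallel to $\partial_t A(t;\xi)$ at every time, hence $A(t;\xi)-A(0;\xi)=f(t;\xi)B(\xi)$ for a fixed matrix $B(\xi)$ and a scalar $f$ with nonvanishing time derivative. Combined with the elementary determinant lemma (Lemma~\ref{polymatriceslem}: $\int_E |\det(tA+B)|\,dt\gtrsim_k|E|^{k+1}|\det A|$), this gives~\eqref{averagelargenessofJacobianeq}. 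Note this is strictly weaker information than straightenability: the matrices $A(t;\xi)$ move along a single ray in matrix space, but the curve family need not admit any projective flattening. If you want to salvage a straightening proof, you would first have to establish that Bourgain's condition implies such a flattening, and at present I see no evidence for that and no argument in your proposal.
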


The above polynomial Wolff axiom for $\phi$ satisfying Bourgain's condition is a generalization of that for $\phi(\bfx; \xi)=\inn{x}{\xi}+t|\xi|^2$, proven by Katz and Rogers \cite{MR3881832}. Hickman and Rogers \cite{HR2019} used the polynomial Wolff axiom of Katz and Rogers and proved that \eqref{211003e1.4} holds with $\phi(\bfx; \xi)=\inn{x}{\xi}+t|\xi|^2$ for 
\begin{equation}\label{220728e1_15}
    q>2+\frac{\lambda_{\mathrm{HR}}}{n}+O(n^{-2}),
\end{equation}
where 
\begin{equation}
    \lambda_{\mathrm{HR}}=4/(5-2\sqrt{3})=2.60434\dots
\end{equation}
After verifying the polynomial Wolff axiom for general $\phi$ satisfying Bourgain's condition, one can expect to combine the argument of \cite{MR4047925} and \cite{HR2019}, and prove \eqref{211003e1.4} for all $q$ satisfying \eqref{220728e1_15}. We will indeed prove something stronger. Before stating the next theorem, we first recall the result of Hickman and Zahl \cite{hickman2020note}. Let $\nu^{1/2}$ be the real number that solves the equation 
\begin{equation}
    2 x^3+3 x^2-2=0.
\end{equation}
Denote 
\begin{equation}
    \lambda_{\mathrm{HZ}}:=\frac{4}{2-\nu}=2.59607\dots
\end{equation}
Hickman and Zahl \cite{hickman2020note} used the strong polynomial Wolff axiom by Hickman-Rogers-Zhang \cite{HRZ} and independently Zahl \cite{MR4205111} to further improve the result in \cite{HR2019} and obtained that  \eqref{211003e1.4} holds for 
\begin{equation}\label{220926e1_19}
    q> 2+\frac{\lambda_{\mathrm{HZ}}}{n}+O(n^{-2}),
\end{equation}
with $\phi(\bfx; \xi)=\inn{x}{\xi}+t|\xi|^2$. This result gives the best asymptotic formula (as $n\to \infty$) in the literature for the Fourier restriction conjecture. 

\begin{theorem}\label{220130thm1.2}
Let $\phi: B^n\times B^{n-1}\to \R$ be a smooth function satisfying (H1) and (H2$^+$). If Bourgain's condition holds for the phase function $\phi(\bfx; \xi)$ at every point $(\bfx; \xi)\in \supp(a)$, then \eqref{211003e1.4} holds for 
\begin{equation}\label{220717e1_13}
    q>q_{n, 2}:=2+\frac{2.5921}{n}+O(n^{-2}).
\end{equation}
\end{theorem}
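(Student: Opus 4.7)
The plan is to adapt the polynomial partitioning framework of Guth-Hickman-Iliopoulou \cite{MR4047925}, refined by Hickman-Rogers \cite{HR2019} and Hickman-Zahl \cite{hickman2020note}, from the paraboloid to general phase functions satisfying (H1), (H2$^+$), and Bourgain's condition at every point. The first step is to reduce \eqref{211003e1.4} via the broad-narrow dichotomy of Bourgain-Guth: the narrow contribution is bounded by induction on the dimension (using lower-dimensional restriction estimates along the fibres where wave packets cluster), while the broad contribution is controlled by a $k$-broad norm estimate for $T_N$. The whole broad-narrow setup for $\phi$ satisfying (H1) and (H2$^+$) is already available in \cite{MR4047925}, so the task reduces to proving the strongest possible $k$-broad inequality.

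To prove this $k$-broad estimate, I would run polynomial partitioning on $|T_N f|^q$: at each scale, either the mass is concentrated in the cells (giving an induction-on-scales gain with an improved exponent) or it is concentrated in a thin neighbourhood of an algebraic hypersurface. In the latter case, the relevant wave packets are sorted into transverse and tangent subfamilies; iterating the cellular-or-tangent alternative, one eventually arrives at a tangent-to-variety regime at some intermediate scale $\delta\gg N^{-1/2}$. The exponent that one extracts is governed by how efficiently one can bound the number of generalized tubes $T_{\theta, v}$ tangent to a variety of controlled degree.

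The clean incidence input at the tangent step for the curved tubes $T_{\theta,v}$ is supplied by Theorem \ref{main_thm_2}. To reach the Hickman-Zahl exponent $\lambda_{\mathrm{HZ}}$, one additionally needs the stronger ``nested polynomial Wolff axiom'' of Hickman-Rogers-Zhang \cite{HRZ} and Zahl \cite{MR4205111}, which controls nested families of tubes tangent to varieties of decreasing dimension. The plan is to prove the analogous strong version for the curved tubes $T_{\theta,v}$ by the same polynomial method used to establish Theorem \ref{main_thm_2}, noting that the only $\phi$-dependent ingredient is the Jacobian analysis of the defining map for $T_{\theta,v}$, which is precisely what Bourgain's condition lets one treat uniformly. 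With this nested incidence bound in hand, the exponent bookkeeping of \cite{hickman2020note} should transfer essentially verbatim and yield the range $q>q_{n,2}=2+2.5921/n+O(n^{-2})$.

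The main obstacle will be verifying the variable-coefficient transverse equidistribution estimate that underlies every step of the tangent analysis. This estimate states that wave packets whose tubes are tangent to an algebraic hypersurface $Z$ must be equidistributed in the directions transverse to $Z$ at the appropriate scale. For the paraboloid this rests on affine transverse equidistribution of the extension operator. For a general $\phi$ satisfying (H2$^+$) and Bourgain's condition, one needs a careful normal-form analysis of $\phi$ in a neighbourhood of each $(\bfx_0;\xi_0)\in\supp(a)$ to reduce to a localized paraboloid-type situation with admissible error terms. This is exactly where Bourgain's condition enters in the positive direction: Theorem \ref{main_thm_1} shows that if the condition fails then the transverse equidistribution one needs must also fail at the level of the operator, whereas if it holds then a suitable change of variables, together with the quantitative form of \eqref{Bourgaincond}, should match $\phi$ to the paraboloid phase modulo terms small enough to be absorbed in the induction, allowing the remainder of the Hickman-Zahl argument to go through.
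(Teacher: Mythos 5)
Your outline correctly identifies the polynomial-partitioning/wave-packet framework, the broad--narrow reduction, and the need for a strong (nested) polynomial Wolff axiom for the curved tubes $T_{\theta,v}$, and it correctly locates the proof of that axiom in a Jacobian/real-algebraic-geometry argument powered by Bourgain's condition. But as written the proposal does not actually prove Theorem \ref{220130thm1.2}: it proves a strictly weaker statement and its own numbers are inconsistent.

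The critical gap is the exponent. You say that with the nested PWA in hand ``the exponent bookkeeping of \cite{hickman2020note} should transfer essentially verbatim and yield the range $q>q_{n,2}=2+2.5921/n+O(n^{-2})$.'' That is not what the Hickman--Zahl bookkeeping yields. Transferring \cite{hickman2020note} verbatim gives $q>2+\lambda_{\mathrm{HZ}}/n+O(n^{-2})$ with $\lambda_{\mathrm{HZ}}=2.59607\dots$, whereas $q_{n,2}=2+2.5921/n+O(n^{-2})$ is strictly better. The paper explicitly states that combining the strong PWA with \cite{hickman2020note} only reproduces \eqref{220926e1_19}, and that the further improvement to $q_{n,2}$ requires an additional ingredient: the ``broom'' decomposition (Section \ref{220706section6}), generalizing Wang's $\R^3$ construction \cite{wang2018restriction} to all dimensions and to the variable-coefficient setting, together with its companion ``bush'' estimate for small grains (Section \ref{220717section7}), a two-ends relation built on the broom/bush structure, the key broom estimate (Theorem \ref{220615thm8_7}), and the Variety Uncertainty Principle (Lemma \ref{220614lemma8_8}), proved via a generalized pseudo-conformal transformation (Lemma \ref{220608lemma8_9}) and a counting argument (Lemma \ref{lem: counting}). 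None of this appears in your proposal, so the plan as stated caps out at the Hickman--Zahl exponent and does not reach the theorem's claim.

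A second, smaller but still real issue is that you misplace where Bourgain's condition is used in the positive direction. You suggest the condition is needed to establish transverse equidistribution for general $\phi$. In fact transverse equidistribution for phases satisfying (H1) and (H2$^+$) is already a theorem of Guth--Hickman--Iliopoulou \cite{MR4047925} (used in the paper via, e.g., their Lemma 8.4) and holds without any assumption beyond (H2$^+$); the failure mode diagnosed in Theorem \ref{main_thm_1} is a Kakeya-type clustering of tubes, not a failure of transverse equidistribution of the operator. Bourgain's condition enters the proof of Theorem \ref{220130thm1.2} precisely through the polynomial Wolff axiom (Theorem \ref{main_thm_2}) and its strong/nested version (Theorem \ref{nestedgeomlemofPhi}): it guarantees, via the equivalent derivative reformulation (Theorem \ref{211016thm2.1}, Lemma \ref{211016rem2.2}), that the relevant $(n-1)\times(n-1)$ Jacobian $\det(M-\nabla_\xi^2\phi)$ has uniform lower bounds on average in time (Lemma \ref{polymatriceslem} plus Claim ($\ast$)), which is exactly what blocks the tube-clustering counterexample.
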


As $\phi(\bfx; \xi)=\inn{x}{\xi}+t|\xi|^2$ also satisfies Bourgain's condition, we obtain the following immediate corollary of Theorem \ref{220130thm1.2}. 
\begin{corollary}[Improved Fourier restriction estimate]
For $q>q_{n, 2}$, it holds that 
\begin{equation}
    \Norm{
    \int_{[0, 1]^{n-1}} e^{iN(\inn{x}{\xi}+t|\xi|^2)}f(\xi)d\xi
    }_q \lesim_{n, q} N^{-n/q} \norm{f}_{\infty}.
\end{equation}
\end{corollary}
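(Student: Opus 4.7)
The plan is to invoke Theorem \ref{220130thm1.2} directly, so the entire task reduces to verifying that the paraboloid phase
\begin{equation}
\phi(\bfx;\xi) = \inn{x}{\xi} + t|\xi|^2, \qquad \bfx = (x,t) \in \R^{n-1} \times \R,
\end{equation}
satisfies (H1), (H2$^+$), and Bourgain's condition at every point $(\bfx_0;\xi_0) \in \supp(a)$. Once these three properties are in place, the estimate claimed in the corollary is exactly the specialization of \eqref{211003e1.4} provided by Theorem \ref{220130thm1.2}.

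For (H1), I would simply compute $\partial_{x_i}\partial_{\xi_j}\phi = \delta_{ij}$ and $\partial_{t}\partial_{\xi_j}\phi = 2\xi_j$, so the $n\times (n-1)$ matrix $\nabla_{\bfx}\nabla_{\xi}\phi$ contains the identity $I_{n-1}$ as its upper block and therefore has full rank $n-1$. For (H2$^+$), the vectors $\partial_{\xi_j}\nabla_{\bfx}\phi = (e_j, 2\xi_j)$ span an $(n-1)$-plane whose orthogonal complement in $\R^n$ is generated by $(-2\xi, 1)$; choosing orientations so that $G_0(\bfx;\xi) = c(\xi)(-2\xi, 1)$ with $c(\xi) > 0$, one gets
\begin{equation}
\inn{\nabla_{\bfx}\phi(\bfx;\xi)}{G_0(\bfx;\xi_0)} = c(\xi_0)\bigl(-2\inn{\xi}{\xi_0} + |\xi|^2\bigr),
\end{equation}
whose Hessian in $\xi$ is $2c(\xi_0) I_{n-1} > 0$. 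Hence (H2$^+$) holds at every $(\bfx_0;\xi_0)$.

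For Bourgain's condition, the cleanest route is to translate to the origin. Setting $\phi'(\bfx;\xi) = \phi(\bfx_0+\bfx;\xi_0+\xi)$ and expanding, one sees after absorbing the purely $\bfx$-dependent and $\xi$-linear terms (which do not affect the $L^p$ estimate) that $\phi'$ is affinely equivalent to the paraboloid phase itself, so a normal form $\phi''$ of $\phi'$ can be taken to be of the same paraboloid type $\langle x,\xi\rangle + t|\xi|^2$ (after a linear change in $\xi$). Then $\partial_t^2\phi'' \equiv 0$, so $\nabla_{\xi}^2\partial_t^2\phi''\big|_{\bfx=0,\xi=0} = 0$, which is trivially a scalar multiple of $\nabla_{\xi}^2\partial_t\phi''\big|_{\bfx=0,\xi=0} = 2I$. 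Combined with the well-definedness of Bourgain's condition (Corollary \ref{bourgain_defined}), this verifies the condition at every point.

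The only mild obstacle is step three: the normal-form reduction at a non-origin basepoint generates low-order remainder terms, and one needs to confirm that these do not alter $\nabla_{\xi}^2\partial_t^2\phi''$ at the origin. Since Corollary \ref{bourgain_defined} guarantees that Bourgain's condition is invariant under the choice of normal form, this is really just a consistency check rather than a substantive computation, so the corollary follows immediately from Theorem \ref{220130thm1.2}.
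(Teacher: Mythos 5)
Your proposal is correct and follows exactly the route the paper intends: the corollary is stated as an immediate consequence of Theorem \ref{220130thm1.2}, with the only content being that the paraboloid phase $\inn{x}{\xi}+t|\xi|^2$ satisfies (H1), (H2$^+$), and Bourgain's condition at every point, which the paper asserts without computation and which you verify explicitly (the translation to $(\bfx_0;\xi_0)$ producing an extra cross term $2t\inn{\xi_0}{\xi}$ that is absorbed by the substitution $x\mapsto x+2t\xi_0$, after which $\partial_t^2\phi''\equiv 0$ and the condition holds trivially).
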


%the above theorem improves the asymptotic of Hickman and Zahl \cite{hickman2020note}. 
Recall Bourgain's observation \cite[Remark 3.43]{MR1132294} that the phase function for the Bochner-Riesz problem also satisfies Bourgain's condition, we see Theorem \ref{220130thm1.2} also gives the currently best known bounds for the Bochner-Riesz problem. More precisely, for $\alpha \geqslant 0$, the Bochner-Riesz multiplier of order $\alpha$ is defined by
$$
m^\alpha(\xi):=\left(1-|\xi|^2\right)_{+}^\alpha, \ \ \xi\in \R^n.
$$
As a corollary of Theorem \ref{220130thm1.2} (also Theorem \ref{220704theorem3_1} below), we obtain 
\begin{corollary}[Improved Bochner-Riesz estimate on $\R^n$]
For $q>q_{n, 2}$, it holds that 
\begin{equation}
    \norm{m^{\alpha}(D) f}_{L^q(\R^n)} \lesim_{n, \alpha, q} \norm{f}_{L^q(\R^n)},
\end{equation}
whenever $\alpha> n(\frac{1}{2}-\frac{1}{p})-\frac{1}{2}$. 
\end{corollary}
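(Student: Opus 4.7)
The plan is to reduce the Bochner--Riesz estimate to a H\"ormander-type oscillatory integral estimate via the classical Carleson--Sj\"olin-type scheme, and then to invoke Theorem \ref{220130thm1.2}. After a smooth dyadic decomposition of the multiplier near the sphere, writing $m^\alpha(\xi) = \chi_0(\xi) + \sum_{k\ge 1} 2^{-k\alpha}\psi_k(\xi)$ with $\psi_k$ supported in an annulus of thickness $\sim 2^{-k}$, and after a finite partition of unity on the sphere (using rotational symmetry to reduce to a single cap), the $k$-th piece can be expressed, up to harmless error, as a constant multiple of a H\"ormander-type operator
\begin{equation*}
T_N f(\bfx) = \int e^{iN\phi(\bfx;\xi)} a(\bfx;\xi) f(\xi)\, d\xi
\end{equation*}
with $N := 2^{k}$ and phase $\phi(\bfx; \xi) = \inn{x'}{\xi} + t\sqrt{1-|\xi|^2}$, where $\bfx = (x', t)$ and $\xi$ is restricted to a small cap of $B^{n-1}$.

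Next I would verify that this phase satisfies the hypotheses of Theorem \ref{220130thm1.2}. Hypotheses (H1) and (H2$^+$) are classical and follow from the nonvanishing Gaussian curvature of the sphere. The key point is to check Bourgain's condition \eqref{Bourgaincond}. This is precisely the content of Bourgain's observation in \cite[Remark 3.43]{MR1132294}: for the sphere phase, after translating to an arbitrary point $(\bfx_0; \xi_0) \in \supp(a)$ and passing to a normal form $\phi''$ about the origin, a direct computation shows that $\nabla_\xi^2 \partial_t^2\phi''$ is a scalar multiple of $\nabla_\xi^2 \partial_t \phi''$ at $\bfx=0, \xi = 0$. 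By Corollary \ref{bourgain_defined}, this property is intrinsic and does not depend on the choice of normal form.

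Applying Theorem \ref{220130thm1.2} (and more precisely the localized variant encoded in Theorem \ref{220704theorem3_1}) then yields the $L^\infty \to L^q$ bound $\|T_N f\|_q \lesssim_\epsilon N^{-n/q + \epsilon} \|f\|_\infty$ for every $q > q_{n,2}$. One then transfers this oscillatory integral bound to an $L^q \to L^q$ bound on the $k$-th annular piece of $m^\alpha(D)$ by the standard procedure (parabolic rescaling of the annulus, duality/$TT^*$, and summation over caps via wave-packet orthogonality), producing a scale-$k$ contribution of size $\lesssim 2^{k\beta(q)}$ with $\beta(q) = n(1/2 - 1/q) - 1/2$. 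Summing $\sum_k 2^{-k\alpha} 2^{k\beta(q)}$ converges exactly in the claimed range $\alpha > n(1/2 - 1/q) - 1/2$.

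The main conceptual obstacle is really the verification of Bourgain's condition for the sphere phase, which is supplied by Bourgain's remark; beyond that, all remaining steps are standard Bochner--Riesz to oscillatory integral reductions, and uniformity across caps of $S^{n-1}$ is automatic from rotational symmetry. Thus the estimate follows by combining these ingredients with Theorem \ref{220130thm1.2}.
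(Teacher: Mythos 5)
You correctly identify that Bourgain's condition is the key hypothesis and that Bourgain's Remark 3.43 supplies its verification, but the reduction you write down uses the wrong phase function, and as a result your step 5 (the transfer back to an $L^q\to L^q$ multiplier bound) is a genuine gap.

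The phase you propose, $\phi(\bfx;\xi)=\inn{x'}{\xi}+t\sqrt{1-|\xi|^2}$, is the phase of the spherical \emph{extension} operator. An $L^q\to L^q(B_\lambda)$ bound for this operator is a (local) Fourier restriction estimate; it does \emph{not} directly imply the Bochner--Riesz estimate. Tao proved that Bochner--Riesz implies restriction, but the converse direction is open, and the naive slicing/averaging over radii $r\in[1-2^{-k+1},1-2^{-k-1}]$ that you would use to assemble the annular piece of $m^\alpha(D)$ from extension operators $Eg_r$ fails: after Minkowski and H\"older in $r$ you are left trying to bound $\|\hat f\|_{L^q(\text{annulus})}$ by $\|f\|_{L^q}$, which is a Hausdorff--Young loss rather than an identity. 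Moreover, your phase satisfies the stronger constant-direction hypothesis \eqref{230129e1_23} (the normal $G_0/|G_0|$ is independent of $\bfx$), so it lies in the Gao--Li--Wang class; if the corollary followed from that phase there would be no need to invoke Bourgain's condition at all. Bourgain's Remark 3.43 concerns the genuinely variable-coefficient phase arising from the Carleson--Sj\"olin reduction, which is of a different type.

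The intended argument runs as follows. After a dyadic decomposition of $m^\alpha$, the $k$-th annular piece convolves with a kernel that, by stationary phase on the sphere, behaves like $K_k(w)\approx 2^{-k}|w|^{-(n-1)/2}\widehat\psi(2^{-k}|w|)e^{\pm 2\pi i|w|}$. Localizing $x$ and $y$ to balls at mutual distance $\sim 2^k$ and rescaling so that they live in unit balls, the relevant oscillatory integral has phase $N\,|\bfx-\bfy|$ with $N=2^k$ and $|\bfx-\bfy|\sim 1$. One then Fubini--slices the $\bfy$-integral, writing $\bfy=(\xi,y_n)$ with $\xi\in\R^{n-1}$ and $y_n$ fixed, which produces a H\"ormander-type operator $T_N$ with the \emph{distance} phase $\phi(\bfx;\xi)=|\bfx-(\xi,y_n)|$ on $B^n\times B^{n-1}$. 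This phase satisfies (H1), $(\mathrm{H2}^+)$ (positive curvature of spheres; the conjugate sign is handled by complex conjugation), and, crucially, Bourgain's condition --- this is exactly Bourgain's Remark 3.43, and it can also be re-derived from the characterization in Theorem \ref{211016thm2.1}. Applying Theorem \ref{220704theorem3_1} to each $y_n$-slice gives $\|T_N g_{y_n}\|_{L^q(B^n)}\lesssim_\epsilon N^{-n/q+\epsilon}\|g_{y_n}\|_{L^q}$; Minkowski and H\"older in $y_n$ produce $\|T_kf\|_{L^q}\lesssim 2^{k((n-1)/2-n/q)+k\epsilon}\|f\|_{L^q}$, and summing the dyadic pieces gives the claim for $\alpha>n(\tfrac12-\tfrac1q)-\tfrac12$. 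So the decisive step you are missing is replacing the extension phase by the Euclidean distance phase in the Carleson--Sj\"olin reduction; once that substitution is made, the remaining steps are genuinely standard and your summation argument goes through.
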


Before stating the next corollary, let us discuss a prior attempt in breaking the critical range of exponents in \eqref{220717e1_9} for general H\"ormander operators. In \cite{gao2022type}, Gao, Li and Wang, under the assumptions (H1), $(\mathrm{H2})^+$, and the additional assumption
\begin{equation}\label{230129e1_23}
    G_0(\bfx; \xi)/|G_0(\bfx; \xi)| \text{ is constant in } \bfx,
\end{equation}
proved that \eqref{211003e1.4} holds for $q$
satisfying the range of Hickman and Zahl \eqref{220926e1_19}. 

By Theorem \ref{211016thm2.1} and Lemma \ref{211016rem2.2}, one can check directly that phase functions satisfying \eqref{230129e1_23} also satisfy Bourgain's condition. Therefore the bounds Gao, Li and Wang obtained in  \cite{gao2022type} can also be improved to the one stated in Theorem \ref{220130thm1.2}. 

The assumption \eqref{230129e1_23} appears naturally in several interesting applications, including the generalized Bochner-Riesz problem for non-degenerate hyper-surfaces, the local smoothing estimates for fractional Schr\"odinger equations and sharp resolvent estimates outside of the uniform boundedness range. These applications were worked out carefully in \cite{gao2022type}. We include the latter two here. \\

Let $u: \R^{n-1}\times \R\to \C$ be the solution to the equation 
\begin{equation}\label{230129e1_24}
    \begin{cases}
        i\partial_t u+(-\Delta)^{\frac{\alpha}{2}}u=0, & (x, t)\in \R^{n-1}\times R,\\
        u(x, 0)=f(x),  & x\in \R^{n-1},
    \end{cases}
\end{equation}
where $\alpha>1$ and $f$ is a Schwartz function. 
\begin{corollary}[Local smoothing estimates for fractional Schr\"odinger equations]
    Let $\alpha>1$ and let $u$ be a solution to \eqref{230129e1_24}. Then 
    \begin{equation}\label{230129e1_25}
        \norm{u}_{L^q(\R^{n-1}\times [1, 2])}
        \lesim_{\alpha, \beta, n, \epsilon, p}
        \norm{f}_{L^q_{\beta}(\R^{n-1})},
    \end{equation}
    whenever 
    \begin{equation}
        \beta> (n-1)\alpha \pnorm{
        \frac{1}{2}-\frac{1}{q}
        }-\frac{\alpha}{q},
    \end{equation}
    and $q$ satisfies \eqref{220717e1_13}. Moreover, for each fixed $q$, the range of $\beta$ is sharp. 
\end{corollary}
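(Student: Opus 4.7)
The plan is to reduce~\eqref{230129e1_25} to Theorem~\ref{220130thm1.2} applied to the Hörmander phase $\phi(x,t;\xi) = x\cdot\xi + t|\xi|^\alpha$ via a Littlewood--Paley decomposition and parabolic rescaling. Since $\nabla_{(x,t)}\phi = (\xi,|\xi|^\alpha)$ is independent of $\bfx$, the direction field $G_0/|G_0|$ is independent of $\bfx$, so this phase satisfies the stronger hypothesis~\eqref{230129e1_23} of Gao--Li--Wang~\cite{gao2022type}. Consequently the passage from the $L^\infty \to L^q$ estimate of Theorem~\ref{220130thm1.2} to the $L^q \to L^q$ mixed-norm local smoothing estimate~\eqref{230129e1_25} is already carried out in~\cite{gao2022type}; it remains (i) to verify that the phase meets the hypotheses of Theorem~\ref{220130thm1.2}, and (ii) to produce the sharpness example.

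For (i), I would Littlewood--Paley decompose $f = \sum_N f_N$ with $\widehat{f_N}$ supported in $|\xi|\sim N$, substitute $\xi = N\eta$ with $|\eta|\sim 1$, partition $[1,2]$ into $\sim N^{\alpha-1}$ time slabs of length $N^{1-\alpha}$ centered at some $t_0$, absorb $e^{iN^\alpha t_0|\eta|^\alpha}$ into the amplitude, and tile the $x$-variable by unit cubes. On each cube-slab the operator takes the form $T_N g(y,\tau) = \int e^{iN(y\cdot\eta + \tau|\eta|^\alpha)}g(\eta)\,d\eta$ with $(y,\tau)\in B^n$ and $|\eta|\sim 1$. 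Hypothesis (H1) is immediate from $\nabla_y\nabla_\eta\phi = I_{n-1}$, and (H2$^+$) reduces to the definiteness of $\nabla_\eta^2|\eta|^\alpha$ on the annulus. For Bourgain's condition, shift $\eta = \eta_0 + \zeta$ and $y \mapsto y - \alpha\tau|\eta_0|^{\alpha-2}\eta_0$ to obtain the normal form
\begin{equation*}
    \phi''(y,\tau;\zeta) = y\cdot\zeta + \tfrac{\tau}{2}\zeta^\top H\zeta + O(\tau|\zeta|^3),\qquad H = \nabla_\eta^2|\eta|^\alpha\big|_{\eta_0},
\end{equation*}
which is \emph{linear} in $\tau$; hence $\nabla_\zeta^2\partial_\tau^2\phi''|_0 = 0$ is trivially a multiple of $\nabla_\zeta^2\partial_\tau\phi''|_0 = H$, verifying Bourgain's condition.

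For (ii), I would use a Knapp-type example: take $\widehat{f_N}$ a bump of height $1$ on a plate of dimensions $N\times N^{1/2}\times\cdots\times N^{1/2}$ tangent to $\{|\xi|=N\}$; the standard computation, adapted to the $\alpha$-dispersion relation $t|\xi|^\alpha$ in place of $t|\xi|^2$, shows that $(n-1)\alpha(\tfrac{1}{2} - \tfrac{1}{q}) - \tfrac{\alpha}{q}$ is the critical regularity. The main obstacle is the precise conversion of the $L^\infty \to L^q$ unit-ball bound from Theorem~\ref{220130thm1.2} to the $L^q \to L^q$ mixed-norm bound on $\R^{n-1}\times[1,2]$ --- summing over spatial unit cubes and time slabs while paying only the natural parabolic factors --- but this is precisely the reduction scheme of~\cite{gao2022type}, which applies verbatim once we replace their Hickman--Zahl input with our Theorem~\ref{220130thm1.2}.
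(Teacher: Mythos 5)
Your proposal matches the paper's approach. The paper itself does not give a standalone proof of this corollary: it observes that the fractional Schr\"odinger phase $\phi(x,t;\xi) = x\cdot\xi + t|\xi|^\alpha$ satisfies the Gao--Li--Wang hypothesis \eqref{230129e1_23} (constancy of $G_0/|G_0|$ in $\bfx$), invokes Theorem~\ref{211016thm2.1} and Lemma~\ref{211016rem2.2} to conclude Bourgain's condition, and then cites \cite{gao2022type} for the full Littlewood--Paley/parabolic-rescaling reduction from the $L^\infty\to L^q$ H\"ormander bound to the $L^q\to L^q_\beta$ local smoothing estimate, together with the sharpness example. Your direct normal-form verification that $\nabla_\zeta^2\partial_\tau^2\phi''|_0 = 0$ (and hence is trivially a multiple of $H$) is correct and is a perfectly good alternative to the route via \eqref{230129e1_23}; the rest of your outline --- Littlewood--Paley into $|\xi|\sim N$, rescaling $\xi = N\eta$, cutting $[1,2]$ into $N^{\alpha-1}$ slabs of length $N^{1-\alpha}$, absorbing $e^{iN^\alpha t_0|\eta|^\alpha}$ into the symbol, tiling $x$ by unit cubes, then applying Theorem~\ref{220130thm1.2} and summing --- is exactly the \cite{gao2022type} reduction the paper defers to, with their Hickman--Zahl input replaced by Theorem~\ref{220130thm1.2}.
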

The conjectured range for \eqref{230129e1_25} to hold is $q>\frac{2n}{n-1}$, the same as the range in the Fourier restriction conjecture. \\

The resolvent estimate for the Laplacian on $\R^n$ is of the form 
\begin{equation}\label{230129e1_27}
    \norm{
    (-\Delta-z)^{-1} f
    }_{L^q(\R^n)} \le C_{p, q, n}(z) \norm{f}_{L^p(\R^n)}, \ \ z\in \C\setminus [0, \infty). 
\end{equation}
Here $C_{p, q, n}(z)$ is a constant that is allowed to depend on $p, q, n$ and $z$. We are particularly interested in tracking the dependence on $z$, for fixed $n, p$ and $q$. 
\begin{corollary}[Resolvent estimates]
    For $q$ satisfying \eqref{220717e1_13}, we have 
    \begin{equation}\label{230129e1_28}
        \norm{
    (-\Delta-z)^{-1} f
    }_{L^q(\R^n)} 
    \lesim_{q, n} 
    |z|^{-1+\gamma_q} 
    \mathrm{dist}(z, [0, \infty))^{-\gamma_q} \norm{f}_{L^q(\R^n)},
    \end{equation}
    where
    \begin{equation}
        \gamma_q:=\frac{n+1}{2}-\frac{n}{q}.
    \end{equation}
    Moreover, for fixed $q$, the bound is optimal in $z$. 
\end{corollary}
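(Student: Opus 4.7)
My plan is to derive the resolvent estimate from the Bochner-Riesz bound of the preceding corollary via a scaling-plus-dyadic-decomposition argument, following the strategy of Kenig-Ruiz-Sogge and its non-uniform refinements (Frank, Kwon-Lee, and others).

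The first step is a scaling reduction. By the homogeneity of $-\Delta$, the rescaling $f\mapsto f(|z|^{1/2}\cdot)$ yields $\|(-\Delta-z)^{-1}\|_{L^q\to L^q}=|z|^{-1}\|(-\Delta-z/|z|)^{-1}\|_{L^q\to L^q}$, so it suffices to prove, for $w=e^{i\theta}$ with $\theta\in(0,2\pi)$, that
\[
\|(-\Delta-w)^{-1}\|_{L^q(\R^n)\to L^q(\R^n)}\lesssim|\sin\theta|^{-\gamma_q};
\]
combined with $\mathrm{dist}(z,[0,\infty))=|z||\sin\theta|$ when $\mathrm{Re}(z)\ge 0$, this recovers the claimed factor $|z|^{-1+\gamma_q}\mathrm{dist}(z,[0,\infty))^{-\gamma_q}$. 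Writing $w=a+i\delta$ with $\delta=\sin\theta>0$, the case $a=\cos\theta\le 0$ gives a smooth bounded symbol handled by H\"ormander-Mikhlin, so we assume $a\in(0,1]$. Next, decompose the multiplier $m_w(\xi):=(|\xi|^2-w)^{-1}$ dyadically around the singular sphere $\{|\xi|^2=a\}$:
\[
m_w=m_w^{\mathrm{far}}+\sum_{0\le k\le K}m_w^k,\qquad K\sim\log_2(1/\delta),
\]
where $m_w^k$ carries a smooth cutoff to the annulus $\{||\xi|^2-a|\sim 2^k\delta\}$ and $m_w^{\mathrm{far}}$ covers the regime $||\xi|^2-a|\gtrsim 1$. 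Each $m_w^k$ satisfies $|m_w^k|\lesssim(2^k\delta)^{-1}$ with support on an annulus of thickness $\sim 2^k\delta$ around a unit-scale sphere.

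The key per-piece estimate is
\[
\|T_{m_w^k}f\|_{L^q(\R^n)}\lesssim(2^k\delta)^{-\gamma_q}\|f\|_{L^q(\R^n)},\qquad q>q_{n,2},
\]
which reduces to the statement that a smooth annular multiplier of thickness $\sigma$ around a unit sphere has $L^q\to L^q$ operator norm $\lesssim\sigma^{1-\gamma_q}$. This last claim is extracted from the preceding Bochner-Riesz corollary by dyadically decomposing $(1-|\xi|^2)_+^{\gamma_q-1+\epsilon}$ into annular pieces and performing a Littlewood-Paley reassembly. Summing the per-piece bounds over $0\le k\le K$ produces a geometric series dominated by $k=0$ (since $\gamma_q>1$ in our range), yielding $\delta^{-\gamma_q}$. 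For sharpness in $z$, a Knapp-type test function with Fourier support in a $\sqrt\delta$-cap on $\{|\xi|^2=a\}$ of radial thickness $\delta$ saturates the estimate: on its Fourier support $|m_w|\sim\delta^{-1}$, and a direct $L^q$ computation returns the matching $\delta^{-\gamma_q}$. The main technical hurdle is the per-piece bound at the critical Bochner-Riesz order: since the corollary is stated only strictly above the critical exponent, one must carry an $\epsilon$-loss through the Littlewood-Paley extraction and absorb it by slightly strengthening the constraint $q>q_{n,2}$.
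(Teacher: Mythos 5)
Your overall plan --- scaling reduction, dyadic decomposition of the symbol $(|\xi|^2-z)^{-1}$ around the singular sphere, and reduction to annular multiplier bounds --- is the standard route and is essentially the one carried out in the reference \cite{gao2022type}, which the paper cites for the deduction rather than reproving it. However, two points in your write-up do not hold up.

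First, the step ``extracted from the preceding Bochner--Riesz corollary by dyadically decomposing $(1-|\xi|^2)_+^{\gamma_q-1+\epsilon}$ into annular pieces and performing a Littlewood--Paley reassembly'' does not actually give the per-annulus estimate. Boundedness of a convergent dyadic sum $\sum_j 2^{-j\alpha} T_{\psi_j}$ for all $\alpha>\gamma_q-1$ does not let you invert the sum to recover an operator-norm bound on each individual $T_{\psi_j}$. The clean way to get the annular multiplier bound $\|T_{\psi_j}\|_{L^q\to L^q}\lesssim_\epsilon \sigma^{1-\gamma_q-\epsilon}$ (with $\sigma=2^{-j}$) is to go back one level and invoke Theorem~\ref{220704theorem3_1} directly: after the usual Carleson--Sj\"olin/Sogge reduction, the kernel of a $\sigma$-annular multiplier is an oscillatory integral with a phase of Bochner--Riesz type, which satisfies Bourgain's condition (as the paper recalls from \cite[Remark 3.43]{MR1132294}), and the $L^\infty\to L^q$ bound of Theorem~\ref{220704theorem3_1} is exactly the per-annulus estimate. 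The Bochner--Riesz corollary is a consequence of this per-annulus bound, not a source for it.

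Second, your Knapp sharpness computation is wrong. With $\hat f$ supported on a $\sqrt\delta\times\cdots\times\sqrt\delta\times\delta$ cap on which $|m_w|\sim\delta^{-1}$, a straightforward calculation gives $\|T_{m_w}f\|_q\sim\delta^{-1}\|f\|_q$, i.e.\ a lower bound of $\delta^{-1}$, \emph{not} $\delta^{-\gamma_q}$. Since $\gamma_q=\tfrac{n+1}{2}-\tfrac nq>1$ throughout the range $q>\tfrac{2n}{n-1}$, the Knapp example is strictly weaker than the claimed optimal bound. The correct extremizer exploits the slow spatial decay of the resolvent kernel rather than uncertainty at a single cap: test on $L^{q'}$ with a modulated unit-scale bump $f(x)=e^{ix_1\sqrt a}\chi(x)$. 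Then $T_{m_w}f(x)\approx |x|^{-(n-1)/2}e^{i\sqrt a|x|}$ on a cone around $e_1$ for $1\lesssim|x|\lesssim 1/\delta$, and
\[
\|T_{m_w}f\|_{q'}^{q'}\gtrsim \int_1^{1/\delta} r^{-q'(n-1)/2}\,r^{n-1}\,dr \sim \delta^{-\gamma_q q'},
\]
giving $\|T_{m_w}\|_{L^{q'}\to L^{q'}}\gtrsim\delta^{-\gamma_q}$, which matches by duality (since $|m_w|=|m_{\bar w}|$). So the bound is indeed optimal, but the example and computation you gave do not establish it.
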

The study of resolvent estimates in \eqref{230129e1_27} has a long history and can be dated back to the work of Kenig, Ruiz and Sogge \cite{MR894584}. The authors there proved \eqref{230129e1_27} for optimal ranges of $(p, q)$ for which the constant $C_{p, q, n}(z)$ is independent of $z$. These are called uniform Sobolev inequalities, and have found numerous applications including unique continuation properties, limiting absorption principles, etc. 

The conjectured range for \eqref{230129e1_28} to hold is $q>\frac{2n}{n-1}$, the same as the Fourier restriction exponent. Moreover, if this conjecture turns out to be true, then by interpolation with known results, it would imply \eqref{230129e1_27} for all combinations of $(p, q)$, with an optimal dependence of $C_{p, q, n}(z)$ on $z$. \\

In the last corollary, we discuss the connection of Theorem \ref{220130thm1.2} to Sogge's work \cite{MR835795} and \cite{MR1639543}. In \cite{MR1639543}, Sogge studied the Nikodym problem on general Riemannian manifolds and proved that the Nikodym maximal operator satisfies better bounds on manifolds of constant scalar curvature than on general Riemannian manifolds. This is a strong indication that distance functions on manifolds of constant curvature satisfy Bourgain's condition. In the next corollary, we show that this is indeed the case for $S^n$, the $n$-dimensional Euclidean sphere. 
\begin{corollary}\label{230129coro1_8}
    Let $a: S^n\times S^n\to\R$ be a smooth function supported away from the diagonal. Let $\mathrm{dist}$ be the distance function on $S^n$. Then 
    \begin{equation}\label{230129e1_30}
        \Norm{
        \int_{S^n} 
        e^{
        iN
        \mathrm{dist}(x, y)
        }
        a(x, y)f(y) dy
        }_{L^q(S^n)} 
        \lesim 
        N^{-n/q} \norm{f}_{L^q(S^n)},
    \end{equation}
    for every $q$ satisfying \eqref{220717e1_13}. 
\end{corollary}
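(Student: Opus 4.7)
The plan is to reduce \eqref{230129e1_30} to the H\"ormander-type estimate \eqref{211003e1.4} and then apply Theorem \ref{220130thm1.2}. Two ingredients are needed: a passage from the $L^q\to L^q$ bound on the compact manifold $S^n$ to an $L^\infty\to L^q$ bound for local H\"ormander operators, and the verification that the phase $\mathrm{dist}(x,y)$ satisfies (H1), (H2$^+$), and Bourgain's condition. The first is a standard reduction, essentially identical to the one used to derive the Bochner--Riesz corollary above from Theorem \ref{220130thm1.2}; the second is where the geometry of $S^n$ enters in an essential way.

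First I would decompose $a(x,y)$ via a partition of unity so that each piece is supported on a product $U\times V\subset S^n\times S^n$ of small geodesic balls with $\mathrm{dist}(U,V)\gtrsim 1$. On each patch, pick coordinates in which $V$ is parametrized by an $(n-1)$-dimensional slice $\xi$ together with a radial coordinate $s$ running along geodesics emanating from a fixed reference point in $U$, so that $\partial_s$ lies along the null direction of the mixed Hessian $\nabla_x\nabla_y\mathrm{dist}$. Freezing $s$ yields a H\"ormander-type oscillatory integral operator with phase $\phi_s(\bfx;\xi):=\mathrm{dist}(\bfx, y(\xi,s))$, smoothly parametrized by $s$. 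A standard analytic interpolation / $TT^{\ast}$ argument (the same one which produces the Bochner--Riesz corollary from Theorem \ref{220130thm1.2}) then reduces \eqref{230129e1_30} to the $L^{\infty}\to L^q$ estimate \eqref{211003e1.4} for each $\phi_s$, uniformly in $s$ and over the partition.

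Next I would verify the three conditions for $\phi_s$. Condition (H1) follows from the non-degeneracy of the restriction of $\nabla_x\nabla_y\mathrm{dist}$ to the hyperplane transverse to $\partial_s$; the strict positivity of the sectional curvature of $S^n$, via the Jacobi field equation, yields (H2$^+$). For Bourgain's condition \eqref{Bourgaincond} I would exploit the transitive action of $O(n+1)$ on $S^n$ to reduce to a single model calculation: fix the base pair to be the north pole together with a nearby point on a chosen meridian, and use exponential coordinates. A Taylor expansion of $\arccos(x\cdot y)$ in these coordinates exhibits a rotationally symmetric normal form of the shape \eqref{211003e1.7}, in which $\nabla_\xi^2\partial_t\phi''|_{\bfx=0,\xi=0}$ and $\nabla_\xi^2\partial_t^2\phi''|_{\bfx=0,\xi=0}$ are both scalar multiples of the identity on $\R^{n-1}$; in particular one is a multiple of the other, which is exactly \eqref{Bourgaincond}. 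By Corollary \ref{bourgain_defined}, this single verification, transported by isometries, establishes Bourgain's condition at every point of $\supp(a)$.

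The main obstacle I anticipate is the explicit Taylor computation, together with the verification that the proportionality in \eqref{Bourgaincond} is preserved under the normalization procedure producing \eqref{211003e1.7}. Although the constant sectional curvature of $S^n$ makes the conclusion ``morally obvious''---and is, as Sogge observed in \cite{MR1639543}, the true source of improved oscillatory estimates on constant-curvature manifolds---the normal form is a priori non-unique, so some care is needed to ensure that Bourgain's condition holds in one particular choice of normalization. Once this is done, Theorem \ref{220130thm1.2} applies to each $\phi_s$ uniformly in $s$, and summing over the partition of unity yields \eqref{230129e1_30} for every $q$ satisfying \eqref{220717e1_13}.
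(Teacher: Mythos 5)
Your proof follows essentially the same route as the paper's: slice the $y$-integration (your radial coordinate $s$ corresponds to the paper's $r=\sqrt{1-|y'|^2}$), freeze the slice to obtain a H\"ormander-type operator with $n$-dimensional spatial and $(n-1)$-dimensional frequency variables, and verify Bourgain's condition by Taylor-expanding $\arccos(x\cdot y)$ and observing that both $\nabla^2_{\xi}\partial_t\phi''|_0$ and $\nabla^2_{\xi}\partial_t^2\phi''|_0$ are scalar multiples of the identity, with rotational symmetry reducing the check to a single base point exactly as the paper does. One small remark: rather than a $TT^*$/analytic-interpolation passage from the $L^\infty\to L^q$ estimate \eqref{211003e1.4}, the cleaner route---which the paper implicitly takes---is to invoke Theorem \ref{220704theorem3_1}, which already delivers an $L^p\to L^p$ bound, so that only Minkowski's and H\"older's inequalities in the slice parameter are needed.
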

\begin{proof}[Proof of Corollary \ref{230129coro1_8}]
By cutting the support of $a$ into finitely many pieces, we without loss of generality assume that the support of $a(x, y)$ is such that $x$ is around the north pole and $y$ is slightly away from the north pole. Write 
\begin{equation}
    x=(x_1, \dots, x_n, \sqrt{1-|x'|^2}), \ \ y=(y_1, \dots, y_n, \sqrt{1-|y'|^2}),
\end{equation}
where $x'=(x_1, \dots, x_n), y'=(y_1, \dots, y_n)$. Note that $\dist(x, y)=\arccos(x\cdot y)$. When integrating in $y$ on the left hand side of \eqref{230129e1_30}, we apply Fubini's theorem and integrate in $y$ with $\sqrt{1-|y'|^2}=r$ for some $r$ and then integrate in $r$. For a fixed $r$, our distance function can be written as 
\begin{equation}\label{230130e1_32}
    \arccos(
    x_1y_1+\dots +x_{n-1} y_{n-1}
    + x_n \sqrt{1-r^2-|y''|^2}+r\sqrt{1-|x'|^2}
    ),
\end{equation}
where $y'':=(y_1, \dots, y_{n-1})$. Therefore, to prove Corollary \ref{230129coro1_8}, it suffices to show that \eqref{230130e1_32} satisfies Bourgain's condition in $x'$ and $y''$ variables. We will prove this by checking the definition of Bourgain's condition as in \eqref{Bourgaincond}. By rotation symmetry, it suffices to consider the phase function \eqref{230130e1_32} near the north pole $x'=0$ and $y''=0$. Next, we apply a Taylor expansion for \eqref{230130e1_32} about $x'=0$ and $y''=0$. After the Taylor expansion, note that all linear terms in $y''$ can be written as 
\begin{equation}
    (x_1+O(|x'|^2))y_1+\dots+(x_{n-1}+O(|x'|^2))y_{n-1}.
\end{equation}
We therefore apply the change of variables 
\begin{equation}
    x_1+O(|x'|^2)\mapsto x_1, \dots, x_{n-1}+O(|x'|^2)\mapsto x_{n-1},
\end{equation}
to turn our phase function into a normal form. Let $\phi''$ denote this normal form. In the end, we just need to note that both matrices 
\begin{equation}
    \nabla_{y''}^2 \partial^2_t \phi''\big|_{x'=0, y''=0}, \ \ \nabla_{y''}^2 \partial_t \phi''\big|_{x'=0, y''=0}
\end{equation}
are multiples of the identity matrix. This finishes the proof. 
\end{proof}

It is conjectured (see Sogge \cite{MR835795}) that \eqref{230129e1_30} holds for all $q>2n/(n-1)$. The operator studied in \eqref{230129e1_30} appears in the study of the Bochner-Riesz problem on Euclidean spheres $S^n$. Let $\Delta_g$ denote the Laplace-Beltrami operator, and 
\begin{equation}
0<\lambda_1\le \lambda_2\le \dots
\end{equation}
the eigenvalues of $-\Delta_g$. Let $E_j$ be the one-dimensional eigenspace for $-\Delta_g$ with eigenvalue $\lambda_j$, and $e_j: L^2(S^n)\to L^2(S^n)$ be the projection operator onto the eigenspace $E_j$. We define the Riesz means of index $\alpha\ge 0$ as 
\begin{equation}
S^{\alpha}_L(f):=\sum_{j=1}^{\infty}
\pnorm{
1-\frac{\lambda_j}{L}
}^{\alpha}_+ e_j(f).
\end{equation}
One can follow the work Sogge \cite{MR835795} and Huang and Sogge \cite{MR3275105}, and deduce the following corollary from Corollary \ref{230129coro1_8}. 
\begin{corollary}[Bochner-Riesz for spheres]\label{220112corollary1_6}
    Assume that $q$ satisfies \eqref{220717e1_13}. We have that 
    \begin{equation}
        \norm{S^{\alpha}_L (f)}_{L^q(S^n)}
        \lesim \norm{f}_{L^q(S^n)} \ \text{ uniformly in } L,
    \end{equation} 
    whenever
    $\alpha> n(\frac{1}{2}-
    \frac{1}{p}
    )-\frac{1}{2}$.  Moreover, the range of $\alpha$ is sharp for fixed $q$.
\end{corollary}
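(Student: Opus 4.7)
The plan is to follow the well-established reduction of Sogge \cite{MR835795}, later sharpened by Huang--Sogge \cite{MR3275105}, which converts $L^q$ bounds for the Riesz means $S^{\alpha}_L$ into $L^q$ bounds for oscillatory integrals with phase $N\,\mathrm{dist}(x,y)$, exactly of the form appearing in Corollary \ref{230129coro1_8}. Once this reduction is in place, the improvement at the critical Bochner-Riesz index for $q$ in the range \eqref{220717e1_13} is driven entirely by Corollary \ref{230129coro1_8}.

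First, by Stein's analytic interpolation combined with a dyadic decomposition in the spectral variable, it suffices to prove the endpoint bound at the critical index $\alpha_q := n(\tfrac{1}{2}-\tfrac{1}{q}) - \tfrac12$ with an $\epsilon$-loss, for a Littlewood--Paley piece $m_L(\sqrt{-\Delta_g})$ whose symbol $m_L(\tau)$ is supported in $\tau \sim \sqrt L$. Writing $m_L(\sqrt{-\Delta_g})$ as a Fourier average of the half-wave propagator,
\begin{equation*}
m_L(\sqrt{-\Delta_g}) \;=\; \int \widehat{m}_L(t)\, e^{it\sqrt{-\Delta_g}}\, dt,
\end{equation*}
I would then split the $t$-integral into a short-time part ($|t|\leq T$ for a small fixed $T$) and a long-time part.

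Second, for the short-time part, the Hadamard parametrix (or, more directly on $S^n$, the explicit formula for the half-wave kernel) represents $e^{it\sqrt{-\Delta_g}}(x,y)$ modulo smoothing errors as an oscillatory integral in a cotangent variable with phase proportional to $\mathrm{dist}(x,y)$ and a classical amplitude. Substituting this into the Fourier representation of $m_L(\sqrt{-\Delta_g})$ and carrying out stationary phase in $t$ yields, after rescaling, an operator of exactly the form appearing in Corollary \ref{230129coro1_8} with $N = \sqrt L$, with smooth amplitude supported away from the diagonal and of size $\sim N^{n/q-1/2}$. Corollary \ref{230129coro1_8} then produces the desired uniform-in-$L$ $L^q$ bound at the critical index. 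The antipodal singularity of $\mathrm{dist}(x,y)$ is handled by the reflection symmetry $\mathrm{dist}(x,y)=\pi-\mathrm{dist}(x,-y)$, which reduces a neighborhood of the antipode to the same estimate after pre-composing with the antipodal map.

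Finally, the long-time contribution and the low-frequency contributions are controlled by the standard $L^2$-based arguments of \cite{MR835795}, which only rely on the universal spectral cluster bound and are therefore insensitive to the improved range of $q$. Sharpness of the range of $\alpha$ for fixed $q$ follows from a Knapp-type example: testing $S^{\alpha}_L$ against spherical harmonic packets concentrated in a cap of radius $\sim L^{-1/2}$ on $S^n$ reproduces the standard Bochner-Riesz necessary condition. The main obstacle I expect is purely bookkeeping: verifying that the parametrix step, the stationary-phase reduction in $t$, and the antipodal decomposition produce operators whose amplitudes exactly match the hypotheses of Corollary \ref{230129coro1_8} uniformly in $L$. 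However, this bookkeeping is essentially parallel to the Stein--Tomas-range arguments already carried out in \cite{MR835795, MR3275105}, and I expect no new analytic difficulty beyond Corollary \ref{230129coro1_8} itself.
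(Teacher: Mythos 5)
Your proposal is correct and takes essentially the same route as the paper: the paper simply cites Sogge \cite{MR835795} and Huang--Sogge \cite{MR3275105} as the reduction of the Riesz means estimate to the $N\,\mathrm{dist}(x,y)$ oscillatory integral bound (Corollary \ref{230129coro1_8}), and your proposal spells out the steps of that reduction (half-wave representation, Hadamard parametrix, short/long time split, antipodal decomposition, Knapp for sharpness) rather than adding a new argument.
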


\medskip

At the end of the introducion, we discuss the proof of Theorem \ref{220130thm1.2}. To prove Theorem \ref{220130thm1.2}, we first prove a strong Polynomial Wolff Axiom (SPWA) for the phase function $\phi$ satisfying Bourgain's condition, see Section \ref{220704section4}. It is a nested version of the Polynomial Wolff Axiom and generalizes the SPWA by Hickman and Zahl \cite{hickman2020note}, which was built on  the work of Hickman-Rogers-Zhang \cite{HRZ} and Zahl \cite{MR4205111}. One can combine this strong polynomial Wolff axiom with the argument in \cite{hickman2020note}, and prove \eqref{211003e1.4} for $q$ satisfying \eqref{220926e1_19}.

 To improve the range in \cite{hickman2020note}, we further develop the idea of ``brooms'' in dimension $n=3$ in Wang \cite{wang2018restriction} for the Fourier restriction problem. In \cite{wang2018restriction}, the second author introduced a notion of \emph{brooms}. They enable one to exploit the feature that if the sum of a collection of wave packets is highly concentrated locally in space, then this collection must spread out on the far end, leading to new improvements on the range of exponent for the Fourier restriction conjecture in $\R^3$.

However, a key  geometric argument in \cite{wang2018restriction} regarding brooms relies heavily on the space being three-dimensional. Even in the Fourier restriction setting, it was not clear how one can most efficiently generalize the notion and construction of brooms in  \cite{wang2018restriction} to higher dimensions. In the current paper, we come up with a slightly different notion of brooms, which works in all dimensions and also in the setting of oscillatory integral operators satisfying H\"ormander's conditions. This is done in Subsection \ref{220926sub7_1}. When proving the relevant broom estimates (see Theorem \ref{220615thm8_7} in Subsection \ref{220926sub7_3}), we use an argument that can be viewed as a generalized pseudo-conformal transformation (Lemma \ref{220608lemma8_9}). By this transformation and a counting lemma (Lemma \ref{lem: counting} below) the key broom estimate is then reduced to what we call a \emph{Variety Uncertainty Principle}. We  state it in the Fourier transform case below as we will use it to prove the general case. 
\begin{lemma}[Variety Uncertainty Principle]\label{220614lemma8_8}
Given two $(m-1)$-dimensional algebraic varieties $Y_1, Y_2$ in $\R^{n-1}$ that are transverse complete intersections (see Definition \ref{221010defi5_1} below). Let $Z_i\subset Y_i$ be the part of $Y_i$ where every point is non-singular and the angle formed by $T_{\bfz_i}(Z_i)$ and the space spanned by $\{\vector{e}_1, \dots, \vector{e}_{m-1}\}$ is $\le 1/(100n)$, for every $i=1, 2$ and every $\bfz_i\in Z_i$. Here $T_{\bfz_i}(Z_i)$ refers to the tangent space and $\vec{e}_j$ refers to a coordinate vector. Let $1\le R_1\le R_2$. Denote\footnote{Here $\mc{N}$ means neighborhood in $\R^{n-1}$; in this lemma there is no $\R^n$. }
\begin{equation}
    \Omega_1=\mc{N}_{\sqrt{R_1}}(Z_1), \ \ \Omega_2=\mc{N}_{1/\sqrt{R_2}}(Z_2).
\end{equation}
Assume that $F: \R^{n-1}\to \C$ satisfies $\supp(F)\subset \Omega_2$. Then 
\begin{equation}
    \norm{\widehat{F}}_{L^2(\Omega_1)}^2 \lesim \pnorm{\frac{R_1}{R_2}}^{\frac{n-m}{2}-\delta} \norm{F}_{L^2}^2,
\end{equation}
for every $\delta>0$, where the implicit constant depends on $n, m$, $\deg(Z_1)$,  $\deg(Z_2)$ and $\delta$. 
\end{lemma}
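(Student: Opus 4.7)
The plan is to reduce to a model ``flat'' case—where $Z_1$ and $Z_2$ are both the coordinate plane $V := \R^{m-1} \times \{0\}^{n-m}$—via the tangent-alignment hypothesis and a partition of unity, and to prove the bound in that model by Plancherel combined with a Bernstein-type inequality.

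Split $\xi = (\xi', \xi'') \in \R^{m-1} \times \R^{n-m}$ and $x = (x', x'')$ analogously. In the flat model case, $\Omega_2 \subset \R^{m-1} \times B(0, 1/\sqrt{R_2})$ and $\Omega_1 \subset \R^{m-1}\times B(0, \sqrt{R_1})$. Introducing the partial Fourier transform $\widetilde F(x', \xi'') := \int F(\xi', \xi'') e^{-2\pi i x' \cdot \xi'}\, d\xi'$, one sees that for each fixed $x'$ the function $\widehat F(x', \cdot)$ is the $x''$-Fourier transform of $\widetilde F(x', \cdot)$, which is supported in the ball of radius $1/\sqrt{R_2}$. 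Cauchy--Schwarz on that ball gives $\|\widehat F(x', \cdot)\|_\infty \lesim R_2^{-(n-m)/4} \|\widetilde F(x', \cdot)\|_{L^2_{\xi''}}$, hence
\begin{equation*}
\int_{|x''|\le \sqrt{R_1}} |\widehat F(x', x'')|^2\, dx'' \;\lesim\; (R_1/R_2)^{(n-m)/2}\, \|\widetilde F(x', \cdot)\|_{L^2_{\xi''}}^2,
\end{equation*}
and integrating over $x'$ together with Plancherel in $\xi'$ yields $(R_1/R_2)^{(n-m)/2}\|F\|_2^2$ in the flat model (even without any $\delta$-loss). To pass to the curved case, I use the tangent-alignment and the bounded degree of $Y_i$ to partition $Z_1$ and $Z_2$ into a uniformly bounded number of graph patches $\{x''=\tilde g(x')\}$ and $\{\xi''=g(\xi')\}$ with $|\nabla \tilde g|, |\nabla g|\le 1/(100n)$, and analyze each pair of patches separately. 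Curvature of $Z_2$ alone comes essentially for free, via the substitution $\eta = \xi''-g(\xi')$: the Fourier integral acquires a unimodular twist $e^{-2\pi i x''\cdot g(\xi')}$, which disappears upon applying Plancherel in $\xi' \leftrightarrow x'$, leaving precisely the Bernstein computation of the flat case. Curvature of $Z_1$ alone is also benign, since its $\sqrt{R_1}$-neighborhood is contained in a straight slab of thickness $\sqrt{R_1} + O(1)$ and the $O(1)$ is absorbed into constants when $R_1 \ge 1$.

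The main obstacle is the \emph{simultaneous} curvature of both $Z_1$ and $Z_2$: after straightening $Z_1$ via $y'' = x'' - \tilde g(y')$, one needs to bound $\widehat F(y', y'' + \tilde g(y'))$, whose underlying phase $y'\cdot \xi' + y''\cdot g(\xi') + \tilde g(y')\cdot (g(\xi')+\eta)$ is a genuine oscillatory integral phase in which $y'$ couples to $\eta$ through the nonlinearity of $\tilde g$, so a single application of Plancherel in $\xi'\leftrightarrow y'$ cannot diagonalize the $Z_1$-twist and the $Z_2$-twist simultaneously. I plan to resolve this by a dyadic multiscale refinement: subdivide each $Z_1$-patch into sub-patches on which $\tilde g$ is essentially affine, absorb the affine part into a linear shift of the $x'$-variable (which preserves Plancherel), and treat the quadratic remainder perturbatively. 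The transverse complete intersection hypothesis is used to keep the polynomial partitioning quantitative, and the $O_{\deg, n}(1)$ count of patches together with the polylogarithmic number of dyadic scales yields exactly the $(R_1/R_2)^{-\delta}$ slack allowed by the statement.
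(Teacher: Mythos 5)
Your approach is genuinely different from the paper's, and your flat model computation and the identification of the cross-term obstruction are correct. But the proposed resolution has a gap that I do not see how to close. The claim that curvature of $Z_1$ alone is benign (because $\mc{N}_{\sqrt{R_1}}(Z_1)$ sits in a straight slab of thickness $\sqrt{R_1}+O(1)$) is false in the regime that actually arises in the application: after the rescaling used in the proof of Lemma~\ref{220608lemma8_9}, $Z_1$ has diameter that is \emph{large} compared to $\sqrt{R_1}$, so the deviation of $Z_1$ from any fixed $(m-1)$-plane dwarfs the slab thickness. Thus the patch decomposition of $Z_1$ is unavoidable. Once you decompose $\Omega_1$ into ``tilted boxes'' on which $\tilde g$ is approximately affine, the per-box Plancherel/Bernstein computation yields a bound of the form $(R_1/R_2)^{(n-m)/2}\|F\|_{L^2}^2$ --- with the \emph{global} $\|F\|_{L^2}^2$ on the right, because there is no natural localization of $F$ (which lives on $\Omega_2$) associated to a box of $\Omega_1$. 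Summing over the boxes therefore loses a factor of the box count. That count is not $O_{\deg,n}(1)$ as you assert: the tangent-alignment hypothesis bounds first derivatives of the implicitly-defined graph $\tilde g$, but not its second fundamental form, which can be large for a degree-$O(1)$ variety; and the box size needed for $\tilde g$ to be affine up to $O(\sqrt{R_1})$ also depends on $R_1$. Separately, the ``perturbative'' treatment of the quadratic remainder $Q$ has an unaddressed cross-term $Q(y')\cdot g(\xi')$ which need not be small (its size depends on the diameter of $Z_2$, not merely on $|\eta|\lesssim 1/\sqrt{R_2}$), and a Taylor expansion of the oscillatory factor in $\xi'$ is not available for merely $L^2$ data $F$.

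For contrast, the paper's proof decomposes the \emph{frequency} side: $F$ is split into pieces supported on boxes tiling $\Omega_2$ at a hierarchy of scales, so that $\sum_j\|F_j\|_2^2 = \|F\|_2^2$ by genuine $L^2$ orthogonality and there is no box-count loss. The gain per scale comes from the uncertainty principle plus a degree-based variety-in-box counting lemma (Claim~\ref{221010claim7_11}, a version of Guth's Lemma~5.7 in \cite{guth2018}, closely related to Wongkew's theorem): inside each anisotropic dual box of dimensions $K^{m-1}\times (K^2)^{n-m}$, the variety $Z_1$ can only meet $O_{\deg}(1)$ of the $K$-balls tiling that box. This is precisely the tool that controls the geometry of an algebraic variety in terms of degree alone, with no hypothesis on curvature, and it is what your proposal is missing. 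Iterating over $O(\log_K(R_2/R_1))$ scales accumulates a gain of $K^{-(n-m)}$ per step and yields the claimed bound up to the $\delta$-loss.
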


Lemma \ref{220614lemma8_8} may also be of independent interest as it can be viewed as a variant of the generalized Mizohata-Takeuchi Conjecture by Jonathan Bennett and Tony Carbery, as stated in (9) in \cite{bennett2022tomographic}. For the original Mizohata-Takeuchi Conjecture and its influences, see e.g. the references in \cite{bennett2022tomographic}. As one sees from the proof, Lemma \ref{220614lemma8_8} is proved by using the geometric information of neighborhoods of both the spatial set and the hypersurface in the Fourier space at many scales, and can be viewed as a result in the vein of Mizohata-Takeuchi but with much stronger assumptions about the neighborhoods of the underlying sets at many scales.\\ %for bounded degree hypersurfaces in the frequency space and the Lebesgue measure of a neighborhood of a hypersurface of bounded degree in the physical space.\\

\noindent {\bf Structure of the paper.} In Section \ref{220717section2} we first give an equivalent characterization of Bourgain's condition, which is more straightforward to check, and then prove Theorem \ref{main_thm_1}. In dimension $n=3$, the improvement of our result over Bourgain's \cite{MR1132294} comes from a slightly more efficient way of constructing (curved) tubes that have high overlapping. 

In Section \ref{220728section3}, we show that for phase functions $\phi(\bfx; \xi)$ satisfying Bourgain's condition, the corresponding tubes satisfy the polynomial Wolff axiom. We follow largely the argument of Katz and Rogers \cite{MR3881832}.

In Section \ref{220717section3}, we introduce the standard wave packet decomposition and standard reduction of Theorem \ref{220130thm1.2} to a broad norm estimate (Theorem \ref{201204thm5_1}).

In Section \ref{220717section5} we apply a polynomial partitioning algorithm to decompose the broad norm in Theorem \ref{201204thm5_1}. The algorithm is a slight variant of that in Hickman and Rogers \cite{HR2019}, with one difference that we need to have a better control of how fast cells shrink. 

In Section \ref{220704section4}, we prove the strong polynomial Wolff axiom (mentioned below Theorem \ref{220130thm1.2}) for phase functions satisfying Bourgain's condition.

In Section \ref{220706section6}, we define brooms and prove the broom estimate (Theorem \ref{220615thm8_7}), which is key to the proof of Theorem \ref{201204thm5_1}. It is worth noting that the broom estimate holds for all phase functions $\phi(\bfx; \xi)$ satisfying (H1) and $(\mathrm{H2}^+)$, and does not rely on Bourgain's condition. 

In Section \ref{220717section7}, we define bushes and prove bush estimates. They are used to handle ``small" grain resulting from the polynomial partitioning algorithm in Section \ref{220717section5}. 

In Section \ref{220706section8}, we put all the ingredients together and finish the proof of Theorem \ref{201204thm5_1}, the broad norm estimate. \\

\noindent {\bf Notation.} We use $\bfx=(x, t)$ to refer to a spatial points in $\R^n$, and $\xi$ or $\omega$ for a frequency point in $\R^{n-1}$. Denote $\partial_i=\partial_{x_i}$ if $1\le i\le n-1$ and $\partial_n=\partial_t$.

We use $d$ for the degree of polynomials that we will use in the polynomial partitioning lemmas. It is a large constant and is not allowed to depend on parameters like $R$ or $\lambda$.

We will use a few admissible parameters
\begin{equation}\label{admissable_parameters}
\epsilon^{C} \leqslant \delta \ll_{\epsilon} \delta_{n} \ll_{\epsilon} \delta_{n-1} \ll_{\epsilon} \cdots \ll_{\epsilon} \delta_{1}\ll_{\epsilon} \delta_0 \ll_{\epsilon} \epsilon_{\circ} \ll_{\epsilon} \epsilon .
\end{equation}
Here $C$ is some dimensional constant and the notation $A \ll_{\epsilon} B$ indicates that $A \leqslant C_{n, \epsilon}^{-1} B$ for some large admissible constant $C_{n, \epsilon} \geqslant 1$. These parameters have exactly the same meaning as their counterparts $\delta, \delta_n, \dots, \delta_0, \epsilon$ in Hickman-Rogers' work \cite{HR2019}. In the current paper, we need more of these parameters. For each $1\le n'\le n$, let $\delta_{n'-1/2}$ be such that 
\begin{equation}\label{220707e1_12}
    \delta_{n'}\ll_{\epsilon} \delta_{n'-1/2}\ll_{\epsilon} \delta_{n'-1}.
\end{equation}
These new parameters will be used when we modify the polynomial partitioning algorithm of \cite{HR2019}. 

For two positive constant $A, B$, by $A\lessapprox B$ we mean $A\le R^{O(\delta)} B$. 

For a function $F: \R^n\to \C$ and a region $\Omega\subset \R^n$, we say that $F$ is essentially supported on $\Omega$ if it decay rapidly outside $\Omega$. In other words, for $\bfx\in \R^n\setminus \Omega$, and every $N\in \N$, it holds 
\begin{equation}
    |F(\bfx)|\le C_{n, N} R^{-N},
\end{equation}
for some constant $C_{n, N}$. Here $R>1$ is as in Theorem \ref{201204thm5_1}.\\

\noindent {\bf Acknowledgement.} Guo is partly supported by  NSF DMS-1800274 and NSF DMS-2044828. Wang is partly supported by NSF DMS-2055544 and an AMS-Simons travel grant. Zhang is partly supported by NSF  DMS-2207281, NSF DMS-2143989 and the Sloan Research Fellowship. The authors would like to thank Larry Guth and Changkeun Oh for numerous inspiring discussions. They also thank Chris Sogge for explaining to them the connection of Theorem \ref{220130thm1.2} to the Bochner-Riesz problem on $S^n$.

\section{Bourgain's condition and proof of Theorem \ref{main_thm_1}}\label{220717section2}

\subsection{An equivalent formulation of Bourgain's condition}

In this subsection, we will provide an equivalent formulation of Bourgain's condition. This equivalent formulation will be used in a few places below; for instance, it will play a crucial role in Section \ref{220728section3} and Section \ref{220704section4} when proving polynomial Wolff axioms for H\"ormander's operators satisfying Bourgain's condition. \\

Define 
\begin{equation}\label{defnT}
\vector{T_j}(\bfx; \xi)=\partial_{\xi_j} \nabla_{\bfx} \phi(\bfx; \xi),
\end{equation}
and 
\begin{equation}\label{211017e2.2}
\vector{V}(\bfx; \xi)=\vector{T}_1(\bfx; \xi)\wedge\cdots \wedge \vector{T}_{n-1} (\bfx; \xi). 
\end{equation}
The main result in this subsection is 
\begin{theorem}\label{211016thm2.1}
For a phase function $\phi(\bfx; \xi)$ satisfying conditions (H1) and (H2), it satisfies Bourgain's condition at $(\bfx_0; \xi_0)$ if and only if 
\begin{equation}\label{211016e2.3}
((\vector{V}\cdot \nabla_{\bfx})^2 \nabla^2_{\xi} \phi)(\bfx_0; \xi_0) \text{ is a multiple of } (( \vector{V}\cdot \nabla_{\bfx}) \nabla^2_{\xi} \phi)(\bfx_0; \xi_0).
\end{equation}
The constant is allowed to depend on $\bfx_0$ and $\xi_0$.
\end{theorem}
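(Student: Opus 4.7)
By translating both $\bfx$ and $\xi$, we may assume $(\bfx_0;\xi_0) = (0;0)$. Pass to a normal form $\phi''(\bfx;\xi) = \inn{x}{\xi} + t\inn{A\xi}{\xi} + R$ at the origin, with $R = O(|t||\xi|^3 + |\bfx|^2|\xi|^2)$ and $A$ a non-degenerate symmetric matrix, as in \eqref{211003e1.7}. Both Bourgain's condition and \eqref{211016e2.3} depend only on the Taylor jet of $\phi$ at the base point, so the plan is to first verify directly that \eqref{211016e2.3} for $\phi''$ at the origin is equivalent to Bourgain's condition, and then argue that \eqref{211016e2.3} is invariant under the change of variables bringing $\phi$ to $\phi''$.

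\textbf{Direct computation in normal form.} For $\phi''$ as above, differentiation gives
\begin{equation*}
\vec{T}_j(\bfx;\xi) = \bigl(\vec{e}_j + O(|\bfx||\xi|),\ 2(A\xi)_j + O(|\xi|^2 + |\bfx||\xi|)\bigr),
\end{equation*}
so $\vec{T}_j(0;0) = (\vec{e}_j, 0)\in\R^n$ and, after fixing orientation, $\vec{V}(0;0) = \vec{e}_n$. Writing each component $V_i$ of $\vec{V}$ as a signed $(n-1)\times(n-1)$ minor of the matrix $[\vec{T}_1|\cdots|\vec{T}_{n-1}]$, inspection gives $V_n(\bfx;\xi) = 1 + O(|\bfx||\xi|)$ and $V_k(\bfx;\xi) = O(|\xi|)$ for $k<n$. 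In particular every first $\bfx$-partial $\partial_iV_j$ vanishes at the origin. Similarly, $\nabla_\xi^2\phi'' = 2tA + S$ with $S = O(|t||\xi| + |\bfx|^2)$ implies $\partial_{x_i}\nabla_\xi^2\phi''|_{0;0} = 0$ for $i < n$, $\partial_t\nabla_\xi^2\phi''|_{0;0} = 2A$, and $\partial_t^2\nabla_\xi^2\phi''|_{0;0} =: 2B$. Applying the Leibniz expansion
\begin{equation*}
(\vec{V}\cdot\nabla_\bfx)^2 f = \sum_{i,j}V_iV_j\,\partial_i\partial_jf + \sum_{i,j}V_i(\partial_iV_j)\,\partial_jf,
\end{equation*}
and evaluating at the origin, the first sum yields $\partial_t^2 f|_{0;0}$ (since $V_i(0;0) = \delta_{in}$) while the second vanishes (since all $\partial_iV_j|_{0;0} = 0$). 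Therefore
\begin{equation*}
(\vec{V}\cdot\nabla_\bfx)\nabla_\xi^2\phi''\big|_{0;0} = 2A, \qquad (\vec{V}\cdot\nabla_\bfx)^2\nabla_\xi^2\phi''\big|_{0;0} = 2B,
\end{equation*}
so \eqref{211016e2.3} in normal form at the origin is equivalent to $B$ being a scalar multiple of $A$, which is Bourgain's condition \eqref{Bourgaincond}.

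\textbf{Invariance and main obstacle.} It remains to check that \eqref{211016e2.3} is preserved under the changes of variable bringing $\phi$ to $\phi''$. For a linear $\bfx$-change $\bfx = M\tilde\bfx$, standard wedge-product identities give $\vec{T}_j\to M^T\vec{T}_j$ and $\vec{V}\to(\det M)M^{-1}\vec{V}$, and combined with $\nabla_\bfx\to M^{-T}\nabla_{\tilde\bfx}$ the operator $\vec{V}\cdot\nabla_\bfx$ is rescaled by the scalar $\det M$. A linear $\xi$-change $\xi = N\tilde\xi$ conjugates $\nabla_\xi^2\phi$ by $N^T$ and acts identically on both sides of \eqref{211016e2.3}. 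Hence both sides transform the same way, and the ``multiple of'' relation is preserved. The main technical obstacle is to extend this invariance to any nonlinear $\bfx$- or $\xi$-dependent corrections that appear in the reduction to normal form; the key feature to exploit is that any such correction only adds multiples of $\partial_t\nabla_\xi^2\phi''|_{0;0} = 2A$ to the matrices appearing in \eqref{211016e2.3}, and these are absorbed into the ``multiple of'' relation. A careful inductive Taylor analysis, tracking how each higher-order correction perturbs both $\vec{V}$ and $\nabla_\xi^2\phi$ at the origin, completes the proof.
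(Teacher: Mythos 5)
Your normal-form computation is correct and matches the paper's: at the origin of a normal form, $\vec{T}_j$ has no linear-in-$\bfx$ term, so $\vec{V}\cdot\nabla_\bfx$ acts as $\partial_t$ and $(\vec{V}\cdot\nabla_\bfx)^2$ acts as $\partial_t^2$, recovering Bourgain's condition. The argument for linear changes of variable in $\bfx$ and $\xi$ is also fine. The trouble is entirely in the ``invariance'' step for nonlinear changes of variables, which is where the real content of the theorem lies, and there your proposal has a genuine gap.

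First, a minor point: to reach the normal form one also needs nonlinear diffeomorphisms in $\bfx$ (e.g.\ to remove terms like $t^2\xi_i$), not only linear ones. For these, the linear scaling argument you give is not literally applicable; however it does extend, because the wedge identity you cite shows that the \emph{direction} of the vector field $\vec{V}$ is coordinate-invariant at every point (not just the origin), and together with the observation that \eqref{211016e2.3} is unchanged when $\vec{V}$ is multiplied by a nonvanishing scalar function, this handles arbitrary $\bfx$-diffeomorphisms. You should make this explicit.

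Second, and more seriously, the claim that under the nonlinear $\xi$-changes \eqref{changeofvarkeepinglinearterms} ``any such correction only adds multiples of $\partial_t\nabla_\xi^2\phi''|_{0;0} = 2A$'' is both unproven and incorrect. Under a substitution $\xi_j\mapsto\xi_j + \xi^TA_j\xi + \cdots$, the discrepancy in the left-hand side of \eqref{211016e2.3} works out to a linear combination $\sum_j c_j A_j$ of the \emph{arbitrary} symmetric matrices $A_j$ defining the change of variables; these have nothing to do with the Hessian $A$ of the phase, and there is no reason for $\sum_j c_j A_j$ to be a multiple of $A$. What actually has to happen — and what the theorem ultimately rests on — is that this discrepancy \emph{vanishes identically}. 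For the first-order operator the cancellation follows from the orthogonality of $\vec{V}_0$ to the columns $\vec{C}_j$ of $\nabla_\bfx\nabla_\xi\phi(0;0)$. For the second-order operator it requires expanding $\vec{V}$ to linear order in $\bfx$ (producing vectors $\vec{U}_k$ built from wedge products $\vec{C}_1\wedge\cdots\wedge\vec{E}_{j;k}\wedge\cdots\wedge\vec{C}_{n-1}$ involving the cubic Taylor coefficients $E_j$ of $\phi$) and then verifying the nontrivial determinantal identity $2\vec{E}_{j;k}\cdot\vec{V}_0 + \vec{U}_k\cdot\vec{C}_j = 0$, which comes from antisymmetry of the determinant. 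This is not an ``inductive Taylor analysis'' that one can defer to: it is the heart of the proof, and without it the theorem is not established.
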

\begin{corollary}\label{bourgain_defined}
Let $\phi(\bfx; \xi)$ be a phase function satisfying conditions (H1) and (H2). That Bourgain's condition holds at $(\bfx_0; \xi_0)$ is independent of the choice of normal forms. 
\end{corollary}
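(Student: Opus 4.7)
The plan is to deduce the corollary directly from Theorem \ref{211016thm2.1}. The key observation is that condition \eqref{211016e2.3} is phrased entirely in terms of derivatives of the original phase $\phi$ evaluated at the original point $(\bfx_0; \xi_0)$, together with the vector $\vector{V}(\bfx_0; \xi_0)$ defined in \eqref{211017e2.2}. In particular, \eqref{211016e2.3} makes no reference to any normal form and is manifestly intrinsic to the pair $(\phi, (\bfx_0; \xi_0))$.

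Given this, the argument is the following. Suppose $\phi''_1$ and $\phi''_2$ are two normal forms of $\phi'(\bfx;\xi) := \phi(\bfx_0 + \bfx;\xi_0 + \xi)$ at the origin. Apply Theorem \ref{211016thm2.1} twice, once with $\phi''_1$ and once with $\phi''_2$ in the role of the normal form used to define Bourgain's condition at $(\bfx_0; \xi_0)$. In each case, the condition \eqref{Bourgaincond} for $\phi''_i$ is equivalent to the same intrinsic condition \eqref{211016e2.3} for $\phi$ at $(\bfx_0; \xi_0)$. Transitivity then yields that \eqref{Bourgaincond} for $\phi''_1$ holds if and only if \eqref{Bourgaincond} for $\phi''_2$ does, which is exactly the assertion that Bourgain's condition is well-defined.

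The only point that requires care is to ensure that the proof of Theorem \ref{211016thm2.1} genuinely establishes the equivalence for an \emph{arbitrary} normal form, rather than only for some distinguished one produced by a specific reduction procedure. I expect this to be automatic: the natural way to prove Theorem \ref{211016thm2.1} is to start from any change of variables $(\bfx, \xi) \mapsto (\widetilde{\bfx}, \widetilde{\xi})$ reducing $\phi'$ to a form as in \eqref{211003e1.7}, track how the second derivatives $\nabla_{\xi}^2 \partial_t \phi''$ and $\nabla_{\xi}^2 \partial_t^2 \phi''$ at the origin pull back to combinations of derivatives of $\phi$ at $(\bfx_0; \xi_0)$, and identify the leading contribution with the directional derivative $\vector{V}\cdot\nabla_{\bfx}$. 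No specific choice of reduction is needed. Hence the corollary should be essentially a direct reading of the theorem, with the main intellectual content residing in Theorem \ref{211016thm2.1} itself.
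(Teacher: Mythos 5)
Your proposal is correct and matches the paper's implicit reasoning: the invariant reformulation \eqref{211016e2.3} is intrinsic to $(\phi,(\bfx_0;\xi_0))$, and the proof of Theorem \ref{211016thm2.1} explicitly establishes (i) that \eqref{211016e2.3} is preserved by arbitrary diffeomorphisms in $\bfx$ only or in $\xi$ only, and (ii) that for a phase already in normal form, \eqref{Bourgaincond} at the origin coincides with \eqref{211016e2.3}; applying this to any two normal forms and using transitivity gives the corollary. The concern you flag about arbitrary versus distinguished normal forms is indeed handled in the paper exactly as you predicted, via the invariance under all separate $\bfx$- and $\xi$-diffeomorphisms rather than by tracking a particular reduction.
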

Here by $\vector{V}\cdot \nabla_{\bfx}$, we mean
\begin{equation}
    V_1(\bfx; \xi)\partial_{1}+\dots +V_{n-1}(\bfx; \xi)\partial_{n-1}+ V_n(\bfx; \xi)\partial_{n},
\end{equation}
where $\vector{V}=(V_1, \dots, V_n)^T$ and for the sake of simplicity we introduced the notation $\partial_i=\partial_{x_i}$ if $i\le n-1$ and $\partial_n=\partial_t$. It is perhaps worthy emphasizing that because of the dependence of $\vector{V}$ on $\bfx$, the differential operator $(\vector{V}\cdot \nabla_{\bfx})^2$ is equal to 
\begin{equation}\label{211022e2.5}
    \sum_{1\le i, j\le n}V_i V_j\cdot \partial_i \partial_j+\sum_{1\le i\le n}V_i\sum_{1\le j\le n}\partial_i V_j\cdot  \partial_j.
\end{equation}

\begin{proof}[Proof of Theorem \ref{211016thm2.1}.]

We first observe that if \eqref{211016e2.3} is satisfied everywhere, then it is also satisfied everywhere if one replaces $V$ by $\lambda(\bfx; x) V$ where $\lambda(\bfx; x)$ is any smooth scalar function. This can be seen by the straightforward computation
\begin{equation}
    (\lambda\vector{V}\cdot \nabla_{\bfx})^2 f= \lambda^2 (\vector{V}\cdot \nabla_{\bfx})^2 f+ \lambda\cdot  (\vector{V}\cdot \nabla_{\bfx} \lambda)\vector{V}\cdot \nabla_{\bfx} f
\end{equation}
for every smooth function $f = f(\bfx; \xi)$.

Back to the proof of the Theorem. We first prove that if $\phi$ satisfies \eqref{211016e2.3} everywhere, then it also satisfies \eqref{211016e2.3} everywhere after any diffeomorphism in $\bfx$ only or in $\xi$ only. Without loss of generality, we only need to verify this at $({\bf0}, 0)$ and can assume the diffeomorphism always preserves the origin.

In order to show that \eqref{211016e2.3} continues to hold after any diffeomorphism in $\bfx$, in light of the above property, we just need to show the ``direction'' of $\vector{V}\cdot \nabla_{\bfx}$ is invariant. More specifically, if $h$ is a diffeomorphism that preserves the origin and write $h(\bfx) = \bfy$, we use $h_*$ to denote the tangent map of $h$ at the origin and only need to prove
\begin{equation}\label{xdiffinv}
    h_* (\vector{T}_1 \wedge \cdots \wedge \vector{T}_{n-1}) \parallel \vector{S}_1 \wedge \cdots \wedge \vector{S}_{n-1}
\end{equation}
where $\vector{T}_j$ is similarly the original $\vector{T}_j ({\bf0}; 0) =\nabla_{\bfx} \partial_{\xi_j} \phi({\bf0}; 0)$ as before and $\vector{S}_j=\nabla_{\bfy} \partial_{\xi_j} \phi({\bf0}; 0)$ is defined similarly in the tangent space of $({\bf0}; 0)$ in $\bfy$ coordinates.

Now everything can be computed in the tangent space of $({\bf0}; 0)$ in terms of $n$-variate functions $\partial_{\xi_j} \phi (\cdot; 0)$ and we now check \eqref{xdiffinv} using linear algebra. View all vectors $\vector{T}, \vector{S}$ as column vectors as before. Let $J$ denote the Jacobian $\frac{\partial \bfx}{\partial \bfy}|_{\bf0}$ with $J_{ij} = \frac{\partial x_i}{\partial y_j}|_{\bf0}$. Then
\begin{equation}
    \vector{S}_j = J^T \cdot \vector{T}_j, 1 \leq j \leq n-1.
\end{equation}
It is then easy to compute by considering the $(n-1)$-th tensor product of $J$ acting on the $(n-1)$-fold wedge algebra that
\begin{equation}\label{wedgeidentity}
    \wedge_{j=1}^{n-1} \vector{S}_j = \det(J)\cdot (J^{-1}) \cdot (\wedge_{j=1}^{n-1} \vector{T}_j).
\end{equation}
Finally, note that the matrix of $h_*$ is $\frac{\partial \bfy}{\partial \bfx}|_{\bf0} = J^{-1}$. By \eqref{wedgeidentity} we see both sides of \eqref{xdiffinv} is parallel to $(J^{-1}) \cdot (\wedge_{j=1}^{n-1} \vector{T}_j)$ and thus  \eqref{xdiffinv} holds.

Next we show that \eqref{211016e2.3} continues to hold after any diffeomorphism in $\xi$. First note that this property is preserved if we do any invertible diffeomorphism in $\xi$. Indeed, this will multiply a nonzero scalar (equal to the determinant of the linear change of variable in $\xi$) to the whole vector field $\vector{V}$, and will result in a constant congruent transformation in $\nabla_{\xi}^2 \phi$ everywhere.

Hence, it suffices to show that \eqref{211016e2.3} gets preserved if one does a change of variables of the following shape:
\begin{equation}\label{changeofvarkeepinglinearterms}
    \xi_j \mapsto \xi_j + \xi^T A_j \xi + \text{ higher order terms}
\end{equation}
where $A_j (1 \leq j\leq n-1)$ is a symmetric $(n-1)\times (n-1)$ matrix. Assume the Taylor expansion of $\phi$ near the origin is
\begin{align}\label{oldexpressionofV}
    \phi (\bfx; \xi) = & a (\bfx)  + b (\xi) + \sum_{i, j} c_{i,j} x_i \xi_j + \sum_{i}  x_i\cdot \xi^T D_i \xi +\sum_{j} \xi_j \cdot \bfx^T E_j \bfx \nonumber\\
    + &  \sum_{i_1, i_2 , j_1, j_2} f_{i_1, i_2, j_1, j_2} x_{i_1} x_{i_2} \xi_{j_1} \xi_{j_2} + g(\bfx; \xi)
\end{align}
where to simplify notation we wrote $t=x_n$, the functions $a, b$ are polynomials of degree $\leq 3$ and $g$ is a sum of terms of order $\geq 3$ in $\bfx$ and terms of order $\geq 3$ in $\xi$. These terms %along with the $d_{i, j_1, j_2}$ terms 
will not play roles in verifying \eqref{211016e2.3} at the origin. Here $D_i$ and $E_j$ refer to square matrices.

We use $\tilde{\phi}$ to denote the new expression of $\phi$ under the change of variable \eqref{changeofvarkeepinglinearterms}, and use $\widetilde{\vector{T}}_j$ and $\widetilde{\vector{V}}$ to denote the counterpart of $\vector{T}_j$ and $\vector{V}$ after the change of variable \eqref{changeofvarkeepinglinearterms}.

Note that $\tilde{\phi}$ has the Taylor expansion
\begin{align}\label{newexpressionofV}
    \tilde{\phi} (\bfx; \xi) = & \sum_{i, j} c_{i,j} x_i \xi_j + \sum_{i}  x_i \cdot \xi^T (D_i+ \sum_{j} c_{i, j} A_j) \xi +\sum_{j} \xi_j \cdot \bfx^T E_j \bfx \nonumber\\
    &+  \sum_{i_1, i_2 , j_1, j_2} f_{i_1, i_2, j_1, j_2} x_{i_1} x_{i_2} \xi_{j_1} \xi_{j_2} +\sum_{j} \xi^T A_j \xi \cdot \bfx^T E_j \bfx\\
    & + \text{ terms playing no role }\nonumber
\end{align}

Comparing \eqref{oldexpressionofV} and \eqref{newexpressionofV}, we see $\vector{V}$ and $\widetilde{\vector{V}}$ differ at the origin by terms of order at least $1$ in $\bfx$ or $\xi$. Denote $\vector{V}_{0} = (V_{0, 1}, \ldots, V_{0, n})$ to be the common value of $\vector{V}$ and $\widetilde{\vector{V}}$ at $({\bf0}, 0)$. %Denote $\widetilde{\vector{V}} = (\tilde{V}_1, \ldots, \tilde{V}_n)^T$
Now
\begin{align}\label{comparing3rdderivative}
& (\widetilde{\vector{V}}\cdot \nabla_{\bfx}) \nabla^2_{\xi} \tilde{\phi} ({\bf0}; 0)-(\vector{V}\cdot \nabla_{\bfx}) \nabla^2_{\xi} \phi({\bf0}; 0)\nonumber\\
= & \sum_i \sum_j V_{0, i} c_{ij} (2A_j)\nonumber=  0
\end{align}
where the last equality is because by definition, $\vector{V}_{0}$ is orthogonal to each $(c_{1j}, \ldots, c_{nj})^T$.

Next we compare $(\widetilde{\vector{V}}\cdot \nabla_{\bfx})^2 \nabla^2_{\xi} \tilde{\phi}$ and $(\vector{V}\cdot \nabla_{\bfx})^2 \nabla^2_{\xi} \phi$. We will show that they are also equal by using \eqref{oldexpressionofV} and \eqref{newexpressionofV} to compute their difference. We begin by showing that near the origin,
\begin{equation}\label{noxindifference}
    (\widetilde{\vector{V}}- \vector{V})(\bfx; \xi) = O(|\xi| + |\bfx|^2),
\end{equation}
which will greatly simplify our computation. Indeed, use the definition \eqref{211017e2.2} of $\widetilde{\vector{V}}$ and $\vector{V}$, we see that both vectors have the same constant terms. Moreover, observe that in the wedge definition \eqref{211017e2.2} of $\widetilde{\vector{V}}$ and $\vector{V}$, all $\vector{T}$ have the same linear term in $\bfx$, since the coefficients of all $x_{i_1} x_{i_2} \xi_j$ terms are the same for $\phi$ and $\tilde{\phi}$. Hence $\widetilde{\vector{V}}$ and $\vector{V}$ also have the same linear term in $\bfx$ and \eqref{noxindifference} is seen to hold.

By \eqref{noxindifference}, if we write
\begin{equation}\label{TaylorV}
    \vector{V} (\bfx; \xi) = \vec{V}_0 + \sum_{i=1}^n x_i \vec{U}_i + O(|\xi| + |\bfx|^2),
\end{equation}
we can reduce the effect of both $(\widetilde{\vector{V}}\cdot \nabla_{\bfx})^2$ and $(\vector{V}\cdot \nabla_{\bfx})^2$ at the origin to the action of a constant coefficient differential operator $\mathcal{D}_0 = (\vec{V}_0\cdot \nabla_{\bfx})^2 + \sum_j V_{0, j} (\vec{U}_j\cdot \nabla_{\bfx})$. Hence
\begin{align}\label{comparing4thderivative}
& (\widetilde{\vector{V}}\cdot \nabla_{\bfx})^2 \nabla^2_{\xi} \tilde{\phi}({\bf0}; 0)-(\vector{V}\cdot \nabla_{\bfx})^2 \nabla^2_{\xi} \phi({\bf0}; 0)\nonumber\\
= & \mathcal{D}_0(\tilde{\phi} - \phi) ({\bf0}; 0)=  \sum_j 4(\vector{V}_0^T E_j \vector{V}_0) A_j + 2\sum_j \vector{C}_j^T U \vector{V}_0 A_j
\end{align}
where the column vector $\vec{C}_j = (c_{1, j}, \ldots, c_{n, j})^T$ and the $n\times n$ matrix $U$ has the $k$-th column equal to $\vector{U}_k$ and $(\cdot)_j$ means the $j$-th component.

In order to show \eqref{comparing4thderivative} gives $0$, it suffices to show the stronger statement
\begin{equation}\label{cancelcond4th}
    2E_j \vector{V}_0 + U^T \vector{C}_j = 0, \forall 1 \leq j\leq n-1.
\end{equation}

We prove \eqref{cancelcond4th} entrywisely. Take an arbitrary $1 \leq k \leq n$, we need to prove
\begin{equation}\label{cancelcond4thentry}
    2\vector{E}_{j; k}\cdot \vector{V}_0 + \vector{U}_k \cdot  \vector{C}_j = 0
\end{equation}
where $\vector{E}_{j; k}$ is the $k$-th row (or column) of $E_j$. To show this we recall how one obtain $\vector{V}_0$ and $\vector{U}_k$. Recall from \eqref{211017e2.2} and \eqref{oldexpressionofV},
\begin{equation}
    \vector{V}_0 = \vector{C}_1 \wedge \vector{C}_2 \wedge \cdots \wedge \vector{C}_{n-1}.
\end{equation}

To compute $\vector{U}_k$, note that it is the coefficient of $x_k$ in \eqref{TaylorV}. Its computation boils down to expanding the $\vector{T}_j$ in \eqref{defnT} and  \eqref{211017e2.2} into the constant term, the $x_k$ term and higher terms. We see
\begin{equation}
\vector{U}_k = 2\vector{E}_{1; k} \wedge \vector{C}_2 \wedge \cdots \wedge \vector{C}_{n-1} + 2\vector{C}_1 \wedge \vector{E}_{2; k} \wedge \cdots \wedge  \vector{C}_{n-1} + \cdots + 2\vector{C}_1 \wedge \cdots \wedge  \vector{C}_{n-2} \wedge \vector{E}_{n-1; k}.
\end{equation}

Now there is only one nonzero term in the above expression, namely $2\vector{C}_1 \wedge \cdots \wedge \vector{E}_{j; k} \wedge \cdots \wedge \vector{C}_{n-1}$, that contributes to $\vector{U}_k\cdot \vector{C}_j$. Hence
\begin{equation}
    \vector{U}_k \cdot  \vector{C}_j = 2\det (\vector{C}_1, \ldots, \vector{C}_{j-1}, \vector{E}_{j; k}, \vector{C}_{j+1}, \ldots, \vector{C}_{n-1}, \vector{C}_j).
\end{equation}

But we also have
\begin{equation}
    2\vector{E}_{j; k} \cdot \vector{V}_0 = 2\det (\vector{C}_1, \ldots, \vector{C}_{n-1}, \vector{E}_{j; k}).
\end{equation}

Since the two add up to $0$, we see \eqref{cancelcond4thentry}, and thus \eqref{cancelcond4th} holds. This concludes the proof that both terms in \eqref{211016e2.3} at $({\bf0}; 0)$ remain the same under the change of variables \eqref{changeofvarkeepinglinearterms}, finishing the proof that the property \eqref{211016e2.3} is preserved under every diffeomorphism in $\bfx$ only or in $\xi$ only.

Now we just need to prove that if the phase function is in the normal form \eqref{211003e1.7}, Bourgain's condition \eqref{Bourgaincond} at the origin coincides with \eqref{211016e2.3}. Indeed, in the normal form, the expression of every $\vector{T_j}$ has no linear term in $\bfx$ and thus the action of $\vector{V}$ at the origin is the same as $\partial_t$ and  the action of $\vector{V}^2$ at the origin is the same as $\partial_t^2$, verifying the above claim. 
\end{proof}

The first part of the above proof immediately implies the following useful corollary.%Next, let us record one simple lemma that will be useful later. 

\begin{lemma}\label{211016rem2.2}
Let $\lambda(\bfx; \xi)$ be a smooth scalar function that does not take value zero. Then for a phase function $\phi(\bfx; \xi)$ satisfying conditions (H1) and (H2), it satisfies Bourgain's condition at $(\bfx_0; \xi_0)$ if and only if \eqref{211016e2.3} holds with $\vector{V}$ replaced by $\lambda\cdot \vector{V}$.
\end{lemma}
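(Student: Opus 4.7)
The plan is to deduce this directly from a product-rule computation, which is in fact the opening observation in the proof of Theorem \ref{211016thm2.1}. By that theorem, it suffices to verify that the matrix-proportionality condition
\begin{equation*}
((\vector{V}\cdot \nabla_{\bfx})^2 \nabla^2_{\xi} \phi)(\bfx_0; \xi_0) \text{ is a multiple of } ((\vector{V}\cdot \nabla_{\bfx}) \nabla^2_{\xi} \phi)(\bfx_0; \xi_0)
\end{equation*}
is invariant under replacing $\vector{V}$ by $\lambda \vector{V}$ when $\lambda$ is smooth and nowhere vanishing.

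The product rule yields the two identities
\begin{equation*}
(\lambda\vector{V}\cdot \nabla_{\bfx}) f = \lambda\, (\vector{V}\cdot \nabla_{\bfx}) f, \qquad
(\lambda\vector{V}\cdot \nabla_{\bfx})^2 f = \lambda^2 (\vector{V}\cdot \nabla_{\bfx})^2 f + \lambda\, (\vector{V}\cdot \nabla_{\bfx} \lambda)\, (\vector{V}\cdot \nabla_{\bfx}) f,
\end{equation*}
valid for every smooth $f$. I would apply these with $f = \nabla^2_{\xi}\phi$ and evaluate at $(\bfx_0; \xi_0)$. Writing $M_1 := ((\vector{V}\cdot \nabla_\bfx)^2 \nabla_\xi^2\phi)(\bfx_0; \xi_0)$ and $M_2 := ((\vector{V}\cdot \nabla_\bfx)\nabla_\xi^2\phi)(\bfx_0; \xi_0)$, and using tildes for the analogous matrices built from $\lambda\vector{V}$, the identities give
\begin{equation*}
\widetilde{M}_2 = \lambda(\bfx_0; \xi_0)\, M_2, \qquad \widetilde{M}_1 = \lambda(\bfx_0; \xi_0)^2\, M_1 + \lambda(\bfx_0; \xi_0)\, (\vector{V}\cdot \nabla_{\bfx}\lambda)(\bfx_0; \xi_0)\, M_2.
\end{equation*}

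Since $\lambda(\bfx_0; \xi_0)\neq 0$, the line $\spa(\widetilde{M}_2)$ coincides with $\spa(M_2)$. If $M_1 = c\, M_2$ for some scalar $c$, then substituting gives $\widetilde{M}_1 = (\lambda^2 c + \lambda(\vector{V}\cdot \nabla_\bfx\lambda))\, M_2 \in \spa(\widetilde{M}_2)$. Conversely, if $\widetilde{M}_1 = c'\, \widetilde{M}_2$, then $\lambda^2 M_1 = (c'\lambda - \lambda(\vector{V}\cdot \nabla_\bfx\lambda))\, M_2$, so dividing by the nonzero quantity $\lambda^2$ expresses $M_1$ as a multiple of $M_2$. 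This proves the equivalence.

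There is no real obstacle; the only step that uses the non-vanishing hypothesis on $\lambda$ is the division by $\lambda^2$ in the converse direction, and the entire argument is a consequence of a one-line product-rule identity combined with the characterization of Bourgain's condition already established in Theorem \ref{211016thm2.1}.
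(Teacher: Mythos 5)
Your proposal is correct and follows essentially the same route as the paper: the paper derives this lemma as an immediate corollary of the product-rule identity $(\lambda\vector{V}\cdot\nabla_{\bfx})^2 f = \lambda^2(\vector{V}\cdot\nabla_{\bfx})^2 f + \lambda(\vector{V}\cdot\nabla_{\bfx}\lambda)(\vector{V}\cdot\nabla_{\bfx})f$ stated at the very beginning of the proof of Theorem \ref{211016thm2.1}, combined with that theorem's characterization of Bourgain's condition. Your explicit two-directional verification that the spans of the relevant matrices coincide simply spells out the elementary linear algebra the paper leaves implicit.
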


\subsection{Proof of Theorem \ref{main_thm_1}}

Given a phase function $\phi(\bfx; \xi)$, we would like to show that \eqref{211003e1.4} may fail for some $q>\frac{2n}{n-1}$ and some $f\in L^{\infty}$. Let us turn to the dual form of it: 
\begin{equation}\label{211005e3.1}
\int \anorm{\int g(\bfx) e^{iN\phi(\bfx; \xi)}a(\bfx; \xi)d\bfx}d\xi\lesim N^{-n/q}\|g\|_{q'},
\end{equation}
for $q>\frac{2n}{n-1}$. Let $\delta\simeq N^{-1/2}$. Consider a $\delta$-net $\{\xi_{\alpha}\}$ in the $\xi$ variable. For each $\alpha$, we will introduce a curved tube $T_{\alpha}$, whose bottom is a disc of radius $\delta$ and length is about $\delta^{\lambda}$ with $\lambda$ to be determined. Let $\bfT$ denote the collection of tubes $\{T_{\alpha}\}$ and let $\#\bfT$ denote the number of tubes. Moreover, we will find a function $\bomega=(\Omega_1(\xi), \dots, \Omega_{n-1}(\xi))$ such that for every $\alpha$ and every $\bfx\in T_{\alpha}$, we have 
\begin{equation}\label{211005e3.2}
\anorm{\nabla_{\xi} \phi(\bfx; \xi_{\alpha})-\bomega(\xi_{\alpha})}\le \delta. 
\end{equation}
Afterwards, let us set 
\begin{equation}
g(\bfx)=g_{\epsilon}(\bfx)=\sum_{\alpha}\epsilon_{\alpha} e^{-iN\phi(\bfx; \xi_{\alpha})}\chi_{T_{\alpha}}(\bfx)
\end{equation}
where $\chi_{T_{\alpha}}$ is the indicator function of $T_{\alpha}$ and $\epsilon_{\alpha}$ takes $\pm 1$ randomly. If \eqref{211005e3.1} holds, then 
\begin{equation}\label{220609e3_33}
\begin{split}
& \int \max_{\alpha} \anorm{\int_{T_{\alpha}} e^{iN[\phi(\bfx; \xi)-\phi(\bfx; \xi_{\alpha})]} a(\bfx; \xi) d\bfx
} d\xi\\
& \le \int \pnorm{\sum_{\alpha} \anorm{\int_{T_{\alpha}} e^{iN[\phi(\bfx; \xi)-\phi(\bfx; \xi_{\alpha})]} a(\bfx; \xi) d\bfx
}^2}^{1/2} d\xi
\end{split}
\end{equation}
which, by Khintchine's inequality, is bounded by 
\begin{equation}\label{220609e3_34}
N^{-n/q}\pnorm{\int (\sum_{\alpha} \chi_{T_{\alpha}})^{q'/2}d\bfx }^{1/q'}.
\end{equation}
We apply H\"older's inequality, and obtain 
\begin{equation}\label{220609e3_35}
    \eqref{220609e3_34} \lesim N^{-n/q} \big|\bigcup_{\alpha}T_{\alpha}\big|^{\frac{1}{2}-\frac{1}{q}} (\sum |T_{\alpha}|)^{\frac{1}{2}}. 
\end{equation}
Recall the function $\bomega$. Write 
\begin{equation}
    \begin{split}
        \phi(\bfx; \xi)& =\phi(\bfx; \xi_{\alpha})+\inn{\nabla_{\xi}\phi(\bfx; \xi_{\alpha})}{\xi-\xi_{\alpha}}+O(\delta^2)\\
        & =\phi(\bfx; \xi_{\alpha})+\inn{\bomega(\xi_{\alpha})}{\xi-\xi_{\alpha}}+O(\delta^2), \ \bfx\in T_{\alpha}. 
    \end{split}
\end{equation}
Therefore the left hand side of \eqref{220609e3_33} is at least 
\begin{equation}\label{220609e3_37}
    \delta^{n-1} \sum |T_{\alpha}|. 
\end{equation}
We combine \eqref{220609e3_35} and \eqref{220609e3_37}, and obtain 
% \begin{equation}
%     (\sum |T_{\alpha}|)^{\frac{1}{2}}\lesim N^{-n/q} \big|\bigcup_{\alpha}T_{\alpha}\big|^{\frac{1}{2}-\frac{1}{q}} \delta^{-(n-1)}. 
% \end{equation}
\begin{equation}\label{211005e3.6}
\delta^{\lambda/2}\lesim N^{-n/q}\delta^{-(n-1)}\big|\bigcup_{\alpha}T_{\alpha}\big|^{1/2-1/q}.
\end{equation}
In the remaining part, we will construct $\{T_{\alpha}\}$ so that the union of these tubes is small. So far we have been following Bourgain's framework in \cite{MR1132294}. The improvement over Bourgain's result in dimension $n=3$ comes from the construction of the tubes $\{T_{\alpha}\}$. \\

Let us write our phase function in its normal form at the origin, that is, 
\begin{equation}\label{211015e4.7}
\phi(\bfx; \xi)=\inn{x}{\xi}+t\inn{A\xi}{\xi}+O(|t||\xi|^3+|\bfx|^2 |\xi|^2).
\end{equation}
The following lemma is the key for the construction of $\{T_{\alpha}\}$. 
\begin{lemma}\label{211005lem3.1}
If Bourgain's condition fails at the origin, then we can find $\bomega$ with $\bomega(0)=0$ such that the following holds: Let $X_t(\xi): \R^{n-1}\mapsto \R^{n-1}$ denote the unique solution to $\nabla_{\xi}\phi(x, t; \xi)=\bomega(\xi)$ in the $x$ variable, then 
\begin{equation}\label{211016e4.8}
\big|\det\nabla_{\xi} X_t\big|=O(|(t, \xi)|^{n}),
\end{equation}
for $t, \xi$ small. 
\end{lemma}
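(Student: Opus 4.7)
My plan is to construct $\bomega(\xi)$ explicitly, exploiting the equivalent formulation of Bourgain's condition provided by Theorem \ref{211016thm2.1}. In the normal form \eqref{211015e4.7}, I write
\[
\phi(\bfx;\xi) = \langle x,\xi\rangle + t\langle A\xi,\xi\rangle + t^2 \langle B\xi,\xi\rangle + \text{(remainder)},
\]
where the $t^2\langle B\xi,\xi\rangle$ term is admitted by the $O(|\bfx|^2|\xi|^2)$ piece of the remainder. Then $\partial_t\nabla_\xi^2\phi|_{0}=2A$ and $\partial_t^2\nabla_\xi^2\phi|_{0}=4B$, so by Theorem \ref{211016thm2.1} the failure of Bourgain's condition at the origin is exactly the statement $B \neq cA$ for every scalar $c$.

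The first step is to apply the implicit function theorem to $\nabla_\xi\phi(X_t(\xi),t;\xi)=\bomega(\xi)$. Since $\nabla_x\nabla_\xi\phi({\bf 0};0)=I_{n-1}$ in the normal form, differentiation in $\xi$ yields
\[
\nabla_\xi X_t(\xi) = (\nabla_x\nabla_\xi\phi)^{-1}(X_t,t;\xi)\,\bigl[\nabla_\xi\bomega(\xi) - \nabla_\xi^2\phi(X_t(\xi),t;\xi)\bigr].
\]
The prefactor is smooth and invertible near the origin, so the task reduces to choosing $\bomega$ so that $\det\bigl[\nabla_\xi\bomega(\xi) - \nabla_\xi^2\phi(X_t,t;\xi)\bigr] = O(|(t,\xi)|^n)$. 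Specializing to $\xi=0$, where $X_t(0)=0$, I get $\nabla_\xi X_t(0) = L - 2tA - 2t^2B + O(t^3)$ with $L:=\nabla_\xi\bomega(0)$, and I need $\det(L-2tA-2t^2B+O(t^3))=O(t^n)$.

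This is the algebraic heart of the argument. I would take $\bomega(\xi) = L\xi + Q(\xi,\xi) + \cdots$ and first fix $L$. Bourgain's three-dimensional example furnishes the template: $\bomega(\xi)=(0,-\xi_2)$ yields $\nabla_\xi X_t = \bigl(\begin{smallmatrix}-t^2 & -t \\ -t & -1\end{smallmatrix}\bigr)$ whose determinant vanishes identically, and the cancellation relies on $B$ being not parallel to $A$. In general dimension I would observe that when $B$ IS a multiple of $A$, the coefficient of $t^{n-1}$ in $\det(L-2tA-2t^2B)$, viewed as a polynomial in $t$, equals the $L$-independent constant $(-1)^{n-1}2^{n-1}\det A \neq 0$, obstructing any vanishing of order $n$. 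When Bourgain's condition fails, this rigidity is broken, and there is enough algebraic room to choose $L$ (of the appropriate low rank, with kernel and cokernel aligned with the eigenstructure of $A^{-1}B$, which is well-defined and not a scalar multiple of $I_{n-1}$) so that the first $n$ coefficients of $\det(L-2tA-2t^2B)$ vanish. The higher-order coefficients $Q(\xi,\xi),\ldots$ of $\bomega$ would then be chosen inductively to propagate the cancellation from $\xi=0$ to nearby $\xi$, matching the smooth $\xi$-dependence of $\nabla_\xi^2\phi(X_t,t;\xi)$ order by order.

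The main obstacle I anticipate is carrying out the explicit construction of $L$ in full generality. The vanishing of the first $n$ coefficients in the polynomial $\det(L-2tA-2t^2B)$ couples $L$, $A$, and $B$ through a nonlinear system of algebraic equations, and one must show directly that this system is solvable precisely because $B$ is not proportional to $A$. I plan to reduce the problem by first putting the pair $(A,B)$ into a simultaneous normal form via the eigenspace (or Jordan-block) decomposition of $A^{-1}B$, splitting off an invariant block where $A^{-1}B$ is non-scalar, and then constructing $L$ concentrated on that block. The extension from the slice $\xi=0$ to a full neighborhood, while not as subtle, will demand careful Taylor-series bookkeeping to control the contributions of the $O(|\bfx|^2|\xi|^2)$ and $O(|t||\xi|^3)$ error terms in $\phi$ and thus obtain the uniform bound $O(|(t,\xi)|^n)$.
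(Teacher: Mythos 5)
Your overall strategy coincides with the paper's: pass to a normal form, differentiate the defining relation to get $\nabla_\xi X_t$, specialize at $\xi=0$, and build $L := \nabla_\xi\bomega(0)$ out of the Jordan structure of $A^{-1}B$. Two adjustments before the main gap. (a) The rigidity observation is misstated: for $B=cA$ with $c\neq 0$ the coefficient of $t^{n-1}$ in $\det(L-2tA-2t^2B)$ does depend on $L$. The clean version substitutes $s=t+ct^2$, a local diffeomorphism near $0$; then $\det(L-2sA)$ is a polynomial in $s$ of degree $\le n-1$ whose $s^{n-1}$ coefficient $(-2)^{n-1}\det A$ is nonzero, so it cannot vanish to order $n$ in $s$ (equivalently in $t$). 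The moral is correct, but your stated form of the obstruction is not. (b) The ``Taylor-series bookkeeping'' to pass from the fiber $\xi=0$ to a neighborhood is handled in the paper not by inductively choosing higher coefficients of $\bomega$, but by a preliminary change of variables $-A\xi + P_1(\xi)\mapsto A\eta$, where $tP_1(\xi)$ (with $P_1$ of degree $\ge 2$) is the sub-leading $t$-linear part of $X_t(\xi)$. After this substitution the non-$\bomega$ part of $\nabla_\eta X'_t$ equals $tA+t^2B$ plus an error jointly of order $\ge 3$ in $(t,\eta)$, so a linear $\widetilde{\bomega}'$ suffices and no inductive corrections are needed.

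The genuine gap is the explicit $L$, which you flag but do not construct. What the paper does: normalize $A=I$ (using its nondegeneracy), put $B$ in Jordan form with $b_{i1}=b_{i2}=0$ for $i\ge 3$ (possible precisely because $B$ is not scalar), and split the top $2\times 2$ block of $B$ into three cases --- a nontrivial Jordan block $\left(\begin{smallmatrix}\gamma & 1\\ 0 & \gamma\end{smallmatrix}\right)$, a real splitting $\left(\begin{smallmatrix}\gamma' & 0\\ 0 & 0\end{smallmatrix}\right)$ with $\gamma'\neq 0$, or a complex pair $\left(\begin{smallmatrix}\gamma_1 & -\gamma_2\\ \gamma_2 & \gamma_1\end{smallmatrix}\right)$ with $\gamma_2\neq 0$. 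For each case, $\nabla_\eta\widetilde{\bomega}'$ is a \emph{constant} matrix of rank $\le 2$ supported on the top $2\times 2$ block, engineered so that two things hold for the purely algebraic matrix $\nabla_\eta\widetilde{\bomega}'+tI+t^2B$: the lower-left block (rows $\ge 3$, columns $\le 2$) vanishes identically, making its determinant factor as the top-left $2\times 2$ subdeterminant times the bottom-right $(n-3)\times(n-3)$ subdeterminant ($\simeq t^{n-3}$); and the top-left $2\times 2$ subdeterminant vanishes to order $t^3$ rather than the naive $t^2$. Together these give $O(t^n)$ for the algebraic part, and the errors of joint order $\ge 3$ are absorbed by a Leibniz expansion in which any permutation touching a lower-left entry gains order three at that factor. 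You should carry out the $2\times 2$ subdeterminant computation in each of the three cases; it is short but case-specific, and it is the place the failure of Bourgain's condition actually enters.
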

Let us assume Lemma \ref{211005lem3.1} and finish the proof of Theorem \ref{main_thm_1}. As a consequence of this lemma, if we define 
\begin{equation}
T_{\alpha}=\{(x, t): |x-X_t(\xi_{\alpha})|\le \delta, 0\le t\le \delta^{\lambda}\},
\end{equation}
then \eqref{211005e3.2} holds by mean value theorems. Here the value that $\lambda>0$ takes is not relevant, that is, $\lambda$ can even be very close to zero. Next, pick $\lambda=1/(n+1)$. Lemma \ref{211005lem3.1} then says that the union of the tubes is small: 
\begin{claim}\label{220130claim2.4}
\begin{equation}
\anorm{\bigcup_{\alpha: |\xi_{\alpha}|\le \delta^{\lambda}} T_{\alpha}}\lesim_{\lambda} \delta^{2n\lambda}.
\end{equation}
\end{claim}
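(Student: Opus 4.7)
The plan is to rescale by $r := \delta^\lambda = \delta^{1/(n+1)}$, so that $\delta = r^{n+1}$ and the target reduces to $|\bigcup_\alpha T_\alpha| \lesssim r^{2n}$. The key input from Lemma \ref{211005lem3.1} is the Jacobian bound
\[
|\det \nabla_\xi X_t(\xi)| \lesssim |(t, \xi)|^n \lesssim r^n
\]
uniformly on $B(0, r) \times [0, r]$.

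The approach is to realize $\bigcup_\alpha T_\alpha$ as a $\delta$-thickening of the image of a smooth map and exploit the Jacobian estimate via the area formula. Setting $\Phi \colon B(0, r) \times [0, r] \to \R^n$, $\Phi(\xi, t) := (X_t(\xi), t)$, the $n \times n$ Jacobian determinant of $\Phi$ is exactly $\det \nabla_\xi X_t$, so the area formula gives $|\Phi(B(0, r) \times [0, r])| \lesssim r^n \cdot r^{n-1} \cdot r = r^{2n}$. Since each tube $T_\alpha$ is the $\delta$-neighborhood in $x$ of the curve $t \mapsto (X_t(\xi_\alpha), t)$, we have $\bigcup_\alpha T_\alpha \subset \mathcal{N}^{(x)}_{C\delta}(\Phi(B(0, r) \times [0, r]))$. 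I would then slice by $t$ via Cavalieri; for each $t$, the area formula applied to $X_t$ alone gives $|X_t(B(0, r))| \lesssim r^{2n-1}$, and a Steiner-type inequality bounds the $\delta$-thickening by
\[
|\mathcal{N}_{C\delta}(X_t(B(0, r)))| \lesssim |X_t(B(0, r))| + \delta \cdot \mathcal{H}^{n-2}(\partial X_t(B(0, r))) + \cdots + \delta^{n-1}.
\]
The boundary is the Lipschitz image of $\partial B(0, r)$ together with the fold set of $X_t$ (a subvariety of codimension one), so its $(n-2)$-Hausdorff measure is $\lesssim r^{n-2}$. The relation $\delta = r^{n+1}$ precisely balances the two leading terms, since $\delta \cdot r^{n-2} = r^{2n-1}$, and all remaining powers are smaller. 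Integrating in $t \in [0, r]$ then gives $|\bigcup_\alpha T_\alpha| \lesssim r \cdot r^{2n-1} = r^{2n}$.

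The hard part will be rigorously justifying the Steiner-type inequality for the (possibly self-intersecting, non-convex) image $X_t(B(0, r))$. As a backup, I would use a dual multiplicity argument: Fubini applied to the thickened parametrization $\tilde\Phi(\xi, t, y) := (X_t(\xi) + y, t)$ on $B(0, r) \times [0, r] \times B(0, C\delta)$ gives
\[
\int_{\bigcup_\alpha T_\alpha} \bigl| X_t^{-1}(B(x, C\delta)) \cap B(0, r) \bigr| \, dx \, dt \; \sim \; r^n \delta^{n-1},
\]
and the claim reduces to the pointwise lower bound $|X_t^{-1}(B(x, C\delta)) \cap B(0, r)| \gtrsim \delta^{n-1}/r^n$ on a set of measure comparable to $|\bigcup_\alpha T_\alpha|$. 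At a regular preimage $\xi_0$ of $x$, the inverse function theorem realizes the preimage as an ellipsoid of volume $\delta^{n-1}/|\det \nabla_\xi X_t(\xi_0)| \gtrsim \delta^{n-1}/r^n$, with smallest semi-axis $\ge \delta$ since $\|\nabla_\xi X_t\|_{\mathrm{op}} \lesssim 1$. When this ellipsoid extends past $B(0, r)$, the clipped portion still has volume $\gtrsim r \cdot \delta^{n-2} = \delta^{n-1}/r^n$ thanks to $\delta = r^{n+1}$. Tubes whose base $\xi_\alpha$ lies within distance $\delta$ of the critical set of $X_t$ form an exceptional contribution of measure $\lesssim r^{n-1} \cdot \delta = r^{2n}$, absorbable into the final bound.
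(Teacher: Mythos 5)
Your overall strategy is the same as the paper's at a high level: bound the volume of the image of the map $X : (\xi,t) \mapsto (X_t(\xi), t)$ using the Jacobian estimate from Lemma \ref{211005lem3.1}, and separately control the contribution of the $\delta$-thickening by showing that the relevant lower-dimensional ``boundary'' pieces are small. However, there is a real gap in both of your arguments (the primary one and the backup), and it is the same gap: you never reduce to the case where $X_t$ is a \emph{polynomial} map of bounded degree, and without that reduction the geometric estimates you need are simply false in the smooth category.

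Concretely, in your primary argument you assert that the boundary of $X_t(B(0,r))$ is contained in the Lipschitz image of $\partial B(0,r)$ together with the fold set, ``a subvariety of codimension one,'' and conclude $\mathcal{H}^{n-2}(\partial X_t(B(0,r))) \lesssim r^{n-2}$. For a general smooth map, the critical set $\{\xi : \det \nabla_\xi X_t(\xi) = 0\}$ is merely a closed set; it need not be rectifiable of codimension one, and even when it is a smooth hypersurface, nothing controls its $(n-2)$-dimensional measure (or the measure of its image) without a degree bound. Your backup argument has the same weakness hiding in the last sentence: you need the set of $\xi$ within distance $\delta$ of the critical set to have measure $\lesssim r^{n-2}\delta$, which is precisely a bound on the $\delta$-neighborhood of the critical set and again requires the critical set to be a bounded-degree variety. (The paper gets this from the quantitative Tarski--Seidenberg theorem plus Corollary 5.7 of \cite{MR2041428}, which is a semialgebraic covering result in the spirit of Wongkew's theorem.) The backup argument additionally leans on a version of the inverse function theorem at scale $\delta$ that is delicate precisely near degeneracy: where $\det \nabla_\xi X_t$ is small, the ``ellipsoid'' description of the preimage can fail because the linearization error $\|\nabla^2 X_t\| \cdot |\xi-\xi_0|^2$ need not be $\ll \delta$ over the long axis, and you have not justified that the set where this fails is exceptional without, once more, algebraic structure.

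The fix is exactly what the paper does before anything else: replace $\phi$ and $\bomega$ by their degree-$M$ Taylor polynomials with $M > 1/\lambda$, check that this only shifts each core curve by $O(\delta^{(M+1)\lambda}) = o(\delta)$ so that the union of tubes is unchanged up to a bounded thickening, and then argue about a polynomial map. Once $X$ is semialgebraic of complexity $O_\lambda(1)$, both the singular set and $X(\partial B)$ are semialgebraic of dimension $<n$ and bounded complexity, and the covering result gives the needed $\delta$-neighborhood bound. With that reduction in place, your decomposition essentially matches the paper's $\mathcal{N}_{2\delta}(X(B)) \subseteq X(B) \cup \mathcal{N}_{2\delta}(X(\mathrm{Sing}(X;B))) \cup \mathcal{N}_{2\delta}(X(\partial B))$, and the rest of your first argument goes through; the backup ``dual multiplicity'' argument would also be salvageable but is heavier machinery for the same conclusion. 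So the missing idea is specifically the Taylor-approximation reduction to semialgebraic maps; without it, the boundary and exceptional-set estimates you invoke do not hold.
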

Let us first accept the above claim. For $|\xi_{\alpha}|\ge \delta^{\lambda}$, we will construct tubes in the same way. Therefore 
\begin{equation}
\anorm{\bigcup_{\alpha} T_{\alpha}}\lesim \delta^{2n\lambda} \delta^{-(n-1)\lambda}.
\end{equation}
Substituting this into \eqref{211005e3.6} will give us 
\begin{equation}
q\ge \frac{2(2n^2+n-1)}{2n^2-n-2}>\frac{2n}{n-1}.
\end{equation}
This finishes the proof of the lemma, modulo the proof of Lemma \ref{211005lem3.1} and the proof of Claim \ref{220130claim2.4}. \\

\begin{proof}[Proof of Lemma \ref{211005lem3.1}.] Let us start with \eqref{211015e4.7}. Write 
\begin{equation}
\phi(\bfx; \xi)=\inn{x}{\xi}+t\inn{A\xi}{\xi}+t^2Q_2(\xi)+\phi_4(\bfx; \xi),
\end{equation}
where $Q_2(\xi)$ is a quadratic form in $\xi$. The assumption that Bourgain's condition fails at the origin is then equivalent to saying that 
\begin{equation}
\hessian(Q_2) \text{ is not a multiple of } A. 
\end{equation}
Let $\bomega=(\Omega_1(\xi), \dots, \Omega_{n-1}(\xi))$ be smooth with $\bomega(0)=0$. We need to solve 
\begin{equation}\label{211015e4.15}
x+tA\xi+t^2\nabla_{\xi} Q_2(\xi)+\nabla_{\xi}\phi_4(\bfx; \xi)=\bomega(\xi).
\end{equation}
It is not difficult to see that when $\bfx$ and $\xi$ are small, the solution is unique. 
We solve \eqref{211015e4.15} iteratively and write the solution as \begin{equation}\label{211015e4.16}
\begin{split}
X_t(\xi)& =-tA\xi-t^2B\xi+tP_1(\xi)+t^2P_2(\xi)\\
& +\sum_{j=3}^n t^j P_j(\xi)+\widetilde{\bomega}(\xi)+ O(|(t, \xi)|^{n+1}),
\end{split}
\end{equation}
where $B$ is a $(n-1)\times (n-1)$ matrix that is not a constant multiple of $A$, $B\xi=\nabla_{\xi} Q_2(\xi)$, $\widetilde{\bomega}$ depends on $\bomega$, $P_i(\xi)$ is a polynomial of degree $n-i$ with lowest order term of degree 2 for $i=1, 2$, and $P_j(\xi)$ is a polynomial of degree $n-i$ with lowest order term of degree 1 for $j\ge 3$. Here $P_i (1\le i\le n)$ depends on $\bomega$, but it is important that the matrix $B$ does not depend on $\bomega$.  Before computing $\nabla_{\xi} X_t$, let us do the change of variables 
\begin{equation}\label{211015e4.17}
-A\xi+P_1(\xi)\mapsto A\eta.
\end{equation}
Write the right hand side of \eqref{211015e4.16} in the $\eta$ variable: 
\begin{equation}
tA\eta+t^2 B\eta+t^2P_2'(\eta)+\sum_{j=3}^n t^jP'_j(\eta)+\widetilde{\bomega}'(\eta)+ O(|(t, \eta)|^{n+1})=:X'_t(\eta),
\end{equation}
where $P'_2(\eta)$ is a polynomial of degree $n-2$ with lowest order term of degree 2, and $P'_j(\eta)$ is a polynomial of degree $n-i$ with lowest order term of degree 1 for $j\ge 3$. As the change of variables in \eqref{211015e4.16} is non-degenerate, in order to guarantee \eqref{211016e4.8}, we just need to show that 
\begin{equation}\label{211016e4.19}
\big|\det\nabla_{\eta} X'_t\big|=O(|(t, \eta)|^{n}).
\end{equation}
When computing \eqref{211016e4.19}, we will see more clearly why it is convenient to do the change of variables in \eqref{211015e4.17}. Write 
\begin{equation}
X'_t(\eta)=tA\eta+t^2 B\eta+\widetilde{\bomega}'(\eta)+\phi'_4(t, \eta).
\end{equation}
Note that the lowest order term in $\phi'_4$, jointly in $t$ and $\eta$ variables, is four. Compute 
\begin{equation}
\nabla_{\eta} X'_t=tA+t^2B+\nabla_{\eta} \widetilde{\bomega}'(\eta)+\nabla_{\eta}\phi'_4(t, \eta).
\end{equation}
Next we will compute the determinant. As $A$ is non-degenerate, when computing the determinant, we can without loss of generality assume that $A$ is the identity matrix. Moreover, by using Jordan normal forms for $B$, and using the fact that $B$ is not a multiple of $A$, we can therefore without loss of generality assume that $B$ is of the form $[b_{ij}]_{1\le i, j\le n-1}$ with $b_{i1}=b_{i2}=0$ for every $i\ge 3$, and the leading principle minor of order $2$ is one of the following forms 
\begin{equation}\label{220609e3_54}
    \begin{bmatrix}
    \gamma, & 1\\
    0, & \gamma
    \end{bmatrix}
    \text{ or }
    \begin{bmatrix}
    \gamma', & 0\\
    0, & 0
    \end{bmatrix}
    \text{ or } 
    \begin{bmatrix}
    \gamma_1, & -\gamma_2\\
    \gamma_2, & \gamma_1
    \end{bmatrix}
\end{equation}
where $\gamma, \gamma', \gamma_1, \gamma_2\in \R$ and $\gamma'\neq 0, \gamma_2\neq 0$. 
Write
\begin{equation}
\nabla_{\eta} X'_t(\eta)=\nabla_{\eta}\widetilde{\bomega}'(\eta)+
    \begin{bmatrix}
*, & *, & 1_{t, y}, & 1_{t, y}, & \dots\\
*, & *, & 1_{t, y}, & 1_{t, y}, & \dots\\
3_{t, y}, & 3_{t, y}, & 1_{t, y}, & 1_{t, y}, & \dots\\
3_{t, y}, & 3_{t, y}, & 1_{t, y}, & 1_{t, y}, & \dots\\
\dots, & \dots, & \dots, & \dots, & \dots
\end{bmatrix}
\end{equation}
where $i_{t, y}$ means that the lowest order in $t, y$ is $i$, for $i=1, 3$. If we are in the first case in \eqref{220609e3_54}, then 
we pick $\widetilde{\bomega}'$ such that 
\begin{equation}
\nabla_{\eta}\widetilde{\bomega}'=
 \begin{bmatrix}
0, & 0, & 0, & \dots\\
1, & 0, & 0, &   \dots\\
0, & 0,   & 0, & \dots\\
\dots, & \dots, & \dots,  & \dots
\end{bmatrix}
\end{equation}
If we are in the second case, then we pick 
$\widetilde{\bomega}'$ such that 
\begin{equation}
\nabla_{\eta}\widetilde{\bomega}'=\frac{1}{\gamma'}
 \begin{bmatrix}
1, & 1, & 0, & \dots\\
-1, & -1, & 0, &   \dots\\
0, & 0,   & 0, & \dots\\
\dots, & \dots, & \dots,  & \dots
\end{bmatrix}
\end{equation}
The last case in \eqref{220609e3_54} can be handled in the same way. In the end, to find $\bomega$ from $\widetilde{\bomega}'$, we just need to revert the change of variables in \eqref{211015e4.17}. 
\end{proof}

\begin{proof}[Proof of Claim \ref{220130claim2.4}]
Let us start by sketching the ideas in the proof. To begin with, we replace the set $\bigcup_{\alpha: |\xi_{\alpha}|\le \delta^{\lambda}} T_{\alpha}$ by a larger set $\mc{N}_{2\delta} (X (B))$, where $X$ is the map  $(\xi, t) \mapsto (X_t (\xi), t)$, %locally differentiable around the origin
$B$ is a ball of radius $O(\delta^{\lambda})$ around the origin and $\mc{N}_{2\delta}$ refers to the $2\delta$ neighbourhood. Now intuitively the volume of $\mc{N}_{2\delta} (X (B))$ depends on the volume of $X (B)$ and the ``surface area'' of $X (B)$. We will control the volume of $X (B)$ by Lemma \ref{211005lem3.1}. For its    ``surface volume'', we first observe that if $\phi$ and $\bomega$ are polynomials, $\partial X (B)$ is contained in a nice semialgebraic set of dimension $<n$ and hence has a controlled ``surface volume'' by tools in real algebraic geometry. Finally, the general situation can be reduced to the above polynomial situation by a Taylor series approximation. To establish the semialgebracity above, we will use quantifier elimination based on the Tarski-Seidenberg theorem. For a recent application of quantifier elimination in Kakeya and restriction that also helped us to motivate the present proof, see \cite{MR3881832}. The tools we need from real algebraic geometry can be found in references \cite{MR2248869, MR2041428}.\\

We now present the proof details. First we claim that without loss of generality, one may assume $\phi$ and all components of $\bomega$ are polynomials of degree $O_{\lambda}(1)$. To see this, let $M\geq 5$ be a large positive integer to be determined later and replace $\phi$ and $\bomega$ by their degree $M$ Taylor approximations. Since $M\geq 5$, $\phi$ will stay as a legitimate phase function and Bourgain's condition continues to fail at the origin. Moreover, whenever $|t|, |\xi_{\alpha}| \leq \delta^{\lambda}$, the change of $\bomega (\xi_{\alpha})$ is $O(\delta^{(M+1)\lambda})$. Thus for these $t$ and $\xi_{\alpha}$, the distance between the new $X_t(\xi_{\alpha})$ from the old one is $O(\delta^{(M+1)\lambda})$ by the nondegeneracy of $\phi$. Since we only care about the volume of the union of $\delta$-neighborhoods, it suffices to choose $M > \frac{1}{\lambda}$ so that each old $T_{\alpha}$ is contained in the twice-thickening of the corresponding new $T_{\alpha}$. Now the old situation is reduced to the new situation where $\phi$ and all components of $\bomega$ are polynomials of degree $O_{\lambda}(1)$.\\

Take $B$ to be a ball of radius $O(\delta^{\lambda})$ centered at the origin in the $(\xi, t)$ space containing all $(\xi, t)$ with $|\xi|, |t| \leq \delta^{\lambda}$. Let $X$ denote the map $(\xi, t) \mapsto (X_t (\xi), t)$. $X$ is smooth near the origin by the implicit function theorem. By definition, $\bigcup_{\alpha: |\xi_{\alpha}|\le \delta^{\lambda}} T_{\alpha}$ is contained in $\mc{N}_{2\delta} (X (B))$. It suffices to prove
\begin{equation}\label{nbhdvolupperbd}
\anorm{\mc{N}_{2\delta} (X (B))}\lesim_{\lambda} \delta^{2n\lambda}.
\end{equation}
Let us understand the geometry of $X(B)$. For a point in $\partial X(B)$, either it is in $X(B)$ and hence in $X(\sing(X; B))$ where $\sing(X; B)$ is the singular set of $X$ inside $B$, or it is outside of $X(B)$ and hence by a compactness argument it is in $X(\partial B)$. Since
\begin{equation}
    \mc{N}_{2\delta} (X (B)) \in X (B) \bigcup \mc{N}_{2\delta} (\partial X (B)),
\end{equation}
we have
\begin{equation}\label{nbhdinthreesets}
\mc{N}_{2\delta} (X (B)) \subseteq X (B) \bigcup \mc{N}_{2\delta} (X(\sing(X; B))) \bigcup \mc{N}_{2\delta} (X(\partial B))
\end{equation} and will next bound the measures of all three sets on the right-hand side from above.\\

First we bound $\big|{X (B)}\big|$. By the definition of $X$ and Lemma \ref{211005lem3.1}, $\big|\det\nabla X\big| = \big|\det\nabla_{\xi} X_t\big|=O(|(t, \xi)|^{n}).$ Integrating on $B$ we get
\begin{equation}\label{upperbdofXB}
\anorm{{X (B)}} \lesssim |B|\sup_{(\xi, t) \in B}|(\xi, t)|^{n} \lesssim \delta^{2n\lambda}.
\end{equation}
Next we bound $\big|\mc{N}_{2\delta}( X(\sing(X; B)))\big|$. By the chain rule and the non-degeneracy of $\nabla_x\nabla_{\xi}\phi$ near $0$, we can rewrite
\begin{equation}
\begin{split}
    X(\sing(X; B)) =\{(x, t): \exists (\xi, t) \in B \text{ s.t. } \nabla_{\xi}\phi(x, t; \xi)=\bomega(\xi) \\
    \text{ and } \nabla^2_{\xi} \phi(x, t; \xi)  = \nabla_{\xi}\bomega(\xi)\}.
    \end{split}
    \end{equation}
We will analyze this set using tools in real algebraic geometry and first do some setup. \normalem We recall a subset of some $\R^N$ is \emph{semialgebraic} if it can be obtained by finitely many steps of taking unions, intersections, or complements from algebraic sets. The \emph{complexity} of a semialgebraic set is the smallest possible sum of the degrees of all polynomials appearing in a complete description of it. Section 2 of \cite{MR3881832} has a good introduction to basic properties of the above notions, as well as a quantitative quantifier elimination (or a quantitative Tarski-Seidenberg theorem) that we will use below, from analysts' viewpoint. A semialgebraic set in $\R^N$ has a \emph{dimension} that is a non-negative integer $\leq N$. See Chapter 5 of \cite{MR2248869} for its basic properties.

Note that the ball $B$ is a semialgebraic set of complexity $O(1)$. Moreover, $\phi$ and components of $\bomega$ are already polynomials of degree $O_{\lambda}(1)$. Hence by quantitative quantifier elimination (or the quantitative Tarski-Seidenberg theorem, see Theorem 14.16 of \cite{MR2248869}), $X(\sing(X; B))$  is a semialgebraic set of complexity $O_{\lambda}(1)$. By Sard's theorem, $X(\sing(X; B))$ has measure zero and thus has dimension $<n$ (by Proposition 5.53 of \cite{MR2248869}). Now by Corollary 5.7 of \cite{MR2041428}, $X(\sing(X; B))$ can be covered by $O_{\lambda}(1) \times (\delta^{\lambda -1})^{(n-1)}$ many $\delta$-balls. Hence
\begin{equation}\label{upperbdofNXSingB}
\anorm{\mc{N}_{2\delta} (X(\sing(X; B)))} \lesssim_{\lambda} \delta^{1+ (n-1)\lambda}.
\end{equation}
We remark that Corollary 5.7 of \cite{MR2041428} can be viewed qualitatively as a generalization of Wongkew's theorem \cite{Wongkew} to the semialgebraic setting.

Finally we bound $\big|\mc{N}_{2\delta} (X(\partial B))\big|$ via a similar application of real algebraic geometrical tools. First write $$X(\partial B) = \{(x, t): \exists (\xi, t) \in \partial B \text{ s.t. } \nabla_{\xi}\phi(x, t; \xi)=\bomega(\xi)\}$$ and we  see $X(\partial B)$ is a semialgebraic set of complexity $O_{\lambda} (1)$. Since $X$ is smooth on the dilation $2B$, $X(\partial B)$ has zero measure and thus has dimension  $<n$. Applying Corollary 5.7 of \cite{MR2041428} as before we get 
\begin{equation}\label{upperbdofNXpartialB}
\anorm{\mc{N}_{2\delta} (X(\partial B))} \lesssim_{\lambda} \delta^{1+ (n-1)\lambda}.
\end{equation}
Combining \eqref{nbhdinthreesets}, \eqref{upperbdofXB}, \eqref{upperbdofNXSingB} and \eqref{upperbdofNXpartialB} and noticing that our $\lambda = \frac{1}{n+1}$, we finish the proof of the claim.
\end{proof}

\section{Polynomial Wolff axiom: Proof of Theorem \ref{main_thm_2}}\label{220728section3}

In order to prove this theorem, we first state and prove a generalized version of Polynomial Wolff Axiom by Katz-Rogers \cite{MR3881832} as follows. We first introduce more notation. Let $n\ge 2$. Suppose the map
\begin{equation}
    \Phi: \R^{n-1} \times \R \times \R^{n-1}  \to  \R^{n-1}
\end{equation}
is smooth on a neighborhood of $[-1, 1]^{2n-1}$ with $\|\Phi\|_{C^k} \lesssim_k 1, \forall k \geq 1$.\footnote{Depending on the choice of $\epsilon$, we will only use the boundedness of finitely many derivatives of $\Phi$ in the proof.}. %and that
%\begin{equation}\label{Jacobianvlowerbdeqn}
%    |\det(\nabla_v \Phi)| \gtrsim 1, \forall (v, t, \xi) \in [-1, 1]^{2n-1}.
%\end{equation}

\normalem By a \emph{$\delta$-tube for cap $\theta$ with respect to $\Phi$}, we mean some
\begin{equation}\label{defnofTthetavphi}
T_{\xi_\theta, v, \Phi} (\delta, 1):=\{(x, t) \in \R^n: |x-\Phi (v, t, \xi_{\theta})|\le \delta, |t|\le 1\}
\end{equation}
where the $\delta$ in the name indicates the ``thickness'' and the $1$ in the name indicates the time span of the tube. For a collection $\T$ of tubes $\{T_{\xi_\theta, v, \Phi} (\delta, 1)\}$, we say that the tubes in $\T$ point in different directions if all the underlying $\theta$ for them are distinct.

\begin{theorem}[Generalized Polynomial Wolff Axiom]\label{GPWAthm}

Suppose that for every choice of $v \in [-1, 1]^{n-1}$ and $\xi \in [-1, 1]^{n-1}$, \begin{equation}\label{averagelargenessofJacobianeq}
\int_{-\delta^{\epsilon}}^{\delta^{\epsilon}} |\det (\nabla_{v} \Phi(v, t, \xi)\cdot M+\nabla_{\xi} \Phi(v, t, \xi))|\mathrm{d} t \gtrsim_{\epsilon} \delta^{C \epsilon}, \forall M \in \mathrm{Mat}_{(n-1)\times (n-1)} (\R)
\end{equation}
for some constant $C$ that depends only on the dimension $n$, where $\mathrm{Mat}_{(n-1)\times (n-1)} (\R)$ stands for the set of $(n-1)\times (n-1)$ real matrices and the bound is uniform and independent of the choices of $v, \xi$ and $M$. Then for every collection $\T$ of $\delta$-tubes pointing in different directions, 
\begin{equation}\label{numberofdirectionseq}
\#\{T\in \T: T \subset S\}\le C(n, E, \epsilon)|S| \delta^{1-n-\epsilon}
\end{equation}
whenever $S \subset B^n$ is a semialgebraic set of complexity $\le E$.

Moreover the implied constant only depends on bounds of finitely many (depending on $n, E, \epsilon$) derivatives of $\Phi$.
\end{theorem}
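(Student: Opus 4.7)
The plan is to adapt the strategy of Katz and Rogers \cite{MR3881832} to the present curved setting, with condition \eqref{averagelargenessofJacobianeq} playing the role that the explicit algebraic identities of the paraboloid play there. The first step is a standard Taylor truncation: replacing $\Phi$ by its degree-$D$ Taylor polynomial near the origin introduces a perturbation of size $O(\delta^{D\epsilon})$ in each coordinate of $\Phi$, which is much smaller than $\delta$ once $D=D(n,\epsilon)$ is chosen large. The perturbed $\Phi$ defines essentially the same family of tubes after a trivial thickening, and condition \eqref{averagelargenessofJacobianeq} survives with the same exponent $C$ and only mildly worse implicit constant. So we may assume from now on that $\Phi$ is a polynomial of degree $O_{n,\epsilon}(1)$.

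With $\Phi$ polynomial, the admissible parameter set
\begin{equation*}
\Xi := \bigl\{ (\xi_{\theta}, v) \in [-1,1]^{2(n-1)} : T_{\xi_{\theta}, v, \Phi}(\delta, 1) \subset S \bigr\}
\end{equation*}
becomes semialgebraic of complexity $O_{n,E,\epsilon}(1)$ by the quantitative Tarski--Seidenberg theorem (Theorem 14.16 of \cite{MR2248869}), in the same spirit as the use of semialgebraic geometry carried out already in Section \ref{220717section2}. Since directions correspond to $\delta$-separated values of $\xi_\theta$, the left-hand side of \eqref{numberofdirectionseq} is controlled by a constant multiple of $\delta^{-(n-1)}$ times the $\delta$-covering volume of the projection $\pi_\xi(\Xi)\subset [-1,1]^{n-1}$, and the desired estimate reduces to the bound $|\pi_\xi(\Xi)|_\delta \lesssim |S|\,\delta^{-O(\epsilon)}$.

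The main geometric step uses condition \eqref{averagelargenessofJacobianeq} to convert tube-in-$S$ incidences into a volume bound on $\Xi$. For each fixed direction $\xi_0$, the fiber $\Xi|_{\xi=\xi_0}$ consists of $v$'s for which the curve $t\mapsto \Phi(v,t,\xi_0)$ stays in an $O(\delta)$-neighborhood of the $t$-slice $S_t$ for every $t\in [-1,1]$; applying \eqref{averagelargenessofJacobianeq} with $M=0$ says that the map $(v,t)\mapsto (\Phi(v,t,\xi_0),t)$ has, on average in $t$, nondegenerate Jacobian, which gives $|\Xi|_{\xi_0}| \lesssim \delta^{-O(\epsilon)} |S|$. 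To exploit direction-separation one then compares fibers at two nearby directions $\xi_0, \xi_1$: if a tube at $\xi_0$ is close to one at $\xi_1$, the first-order shearing between their $v$-parameters is encoded by a matrix $M$, and the general case of \eqref{averagelargenessofJacobianeq} quantitatively prevents the two trajectories from agreeing on a large fraction of $t$. Combining these fiberwise bounds with a Guth--Zahl-style polynomial partition of $\Xi$ and the semialgebraic Wongkew-type bound (Corollary 5.7 of \cite{MR2041428}) on the resulting cells yields \eqref{numberofdirectionseq}.

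The principal obstacle will be threading the uniformity in $M$ from \eqref{averagelargenessofJacobianeq} through the algebraic decomposition. In \cite{MR3881832} the paraboloid enjoys explicit bilinear identities that handle all potential direction comparisons simultaneously; here only the averaged nondegeneracy is available, so after each partitioning step one must verify that every remaining ``candidate shearing'' $M$ still satisfies an averaged Jacobian lower bound on the relevant subregions. Controlling the inductive blow-up of semialgebraic complexity while retaining a usable form of \eqref{averagelargenessofJacobianeq} on each piece will be the most delicate part of the argument.
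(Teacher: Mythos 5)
Your setup matches the paper's: Taylor-truncate $\Phi$ to a polynomial of degree $O_{n,\epsilon}(1)$, form the semialgebraic parameter set $\Xi$ (the paper's $L$) of pairs $(\xi,v)$ whose tube lies in $S$, note it has complexity $O_{n,E,\epsilon}(1)$ by quantitative Tarski--Seidenberg, and reduce to a volume bound. Up to that point you are essentially following the paper.

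The divergence, and the gap, is in how \eqref{averagelargenessofJacobianeq} actually enters. You try to invoke it fiberwise with $M=0$ and then via a ``compare two nearby directions'' shearing argument, finishing with a Guth--Zahl style polynomial partition and worrying about whether the averaged Jacobian bound survives the induction. None of that is needed, and the fiberwise $M=0$ claim is also slightly off: taking $M=0$ in \eqref{averagelargenessofJacobianeq} controls $\int |\det \nabla_\xi\Phi|\,dt$, whereas the Jacobian of the fixed-direction map $(v,t)\mapsto(\Phi(v,t,\xi_0),t)$ is $\det\nabla_v\Phi$, whose nondegeneracy instead comes for free from \eqref{nablavPhieq} (it does not need the hypothesis at all). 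What the Katz--Rogers argument actually does is apply Gromov's algebraic parametrization lemma (Lemma 2.3 of \cite{MR3881832}) to a semialgebraic section $L'\subset L$, producing low-degree polynomial maps $F,G:[0,\delta^{\epsilon_1}]^{n-1}\to\R^{n-1}$ with $\|F\|_{C^1},\|G\|_{C^1}\le 1$ such that $(F(x),G(x))\in L$ for all $x$ and $|G([0,\delta^{\epsilon_1}]^{n-1})|\gtrsim\delta^{(n-1)+C\epsilon_1}N$. Differentiating the tube map $x\mapsto(\Phi_1(F(x),t,G(x)),t)$ in $x$, its Jacobian factors as
\begin{equation*}
\det(\nabla_x G)\cdot\det\bigl(\nabla_v\Phi_1\cdot(\nabla_x F\cdot(\nabla_x G)^{-1})+\nabla_\xi\Phi_1\bigr),
\end{equation*}
so the matrix $M=\nabla_x F\cdot(\nabla_x G)^{-1}$ is exactly the object to which \eqref{averagelargenessofJacobianeq} is applied, \emph{once}, after restricting to the Chebyshev set $B$ where $|\det\nabla_x G|$ is not too small so that the entries of $M$ stay $\lesssim\delta^{-n}$. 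Comparing the resulting lower bound on the volume of the image of $B\times[-\delta^{\epsilon_1},\delta^{\epsilon_1}]$ against the trivial upper bound $|S|$ closes the argument with no partitioning, no complexity bookkeeping, and no direction-pair shearing. The ``most delicate part'' you flag at the end is therefore an artifact of the route you chose rather than a genuine obstacle; the missing idea is Gromov's lemma and the identification of $M$ with $\nabla_x F\cdot(\nabla_x G)^{-1}$.
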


\begin{proof}[Proof of Theorem \ref{GPWAthm}] When first reading the proof we recommend fixing $\Phi$, and it will be easy to see from the proof that the constant only depends on bounds of finitely many derivatives of $\Phi$ when we are allowed to change it.

In the proof we always think of $\epsilon$ as fixed and always assume $\delta$ is sufficiently small (that can depend on $\epsilon$) since otherwise the conclusion is easily seen to hold. %We will suppress all dependencies on $\epsilon$ henceforth.
Without loss of generality, we always assume $|S| \gtrsim \delta^{n-1}$ with a suitable absolute constant, since otherwise no $T$ can lie in $S$. Our proof will largely follow that of Theorem 3.1 in \cite{MR3881832}. 

Our $\Phi$ is not necessarily a semialgebraic map and we would like to first make it semialgebraic by a Taylor approximation. Fix a $0< \epsilon_1 <\min\{0.1, \epsilon\}$ and fix a large $K>\frac{2n^3}{\epsilon_1}$ (with more constraints to be determined on both parameters). Let $N$ be the quantity on the left hand side of \eqref{numberofdirectionseq}. We may assume $N \geq 1$. Without loss of generality we can assume
\begin{equation}
    \#\{T_{\xi_\theta, v, \Phi} (\delta, 1)\in \T: T_{\xi_\theta, v, \Phi} (\delta, 1) \subset S, |\xi_\theta| \leq \delta^{\epsilon_1}, |v|\leq \delta^{\epsilon_1}\} \gtrsim \delta^{2(n-1)\epsilon_1} N
\end{equation}
and will focus on giving an upper bound of the number of these $T_{\xi_\theta, v, \Phi} (\delta, 1)$. Without loss of generality we assume $\Phi(0) = 0$.

Now replace $\Phi$ by its $K$-th Taylor approximation at the origin, called $\Phi_1$. The new map $\Phi_1$ is still in $C^{\infty}$. For each $T_{\xi_{\theta}, v, \Phi} (\delta, 1)$ in $\T$ such that $|\xi_\theta| \leq \delta^{\epsilon_1}$ and that $|v|\leq \delta^{\epsilon_1}$, we form a corresponding shrunken tube $T_{\xi_{\theta}, v, \Phi_1} (\delta/2, 2\delta^{\epsilon_1})$ that is defined similar to \eqref{defnofTthetavphi} but have thickness $\delta/2$ and time span $|t| \leq 2\delta^{\epsilon_1}$. By Taylor approximation it is easy to see the entire
\begin{equation}
    T_{\xi_\theta, v, \Phi_1} (\delta/2, 2\delta^{\epsilon_1}) \subset T_{\xi_\theta, v, \Phi} (\delta, 1) \subset S.
\end{equation}
Moreover, by Taylor approximation, we see that the following analogue of \eqref{averagelargenessofJacobianeq} continues to hold for each $|\xi| \leq \delta^{\epsilon_1}$ and  $|v|\leq \delta^{\epsilon_1}$ as long as $K \gtrsim 1$ (when $\delta$ is sufficiently small) and $M \in \mathrm{Mat}_{(n-1)\times (n-1)} (\R)$ are such that all entries of $M$ are $\leq \delta^{-n}$:
\begin{equation}\label{avgbigJacrestreq}
\int_{-\delta^{\epsilon_1}}^{\delta^{\epsilon_1}} |\det (\nabla_{v} \Phi_1(v, t, \xi)\cdot M+\nabla_{\xi} \Phi_1(v, t, \xi))|\mathrm{d} t
\gtrsim_{\epsilon_1} 
\delta^{C \epsilon_1}
\end{equation}
Like Katz-Rogers did in \cite{MR3881832}, we define a set
\begin{equation}\label{tuebinSpropeq}
    L = \{(\xi, v): T_{\xi, v, \Phi_1} (\delta/3, 2\delta^{\epsilon_1}) \subset S\}.
\end{equation}
Now the map $\Phi_1$ is algebraic, by definition $L$ is semialgebraic with complexity $O_{n, E, \epsilon_1, K} (1)$ (for the definition of semialgebraic sets and their complexity and how to arrive at the present claim, see Section 2 of \cite{MR3881832}). Note that if a tube $T_{\xi_\theta, v, \Phi_1} (\delta/2, 2\delta^{\epsilon_1}) \subset S$, then keeping $v$ and perturbing $\xi_{\theta}$ by an arbitrary small distance proportional to $\delta$, the resulting $(\xi, v)$ will end up in $L$ by definition. Hence we know the measure
\begin{equation}\label{allthexieq}
    |\{\xi: \exists v \text{ s.t. } (\xi, v) \in L\}| \gtrsim \delta^{2(n-1)\epsilon_1} N \cdot \delta^{n-1} \simeq \delta^{(n-1)+2(n-1)\epsilon_1} N.
\end{equation}
Like in the proof of Theorem 3.1 in \cite{MR3881832}, next we apply the Tarski-Seidenberg theorem to obtain a semialgebraic section $L' \subset L$ of complexity $O_{n, E, \epsilon_1, K} (1)$ consisting of a single $(\xi, v)$ for each $\xi$ appearing in the set in \eqref{allthexieq}. Arguing like \cite{MR3881832}, we see $L'$ is an $(n-1)$-dimensional subset of $\R^{2n-2}$. Using Gromov's lemma (cited as Lemma 2.3 in \cite{MR3881832}) in the identical way as in pages 1711-1712 of \cite{MR3881832}, we find two polynomial maps $F$ and $G: [0, \delta^{\epsilon_1}]^{n-1} \to \R^{n-1}$ with $\deg F, \deg G = O_{n, E, \epsilon_1, K} (1)$ and $\|F\|_{C^1}, \|G\|_{C^1} \leq 1$ such that
\begin{equation}\label{imofcubelargeeq}
    |G([0, \delta^{\epsilon_1}]^{n-1})| \gtrsim_{n, E, \epsilon_1, K} \delta^{(n-1)+C\epsilon_1} N
\end{equation}
and that
\begin{equation}
    (\Phi_1 (F(x), t, G(x)), t) \in S, \forall x \in [0, \delta^{\epsilon_1}]^{n-1} \text{ and } \forall |t| \leq \delta^{\epsilon_1}.
\end{equation}
For technical reasons that we replaced $\Phi$ by $\Phi_1$ which does not need to satisfy \eqref{averagelargenessofJacobianeq} but instead only satisfies the weaker \eqref{avgbigJacrestreq}, we now pass to a subset $B$ of $[0, \delta^{\epsilon_1}]^{n-1}$ where $G$ has reasonably large Jacobian. Define
\begin{equation}
    B = \{x \in [0, \delta^{\epsilon_1}]^{n-1}: |\det\nabla_x G|\gtrsim_{n, E, \epsilon_1, K} \delta^{(n-1)+C\epsilon_1} N\}.
\end{equation}
We see that if the implied constant above is carefully chosen, by Chebyshev we continue to have the following inequality similar to \eqref{imofcubelargeeq}:
\begin{equation}\label{imofBlargeeq}
    |G(B)| \gtrsim_{n, E, \epsilon_1, K} \delta^{(n-1)+C\epsilon_1} N.
\end{equation}
Now like in \cite{MR3881832}, we look at the volume of
\begin{equation}
    \Delta = \{(\Phi_1 (F(x), t, G(x)), t): x \in B, |t| \leq \delta^{\epsilon_1}\}.
\end{equation}
On one hand, $\Delta$ is contained in $S$ and thus
\begin{equation}\label{upperbdofDeltaeq}
    |\Delta| \leq |S|.
\end{equation}
On the other hand, we can bound $|\Delta|$ below by calculus. Note that $\Phi_1$ is a polynomial of degree $K$ and that $F$ and $G$ are polynomials of degree $O_{n, E, \epsilon_1, K} (1)$. Hence by B\'{e}zout's theorem for every fixed $t$ the map
\begin{equation}
    x \mapsto \Phi_1 (F(x), t, G(x))
\end{equation}
is $O_{n, E, \epsilon_1, K} (1)$ to $1$. Thus
\begin{align}\label{integralineq}
    & |\Delta| \simeq_{n, E, \epsilon_1, K} \int_{-\delta^{\epsilon_1}}^{\delta^{\epsilon_1}} \int_B |\nabla_x (\Phi_1 (F(x), t, G(x)))|  \mathrm{d}x\mathrm{d}t\nonumber\\
    = & \int_{-\delta^{\epsilon_1}}^{\delta^{\epsilon_1}} \int_B |\det(\nabla_v \Phi_1 (F(x), t, G(x))\cdot \nabla_x F + \nabla_{\xi} \Phi_1 (F(x), t, G(x))\cdot \nabla_x G)|  \mathrm{d}x\mathrm{d}t\nonumber\\
    = & \int_B |\det(\nabla_x G)|\nonumber\\
    \cdot & \int_{-\delta^{\epsilon_1}}^{\delta^{\epsilon_1}}  |\det(\nabla_v \Phi_1 (F(x), t, G(x))\cdot (\nabla_x F\cdot (\nabla_x G)^{-1}) + \nabla_{\xi} \Phi_1 (F(x), t, G(x)))|  \mathrm{d}t\mathrm{d}x.
\end{align}
Note that $\|F\|_{C^1}, \|G\|_{C^1} \leq 1$ and that for all $x \in B$,
\begin{equation}
    |\det\nabla_x G|\gtrsim_{n, E, \epsilon_1, K} \delta^{(n-1)+C\epsilon_1} N \geq \delta^{(n-1)+C\epsilon_1},
\end{equation}
we see that each entry of $(\nabla_x F\cdot (\nabla_x G)^{-1})$ is
\begin{equation}
    \lesssim_{n, E, \epsilon_1, K} \delta^{-(n-1)-C\epsilon_1} \le \delta^{-n}
\end{equation}
if $\delta$ is sufficiently small. This allows us to invoke \eqref{avgbigJacrestreq} to obtain
\begin{equation}
    |\Delta|\gtrsim_{n, E, \epsilon_1, K} \delta^{C\epsilon_1} \int_B |\det \nabla_x G(x)|\mathrm{d}x.
\end{equation}
Use B\'{e}zout again and notice \eqref{imofBlargeeq}, the right hand side is
\begin{equation}
    \simeq_{n, E, \epsilon_1, K} |G(B)| \gtrsim_{n, E, \epsilon_1, K} \delta^{(n-1)+C\epsilon_1} N.
\end{equation}
Hence
\begin{equation}\label{lowerbdofDeltaeq}
    |\Delta|\gtrsim_{n, E, \epsilon_1, K} \delta^{(n-1)+C\epsilon_1} N.
\end{equation}
Combine \eqref{upperbdofDeltaeq} and \eqref{lowerbdofDeltaeq}, we obtain
\begin{equation}
    N \lesssim_{n, E, \epsilon_1, K} |S|\delta^{1-n-C\epsilon_1}.
\end{equation}
It suffices to take $\epsilon_1$ to be a suitable multiple of $\epsilon$ depending on the above constant $C$ (and fix $K$ accordingly as in the beginning of the proof).
\end{proof}

Before proving Theorem \ref{main_thm_2}, we also state and prove an elementary lemma on the averaged size of determinants.

\begin{lemma}\label{polymatriceslem}
For $A, B \in \text{Mat}_{k \times k} (\R)$ and a measurable $E \subset \R$, we have
\begin{equation}\label{unifestimateofmatriceseq}
    \int_E |\det (tA+B)|\mathrm{d}t \gtrsim_k |E|^{k+1}|\det (A)|.
\end{equation}
\end{lemma}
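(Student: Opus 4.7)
The plan is to reduce the matrix inequality to a general polynomial estimate. Viewing $t \mapsto \det(tA + B)$ as a function of the scalar $t$, the Leibniz expansion of the determinant shows that $p(t) := \det(tA+B)$ is a polynomial in $t$ of degree at most $k$, whose coefficient of $t^k$ is exactly $\det(A)$ (obtained by picking the entry from $tA$ in every column, in every term of the Leibniz sum). Thus \eqref{unifestimateofmatriceseq} reduces to the following general fact: if $p(t)$ is a polynomial of degree $\le k$ with leading coefficient $c$, then $\int_E |p(t)|\, dt \gtrsim_k |c|\, |E|^{k+1}$ for every measurable $E \subset \R$.

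To establish this, we may assume $c \neq 0$, since otherwise the right-hand side vanishes. Factor $p(t) = c \prod_{i=1}^k (t - r_i)$ over $\C$, and let $a_i := \mathrm{Re}(r_i) \in \R$. For any real $t$, we have $|t - r_i| \ge |t - a_i|$, and therefore
\[
|p(t)| \ge |c| \prod_{i=1}^k |t - a_i|.
\]

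The heart of the argument is a short bad-set estimate. Set $\alpha := |E|/(4k)$ and define $F := E \setminus \bigcup_{i=1}^k (a_i - \alpha, a_i + \alpha)$. The union on the right has measure at most $2k\alpha = |E|/2$, so $|F| \ge |E|/2$, while on $F$ every factor satisfies $|t - a_i| \ge \alpha$. Integrating the resulting pointwise lower bound over $F$ gives
\[
\int_E |p(t)|\, dt \;\ge\; |F| \cdot |c|\, \alpha^k \;\ge\; \frac{|c|\, |E|^{k+1}}{2 (4k)^k},
\]
which is the claim. No step looks particularly delicate; the only point requiring a little care is the handling of complex roots, which is why one passes to their real parts $a_i$ before invoking the bad-set argument.
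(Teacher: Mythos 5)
Your proof is correct, and it takes essentially the same route as the paper: reduce to a lower bound for a degree-$k$ polynomial with leading coefficient $\det A$, factor over $\mathbb{C}$, and run a bad-set (remove small intervals around roots) argument. Two minor cosmetic differences: the paper first normalizes to $A = I$ by writing $tA+B = A(tI + BA^{-1})$, whereas you read off the leading coefficient directly; and you are more careful about the constant — taking $\alpha = |E|/(4k)$ so the excised set has measure $\le |E|/2$ (the paper's $|E|/(2k)$ is a bit loose as stated but immaterial for a $\gtrsim_k$ bound). You also make explicit the step of passing from complex roots to their real parts via $|t - r_i| \ge |t - \mathrm{Re}(r_i)|$, which the paper leaves implicit.
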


\begin{proof}[Proof of Theorem \ref{polymatriceslem}]
Without loss of generality we can assume $\det A \neq 0$. We may further assume $A = I$ since otherwise we can replace $A$ by $I$ and replace $B$ by $BA^{-1}$ and notice $tA+B = A\cdot  (tI + BA^{-1})$.

Now $\det (tI+B)$ is a monic polynomial $g_B(t)$ in variable $t$ of degree $k$. Factorize $g_B$ over $\C$ and notice that the set of $t \in E$ with distance $\geq \frac{|E|}{2k}$ against each root of $g_B$ has measure at least $\frac{|E|}{2}$. For each such $t$,
\begin{equation}
    |\det (tI+B)| \geq \frac{|E|^k}{(2k)^k}.
\end{equation}
This finishes the proof. 
\end{proof}

Note that the key point of Lemma \ref{polymatriceslem} is that the estimate \eqref{unifestimateofmatriceseq} is independent of $B$. With the above preparation we now prove Theorem \ref{main_thm_2}.

\begin{proof}[Proof of Theorem \ref{main_thm_2}]
By the non-degenerate assumption of $\phi$, for sufficiently small $(v, t, \xi)$ one can find a unique $\Phi = \Phi(v, t, x)$ near $0$ such that
\begin{equation}\label{defnofPhi}
    (\nabla_{\xi} \phi)(\Phi, t; \xi)=v.
\end{equation}

Let us assume the above can be done for all $(v, t, \xi)\in [-1.5, 1.5]^{2n-1}$ without loss of generality since otherwise we can perform a constant rescaling. %The non-degenerate assumption of $\phi$  also  implies \eqref{Jacobianvlowerbdeqn}.
It suffices to show that our $\Phi$ satisfies the condition \eqref{averagelargenessofJacobianeq} since we can then use Theorem \ref{GPWAthm} to conclude the proof.\\

By \eqref{defnofPhi}, we get
\begin{equation}
    (\nabla_{\xi} \phi)(\Phi(v, t, \xi), t; \xi)=v. 
\end{equation}
Differentiating with respect to $\xi$ and $v$ respectively, we deduce
\begin{equation}\label{nablaxiPhieq}
    \nabla_x \nabla_{\xi} \phi \cdot \nabla_{\xi} \Phi + \nabla_{\xi}^2 \phi = 0
\end{equation}
and
\begin{equation}\label{nablavPhieq}
    \nabla_x \nabla_{\xi} \phi \cdot \nabla_{v} \Phi = I.
\end{equation}
Note that we have adopted the abbreviation that $\phi$ is evaluated at $(\Phi(v, t, \xi), t; \xi)$. By the non-degeneracy of $\phi$, we know $|\nabla_x \nabla_{\xi} \phi| \simeq 1$. Hence
\begin{align}\label{reductioninPWAeq}
& \int_{-\delta^{\epsilon}}^{\delta^{\epsilon}} |\det (\nabla_{v} \Phi(v, t, \xi)\cdot M+\nabla_{\xi} \Phi(v, t, \xi))|\mathrm{d} t\nonumber\\
\simeq & \int_{-\delta^{\epsilon}}^{\delta^{\epsilon}} |\det (\nabla_x \nabla_{\xi} \phi \cdot \nabla_{v} \Phi(v, t, \xi)\cdot M+\nabla_x \nabla_{\xi} \phi \cdot \nabla_{\xi} \Phi(v, t, \xi))|\mathrm{d} t\nonumber\\
\simeq & \int_{-\delta^{\epsilon}}^{\delta^{\epsilon}} |\det (M-\nabla_{\xi}^2 \phi (\Phi(v, t, \xi), t; \xi))|\mathrm{d} t.
\end{align}
Recall we only need to verify \eqref{averagelargenessofJacobianeq} for $\Phi$. In light of \eqref{reductioninPWAeq}, it now suffices to show that the right hand side of \eqref{reductioninPWAeq} is $\gtrsim_{\epsilon} \delta^{n\epsilon}$ (independent of the choice of $M$, $v \in [-1, 1]^{n-1}$ and $\xi \in [-1, 1]^{n-1}$).\\

From now on we fix $v$ and omit it from place to place, and all the estimates will be uniform  in $v$. For simplicity we use $X_t (\xi)$ to denote $\Phi (v, t, \xi)$ below. Denote 
\begin{equation}\label{defnofA}
A(t; \xi)= \nabla^2_{\xi} \phi(X_t(\xi), t; \xi).
\end{equation}
We claim that for all $t \in [0, 1]$, $\xi \in [0, 1]^{n-1}$, $A(t; \xi) - A(0; \xi)$ as a matrix is proportional to a matrix $B(\xi)$ independent of $t$. This will be refered to as Claim ($\ast$). We will prove this claim by  finding $B(\xi)$ explicitly. Compute 
\begin{equation}
\begin{split}
\partial_t A(t; \xi)& =\pnorm{\nabla_x\cdot \partial_t X_t} \nabla^2_{\xi}\phi(X_t(\xi), t; \xi)+\partial_t  \nabla^2_{\xi}\phi(X_t(\xi), t; \xi)\\
&=\bnorm{\pnorm{\vector{V}\cdot \nabla_{\bfx}}\nabla^2_{\xi}\phi}(X_t(\xi), t; \xi)
\end{split}
\end{equation}
where $\vector{V}:=(\partial_t X_t, 1)$. Note that if we differentiate both side of \eqref{defnofPhi} in $t$, then we obtain 
\begin{equation}
\partial_t X_t \cdot \nabla_{x}\nabla_{\xi}\phi(X_t(\xi), t; \xi)+\partial_t \nabla_{\xi}\phi(X_t(\xi), t; \xi)=0.
\end{equation}
This means at the point $(X_t(\xi), t; \xi)$, the vector field $\vector{V}$ is parallel to the one defined in \eqref{211017e2.2}. Moreover by a similar computation, we obtain that 
\begin{equation}
\partial^j_t A(t; \xi)=\bnorm{\pnorm{\vector{V}\cdot \nabla_{\bfx}}^j\nabla^2_{\xi}\phi}(X_t(\xi), t; \xi)
\end{equation}
for every $j\ge 1$.  By Lemma \ref{211016rem2.2}, $\partial^2_t A(t; \xi)$ is always parallel to $\partial_t A(t; \xi)$. Hence by considering the time derivative of the quotient of two entries we see $\partial_t A(t; \xi)$ is always parallel to $\partial_t A(0; \xi)$. By our non-degeneracy assumption \eqref{curvcondeq} for $\phi$, we see $\partial_t A(0; \xi)$ has norm and determinant $\simeq 1$ and call it $B(\xi)$. Since $\partial_t A(t; \xi)$ is always parallel to  $B(\xi)$, we see that Claim ($\ast$) holds.

Now by Claim ($\ast$), we assume
\begin{equation}\label{scalarcondofA}
    A(t; \xi) = f(t; \xi)B(\xi) + A(0; \xi)
\end{equation}
for some scalar function $f$. Since \begin{equation}
    \partial_t A(t; \xi)|_{t=0} = B(\xi),
\end{equation}
we see that the time derivative of $f$ is $1$ at $t=0$. By compactness, the range of $f(t; \xi)$ for $|t| \leq \delta^{\epsilon}$ has measure $\geq C \delta^{\epsilon}$ for some universal $C>0$ independent of $\xi \in [-1, 1]$ and $\epsilon$. 

We are in a position to apply Lemma \ref{polymatriceslem} to the right hand side of \eqref{reductioninPWAeq}. Note that
\begin{equation}
    M-\nabla_{\xi}^2 \phi (\Phi(v, t, \xi), t; \xi) =  M - A(t; \xi) = (M-A(0; \xi)) - f(t; \xi) B(\xi).
\end{equation}
We see the right hand side of \eqref{reductioninPWAeq} is bounded below by $C\delta^{n\epsilon}\det B(\xi) \geq C\delta^{n\epsilon}$, thus concluding the proof.
\end{proof}

\section{Preliminaries for the proof of Theorem \ref{220130thm1.2}}\label{220717section3}

For $\lambda\ge 1$, denote 
\begin{equation}
    \phi^{\lambda}(\bfx; \xi)=\lambda\phi(\bfx/\lambda; \xi), \ \ a^{\lambda}(\bfx; \xi)=a(\bfx/\lambda; \xi).
\end{equation}
Define an operator 
\begin{equation}\label{220221e4.2}
    T^{\lambda} f(\bfx):=\int e^{i\phi^{\lambda}(\bfx; \xi)} a^{\lambda}(\bfx; \xi)f(\xi)d\xi. 
\end{equation}
Note that $T^{\lambda} f$ is just a rescaled version of the operator in  Theorem \ref{220130thm1.2}, and we use this rescaled version as we will use the wave packet decomposition and uncertainty principles to bound $T^{\lambda}$.  In the rest of the paper, we will prove 
\begin{theorem}\label{220704theorem3_1}
If $\phi$ is  assumed to satisfy (H1), $(\mathrm{H2}^+)$ and Bourgain's condition at every point, then 
\begin{equation}\label{211026e5.3}
    \norm{T^{\lambda}f}_{L^p(B_{\lambda})}\lesim_{\epsilon, p, \phi, a} \lambda^{\epsilon} \|f\|_{L^{p}}
\end{equation}
for every $p>q_{n, 2}$, defined in \eqref{220717e1_13}, ball $B_{\lambda}\subset \R^n$ of radius $\lambda\ge 1$ and every $\epsilon>0. $
\end{theorem}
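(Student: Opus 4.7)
The plan is to follow the now-standard framework of wave packet decomposition, $k$-broad reduction, and polynomial partitioning, but combine it with the two main new tools developed earlier in the paper: the strong polynomial Wolff axiom (SPWA) for phases satisfying Bourgain's condition (proved in Section \ref{220704section4}) and the new broom estimate (Theorem \ref{220615thm8_7}). First, I would perform the wave packet decomposition of $T^{\lambda}f$ at scale $\lambda^{1/2}$, so that $T^{\lambda}f$ is essentially a sum of wave packets supported on $\lambda^{1/2}$-tubes $T_{\theta,v}$ pointing in the directions prescribed by $\phi$, and then reduce the $L^p$ estimate \eqref{211026e5.3} to a $k$-broad norm estimate (Theorem \ref{201204thm5_1}) in the spirit of Guth. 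This reduction is purely formal and only uses (H1), $(\mathrm{H2}^+)$ and interpolation with local $L^2$ bounds.

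Next, I would run a polynomial partitioning algorithm in the vein of \cite{HR2019} and \cite{MR4047925}, iterated through the scales $\lambda, \lambda^{1-\delta_n}, \ldots$ determined by the admissible parameters \eqref{admissable_parameters} and the refined parameters $\delta_{n'-1/2}$ in \eqref{220707e1_12}. At each step the $k$-broad norm is split into a \emph{cellular}, \emph{transverse (algebraic)}, and \emph{tangential} contribution. Cellular and transverse parts are estimated by induction on scale exactly as in \cite{HR2019}; the work is concentrated in the tangential case, where one has a collection of wave packets whose associated tubes are tangential to a low-degree algebraic variety $Z$. Here one is allowed to replace the Kakeya/polynomial Wolff input by its generalization to the phase $\phi$ from Section \ref{220728section3} and, in the nested form, by the strong polynomial Wolff axiom of Section \ref{220704section4}. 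Plugging these into the Hickman--Zahl scheme already recovers the range \eqref{220926e1_19}.

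To push the exponent to \eqref{220717e1_13}, I would invoke the broom estimate of Section \ref{220706section6}. The idea is that if a large mass of $T^{\lambda}f$ concentrates in a small ball in the tangential zone, the associated wave packets must cluster near a ``handle'' and hence must open up into many ``bristles'' at the far end of the tubes, which is quantified by the generalized pseudo-conformal transformation (Lemma \ref{220608lemma8_9}) together with the Variety Uncertainty Principle (Lemma \ref{220614lemma8_8}). One feeds this extra gain into the transverse equidistribution / two-ends argument driving the polynomial partitioning iteration, exactly paralleling \cite{wang2018restriction} in $n=3$. The bush estimate of Section \ref{220717section7} handles the degenerate case where the grain size produced by the partitioning collapses prematurely and the broom estimate cannot be applied directly. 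Putting the cellular, transverse, tangential-broom, and bush contributions together and optimizing the parameters $\delta, \delta_{n'}, \delta_{n'-1/2}$ produces \eqref{211026e5.3} for all $p>q_{n,2}$.

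The main obstacle I anticipate is the tangential-broom step: one must verify that the broom estimate (whose proof uses only (H1),(H2$^+$)) can be inserted into the partitioning scheme in a way compatible with Bourgain's condition, since the SPWA (which does rely on Bourgain's condition) is what controls how tubes cluster inside the neighborhood of the partitioning variety at every nested scale. Reconciling the two -- i.e.\ verifying that after each algebraic refinement the wave packets still satisfy the hypotheses needed for both the SPWA and the broom bound -- is where the bulk of the bookkeeping lies, and it is the reason for introducing the additional half-step parameters $\delta_{n'-1/2}$ in \eqref{220707e1_12}.
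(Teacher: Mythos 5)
Your outline reproduces the paper's actual strategy essentially verbatim: wave packet decomposition and reduction to the broad-norm estimate (Theorem \ref{201204thm5_1}) via Proposition 11.1 of \cite{MR4047925}, the modified Hickman--Rogers polynomial partitioning algorithm, the strong polynomial Wolff axiom of Section \ref{220704section4}, the broom estimate of Theorem \ref{220615thm8_7} (proved via Lemma \ref{220608lemma8_9} and Lemma \ref{220614lemma8_8}), the bush estimate of Theorem \ref{220617theorem8_1}, and the final combination in Section \ref{220706section8}. One small misattribution: the half-step parameters $\delta_{n'-1/2}$ are introduced not to reconcile the SPWA with the broom bound but to control how fast cells shrink in the partitioning algorithm, which is what makes Lemma \ref{220708lemma5_9} (bounding $r_{n'}\prod_{i\ge n'}D_i \le R$) hold -- a property not needed in \cite{HR2019} but crucial here for the inductive bookkeeping in Section \ref{220706section8}.
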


For the sake of simplicity, we will assume that our phase function $\phi$ is of the normal form. To prove \eqref{211026e5.3}, it suffices to prove
\begin{equation}
    \norm{T^{\lambda}f}_{L^p(B_{R})}\lesim_{\epsilon, p, \phi, a} R^{\epsilon} \|f\|_{L^p},
\end{equation}
for every $1\le R\le \lambda^{1-\epsilon}$ and every cube $B_R\subset B_{\lambda}$. We will run an induction on both parameters $\lambda$ and $R$. The base case of the induction $\lambda=R=1$ is trivial. Let us assume that we have proven 
\begin{equation}\label{220222e4.5}
    \norm{T^{\lambda'}f}_{L^p(B_{R'})}\lesim_{\epsilon, p, \phi, a} (R')^{\epsilon} \|f\|_{L^p},
\end{equation}
for every $\lambda'\le \lambda/2$, $R'\le (\lambda')^{1-\epsilon}$ and every cube $B_{R'}\subset B_{\lambda'}$. Our goal is to prove that the same holds with $\lambda$ and $R$.

\subsection{Wave packet decomposition}\label{220619subsection4_2}

Let $1\le r\le R$ and take a collection $\Theta_{r}$ of dyadic cubes of side length $\frac{9}{11} r^{-1 / 2}$ covering the ball $B^{n-1}(0,2)$. We take a smooth partition of unity $\left(\psi_{\theta}\right)_{\theta \in \Theta_{r}}$ with $\operatorname{supp} \psi_{\theta} \subset \frac{11}{10} \theta$ for the ball $B^{n-1}(0,2)$ such that
$$
\left\|\partial_{w}^{\alpha} \psi_{\theta}\right\|_{L^{\infty}} \lesssim_{\alpha} r^{|\alpha| / 2}
$$
for any $\alpha \in \mathbb{N}_{0}^{n-1}$. We denote by $\omega_{\theta}$ the center of $\theta$. Given a function $g$, we perform a Fourier series decomposition to the function $g \psi_{\theta}$ on the region $\frac{11}{9} \theta$ and obtain
$$
g(w) \psi_{\theta}(w) \cdot \mathbbm{1}_{\frac{11}{10} \theta}(\omega)=\left(\frac{r^{1 / 2}}{2 \pi}\right)^{n-1} \sum_{v \in r^{1 / 2} \mathbb{Z}^{n-1}}\left(g \psi_{\theta}\right)^{\wedge}(v) e^{2 \pi i v \cdot w} \mathbbm{1}_{\frac{11}{10} \theta}(\omega) .
$$
Let $\widetilde{\psi}_{\theta}$ be a non-negative smooth cutoff function supported on $\frac{11}{9} \theta$ and equal to 1 on $\frac{11}{10} \theta$. We can therefore write
$$
g(w) \psi_{\theta}(w) \cdot \widetilde{\psi}_{\theta}(\omega)=\left(\frac{r^{1 / 2}}{2 \pi}\right)^{n-1} \sum_{v \in r^{1 / 2} \mathbb{Z}^{n-1}}\left(g \psi_{\theta}\right)^{\wedge}(v) e^{2 \pi i v \cdot w} \widetilde{\psi}_{\theta}(\omega)
$$
If we also define
$$
g_{\theta, v}(w):=\left(\frac{r^{1 / 2}}{2 \pi}\right)^{n-1}\left(g \psi_{\theta}\right)^{\wedge}(v) e^{2 \pi i v \cdot \omega} \widetilde{\psi}_{\theta}(\omega)
$$
then we have
$$
g=\sum_{(\theta, v) \in \Theta_{r} \times r^{1 / 2} \mathbb{Z}^{n-1}} g_{\theta, v}
$$
For $\omega\in B^{n-1}$, $z'\in B^{n-1}$ and $t\in [0, 1]$, let us define a function $\Phi=\Phi(z', t; \omega)$ by 
\begin{equation}
    \partial_{\omega}\phi(\Phi(z', t; \omega), t; \omega)=z'.
\end{equation}
We refer to equation (4.6) in \cite[page 275]{MR4047925} for a discussion on the definition of $\Phi$. For $\theta\in B^{n-1}$ and $v\in B^{n-1}$, define the curve $\gamma^1_{\theta, v}: [0, 1]\to \R^{n-1}$ by 
\begin{equation}
    \gamma^1_{\theta, v}(t):=\Phi(v, t; \omega_{\theta}), 
\end{equation}
where $\omega_{\theta}$ is the center of $\theta$. Moreover, for given $(\theta, v)$ let us define the rescaled curve \begin{equation}
    \gamma^{\lambda}_{\theta, v}(t):=\lambda \gamma^1_{\theta, v/\lambda} \pnorm{\frac{t}{\lambda}}. 
\end{equation}
Let $\Gamma^{\lambda}_{\theta, v}$ be the map 
\begin{equation}
    \Gamma^{\lambda}_{\theta, v}:=(\gamma^{\lambda}_{\theta, v}(t), t),
\end{equation}
where $t\in [0, \lambda]$. Define the curved $r^{\frac12+\delta}$-tube as 
\begin{equation}
    T_{\theta, v}:=\set{(x, t): |x-\gamma^{\lambda}_{\theta, v}(t)|\le r^{1/2+\delta}, \ t\in [0, r]
    }.
\end{equation}
The curve $\Gamma^{\lambda}_{\theta, v}$ is referred to as the core of $T_{\theta, v}$. This finishes our wave packet decomposition for a ball of radius $r$ centered at the origin. \\

Next, let us define the wave packet decomposition for a ball not centered at the origin. Fix $\bfx_0\in B(0, \lambda)$ and consider the ball $B(\bfx_0, r)$. For $g: B^{n-1}\to \C$ integrable, define $$\tilde{g}(\omega):=e^{2 \pi i \phi^{\lambda}(\bfx_0; \omega)} g(\omega)$$ so that
$$
T^{\lambda} g(\bfx)=\tilde{T}^{\lambda} \tilde{g}(\tilde{\bfx}) \quad \text { for } \tilde{\bfx}=\bfx-\bfx_0,
$$
where $\widetilde{T}^{\lambda}$ is the H\"ormander-type operator with phase $\tilde{\phi}^{\lambda}$ and amplitude $\tilde{a}^{\lambda}$ given by
$$
\tilde{\phi}(\bfx; \omega):=\phi\left(\bfx+\frac{\bfx_0}{\lambda} ; \omega\right)-\phi\left(\frac{\bfx_0}{\lambda} ; \omega\right) \text { and } \tilde{a}(\bfx; \omega):=a\left(\bfx+\frac{\bfx_0}{\lambda} ; \omega\right) .
$$
If $\bfx\in B(\bfx_0, r)$, then $\tilde{\bfx}\in B(0, r)$, and we can therefore apply the wave packet decomposition above to $\tilde{T}^{\lambda} \tilde{g}$. Moreover, notice that the core curve of $\tilde{T}^{\lambda} (\tilde{g})_{\theta, v}(\tilde{\bfx})$ is given by the collection of $\tilde{\bfx}\in B(0, r)$ satisfying
\begin{equation}
    \partial_{\omega}\phi\left(\frac{\tilde{\bfx}}{\lambda}+\frac{\bfx_0}{\lambda}; \omega_{\theta}\right)=\frac{v}{\lambda}+\partial_{\omega}\phi\left(\frac{\bfx_0}{\lambda}; \omega_{\theta}\right).
\end{equation}
Set 
\begin{equation}
    v^{\lambda}(\bfx_0; \omega):=\partial_{\omega}\phi^{\lambda}\left(\bfx_0; \omega\right)
\end{equation}
Under this notation, the core curve of $\tilde{T}^{\lambda} (\tilde{g})_{\theta, v}(\tilde{\bfx})$ can be written as the image of the map
\begin{equation}
    \Gamma^{\lambda}_{\theta, v+v^{\lambda}(\bfx_0; \omega_{\theta})}=\left(\gamma^{\lambda}_{\theta, v+v^{\lambda}(\bfx_0; \omega_{\theta})}(t), t \right)
\end{equation}
with $t\in [0, \lambda]$. Define curved tubes
\begin{equation}
    T_{\theta, v}(\bfx_0):=\bfx_0+\set{(x, t): |x-\gamma^{\lambda}_{\theta, v+v^{\lambda}(\bfx_0; \omega_{\theta})}(t)|\le r^{1/2+\delta}, \ t\in [0, r]
    }.
\end{equation}
Thus the function 
\begin{equation}
    \tilde{T}^{\lambda}(\tilde{g})_{\theta, v}(\bfx-\bfx_0)
\end{equation}
is essentially supported on $T_{\theta, v}(\bfx_0)$ if we restrict $t\in [t_0, t_0+r]$ where $\bfx_0=(x_0, t_0)$. We will use $\T[B(\bfx_0, r)]$ to denote the collection $\{T_{\theta, v}(\bfx_0)\}_{(\theta, v)}$. Moreover, we write $\theta(T)=\theta$ for a tube $T=T_{\theta, v}(\bfx_0)$. To simplify notation, we also define 
\begin{equation}
    g_{T_{\theta, v}(\bfx_0)}(\omega):=e^{-2\pi i\phi^{\lambda}(\bfx_0; \omega)} (\tilde{g})_{\theta, v}(\omega).
\end{equation}
Under this notation, we can write 
\begin{equation}
    T^{\lambda} g(\bfx)=\sum_{T\in \T[B(\bfx_0, r)]} T^{\lambda} g_T(\bfx). 
\end{equation}
This finishes our wave packet decomposition associated to the ball $B(\bfx_0, r)$.

\subsection{Reducing to broad term estimates}

We define Gauss maps and rescaled Gauss maps. Define 
\begin{equation}
    G_0(\bfx; \xi):=\partial_{\xi_1}\nabla_{\bfx}\phi\wedge \dots \wedge \partial_{\xi_{n-1}}\nabla_{\bfx}\phi.
\end{equation}
Moreover, define 
\begin{equation}
    G(\bfx; \xi):=\frac{G_0(\bfx; \xi)}{|G_0(\bfx; \xi)|}.
\end{equation}
Define the rescaled Gauss map 
\begin{equation}
    G^{\lambda}(\bfx; \xi):=G(\bfx/\lambda; \xi).
\end{equation}
Let $K\ge 1$.   We divide $B^{n-1}$ into caps $\tau$ of side length $K^{-1}$. Let $g_{\tau}$ denote $g\cdot \mathbbm{1}_{\tau}$. For $\bfx\in B_{R}$, denote 
\begin{equation}
    G^{\lambda}(\bfx; \tau):=\{G^{\lambda}(\bfx; \xi): \xi\in \tau\}. 
\end{equation}
Let $V\subset \R^n$ be a linear subspace. Let $\ang(G^{\lambda}(\bfx; \tau), V)$ denote the smallest angle between any non-zero vector $v\in V$ and $v'\in G^{\lambda}(\bfx; \tau)$. Moreover, we say that $\tau\notin_{\bfx, K}V$ if $\ang(G^{\lambda}(\bfx; \tau), V)\ge K^{-1}$; otherwise, we say $\tau\in_{\bfx, K} V$. If  $\bfx$ and $K$ are clear from the context, we often abbreviate $\tau\notin_{\bfx, K} V$ to $\tau\notin V$. 
Next, let us introduce the notion of broad norms.  Fix $B_{K^2}\subset B_R$ centered at $\bfx_0$. Define 
\begin{equation}
    \mu_{T^{\lambda} g}(B_{K^2}):=\min_{V_1, \dots, V_A\in \text{Gr}(k-1, n)} \Big(\max_{\substack{\tau\notin V_a\\ \text{ for any } 1\le a\le A}} \|T^{\lambda} g_{\tau}\|_{L^p(B_{K^2})}^p\Big).
\end{equation}
Here $\mathrm{Gr}(k-1,n)$ is the Grassmannian of all $(k-1)$-dimensional subspaces in $\R^n$, and $k$ is to be determined, and $A$ is a parameter that is less important and its choice will become clear later. For $U\subset \R^n$, define 
\begin{equation}
    \|T^{\lambda} g\|_{\BLka^p(U)}:=\Big(\sum_{B_{K^2}} \frac{|B_{K^2}\cap U|}{|B_{K^2}|} \mu_{T^{\lambda} g}(B_{K^2})\Big)^{1/p}. 
\end{equation}
This is called the broad part of $\operat g$. 

For $2\le k\le n-1$, denote \begin{equation}
    \Gamma_{\mathrm{HZ}}(n, k):=\frac{n-1}{3}+\frac{k-1}{6}\prod_{i=k}^{n-1} \frac{2i}{2i+1}.
\end{equation}
We will prove 
\begin{theorem}[Broad norm estimate]\label{201204thm5_1}
Let $2n/5\le k\le n/2$, and
\begin{equation}
    p>p_n(k):=2+\frac{1}{
    \Gamma_{\mathrm{HZ}}(n, k)+\frac{n}{10^3}
    }.
\end{equation}
Then for every $\epsilon>0$, there exists $A$ such that 
\begin{equation}
\label{main-esti}
    \|T^{\lambda} g\|_{\BLka^p(B_{R})}\lesim_{K, \epsilon} R^{\epsilon} \|g\|_{L^2}^{2/p}\|g\|_{L^{\infty}}^{1-2/p},
\end{equation}
for every $K\ge 1$, $1\le R\le \lambda$, where $B_{R}\subset B_{\lambda}$ is a ball of radius $R$ . Moreover, the implicit constant depends polynomially on $K$. 
\end{theorem}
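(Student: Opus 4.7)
The plan is to prove Theorem \ref{201204thm5_1} by a double induction on the two scale parameters $\lambda$ and $R$, combined with a polynomial partitioning decomposition at the scale $R$. The base case is trivial, and the inductive hypothesis \eqref{220222e4.5} will be invoked at dyadically smaller scales, so I am free to assume the bound at all $\lambda' \le \lambda/2$. After performing the wave packet decomposition of Section \ref{220619subsection4_2} at scale $R^{1/2+\delta}$, the broad quantity $\mu_{T^{\lambda} g}(B_{K^2})$ on each $K^2$-ball is controlled by caps whose Gauss images are simultaneously $K^{-1}$-transverse to a fixed collection of $(k-1)$-planes. This transversality is exactly what allows contributions from different caps to be combined via $k$-broad / Brascamp--Lieb type inequalities, and it is why the broad norm, rather than the full $L^p$ norm, is the right quantity to run the induction on.

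The central dichotomy is cellular versus algebraic. Applying the polynomial partitioning algorithm of Section \ref{220717section5} (a variant of Hickman--Rogers in which the cells are not allowed to shrink too fast, governed by the new parameters $\delta_{n'-1/2}$ from \eqref{220707e1_12}), one either obtains many disjoint cells on which most wave packets are confined, so that induction on $R$ closes the estimate; or the mass concentrates in a thin neighborhood of an algebraic variety $Z$ of controlled degree. In the algebraic case I perform the transverse/tangent split relative to $Z$: transverse tubes are rare at each point of $Z$ and can be handled by rescaling and applying induction on $\lambda$; tangent tubes concentrate near $Z$ and must be estimated by finer geometric means.

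The input on the tangent side that gives the baseline Hickman--Zahl range \eqref{220926e1_19} is the strong polynomial Wolff axiom of Section \ref{220704section4}, which is available here precisely because Bourgain's condition guarantees, via Theorem \ref{211016thm2.1} and the argument behind Theorem \ref{main_thm_2}, that the curved tubes $T_{\theta,v}$ satisfy the same incidence bounds as straight tubes. To improve past \eqref{220926e1_19}, I invoke the broom estimate of Section \ref{220706section6} (Theorem \ref{220615thm8_7}): if a large subcollection of the wave packets contributing to the broad norm on some $B_{K^2}$ shares a common short ``handle'' at an intermediate time, then their ``bristles'' must spread out efficiently at the far end. This estimate does not rely on Bourgain's condition and is proved by a generalized pseudo-conformal transformation which reduces it, via a counting lemma, to the Variety Uncertainty Principle of Lemma \ref{220614lemma8_8}. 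When the algebraic cells shrink faster than the broom argument can tolerate, the bush estimates of Section \ref{220717section7} take over and handle the ``small grain'' regime.

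The main obstacle, and the technical heart of the proof, is the combinatorial bookkeeping that combines all three improvements \emph{simultaneously}. The polynomial partitioning must be iterated through many nested scales $R = R_n \gg R_{n-1} \gg \cdots \gg R_k$, keyed to the admissible parameters \eqref{admissable_parameters} and \eqref{220707e1_12}, and at each scale one decides whether to invoke the cellular/inductive bound, the broom estimate, or the bush estimate. The broom mechanism compares tube concentrations across two distinct scales, so the quantitative gain from Lemma \ref{220614lemma8_8} must be carefully matched against the polynomial losses incurred at each level of the partition in order to recover exactly the exponent $p_n(k)$ with the constant $\Gamma_{\mathrm{HZ}}(n,k) + n/10^3$. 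The assembly of these pieces, together with the choice of $A = A(\epsilon)$ and the closure of the induction, will be carried out in Section \ref{220706section8}.
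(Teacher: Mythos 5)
Your proposal is correct and follows essentially the same architecture as the paper's actual proof: a two-parameter induction on $\lambda$ and $R$, the modified polynomial partitioning that controls how fast cells shrink, the strong polynomial Wolff axiom from Bourgain's condition, the broom estimate reduced via a pseudo-conformal change of variables and counting lemma to the Variety Uncertainty Principle, the bush estimate for grains of radius below $\sqrt{R}$, and the final numerical optimization over nested scales in Section~\ref{220706section8}. The one structural point you under-emphasize is that the paper's Part~I algorithm deliberately does \emph{not} immediately compare cellular versus tangent versus transverse contributions but instead grows a full tree of nodes (retaining all three children at each step) and only walks and compares along a distinguished branch in Part~II; this deferral is what makes the broom construction and the two-ends relation $\sim$ possible.
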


Recall that when $\phi(\bfx; \xi)=\inn{x}{\xi}+t|\xi|^2$, Hickman and Zahl \cite{hickman2020note} proved \eqref{main-esti} for all 
\begin{equation}
    p>2+\frac{1}{\Gamma_{\mathrm{HZ}}
    (n, k)}.
\end{equation}
Theorem \ref{201204thm5_1} provides a slight improvement in this case. When $k$ is outside the range $[2n/5, n/2]$, we also obtain improved broad norm estimates. As they are irrelevant for the asymptotic formula in \eqref{220717e1_13}, we do not state them here.

It is standard in the literature to reduce Theorem \ref{220704theorem3_1} to Theorem \ref{201204thm5_1}. For instance, by Proposition 11.1 in Guth-Hickman-Iliopoulou's work \cite{MR4047925}, if 
\begin{equation}
    2+\frac{4}{2n-k}\le p\le 2+\frac{2}{k-2},
\end{equation}
then Theorem \ref{201204thm5_1} (for some fixed $k$) implies Theorem \ref{220704theorem3_1} for the same $p$. To see the asymptotic formula in \eqref{220717e1_13}, we set $k=\nu n$ and 
\begin{equation}
    p_n(k)=2+\frac{4}{2n-k},
\end{equation}
solve $\nu$, and then obtain 
\begin{equation}
    q_{n, 2}=2+\frac{4}{2-\nu}\frac{1}{n}+O(n^{-2}).
\end{equation}
We refer to the appendix of \cite{hickman2020note} on how to control $\Gamma_{\mathrm{HZ}}(n, k)$.\\

When proving Theorem \ref{201204thm5_1}, we will apply a wave packet decomposition
\begin{equation}
    g=\sum_{T\in \T[B_R]} g_T.
\end{equation}
By pigeonholing, we can assume that $\norm{g_T}_2 \simeq \norm{g_{T'}}_2$ for every $T$ and $T'$. \footnote{This will be used in a counting lemma below (Lemma \ref{lem: counting}). }\\

\section{Polynomial partitioning}\label{220717section5}

\subsection{Preparatory work.} 

In this subsection, we state a few definitions that will be useful in the forthcoming polynomial partitioning algorithms. 
\begin{definition}[Transverse complete intersection]\label{221010defi5_1}
Let $P_{1}, \ldots, P_{n-m}: \mathbb{R}^{n} \rightarrow \mathbb{R}$ be polynomials. We consider the common zero set
\begin{equation}\label{220223e6.1}
    Z\left(P_{1}, \ldots, P_{n-m}\right):=\left\{x \in \mathbb{R}^{n}: P_{1}(x)=\cdots=P_{n-m}(x)=0\right\} .
\end{equation}
Suppose that for all $z \in Z\left(P_{1}, \ldots, P_{n-m}\right)$, one has
$$
\bigwedge_{j=1}^{n-m} \nabla P_{j}(z) \neq 0 .
$$
Then a connected branch of this set, or a union of connected branches of this set, is called an m-dimensional transverse complete intersection. Given a set $Z$ of the form \eqref{220223e6.1}, the degree of $Z$ is defined by
$$
\min \left(\prod_{i=1}^{n-m} \operatorname{deg}\left(P_{i}\right)\right)
$$
where the minimum is taken over all possible representations of $Z=Z\left(P_{1}, \ldots, P_{n-m}\right)$.
\end{definition}

\begin{definition}[Tangent tubes]\label{220613definition6_2}
Recall the parameters in \eqref{admissable_parameters}. Let $r \geqslant 1$ and $Z$ be an $m$-dimensional transverse complete intersection. A tube $T_{\theta, v}\left(\mathbf{x}_{0}\right) \in \mathbb{T}\left[B\left(\mathbf{x}_{0}, r\right)\right]$ is said to be $r^{-1 / 2+\delta_{m}}$-tangent to $Z$ in $B\left(\mathbf{x}_{0}, r\right)$ if it satisfies
\begin{enumerate}
    \item $T_{\theta, v}\left(\mathbf{x}_{0}\right) \subset \mc{N}_{r^{1 / 2+\delta_{m}}}(Z) \cap B\left(\mathbf{x}_{0}, r\right)$;
    \item For every $\bfz \in Z \cap B\left(\mathbf{x}_{0}, r\right)$, if there is $\bfy \in T_{\theta, v}\left(\mathbf{x}_{0}\right)$ with $|\bfz-\bfy| \lesssim r^{1 / 2+\delta_{m}}$, then one has
$$
\measuredangle \left(G^{\lambda}(\bfy; w_{\theta}), T_{\bfz} Z\right) \lesssim r^{-1 / 2+\delta_{m}} .
$$
Here, $T_{\bfz} Z$ is the tangent space of $Z$ at $\bfz$.
\end{enumerate}
\end{definition}

\begin{definition}
Given a function $f: B^{n-1}\to \C$, we say that it is concentrated on wave packets from $\W$ if 
\begin{equation}
    \norm{\sum_{T\notin \W} f_T}_{\infty} \lesim R^{-100n} \norm{f}_2.
\end{equation}
Here $R$ is from Theorem \ref{201204thm5_1}. 
\end{definition}

\subsection{Partitioning algorithms: Part I}\label{220607subsection6_2}

In this subsection, we run the first part of the polynomial partitioning algorithm. It is a variant of the algorithm in Hickman-Rogers' work \cite{HR2019} with two main differences. 

The first difference is that, after reaching an algebraic dominant case, we will not compare contributions from the tangential case and the transverse case, but instead keep both terms and continue to do polynomial partitioning for both terms. This will be needed in Section \ref{220706section6} when we construct brooms. 

Let us explain the second difference. In the first algorithm in \cite[page 247]{HR2019}, the authors there did not need to control how fast cells shrink. In other words, each time when they see a cellular case, they simply decrease the radius parameter $\rho_j$ by a factor of 2 (see for instance equation (31) in \cite[page 253]{HR2019}). If in the current paper we simply repeat their algorithm, then we will not have good control of the non-admissible parameters $D_n, D_{n-1}, \dots$ by $R$ (see Lemma \ref{220708lemma5_9} below), which was not needed in \cite{HR2019} and is crucial in our inductive argument in Section \ref{220706section8}. 

In order to control how fast cells shrink, in Lemma \ref{partitioninglemma} we require cells to have diameter at most $r/d$, instead of $r/2$. This change also brings in changes in how the algorithm runs. For instance, in the last equation in \cite[page 255]{HR2019}, the authors there simply let $d^{-\delta}$ absorb the constant $2$ from the above $r/2$, and this steps needs to be done more carefully in the current paper as $d^{-\delta}$ certainly can not absorb the factor $d$. \footnote{This will be addressed at the end of Subsection \ref{220610subsection6_4}.}\\

By pigeonholing, we can find a collection $\B_{K^2}$ of balls of radius $K^2$ such that 
\begin{equation}\label{220706e5_3}
    \frac{1}{2}\norm{T^{\lambda} g}_{\BLka^p(B'_{K^2})}\le \norm{T^{\lambda} g}_{\BLka^p(B_{K^2})}\le 2 \norm{T^{\lambda} g}_{\BLka^p(B'_{K^2})}
\end{equation}
for two arbitrary $B_{K^2}, B'_{K^2}\in \B_{K^2}$ and 
\begin{equation}
    \norm{T^{\lambda} g}_{\BLka^p(B_R)}^p\lesim (\log R)^{10} \sum_{B_{K^2}\in \B_{K^2}} \norm{T^{\lambda} g}_{\BLka^p(B_{K^2})}^p.
\end{equation}
Denote 
\begin{equation}
    \mfy=\bigcup_{B_{K^2}\in \B_{K^2}} B_{K^2}.
\end{equation}
Next, we apply polynomial partitioning to $T^{\lambda} g$ restricted to $\mfy$. 
\begin{lemma}[Polynomial partitioning, Guth \cite{guth2018}, Hickman and Rogers \cite{HR2019}]\label{partitioninglemma}
Fix $r \gg 1, d \in \mathbb{N}$ and suppose $F \in L^{1}(\R^n)$ is non-negative and supported on $B_r\cap \mc{N}_{r^{1/2+\delta_{\circ}}}(Z)$ for some $0<\delta_{\circ}\ll 1$, where $Z$ is an  $m$-dimensional transverse complete intersection of degree at most $d$. At least one of the following cases holds:\\

\noindent \underline{Cellular case.} There exists a polynomial $P: \mathbb{R}^{n} \rightarrow \mathbb{R}$ of degree $O(d)$ with the following properties:
\begin{itemize}
    \item[(1)] $\#\cell(P) \simeq d^{m}$ and each $O' \in \operatorname{cell}(P)$ has diameter at most $r/d$. 
    \item[(2)] If we define 
\begin{equation}\label{220706e5_6}
    \mc{O}:=\{O'\setminus \mc{N}_{r^{1/2+\delta_{\circ}}}: O'\in \cell(P)\},
\end{equation}
then 
\begin{equation}
    \int_{O} F \simeq d^{-m} \int_{\mathbb{R}^{n}} F \quad \text { for all } O \in \mathcal{O} .
\end{equation}
\end{itemize}
\noindent \underline{Algebraic case.} There exists an $(m-1)$-dimensional transverse complete intersection Y of degree at most $O(d)$ such that
$$
\int_{B_{r} \cap \mc{N}_{r^{1/2+\delta_{\circ}}}(Z)} F \lesssim \int_{B_{r} \cap \mc{N}_{r 1 / 2+\delta_{\circ}}(Y)} F
$$
\end{lemma}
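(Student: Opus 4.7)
The plan is to adapt the on-variety polynomial ham sandwich construction of Guth \cite{guth2018}, as used in Hickman-Rogers \cite{HR2019}, with two strengthenings: enforcing the sharper cell-diameter bound $r/d$ (rather than $r/2$), and ensuring the $(m-1)$-dimensional set in the algebraic case is a genuine transverse complete intersection in the sense of Definition \ref{221010defi5_1}.

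First I would build the partitioning polynomial $P$ of degree $O(d)$. Polynomial ham sandwich applied on $Z$ produces a $P_0$ of degree $O(d)$ whose zero set splits $F|_Z$ into $\simeq d^m$ pieces of comparable mass, and the Barone-Basu refinement of Milnor-Thom bounds the number of connected components of $Z \setminus Z(P_0)$ by $O(d^m)$. Intersecting with the $r^{1/2+\delta_{\circ}}$-neighborhood of $Z$ gives the putative partition of the support of $F$. To sharpen the diameter bound, I would run additional rounds of polynomial bisection within any cell that still exceeds diameter $r/d$ (equivalently, compose $P_0$ with a grid polynomial of degree $O(d)$ and spacing $r/d$ and pigeonhole on the resulting sub-cells), keeping the total degree at $O(d)$ while retaining $\simeq d^{-m} \int F$ of mass in a constant fraction of the surviving cells.

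Then comes the standard dichotomy: write
$$\int F = \int_{\mc{O}} F + \int_{B_r \cap \mc{N}_{r^{1/2+\delta_{\circ}}}(Z(P)) \cap \mc{N}_{r^{1/2+\delta_{\circ}}}(Z)} F,$$
where $\mc{O}$ is as in \eqref{220706e5_6}. If the first term dominates, the cellular conclusion holds with the partition above. If the second term dominates, $F$ concentrates in the $r^{1/2+\delta_{\circ}}$-neighborhood of $Z(P) \cap Z$, an algebraic set of dimension $m-1$ and degree $O(d)$, and pigeonholing over its $O(d^m)$ connected components isolates a single component $Y$ carrying a significant share of $\int F$.

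The main obstacle is upgrading this component to a genuine transverse complete intersection: the wedge $\nabla P \wedge \nabla P_1 \wedge \cdots \wedge \nabla P_{n-m}$, where $P_1, \dots, P_{n-m}$ define $Z$, may vanish on a lower-dimensional stratum. I would handle this by a generic small perturbation of the coefficients of $P$, invoking a Bertini-Sard argument to force this wedge to be non-vanishing away from a locus of dimension $\le m-2$; a continuity and pigeonhole argument preserves the mass concentration and the degree bound throughout the perturbation. The exceptional singular locus, having strictly smaller Hausdorff dimension, is absorbed into a slightly enlarged neighborhood whose volume is negligible compared to that of the $r^{1/2+\delta_{\circ}}$-neighborhood of $Y$. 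The remaining smooth connected branch serves as the desired $(m-1)$-dimensional transverse complete intersection $Y$ of degree $O(d)$, completing the dichotomy.
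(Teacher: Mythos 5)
The paper does not in fact prove this lemma: it is stated as a variant of known results, with the basic dichotomy cited to Guth \cite{guth2018} and Hickman--Rogers \cite{HR2019} and the strengthened cell-diameter bound $r/d$ (rather than $r/2$) deferred to the proof sketch of Theorem 2.12 in \cite{wang2018restriction}. So there is no in-text argument to compare against; your proposal is a reconstruction from the cited sources, and the overall architecture (variety polynomial ham sandwich giving $\simeq d^m$ equal-mass pieces from a degree-$O(d)$ polynomial, a degree-$O(d)$ grid factor of spacing $r/d$ forcing the diameter bound, a real-algebraic count keeping the number of relevant cells at $O(d^m)$, and a Bertini--Sard perturbation turning the wall into a transverse complete intersection) is the standard and correct route. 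The cellular step as written is somewhat loose: the grid polynomial should be \emph{multiplied} into $P_0$ (composition would square the degree); you should argue via a Wongkew-type volume bound that only $O(d^m)$ of the $r/d$-cells meet $B_r \cap \mc{N}_{r^{1/2+\delta_\circ}}(Z)$ (using $r/d \gg r^{1/2+\delta_\circ}$); and since each sub-cell inherits the upper bound $\lesssim d^{-m}\int F$ from its ham-sandwich parent, pigeonholing over $\lesssim d^m$ such sub-cells yields $\simeq d^m$ of them each with $\simeq d^{-m}\int F$. These are details you can fill in.

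There is, however, a genuine gap in your algebraic case. You pigeonhole over the $O(d^m)$ connected components of $Z(P)\cap Z$ to isolate a single connected branch $Y$ ``carrying a significant share'' of $\int F$. A single branch need only carry $\gtrsim d^{-m}\int F$, so the bound you would actually obtain is
\begin{equation*}
\int_{B_r \cap \mc{N}_{r^{1/2+\delta_\circ}}(Z)} F \,\lesssim\, d^{m}\int_{B_r \cap \mc{N}_{r^{1/2+\delta_\circ}}(Y)} F,
\end{equation*}
and the lemma requires the implicit constant to be absolute, independent of $d$. This loss of $d^m$ is fatal for the applications (the polynomial partitioning algorithm applies the lemma $\gtrsim 1/\delta$ times and cannot afford a $d^m$ loss at each step). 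The fix is to not pigeonhole at all: Definition \ref{221010defi5_1} explicitly allows a transverse complete intersection to be a \emph{union} of connected branches, so $Y$ should be taken as the union of all nonsingular branches of the (Bertini--Sard perturbed) intersection $Z(P)\cap Z$. The discarded singular stratum has dimension $\le m-2$, so its $r^{1/2+\delta_\circ}$-neighborhood has volume smaller by a factor $\simeq r^{1/2-\delta_\circ}$ than that of $Y$ and carries a negligible fraction of $\int F$; $Y$ then inherits essentially all the mass with an absolute constant.
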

Here the diameter of a cell in Lemma \ref{partitioninglemma} is $O(r/d)$ instead of $O(r/2)$ as in \cite{guth2018} and \cite{HR2019}. See the proof sketch of Theorem 2.12 in \cite{wang2018restriction} for a discussion.\\

We now start our polynomial partitioning algorithm. This algorithm will produce a tree consisting of many nodes. Each node will have zero child (algorithm for that node stops), one child (cellular case) or two children (algebraic case). For two nodes $\mf{n}$ and $\mf{n}'$, if $\mf{n}$ is a descendant of $\mf{n}'$, then we write $\mf{n}\preccurlyeq \mf{n}'$; similarly we define $\succcurlyeq.$ Here we make the convention that $\mf{n}\preccurlyeq \mf{n}$ and $\mf{n}\succcurlyeq \mf{n}$.   Recall the parameters in \eqref{admissable_parameters}. Moreover, define $\tilde{\delta}_{m-1}$ by 
\begin{equation}
    (1-\tilde{\delta}_{m-1})(1/2+\delta_{m-1})=1/2+\delta_m.
\end{equation}
Note that $\delta_{m-1}/2\le \tilde{\delta}_{m-1}\le 2\delta_{m-1}$. \\

\noindent \underline{Step 0.} In this step, we create the root of the tree.  Denote 
\begin{equation}
    \mathfrak{n}_0=\{O_{i_0}\},
\end{equation}
with $O_{i_0}=B_R\cap \mfy$. Moreover, define $\dim(\mathfrak{n}_0)=n$,  $\rho(\mathfrak{n}_0)=R$ and $\mf{j}(\mf{n}_0)=0$. Later we will define $\mf{j}$ for every node. It will play the role of the parameter $j$ in the recursive step of the first algorithm in \cite[page 247]{HR2019}. In the end of this step, define
\begin{equation}
    \#_a(\mf{j}(\mf{n}_0))=0, \ \ \#_c(\mf{j}(\mf{n}_0))=0.
\end{equation}
Here ``a" is short for ``algebraic" and ``c" is short of ``cellular", and we use $\#_a$ to record the number of algebraic cases we have so far when applying Lemma \ref{partitioninglemma}, and similarly, we use $\#_c$ to record the number of cellular cases. Initialize
\begin{equation}\label{220707e5_11a}
    \mf{M}'_{\ell'}=\emptyset,  \ \  \forall \ell'\in \N.
\end{equation}
This collection will appear at almost the end of the algorithm; we will keep adding elements to it as we run the algorithm. \\

\noindent\underline{Step 1.} Creating nodes at the first level. The root node $\mathfrak{n}_0$ has either one or two children, given as follows. Apply Lemma \ref{partitioninglemma} to the function $T^{\lambda} g\cdot \mathbbm{1}_{O_{i_0}}$, we obtain a collection of cells $\{O_{i_1}\}_{i_1}$ (as in \eqref{220706e5_6}) and a wall $W=\mc{N}_{(\rho(\mathfrak{n}_0))^{1/2+\delta_{m}}}(Z)$ with $m=\dim(\mathfrak{n}_0)$ for some variety $Z$. We without loss of generality assume that all these regions are unions of balls $B_{K^2}$. Compare 
\begin{equation}\label{220531e6_6}
    \#\{B_{K^2}\in \B_{K^2}: B_{K^2}\subset \bigcup_{i_1} O_{i_1}\} \text{ and } \#\{B_{K^2}\in \B_{K^2}: B_{K^2}\subset W\}.
\end{equation}
If the former term in \eqref{220531e6_6} is larger, then we say that we are in the cellular case of this step, and otherwise we say that we are in the algebraic case. In the cellular case, the node $\mf{n}_0$ has only one child, and will be denoted by $\mf{n}_{1, L}=\mf{n}_{1, L}(\mf{n}_0)$ and called the L-child; here $L$ refers to ``left". Define
\begin{equation}
    \rho(\mf{n}_{1, L})=\rho(\mf{n}_0)/d, \ \ \mf{n}_{1, L}=\{O_{i_1}\}_{i_1}, \ \ \dim(\mf{n}_{1, L})=m,
\end{equation}
and 
\begin{align}
& \mf{j}(\mf{n}_{1, L})=\mf{j}(\mf{n}_0)+1,\\ 
    & \#_c(\mf{j}(\mf{n}_{1, L}))=\#_c(\mf{j}(\mf{n}_{0}))+1, \ \ \#_a(\mf{j}(\mf{n}_{1, L}))=\#_a(\mf{j}(\mf{n}_{0})).
\end{align}
In other words, we have had one cellular cases so far, and zero algebraic cases.  Define $\mf{L}_1=\{\mf{n}_{1, L}\}$, that is, we use $\mf{L}_1$ to collect all the L-children in this step. Moreover, set $\mf{M}_1=\mf{R}_1=\emptyset$. This finishes defining the node $\mf{n}_{1, L}$ and its information. \\

If we are in the algebraic case, then the node $\mf{n}_0$ has two children, and will be denoted by $\mf{n}_{1, M}=\mf{n}_{1, M}(\mf{n}_0)$ and $\mf{n}_{1, R}=\mf{n}_{1, R}(\mf{n}_0)$ and be called the M-child and the R-child; here $M$ refers to ``middle", and $R$ to ``right".  Define 
\begin{equation}
     \rho(\mathfrak{n}_{1, M})=\rho(\mathfrak{n}_{1, R})=\rho(\mathfrak{n}_0)^{1-\tilde{\delta}_{m-1}}, 
\end{equation}
and
\begin{equation}
     \dim(\mathfrak{n}_{1, M})=n, \ \dim(\mathfrak{n}_{1, R})=n-1, \ \ \mathfrak{n}_{1, M}=\mathfrak{n}_{1, R}=\{O_{i'_1}\}_{i'_1},
\end{equation}
where each $O_{i'_1}$ is given by $W\cap B_{\rho(\mathfrak{n}_{1, M})}$ and we let $B_{\rho(\mathfrak{n}_{1, M})}$ run through all balls of radii $\rho(\mathfrak{n}_{1, M})$ inside $B_{\rho(\mathfrak{n}_0)}$. Moreover, define
\begin{align}
    & \mf{j}(\mf{n}_{1, M})=\mf{j}(\mf{n}_0)+1,\\
    & \#_c(\mf{j}(\mf{n}_{1, L}))=\#_c(\mf{j}(\mf{n}_{0})), \ \ \#_a(\mf{j}(\mf{n}_{1, L}))=\#_a(\mf{j}(\mf{n}_{0}))+1,
\end{align}
and 
\begin{equation}\label{220717e5_22}
    \mf{j}(\mf{n}_{1, R})=0, \ \ \#_c(\mf{j}(\mf{n}_{1, R}))=\#_a(\mf{j}(\mf{n}_{1, R}))=0.
\end{equation}
Here let us explain the rule of defining $\mf{j}$: It is always reset to be $0$ whenever we see an $R$-child, and otherwise its values is increased by $1$. 
% Define 
% \begin{equation}
%     \mathfrak{L}_1=\{\mathfrak{n}_{1, L}\}, \mathfrak{M}_1=\{\mathfrak{n}_{1, M}\}, \mathfrak{R}_1=\{\mathfrak{n}_{1, R}\},
% \end{equation}

Let $\mathfrak{M}_1, \mathfrak{R}_1$ collect all the M-children and R-children at Step 1, respectively. As $\mf{n}_0$ has no L-child, we set $\mf{L}_1=\emptyset$. 
This finishes the first step. \\

\noindent \underline{Step 2.} Creating nodes at the second level. Take a node $\mathfrak{n}_{1}$ from the previous step. It has one or two children. 

If $\mathfrak{n}_{1}\in \mathfrak{L}_1$ or $\mathfrak{M}_1$, then its children, which will be named either as $\mathfrak{n}_{2, L}$ or as  $\mathfrak{n}_{2, M}, \mathfrak{n}_{2, R}$, will be given as follows. For each $O_{i_1}\in \mathfrak{n}_1$, we apply Lemma \ref{partitioninglemma} with dimension parameter $\dim(\mathfrak{n}_1)$, and obtain a collection of cells $\{O_{i_2}\}_{i_2}$ and a wall $W_{m-1}=\mc{N}_{(\rho(\mathfrak{n}_1))^{1/2+\delta_m}}(Z_{m-1})$ with $m=\dim(\mathfrak{n}_1)$ for some variety $Z_{m-1}$. We make a comparison similar to \eqref{220531e6_6}. If we are in the cellular case, then define
\begin{equation}\label{220707e5_20}
    \rho(\mathfrak{n}_{2, L})=\rho(\mathfrak{n}_1)/d, \ \mathfrak{n}_{2, L}=\bigcup_{O_{i_1}\in \mf{n}_1}\{O_{i_2}\}_{i_2}, \ \dim(\mathfrak{n}_{2, L})=\dim(\mf{n}_1),
\end{equation}
and 
\begin{align}\label{220707e5_21}
    & \mf{j}(
    \mf{n}_{2, L}
    )=\mf{j}(
    \mf{n}_{1}
    )+1, \\
    & \#_c(\mf{j}(\mf{n}_{2, L}))=\#_c(\mf{j}(\mf{n}_{1}))+1, \ \ \#_a(\mf{j}(\mf{n}_{2, L}))=\#_a(\mf{j}(\mf{n}_{1})).
\end{align}
If we are in the algebraic case, then define 
\begin{equation}
    \rho(\mathfrak{n}_{2, M})=\rho(\mathfrak{n}_{2, R})=\rho(\mathfrak{n}_1)^{1-\tilde{\delta}_{m-1}}, 
\end{equation}
and
\begin{align}
         & \dim(\mathfrak{n}_{2, M})=\dim(\mf{n}_1), \ \dim(\mathfrak{n}_{2, R})=\dim(\mf{n}_1)-1,\\
         & \mathfrak{n}_{2, M}=\mathfrak{n}_{2, R}=\bigcup_{O_{i_1}\in \mf{n}_1}\{O_{i'_2}\}_{i'_2},
\end{align}
where each $O_{i'_2}$ is given by $W_{m-1}\cap B_{\rho(\mathfrak{n}_{2, M})}$ and we let $B_{\rho(\mathfrak{n}_{2, M})}$ run through all balls of radius $\rho(\mathfrak{n}_{2, M})$ inside $B_{\rho(\mathfrak{n}_1)}$. Moreover, define 
\begin{align}\label{220707e5_26}
    & \mf{j}(\mf{n}_{2, M})=\mf{j}(\mf{n}_1)+1,\\
    & \#_c(\mf{j}(\mf{n}_{2, L}))=\#_c(\mf{j}(\mf{n}_{1})), \ \ \#_a(\mf{j}(\mf{n}_{2, L}))=\#_a(\mf{j}(\mf{n}_{1}))+1,
\end{align}
and 
\begin{equation}\label{220707e5_28}
    \mf{j}(\mf{n}_{2, R})=0, \ \ \#_c(\mf{j}(\mf{n}_{2, R}))=\#_a(\mf{j}(\mf{n}_{2, R}))=0.
\end{equation}
Let $\mf{L}_2$ collection all the $L$-children at Step 2, and similarly, we define $\mf{M}_2$ and $\mf{R}_2$. 

Next, consider the remaining case $\mf{n}_1\in \mf{R}_1$. Its children are given as follows. For each $O_{i_1}\in \mf{n}_1$, we apply Lemma \ref{partitioninglemma} with dimension parameter $m=\dim(\mf{n}_1)$ and $\delta_{\circ}=\delta_m$, and obtain a collection of cells $\{O_{i_2}\}_{i_2}$ and a wall $W_{m-1}=\mc{N}_{(\rho(\mathfrak{n}_1))^{1/2+\delta_m}}(Z_{m-1})$ for some variety $Z_{m-1}$. If we are in the cellular case, then define $\rho(\mathfrak{n}_{2, L}),  \mathfrak{n}_{2, L},  \dim(\mathfrak{n}_{2, L})$ in the same way as in \eqref{220707e5_20} and $\mf{j}(
    \mf{n}_{2, L}
    ), \#_c(\mf{j}(\mf{n}_{2, L})), \#_a(\mf{j}(\mf{n}_{2, L}))$ in the same way as in \eqref{220707e5_21}. If we are in the algebraic case, then define 
\begin{equation}\label{220706e5_20}
     \rho(\mathfrak{n}_{2, M})=\rho(\mathfrak{n}_{2, R})=\rho(\mathfrak{n}_1)^{1-\tilde{\delta}_{m-1}}, 
\end{equation}
and
\begin{align}\label{220706e5_21}
     &\dim(\mathfrak{n}_{2, M})=\dim(\mf{n}_1), \ \dim(\mathfrak{n}_{2, R})=\dim(\mf{n}_1)-1,\\
    &  \mathfrak{n}_{2, R}=\bigcup_{O_{i_1}\in \mf{n}_1}\{O_{i'_2}\}_{i'_2},
\end{align}
where each $O_{i'_2}$ is given by $W_{m-1}\cap B_{\rho(\mathfrak{n}_{2, M})}$ and we let $B_{\rho(\mathfrak{n}_{2, M})}$ run through all balls of radius $\rho(\mathfrak{n}_{2, M})$ inside $B_{\rho(\mathfrak{n}_1)}$. Our choice of parameters guarantees that 
\begin{equation}
    \rho(\mf{n}_{2, R})^{1/2+\delta_{m-1}}=\rho(\mf{n}_1)^{1/2+\delta_m}. 
\end{equation}
We still need to define $\mf{n}_{2, M}$. Roughly speaking, for each $O_{i'_2}$ given by $W_{m-1}\cap B_{\rho(\mathfrak{n}_{2, M})}$, which is of thickness $\rho(\mf{n}_1)^{1/2+\delta_m}$, we will cut it into thinner layers $W_{m-1, b}$ of thickness $\rho(\mf{n}_{2, M})^{1/2+\delta_m}$. Then we set
\begin{equation}
    \mf{n}_{2, M}=\bigcup_{O_{i_1}\in \mf{n}_1} \bigcup_{O_{i'_2}} \{O_{i'_2}\cap W_{m-1, b}\}_b.
\end{equation}
To make this precise, we follow Hickman-Rogers' treatment \cite{HR2019}. For each $B_{\rho(\mf{n}_{2, M})}$, we follow page 258 in \cite{HR2019}, find a finite set of translates $\mf{B}\subset B(0, \rho(\mf{n}_{1})^{1/2+\delta_m})$ and then set (following the last equation in \cite[page 258]{HR2019})
\begin{equation}\label{220706e5_24}
    \mf{n}_{2, M}=
    \bigcup_{O_{i_1}\in \mf{n}_1}
    \bigcup_{O_{i'_2}} \{O_{i'_2}\cap \mc{N}_{\rho(\mf{n}_{2, M})^{1/2+\delta_m}}(Z_{m-1}+b): b\in \mf{B}\}.
\end{equation}
In the end, define $\mf{j}(\mf{n}_{2, M}), \mf{j}(\mf{n}_{2, R}), \#_a, \#_c$ in the same way as in \eqref{220707e5_26}--\eqref{220707e5_28}. 

Let $\mf{L}_2$ collection all the $L$-children at Step 2, and similarly, we define $\mf{M}_2$ and $\mf{R}_2$. This finishes Step 2.\\

\noindent \underline{Step $\ell$.} Creating nodes at the $\ell$-th level. How we proceed in a general step is similar to what we did in Step 2, with one difference mentioned at the beginning of this section that we need to control how fast cells shrink. We will sketch the part in this step that is similar to Step 2, and explain in more details the difference. \\

Take a node $\mf{n}_{\ell-1}$ from the previous step. There are a few parameters associated to it: A dimension parameter $\dim(\mf{n}_{\ell-1})=:m$, a radius parameter $\rho(\mf{n}_{\ell-1})$, the parameters $\mf{j}(\mf{n}_{\ell-1})$, $\#_c(\mf{j}(\mf{n}_{\ell-1}))$ and $\#_a(\mf{j}(\mf{n}_{\ell-1}))$ satisfying
\begin{equation}
    \mf{j}(\mf{n}_{\ell-1})=\#_c(\mf{j}(\mf{n}_{\ell-1}))+ \#_a(\mf{j}(\mf{n}_{\ell-1})).
\end{equation}
Before we proceed, we need to introduce new notation. Let $\mf{n}_{\ell-1}^{\uparrow}$ denote the closest ancestor (itself included) to $\mf{n}_{\ell-1}$ that belongs to $\mf{M}_{\ell'}$, $\mf{M}'_{\ell'}$ or $\mf{R}_{\ell'}$ for some $\ell'$. (Recall the initialization of $\mf{M}'_{\ell'}$ in \eqref{220707e5_11a}. ) Note that 
\begin{equation}
    \dim(
    \mf{n}_{\ell-1}
    )
    =\dim(
    \mf{n}_{\ell-1}^{\uparrow}
    ).
\end{equation}
For each $O_{i_{\ell-1}}\in \mf{n}_{\ell-1}$, we apply Lemma \ref{partitioninglemma} with dimension parameter $m$ and $\delta_{\circ}=\delta_{\circ}(\mf{n}_{\ell-1})$ satisfying
\begin{equation}\label{220709e5_38}
    \rho(\mf{n}_{\ell-1})^{\frac{1}{2}+\delta_{\circ}}=
    \rho(\mf{n}_{\ell-1}^{\uparrow})
    ^{\frac{1}{2}+\delta_{m}}
\end{equation}
and obtain a collection of cells $\{O_{i_{\ell}}\}_{i_{\ell}}$ and a wall
\begin{equation}\label{220709e5_39}
    W_{m-1}=\mc{N
}_{
(\rho(\mf{n}^{\uparrow}_{\ell-1}))^{1/2+\delta_m}
}(Z_{m-1})
\end{equation}
for some variety $Z_{m-1}$.  If we are in the algebraic case, then $\mf{n}_{\ell-1}$ has two children, called $\mf{n}_{\ell, M}$ and $\mf{n}_{\ell, R}$, and similarly to \eqref{220706e5_20}--\eqref{220706e5_24}, we define 
\begin{equation}\label{220710e5_40}
    \rho(\mf{n}_{\ell, M})=\rho(\mf{n}_{\ell, R})=\rho(\mf{n}_{\ell-1})^{1-\tilde{\delta}_{m-1}}, \ \mf{n}_{\ell, R}=\bigcup_{O_{i_{\ell-1}}\in \mf{n}_{\ell-1}} \{O_{i'_{\ell}}\}_{i'_{\ell}};
\end{equation}
moreover, define 
\begin{equation}\label{220707e5_39a}
    \mf{n}_{\ell, M}=\bigcup_{
    O_{i_{\ell-1}}\in \mf{n}_{\ell-1}
    }
    \bigcup_{
    O_{i'_{\ell}}
    } 
    \{
    O_{i'_{\ell}}\cap 
    \mc{N}_{
    \rho(\mf{n}_{\ell, M})^{1/2+\delta_m}
    }
    (Z_{m-1}+b)
    :
    b\in \mf{B}
    \},
\end{equation}
where $\mf{B}$ is a finite set of points in $B(0, 
\rho(
\mf{n}_{\ell-1}
)^{1/2+\delta_{\circ}}
)$, and 
\begin{equation}
    \dim(\mf{n}_{\ell, M})=\dim(\mf{n}_{\ell-1}), \ \dim(\mf{n}_{\ell, R})=\dim(\mf{n}_{\ell-1})-1.
\end{equation}
If we are in the cellular case,
% then $\mf{n}_{j-1}$ has only one child, called $\mf{n}_{j, L}$ and similarly to \eqref{220706e5_19}, we define 
% \begin{equation}
%     \rho(\mf{n}_{j, L})=\rho(\mf{n}_{j-1})/d, \mf{n}_{j, L}=\bigcup_{O_{i_{j-1}}\in \mf{n}_{j-1}} \{O_{i_j}\}_{i_j}, \ \dim(\mf{n}_{j, L})=\dim(\mf{n}_{j-1}). 
% \end{equation}
then we proceed differently. \footnote{Indeed Step 2 also falls into the same framework; the forthcoming difference only occurs when $\ell$ is large. } There are two further cases. If we are  in the case 
\begin{equation}\label{220707e5_39}
    \frac{\rho(\mf{n}_{\ell-1})}{d}\ge \rho(
    \mf{n}^{\uparrow}_{\ell-1}
    )
    ^{1-
    \delta_{m-1/2}
    },
\end{equation}
where $\delta_{m-1/2}$ is as in \eqref{220707e1_12}, then we define $\mf{n}_{\ell, L}, \rho(\mf{n}_{\ell, L}
)$ and $\dim(\mf{n}_{\ell, L}
)$ in the same way as in \eqref{220707e5_20}, and $\mf{j(\mf{n}_{\ell, L}
)
}, \#_c(\mf{j(\mf{n}_{\ell, L}
)
})$ and $\#_a(\mf{j(\mf{n}_{\ell, L}
)
})$ in the same way as in \eqref{220707e5_21}. If \eqref{220707e5_39} is violated, then we first update 
\begin{equation}
    \mf{M}'_{\ell}=\mf{M}'_{\ell}\bigcup \{\mf{n}_{\ell, L}(\mf{n}_{\ell-1})\}.
\end{equation}
The next step is to cut each $O_{i_{\ell}}$ into thinner layers, in a way that is essentially the same as in \eqref{220706e5_24}. Let us be more precise. By the way we run the partitioning algorithm, in particular, due to the choice of the parameter $\delta_{\circ}$ in \eqref{220709e5_38}, we know that 
\begin{equation}
    \rho(
    \mf{n}
    )^{
    \frac{1}{2}+
    \delta_{\circ}
    (\mf{n})
    }=
    \rho(
    \mf{n}_{\ell-1}^{\uparrow}
    )^{\frac{1}{2}+\delta_m}
\end{equation}
for every node $\mf{n}$ with $\mf{n}_{\ell-1}\preccurlyeq \mf{n}\preccurlyeq \mf{n}_{\ell-1}^{\uparrow}$. Therefore we have 
\begin{equation}\label{220707e5_43}
    O_{i_{\ell}}\subset B_{
    \rho
    (
    \mf{n}_{\ell, L}
    )
    }\cap \mc{N}_{
    \rho(
    \mf{n}_{\ell-1}^{\uparrow}
    )^{1/2+\delta_m}
    }(Z_m),
\end{equation}
where
\begin{equation}
    \rho(\mf{n}_{\ell, L}):=\rho(\mf{n}_{\ell-1})/d,
\end{equation}
and similarly as before we define $\rho(\mf{n}_{\ell, L})$ before defining $\mf{n}_{\ell, L}$, and $Z_m$ is an $m$-dimensional variety. Note that as \eqref{220707e5_39} is violated, we have 
\begin{equation}\label{220707e5_45}
    \rho(
    \mf{n}^{\uparrow}_{\ell-1}
    )
    ^{1-
    \delta_{m-1/2}
    }/d \le \rho(\mf{n}_{\ell, L})\le \rho(
    \mf{n}^{\uparrow}_{\ell-1}
    )
    ^{1-
    \delta_{m-1/2}
    }.
\end{equation}
We will cut the right hand side of \eqref{220707e5_43} into thinner layers of thickness $\rho(\mf{n}_{\ell, L})^{1/2+\delta_m}$ via transverse equidistribution properties (for instance Lemma 8.4 in  \cite{MR4047925}). This can be done in exactly the same way as in \eqref{220706e5_24} and \eqref{220707e5_39a}, with the only difference that the parameter $\tilde{\delta}_{m-1}$ that appears in the radius $\rho(\mf{n}_{2, M})$ is replaced by $\delta_{m-1/2}$ (which appears in the radius $\rho(\mf{n}_{\ell, L})$ because of the relation \ref{220707e5_45}).  Therefore, we follow \cite[page 258]{HR2019} and find a finite set of translates $\mf{B}\subset B(0, 
\rho(
\mf{n}_{\ell-1}^{\uparrow}
)^{1/2+\delta_m}
)$, and then set 
\begin{equation}\label{220707e5_49z}
\mf{n}_{\ell, L}=\bigcup_{
    O_{i_{\ell-1}}\in \mf{n}_{\ell-1}
    }
    \bigcup_{
    O_{i'_{\ell}}
    } 
    \{
    O_{i'_{\ell}}\cap 
    \mc{N}_{
    \rho(\mf{n}_{\ell, L})^{1/2+\delta_m}
    }
    (Z_{m-1}+b)
    :
    b\in \mf{B}
    \}.
\end{equation}
Moreover, define 
\begin{align}
    & \dim(\mf{n}_{\ell, L})=\dim(\mf{n}_{\ell-1}), \ \  \mf{j}
    (
    \mf{n}_{\ell, L}
    )=\mf{j}
    (
    \mf{n}_{\ell-1}
    )+1,\\
    & \#_c(\mf{j}
    (
    \mf{n}_{\ell, L}
    ))=\#_c(\mf{n}_{\ell-1})+1, \ \ \#_a(\mf{j}
    (
    \mf{n}_{\ell, L}
    ))=\#_a(\mf{n}_{\ell-1}).
\end{align}
In the end, we let $\mf{L}_{\ell}$ collect all the $L$-children at the $\ell$-th level, and similarly we define $\mf{M}_{\ell}$ and $\mf{R}_{\ell}$. This finishes the $\ell$-th step of the algorithm. \\

Before we proceed to the next step, we make a remark on the size of the parameter $\delta_{\circ}$ in \eqref{220709e5_38}. By \eqref{220709e5_38} and \eqref{220707e5_39}, we have 
\begin{equation}
     \rho(
     \mf{n}_{\ell-1}^{\uparrow}
     )^{
     \frac{1+2\delta_m}{1+2\delta_{\circ}}
     }
     =\rho(\mf{n}_{\ell-1})
     \ge d \rho(
    \mf{n}^{\uparrow}_{\ell-1}
    )
    ^{1-
    \delta_{m-1/2}
    },
\end{equation}
which further implies 
\begin{equation}
    \delta_m\le \delta_{\circ}\le \delta_{m-1/2}.
\end{equation}
In other words, $\delta_{\circ}$ is still quite close to $\delta_m$, and is very far from $\delta_{m-1}$. 
\begin{remark}
In the above \eqref{220710e5_40}, each element $O_{i'_{\ell}}$ in $\mf{n}_{\ell, R}$ is given by $B_{
\rho(\mf{n}_{\ell, R})
}
\cap 
W_{m-1},
$
where $B_{
\rho(\mf{n}_{\ell, R})
}$ is a ball of radius $\rho(\mf{n}_{\ell, R})$ and $W_{m-1}$ is given in \eqref{220709e5_39}. Their counterpart in \cite{HR2019} is given by $B\cap \mc{N}_{
\rho_j^{1/2+\delta_m}
}(\bfY)$ at the bottom of \cite[page 256]{HR2019}, where $B$ is a ball of radius $\rho_{j+1}$ and $\rho_{j+1}=\rho(
\mf{n}_{\ell, R}
)$ in our notation, $\rho_j=\rho(
\mf{n}_{\ell-1}
)$, $\bfY=W_{m-1}$ in our notation. The slight difference is that the neighborhood scale
$(\rho(
\mf{n}_{\ell-1}^{\uparrow}
))^{
1/2+\delta_m
}$
in \eqref{220709e5_39} is bigger than $\rho_j^{1/2+\delta_m}$. However, by \eqref{220707e5_39}, 
\begin{equation}
    \frac{
    \rho(\mf{n}_{\ell-1}^{\uparrow})
    }{\rho(
    \mf{n}_{\ell-1}
    )}\le \rho(\mf{n}_{\ell-1}^{\uparrow})^{\delta_{m-1/2}}.
\end{equation}
We will lose about $\delta_{m-1}^{-1}$ many of these multiplicative factors. As $\delta_{m-1/2}\ll_{\epsilon} \delta_{m-1}$, we see that they are harmless. 
\end{remark}

\noindent \underline{Stopping condition.} Suppose we have arrived at the $\ell_0$-th level. Take a node $\mf{n}_{\ell_0}$. The algorithm will not continue at this node (but may still continue at other nodes at the same level) if either $\rho(\mf{n}_{\ell_0})\le R^{\delta_0}$ or $\dim(\mf{n}_{\ell_0})\le k-1$. Here $\delta_0$ is given in \eqref{admissable_parameters}
 and $k$ is given in Theorem \ref{201204thm5_1}. In other words, we will not continue our algorithm if the radius of the node is too small, or the dimension is too small. \\

We state one lemma that will be used later. 
\begin{lemma}\label{220608lemma6_5}
Under the above notation, we have \begin{equation}\label{220717e5_57}
    \#\pnorm{\bigcup_{\iota} (\mf{M}_{\iota}\cup \mf{R}_{\iota}
    )}\lesim_{n, \delta} 1. 
\end{equation}
\end{lemma}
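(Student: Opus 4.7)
The plan is to reduce the lemma to a bound on the total number of algebraic-case steps that occur throughout the partitioning tree. Every element of $\mathfrak{M}_\iota\cup\mathfrak{R}_\iota$ is created only when the splitting at its parent node falls into the algebraic case, and each such parent contributes exactly one M-child and one R-child. Thus it suffices to bound the number of nodes in the tree at which the algebraic case is invoked.

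The key structural observation is that branching in the partitioning tree happens \emph{only} at algebraic nodes: every cellular case produces a single L-child (see Step~$\ell$, equations \eqref{220707e5_20} and \eqref{220707e5_49z}), while every algebraic case produces exactly two children of the same radius parameter. Consequently, the partitioning tree is a binary tree with branching allowed only at algebraic nodes, and its total size is bounded by $2^{D+1}$ whenever $D$ is a bound on the number of algebraic cases along any single root-to-leaf path. So the real task is to bound $D$ by a constant depending only on $n$ and $\delta$.

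To bound $D$, I would track the radius parameter $\rho$ along an arbitrary path from the root. At the root, $\rho(\mathfrak{n}_0)=R$. At each algebraic case occurring at a node $\mathfrak{n}_{\ell-1}$ with $\dim(\mathfrak{n}_{\ell-1})=m$, one has $\rho(\mathfrak{n}_{\ell,M})=\rho(\mathfrak{n}_{\ell,R})=\rho(\mathfrak{n}_{\ell-1})^{1-\tilde{\delta}_{m-1}}$, while cellular cases (whether covered by \eqref{220707e5_39} or not) can only further decrease $\rho$. Since $m$ takes values in $\{k,k+1,\dots,n\}$ and $\tilde{\delta}_{m-1}\ge c\,\delta_{n-1}$ for some absolute constant $c>0$ by the defining relation $(1-\tilde{\delta}_{m-1})(1/2+\delta_{m-1})=1/2+\delta_m$ and the ordering \eqref{admissable_parameters}, after $a$ algebraic cases along the path we obtain
\begin{equation}
\log\rho(\text{current node})\le (1-c\,\delta_{n-1})^{a}\log R.
\end{equation}
The stopping condition dictates that the algorithm continues only while $\rho>R^{\delta_0}$, i.e.\ while $\log\rho>\delta_0\log R$. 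Combining these inequalities yields $(1-c\,\delta_{n-1})^{a}>\delta_0$, hence
\begin{equation}
a\le \frac{\log(1/\delta_0)}{c\,\delta_{n-1}} = O_{n,\delta}(1).
\end{equation}

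Taking $D$ to be this uniform bound, we obtain $\#\bigl(\bigcup_\iota(\mathfrak{M}_\iota\cup\mathfrak{R}_\iota)\bigr)\le 2\cdot 2^{D+1}=O_{n,\delta}(1)$, which is \eqref{220717e5_57}. There is no real obstacle here; the only subtlety is making sure the bookkeeping correctly accounts for the fact that cellular cases may be interleaved with algebraic cases in any order, but this is handled automatically because cellular cases only decrease $\rho$ further, and so the inequality $\log\rho\le(1-c\delta_{n-1})^a\log R$ holds regardless of the interleaving pattern.
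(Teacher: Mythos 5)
Your proof is correct and follows essentially the same strategy as the paper's: contract away cellular edges (which create no $M$- or $R$-children), observe that the residual tree is binary with branching only at algebraic nodes, and bound its depth via the geometric decrease $\log\rho\mapsto(1-\tilde\delta_{m-1})\log\rho$ at each algebraic step combined with the stopping threshold $\rho\le R^{\delta_0}$. The main difference is in the final accounting: the paper's proof concludes with the bound $n\delta^{-1}$, but since the contracted tree genuinely branches at each algebraic node, the correct count for a binary tree of depth $D$ is of order $2^{D}$ rather than $D$; your bound $2^{D+1}$ with $D\lesssim\log(1/\delta_0)/\delta_{n-1}$ is the sharper and, as far as I can tell, the accurate version of the estimate. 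Both quantities are $O_{n,\delta}(1)$ (recall from \eqref{admissable_parameters} that $\delta\ll_\epsilon\delta_{n-1}\ll_\epsilon\delta_0$, so $D$ is a constant determined once the admissible parameters are fixed), so the lemma is established either way, but your bookkeeping is the one I would trust. One minor point: you invoke the dimension-stopping rule $\dim\le k-1$ only implicitly, while the paper mentions it explicitly; it is not actually needed for the depth bound, since the $\rho$-shrinkage argument alone terminates the recursion, so omitting it costs nothing.
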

\begin{proof}[Proof of Lemma \ref{220608lemma6_5}]
Note that the left hand side of \eqref{220717e5_57} would not change if we assume that there is no cellular case in the algorithm. In this case, each node has either zero or two children. The total number of levels $\ell_0\le \delta^{-1}$. Moreover, note that the algorithm stops at a node $\mf{n}$ if $\dim(\mf{n})\le k-1$. As a consequence, we see that the left hand side of \eqref{220717e5_57}  is $\le n\delta^{-1}$. 
\end{proof}

\subsection{The related case}

The proof of Theorem \ref{201204thm5_1} relies on a two-ends argument. This requires a relation, denoted by $\sim$, which is defined between tubes $T\in \T[B_R]$ and balls $B_{\iota}\subset B_R$ of radius $R^{1-\delta}$. The definition of $\sim$ is a bit complicated and relies on the definition of brooms; it will be given in Section \ref{220706section6}.  At this point, we only need the fact that 
\begin{equation}\label{220706e5_31}
    \#\{B_{\iota}\subset B_R: B_{\iota}\sim T\}\lesim 1,
\end{equation}
for every tube $T\in \T[B_R]$. 

\begin{definition}
For each ball $B_{\iota}\subset B_R$ of radius $R^{1-\delta}$ and $\bfx\in B_{\iota}$, define 
\begin{equation}
    T^{\lambda} g^{\sim}(\bfx):=\sum_{T\in \T[B_R], T\sim B_{\iota}} T^{\lambda} g_{T}(\bfx),
\end{equation}
and define $T^{\lambda} g^{\not\sim}(\bfx)$ to be the difference of $T^{\lambda} g(\bfx)$ and $T^{\lambda} g^{\sim}(\bfx)$. Moreover, for a given $\iota$, define 
\begin{equation}\label{220610e6_25}
    g^{\not\sim}_{\iota}:=\sum_{T\in \T[B_R], T\not\sim B_{\iota}} g_{T}.
\end{equation}
\end{definition}
Under the above notation, it holds that 
\begin{equation}
    T^{\lambda} g^{\not\sim}(\bfx)=T^{\lambda} g^{\not\sim}_{\iota}(\bfx),
\end{equation}
whenever $\bfx\in B_{\iota}$. Recall the definition of $\B_{K^2}$ at the beginning of Subsection \ref{220607subsection6_2}. Denote 
\begin{equation}
    \B'_{K^2}:=\set{B_{K^2}\in \B_{K^2}: \norm{T^{\lambda} g^{\sim}}_{\BLka^p(B_{K^2})} \le \norm{T^{\lambda} g^{\not\sim}}_{\BLka^p(B_{K^2})}}.
\end{equation}
If $|\B'_{K^2}|\le |\B_{K^2}|/2$, then we say that we are in the related case (of Theorem \ref{201204thm5_1}), and otherwise we say that we are in the non-related case. Because of the pigeonholing step in \eqref{220706e5_3}, if we are in the related case, then the contribution to the broad-norm $\BLka^p(B_{R})$ from $\B_{K^2}\setminus \B'_{K^2}$ is bigger. In this case, we can use the induction hypothesis \eqref{220222e4.5} and the fact that there is only a small number of balls related to each tube, as in \eqref{220706e5_31}, to finish the proofs of Theorem \ref{201204thm5_1} and Theorem \ref{220704theorem3_1}. 
\begin{lemma}
If we are in the related case, then \eqref{main-esti} holds. 
\end{lemma}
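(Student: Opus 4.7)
The plan is to exploit the related-case hypothesis $|\B'_{K^2}|\le |\B_{K^2}|/2$ together with the uniformity \eqref{220706e5_3} coming from the initial pigeonholing to dominate $\norm{T^\lambda g}_{\BLka^p(B_R)}$ by the contribution coming from the related piece $T^\lambda g^{\sim}$. Once this reduction is in place, I can localize to balls of radius $R^{1-\delta}$, on which $T^\lambda g^{\sim}$ collapses to $T^\lambda g^{\sim}_\iota$, and close the estimate by combining the induction hypothesis \eqref{220222e4.5} with the finite-overlap property \eqref{220706e5_31}.

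First, for each $B_{K^2}\notin \B'_{K^2}$ the definition of $\B'_{K^2}$ gives $\norm{T^\lambda g^{\not\sim}}_{\BLka^p(B_{K^2})}\le \norm{T^\lambda g^{\sim}}_{\BLka^p(B_{K^2})}$. The broad norm is sub-additive at the cost of doubling the parameter $A$ (one chooses the optimal subspaces for $T^\lambda g^\sim$ and $T^\lambda g^{\not\sim}$ separately and concatenates them into $2A$ subspaces), so on such $B_{K^2}$ one has $\norm{T^\lambda g}_{\mathrm{BL}_{k,2A}^p(B_{K^2})}\lesim \norm{T^\lambda g^{\sim}}_{\BLka^p(B_{K^2})}$. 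Doubling $A$ is harmless since Theorem \ref{201204thm5_1} permits $A=A(\epsilon)$. Combining with \eqref{220706e5_3} and $|\B'_{K^2}|\le |\B_{K^2}|/2$ yields
\begin{equation*}
\norm{T^\lambda g}_{\BLka^p(B_R)}^p \lesim (\log R)^{O(1)} \sum_{B_{K^2}\in \B_{K^2}\setminus \B'_{K^2}} \norm{T^\lambda g^{\sim}}_{\BLka^p(B_{K^2})}^p.
\end{equation*}

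Second, I would group the balls $B_{K^2}$ according to the ball $B_\iota$ of radius $R^{1-\delta}$ in which they lie. On each $B_\iota$ one has $T^\lambda g^{\sim}|_{B_\iota}=T^\lambda g^{\sim}_\iota$, where $g^{\sim}_\iota:=\sum_{T\sim B_\iota}g_T$. Dominating the broad norm by the $L^p$ norm on each $B_{K^2}$, summing over $B_{K^2}\subset B_\iota$, and then applying the induction hypothesis \eqref{220222e4.5} at the strictly smaller scale $R^{1-\delta}$ — after a rescaling that recasts $T^\lambda$ restricted to $B_\iota$ as a H\"ormander-type operator $T^{\lambda'}$ with $\lambda'\le\lambda/2$ and phase still satisfying (H1), (H2$^+$) and Bourgain's condition — produces
\begin{equation*}
\sum_{B_{K^2}\subset B_\iota}\norm{T^\lambda g^{\sim}}_{\BLka^p(B_{K^2})}^p \lesim \norm{T^\lambda g^\sim_\iota}_{L^p(B_\iota)}^p \lesim (R^{1-\delta})^{p\epsilon}\norm{g^\sim_\iota}_{L^p}^p.
\end{equation*}

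Finally, I would sum over $\iota$. Essential orthogonality of wave packets with distinct caps $\theta(T)$ (from the decomposition in Subsection \ref{220619subsection4_2}) yields $\norm{g^{\sim}_\iota}_{L^2}^2\simeq \sum_{T\sim B_\iota}\norm{g_T}_{L^2}^2$; combined with \eqref{220706e5_31} this gives $\sum_\iota \norm{g^{\sim}_\iota}_{L^2}^2\lesim \norm{g}_{L^2}^2$. Together with the trivial $\norm{g^{\sim}_\iota}_{L^\infty}\lesim \norm{g}_{L^\infty}$ and the interpolation $\norm{h}_{L^p}^p\le \norm{h}_{L^2}^2\norm{h}_{L^\infty}^{p-2}$, one gets $\sum_\iota \norm{g^{\sim}_\iota}_{L^p}^p\lesim \norm{g}_{L^2}^2\norm{g}_{L^\infty}^{p-2}$. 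Choosing $\delta\ll \epsilon$ so that $(R^{1-\delta})^{p\epsilon}(\log R)^{O(1)}\le \tfrac12 R^{p\epsilon}$ assembles the three displays into \eqref{main-esti}. The hard part is the rescaling in the second step: restricting $T^\lambda$ to a ball of radius $R^{1-\delta}$ does not by itself reduce the oscillation parameter, so one must construct an honest change of variables (using that the wave packets defining $g^\sim_\iota$ cluster in a prescribed portion of phase space via the $\sim$-relation) which produces a H\"ormander operator $T^{\lambda'}$ with $\lambda'\le\lambda/2$ whose phase continues to satisfy (H1), (H2$^+$) and Bourgain's condition, so that \eqref{220222e4.5} genuinely applies.
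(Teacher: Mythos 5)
Your high-level framework is right — pass to $g^{\sim}$ via the related-case hypothesis and the uniformity \eqref{220706e5_3}, localize to $B_\iota$, apply the induction hypothesis, and close with the finite-overlap property \eqref{220706e5_31}. That is indeed the standard two-ends/induction-on-scales argument the paper cites. But two of your steps contain genuine gaps. First, the rescaling. You correctly note that restricting $T^\lambda$ to $B_\iota$ does not reduce $\lambda$, and then propose that the $\sim$-relation supplies the clustering in phase space needed for a change of variables producing $T^{\lambda'}$ with $\lambda'\le\lambda/2$. This does not work: the $\sim$-relation is a purely spatial/geometric relation between tubes $T\in\T[B_R]$ and balls $B_\iota$ (defined via brooms and bushes), and imposes no constraint on the frequency caps $\theta(T)$ of the related tubes, which still range over all of $B^{n-1}$. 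The frequency localization that enables a parabolic rescaling comes instead from the $K^{-1}$-caps $\tau$ already built into the broad norm: since $\mu_{T^\lambda g^\sim_\iota}(B_{K^2})$ is a max over $\|T^\lambda(g^\sim_\iota)_\tau\|_{L^p(B_{K^2})}^p$ and each $(g^\sim_\iota)_\tau=g^\sim_\iota\cdot\mathbbm{1}_\tau$ lives on a $K^{-1}$-cap, parabolic rescaling sends $T^\lambda$ to a $T^{\lambda'}$ with $\lambda'\simeq K^{-2}\lambda\le\lambda/2$, with the new phase still satisfying (H1), (H2$^+$), and (by Theorem \ref{211016thm2.1}) Bourgain's condition. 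So the $\tau$-decomposition must come before the invocation of \eqref{220222e4.5}, not after; your passage ``dominate the broad norm by $L^p$ on $B_\iota$'' collapses the $\tau$-structure and forfeits the only available mechanism for reducing $\lambda$.

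Second, the $L^\infty$ bound. The estimate $\|g^{\sim}_\iota\|_{L^\infty}\lesssim\|g\|_{L^\infty}$, which you call trivial, does not follow. Here $g^\sim_\iota=\sum_{T\sim B_\iota}g_T$, and for a fixed cap $\theta$ all the wave packets $g_{\theta,v}(\omega)$ from the decomposition in Subsection \ref{220619subsection4_2} have the form $c_v\,e^{2\pi i v\cdot\omega}\widetilde{\psi}_\theta(\omega)$ — modulations on the \emph{same} cap, not spatially disjoint pieces. An arbitrary sub-sum of such modulations can have $L^\infty$ norm far larger than the full sum: with random-sign coefficients of equal modulus $c$ (the generic post-pigeonholing situation), the full sum has $L^\infty\sim c\sqrt{N\log N}$ while a sub-collection of size $\sim N$ with aligned signs has $L^\infty\sim cN$, a loss of $\sqrt{N}\sim R^{(n-1)/4}$, which would destroy the $R^\epsilon$ gain. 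Making this step work requires genuine input — either a finer pigeonholing (controlling $\|g_T\|_\infty$ and the count per $\theta$) or structural properties of the set $\{T:T\sim B_\iota\}$ that rule out the adversarial sub-collections — and your proposal supplies neither.
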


The proof of this lemma is a standard induction-on-scales argument, and is the same as that of Lemma 2.20 in \cite{wang2018restriction}. We leave out the proof.

\subsection{Partitioning algorithm: Part II}\label{220610subsection6_4}

The rest of the paper is to handle the case that 
\begin{equation}
    |\B'_{K^2}|\ge |\B_{K^2}|/2
\end{equation}
that is, the unrelated component $T^{\lambda} g^{\not\sim}$ dominates. Recall that we need to bound 
\begin{equation}
    \begin{split}
        & \sum_{B_{K^2}\in \B_{K^2}} \norm{T^{\lambda} g^{\not\sim}}_{\BLka^p(B_{K^2})}^p=\sum_{B_{\iota}} \sum_{B_{K^2}\subset B_{\iota}} \Norm{T^{\lambda} g^{\not\sim}_{\iota}}_{\BLka^p(B_{K^2})}^p.
    \end{split}
\end{equation}
We run the previous algorithm again with $T^{\lambda} g$ replaced by $T^{\lambda} g^{\not\sim}$. Note that in the algorithm in Subsection \ref{220607subsection6_2}, we did not compare contributions between the transverse case and the tangential case, which is exactly because we will further run the algorithm below. In what follows, we often abbreviate $g^{\not\sim}_{\iota}$ to $g_{\iota}$. \\

\noindent \underline{Step 0.} Define $\mf{n}^*_0=\mf{n}_0$.\\

\noindent \underline{Step 1. } We will define three quantities; they correspond to contributions from the cellular case $C(\mathfrak{L}_1)$, the transverse case $C(\mathfrak{M}_1)$ and the tangential case $C(\mathfrak{R}_1)$. Take $B_{\iota}\subset B_R$, a ball of radius $R^{1-\delta}$, and $\mf{n}_1$, a child of $\mf{n}^*_0$ with $\mf{n}_1\in \mc{L}_1$, take $O_{i_1}\in \mf{n}_1$ with $O_{i_1}\subset B_{\rho_1}\subset B_{\iota}$ with $\rho_1=\rho(\mf{n}_1)$ for some $B_{\rho(\mf{n}_1)}$, denote 
\begin{equation}\label{220429e6_24}
    g_{\iota, O_{i_1}}=\sum_{T\in \T[B_R], T\cap O_{i_1}\neq \emptyset}(g_{\iota})_{T}.
\end{equation}
Define
\begin{equation}
    C(\mathfrak{L}_1):= \sum_{O_{i_1}\in \mathfrak{n}_{1}} \sum_{\iota} \Norm{T^{\lambda} g_{\iota, O_{i_1}} }_{\BLka^p(O_{i_1}\cap B_{\iota})}^p.
\end{equation}
If $\mf{n}_0^*$ does not have any children in $\mf{L}_1$, then we simply set $C(\mf{L})_1=0$.

To define the other two quantities, we need more notation. Let $\mathfrak{n}_1$ be a child of $\mf{n}^*_0$ with $\mf{n}_1\in \mf{M}_1$ or $\mf{R}_1$, and take  $O_{i'_1}\in \mathfrak{n}_1$ given by $O_{i'_1}=B_{\rho_1}\cap W$ for some $B_{\rho_1}\subset B_{\rho_0}$, with $\rho_1=\rho(\mf{n}_1)$ and $\rho_0=\rho(\mf{n}^*_0)$. Similarly to \eqref{220429e6_24}, we define $g_{\iota, O_{i'_1}}$.  Let $\T_{O_{i'_1}}$ denote the collection of all $T\in \T[B_{\rho_0}]$  for which 
\begin{equation}
    T\cap B_{\rho_1}\cap W\neq \emptyset. 
\end{equation}
Moreover, we will partition $\T_{O_{i'_1}}$ into two parts
\begin{equation}
    \T_{O_{i'_1}}=\T_{O_{i'_1}, \tang}\bigcup \T_{O_{i'_1}, \trans},
\end{equation}
where 
\begin{equation}\label{220429e6_26}
    \T_{O_{i'_1}, \tang}:=\set{T\in \T_{O_{i'_1}}: T \text{ is } \rho_1^{-\frac{1}{2}+\delta_{m-1}}\text{-tangent to } Z \text{ on } B_{\rho_1}
    },
\end{equation}
where $m=\dim(\mf{n}^*_0)$. We refer the definition of $\T_{O_{i'_1}, \tang}$ to Definition 9.3 in \cite[page 257]{HR2019}; it needs some clarification as $T$ is a wave packet at the scale $\rho_0$ and we are talking about tangency at the smaller scale $\rho_1$.

After defining $\T_{O_{i'_1}, \tang}$, we will just set 
\begin{equation}
    \T_{O_{i'_1}, \trans}:=\T_{O_{i'_1}}\setminus \T_{O_{i'_1}, \tang}.
\end{equation}
Moreover, define 
\begin{equation}\label{220610e6_39}
    g_{\iota, O_{i'_1}, \tang}:=\sum_{T\in \T_{O_{i'_1}, \tang}}(g_{\iota, O_{i'_1}})_{T}
\end{equation}
and 
\begin{equation}\label{220610e6_40}
    g_{\iota, O_{i'_1}, \trans}:=\sum_{T\in \T_{O_{i'_1}, \trans}}(g_{\iota, O_{i'_1}})_{T}.
\end{equation}
We continue to define the other two quantities. Define 
\begin{equation}
    C(\mf{M}_1):=\sum_{O_{i'_1}\in \mf{n}_{1, M}(\mf{n}^*_0)} \sum_{\iota} \Norm{T^{\lambda} g_{\iota, O_{i'_1}, \trans} 
    }^p_{\BLka^p(O_{i'_1}\cap B_{\iota})}
\end{equation}
and 
\begin{equation}
    C(\mf{R}_1):=\sum_{O_{i'_1}\in \mf{n}_{1, R}(\mf{n}^*_0)} \sum_{\iota} \Norm{T^{\lambda} g_{\iota, O_{i'_1}, \tang} 
    }^p_{\BLka^p(O_{i'_1}\cap B_{\iota})}.
\end{equation}
In the end, we compare $C(\mf{L}_1), C(\mf{M}_1), C(\mf{R}_1)$ and see which one is the largest. For the one that is the largest, its node $\mf{n}_1$ will be called $\mf{n}^*_1$. This finishes the first step. 

Before we proceed to the next step, we introduce more notations which will be used later and also in the forthcoming broom estimates. If $\mf{n}^*_1\in \mc{L}_1$, then 
\begin{equation}\label{220610e6_43a}
    g^*_{\iota, O_{i_1}}:=g_{\iota, O_{i_1}},
\end{equation}
for which we refer to \eqref{220429e6_24}. If $\mf{n}^*_1\in \mc{M}_1$, then 
\begin{equation}\label{220610e6_44a}
     g^*_{\iota, O_{i_1}}:=g_{\iota, O_{i_1}, \trans}
\end{equation}
for which we refer to \eqref{220610e6_39}. If $\mf{n}^*_1\in \mc{R}_1$, then 
\begin{equation}\label{220610e6_45a}
     g^*_{\iota, O_{i_1}}:=g_{\iota, O_{i_1}, \tang}
\end{equation}
for which we refer to \eqref{220610e6_40}. \\

\noindent \underline{Step $2\le \ell\le \ell_0$.} Here $\ell_0\in \N$ is the last step in the algorithm in Subsection \ref{220607subsection6_2}. Step $\ell$ will be similar to Step 1. Our goal is to define $C(\mf{L}_{\ell}), C(\mf{M}_{\ell}), C(\mf{R}_{\ell})$. We consider the case $\mf{n}_{\ell, L}(\mf{n}^*_{\ell-1})$ and the case $\mf{n}_{\ell, M}(\mf{n}^*_{\ell-1}), \mf{n}_{\ell, R}(\mf{n}^*_{\ell-1})$ separately. 

Take $\mf{n}_{\ell}=\mf{n}_{\ell, L}(\mf{n}^*_{\ell-1})$ and $O_{i_{\ell}}\in \mf{n}_{\ell}$. Suppose that $O_{i_{\ell}}\subset B_{\rho_{\ell}}\cap O_{i_{\ell-1}}$ with $\rho_{\ell}=\rho(\mf{n}_{\ell})$, $O_{i_{\ell-1}}\in \mf{n}^*_{\ell-1}$ and $O_{i_{\ell-1}}\subset B_{\rho_{\ell-1}}$,  $\rho_{\ell-1}=\rho(\mf{n}^*_{\ell-1})$. For a given $\iota$, denote 
\begin{equation}\label{220430e6_36}
    g_{\iota, O_{i_{\ell}}}=\sum_{\substack{T\in \T[B_{\rho_{\ell-1}}]\\ T\cap O_{i_{\ell}}\neq \emptyset}
    } (g^*_{\iota, O_{i_{\ell-1}}})_{T}. 
\end{equation}
Define 
\begin{equation}\label{220430e6_37}
    C(\mathfrak{L}_{\ell}):= \sum_{O_{i_{\ell}}\in \mathfrak{n}_{\ell}} \sum_{\iota} \Norm{T^{\lambda} g_{\iota, O_{i_{\ell}}} }_{\BLka^p(O_{i_{\ell}}\cap B_{\iota})}^p.
\end{equation}
Next, take $\mf{n}_{\ell}=\mf{n}_{\ell, M}(\mf{n}^*_{\ell-1})$ or $\mf{n}_{\ell, R}(\mf{n}^*_{\ell-1})$ and $O_{i'_{\ell}}\in \mf{n}_{\ell}$. Suppose that $O_{i'_{\ell}}\subset B_{\rho_{\ell}}\cap O_{i'_{\ell-1}}$ with $\rho_{\ell}=\rho(\mf{n}_{\ell})$, $O_{i'_{\ell-1}}\in \mf{n}^*_{\ell-1}$ and $O_{i'_{\ell-1}}\subset B_{\rho_{\ell-1}}$,  $\rho_{\ell-1}=\rho(\mf{n}^*_{\ell-1})$. Let $\T_{O_{i'_{\ell}}}$ denote the collection of all $T\in \T[B_{\rho_1}]$  for which 
\begin{equation}
    T\cap B_{\rho_{\ell}}\cap W_{m-1}\neq \emptyset,
\end{equation}
with $m=\dim(\mf{n}^*_{\ell-1})$. Moreover, we will partition $\T_{O_{i'_{\ell}}}$ into two parts
\begin{equation}
    \T_{O_{i'_{\ell}}}=\T_{O_{i'_{\ell}}, \tang}\bigcup \T_{O_{i'_{\ell}}, \trans},
\end{equation}
where 
\begin{equation}\label{220429e6_26z}
    \T_{O_{i'_{\ell}}, \tang}:=\set{T\in \T_{O_{i'_{\ell}}}: T\text{ is } \rho_{\ell}^{-\frac{1}{2}+\delta_{m-1}}\text{-tangent to } W_{m-1} \text{ on } B_{\rho_{\ell}}
    },
\end{equation}
and 
\begin{equation}
    \T_{O_{i'_{\ell}}, \trans}:=\T_{O_{i'_{\ell}}}\setminus \T_{O_{i'_{\ell}}, \tang}.
\end{equation}
We continue to define the other two quantities. Define 
\begin{equation}\label{220430e6_42}
    C(\mf{M}_{\ell}):=\sum_{O_{i'_{\ell}}\in \mf{n}_{{\ell}, M}(\mf{n}^*_{\ell-1})} \sum_{\iota} \Norm{T^{\lambda} g_{\iota, O_{i'_{\ell}}, \trans} 
    }^p_{\BLka^p(O_{i'_{\ell}}\cap B_{\iota})}
\end{equation}
and 
\begin{equation}\label{220430e6_43}
    C(\mf{R}_{\ell}):=\sum_{O_{i'_{\ell}}\in \mf{n}_{\ell, R}(\mf{n}^*_{\ell-1})} \sum_{\iota} \Norm{T^{\lambda} g_{\iota, O_{i'_{\ell}}, \tang} 
    }^p_{\BLka^p(O_{i'_{\ell}}\cap B_{\iota})}.
\end{equation}
In the end, we compare $C(\mf{L}_{\ell}), C(\mf{M}_{\ell}), C(\mf{R}_{\ell})$ and see which one is the largest. For the one that is the largest, its node $\mf{n}_{\ell}$ will be called $\mf{n}^*_{\ell}$. 

In the end, we define $g^*_{\iota, O_{i_{\ell}}}$ in the same way as in  \eqref{220610e6_43a}--\eqref{220610e6_45a}. \\

The above algorithm outputs a sequence of nodes 
\begin{equation}\label{220708e5_78}
    \mf{n}^*_0, \mf{n}^*_1, \dots, \mf{n}^*_{\ell_0}. 
\end{equation}
The parameter $\dim(\mf{n}^*_{\ell})$ is non-increasing in $\ell$. If $\mf{n}^*_{\ell}$ is an $R$-child, then 
\begin{equation}
    \dim(\mf{n}^*_{\ell})=\dim(\mf{n}^*_{\ell-1})-1;
\end{equation}
otherwise the dimension does not decrease. Denote $m:=\dim(\mf{n}^*_{\ell_0})$. We know that $m\ge k$, where $k$ is as in Theorem \ref{201204thm5_1}, as otherwise the desired estimate \eqref{main-esti} there would be trivial. Let 
\begin{equation}\label{220711e5_88}
    \mf{S}_n, \mf{S}_{n-1}, \dots, \mf{S}_m
\end{equation}
denote the nodes from \eqref{220708e5_78} that are $R$-children, where $\mf{S}_n:=\mf{n}^*_0$ is also included. Here $\mf{S}$ is short for ``surface", as elements in $\mf{S}_{n'}$ are neighborhoods of algebraic varieties for each $n'$. We therefore have 
\begin{equation}
    \dim(\mf{S}_{n'})=n', \ \ \forall m\le n'\le n. 
\end{equation}
Moreover, denote 
\begin{equation}\label{220718e5_90}
    r_{n'}:=\rho(\mf{S}_{n'}), \ \ \forall m\le n'\le n, \ \ r_{m-1}:=1. 
\end{equation}
Elements in $\mf{S}_{n'}$ are of the form $B_{r_{n'}}\cap \mc{N}_{r_{n'}^{1/2+\delta_{n'}}}(S_{n'})$ where $S_{n'}$ is some algebraic variety of dimension $n'$. To simplify notation, we will often identify $B_{r_{n'}}\cap \mc{N}_{r_{n'}^{1/2+\delta_{n'}}}(S_{n'})$ with $S_{n'}$ if it is clear from the context that we are talking about the node $\mf{S}_{n'}$. We follow \cite{HR2019} and introduce a few new notions.  

The pair $(S_{n'}, B_{r_{n'}})$ is called a grain, with its dimension given by $n'$ and degree given by the degree of $S$. 

A multigrain $\vec{S}_{n'}$ is a tuple of grains
$$
\vec{S}_{n'}=\left(\mathcal{G}_{n}, \ldots, \mathcal{G}_{n'}\right), \quad \mathcal{G}_{i}=\left(S_{i}, B_{r_{i}}\right) \quad \text { for } n' \leqslant i \leqslant n
$$
satisfying
\begin{enumerate}
    \item $\operatorname{dim}\left(S_{i}\right)=i$ for $n' \leqslant i \leqslant n$;
    \item $S_{n} \supset S_{n-1} \supset \cdots \supset S_{n'}$;
    \item $B_{r_{n}} \supset B_{r_{n-1}} \supset \cdots \supset B_{r_{n'}}$.
\end{enumerate}
Sometimes we also write $\vec{S}_{n'}=(S_n, \dots, S_{n'})$. The parameter $n-n'$ is referred to as the level of the multigrain $\vec{S}_{n'}$. The complexity of the multigrain is defined to be the maximum of the degrees $\operatorname{deg} S_{i}$ over all $n' \leqslant i \leqslant n$.

\begin{definition}[Nested tubes, \cite{hickman2020note}]\label{220726def5_10}
Let $\vec{S}_{n'}=\left(\mathcal{G}_{n}, \ldots, \mathcal{G}_{n'}\right)$ be a multigrain and
$$
\mathcal{G}_{i}=\left(S_{i}, B_{r_i}\right) \quad \text { for } n' \leqslant i \leqslant n .
$$
Define $\mathbb{T}_{r_i}[\vec{S}_{n'}]$ to be the set of length $r_i$ tubes $T\in \mathbb{T}[B_{r_i}]$ that are tangent to $S_i$ and  satisfy  that there exists $T_j \in \mathbb{T}[B_{r_j}]$ for $n'\le j< i$ such that
\begin{equation}
    T_j \subset \mc{N}_{r_{j}^{1 / 2+\delta_{j}}} S_{j}, \ \ \operatorname{dist}\left(\theta(T_i), \theta(T_j)\right) \lesssim r_{j}^{-1 / 2},
\end{equation}   
and 
\begin{equation}
    \operatorname{dist}\left(T_j, T_i \cap B_{r_j}\right) \lesssim r_{i}^{(1+\delta) / 2}
\end{equation}
hold true for all $i, j$ with $n' \leqslant j <i$.
The direction set of $\mathbb{T}_{r_i}[\vec{S}_{n'}]$ is defined to be
\begin{equation}
    \Theta_{r_i}[\vec{S}_{n'}]:=\{
    \theta(T): T\in \T_{r_i}[\vec{S}_{n'}]
    \}. 
\end{equation}
\end{definition}

For each $m\le n'<n$, define 
\begin{equation}
    D_{n'}=d^{\#_c(
    \mf{j}(\mf{n})
    )},
\end{equation}
where $\mf{n}$ is the parent node of $\mf{S}_{n'}$. Moreover, define 
\begin{equation}
    D_{m-1}=d^{
    \#_c
    (
    \mf{j}(n_{\ell_0}^*)
    )
    }, \ \ D_n=1.
\end{equation}
This defines the same quantity as $D_{\ell-1}$ in \cite[page 265]{HR2019}.

\begin{lemma}\label{220708lemma5_9}
For each $n'\ge m-1$, it holds that 
\begin{equation}
    r_{n'} \prod_{i=n'}^n D_{i}\le R.
\end{equation}
\end{lemma}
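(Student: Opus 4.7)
The plan is to track two quantities along any path in the partitioning tree from the root $\mf{n}_0$ down to a given node: the current radius $\rho$ and a count $N_c$ of the cellular steps taken so far. The key invariant I would establish is
\[
\rho(\mf{n}) \cdot d^{N_c(\mf{n})} \le R \qquad \text{for every node } \mf{n},
\]
where $N_c(\mf{n})$ is the total number of $L$-cases occurring on the path from $\mf{n}_0$ to $\mf{n}$. I would prove this by induction on the depth of $\mf{n}$. At the root, $\rho = R$ and $N_c = 0$. Passing from a node $\mf{n}$ to an $L$-child divides $\rho$ by exactly $d$ (by \eqref{220707e5_20}, and also in the modified cellular case above \eqref{220707e5_49z}, where we still set $\rho(\mf{n}_{\ell, L})=\rho(\mf{n}_{\ell-1})/d$) and increases $N_c$ by $1$, so the product is preserved. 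Passing to an $M$- or $R$-child replaces $\rho$ by $\rho^{1-\tilde{\delta}_{m-1}}$ without changing $N_c$; since $\rho(\mf{n}) \ge 1$ throughout the algorithm (we never recurse below the stopping scale $R^{\delta_0}$), this can only decrease the product.

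The next step is to identify $\prod_{i=n'}^{n} D_i$ with $d$ raised to a suitable total cellular count. From \eqref{220717e5_22}, \eqref{220707e5_28} and their analogues in the general step, the counter $\#_c(\mf{j}(\cdot))$ is reset to $0$ precisely at each $R$-child and incremented by $1$ at each $L$-child. Therefore $D_{n'} = d^{\#_c(\mf{j}(\text{parent of } \mf{S}_{n'}))}$ counts exactly the $L$-cases occurring between $\mf{S}_{n'+1}$ and the parent of $\mf{S}_{n'}$ (for $m \le n' \le n-1$, with the convention $D_n=1$), while $D_{m-1} = d^{\#_c(\mf{j}(\mf{n}^*_{\ell_0}))}$ counts the $L$-cases occurring after $\mf{S}_m$. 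Summing telescopically,
\[
\prod_{i=n'}^{n} D_i = d^{N_c(\mf{n}^{\bullet})},
\]
where $\mf{n}^{\bullet}$ denotes the parent of $\mf{S}_{n'}$ when $n' \ge m$, and $\mf{n}^{\bullet} = \mf{n}^*_{\ell_0}$ when $n' = m-1$.

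The conclusion is then immediate. For $m \le n' \le n$, $\mf{S}_{n'}$ is an $R$-child of $\mf{n}^{\bullet}$ (with $\mf{S}_n = \mf{n}^*_0 = \mf{n}^{\bullet}$ as the degenerate case), so by the algebraic step $r_{n'} = \rho(\mf{n}^{\bullet})^{1-\tilde{\delta}_{n'}} \le \rho(\mf{n}^{\bullet})$. Combining with the invariant applied to $\mf{n}^{\bullet}$,
\[
r_{n'} \prod_{i=n'}^{n} D_i \;\le\; \rho(\mf{n}^{\bullet}) \cdot d^{N_c(\mf{n}^{\bullet})} \;\le\; R.
\]
For $n' = m-1$ one has $r_{m-1}=1$ by \eqref{220718e5_90}, and the desired estimate is just the invariant applied to $\mf{n}^*_{\ell_0}$.

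There is no substantive obstacle; the argument is essentially bookkeeping of how each step of the algorithm affects $\rho$ and $\#_c$. The only point requiring a little care is the split of the cellular case according to whether \eqref{220707e5_39} holds or not, but in both sub-cases the radius is divided by exactly $d$ and $\#_c$ is incremented by $1$, so the invariant is preserved identically in each.
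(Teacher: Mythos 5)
Your proof is correct and rests on the same observation as the paper's: each $L$-step divides $\rho$ by exactly $d$ while incrementing $\#_c$, whereas each $M$- or $R$-step replaces $\rho$ by a power $\rho^{1-\tilde\delta}$ that can only shrink it (since $\rho\ge R^{\delta_0}\ge 1$). The paper packages this as the local inequality $D_i \le r_{i+1}/r_i$ for $i<n$ and telescopes, while you carry the equivalent global invariant $\rho(\mf{n})\,d^{N_c(\mf{n})}\le R$ down the tree; the two formulations are interchangeable.
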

\begin{proof}[Proof of Lemma \ref{220708lemma5_9}]
It suffices to show that 
\begin{equation}\label{220717e5_94}
    D_i\le r_{i+1}/r_i, \ \forall i<n.
\end{equation}
Recall that $r_{i+1}=\rho(\mf{S}_{i+1})$ and $r_{i}=\rho(\mf{S}_{i})$. As $\mf{S}_{i+1}$ is an $R$-child, by definition (see equation \eqref{220717e5_22} and the line below), we have 
\begin{equation}
    \mf{j}(\mf{S}_{i+1})=\#_a(\mf{j}(\mf{S}_{i+1}))=\#_c(\mf{j}(\mf{S}_{i+1}))=0.
\end{equation}
Let $\mf{n}$ be the parent node of $\mf{S}_i$, and therefore $D_i=d^{\#_c(\mf{j}(\mf{n}))}$. When the algorithm runs from $\mf{S}_{i+1}$ to $\mf{n}$, the radius parameter $\rho$ decreases to $\rho/d$ each time $\#_c$ increases by $1$, and \eqref{220717e5_94} follows immediately. 
\end{proof}

In the end of this subsection, we describe a few output functions of the above algorithm. Take $n'$ with $m\le n'\le n$ and consider the node $\mf{S}_{n'}$. As $\mf{S}_{n'}$ is an $R$-child, it means the tangential case $C(\mf{R}_{\ell})$, which was defined in \eqref{220430e6_43}, dominates. Here $\ell$ is the level that $\mf{S}_{n'}$ belongs to. As elements in $\mf{S}_{n'}$ are neighborhoods of algebraic varieties, from now on we will always use $S_{n'}$ to refer to an element in $\mf{S}_{n'}$. Consequently, $g^*_{\iota, O_{i'_{\ell}}}$ will be called $g^*_{\iota, S_{n'}}$, and its wave packets in the ball $B_{r_{n'}}$ with $S_{n'}\subset B_{r_{n'}}$ are all tangent to $S_{n'}$. 

Regarding these functions, we have the following properties. Let $p_{n'}$ with $n'\ge m\ge k$ be a Lebesgue exponent that is fixed later. These exponents satisfy 
\begin{equation}
    p_m\ge p_{m+1}\ge \dots\ge p_n=p\ge 2,
\end{equation}
where $p$ is the exponent in Theorem \ref{201204thm5_1}. Define $\alpha_{n'}, \beta_{n'}\in [0, 1]$ by 
\begin{equation}
    \frac{1}{p_{n'}}=\frac{1-\alpha_{n'-1}}{2}+\frac{\alpha_{n'-1}}{p_{n'-1}}, \ \ \beta_{n'}=\prod_{i=n'}^{n-1} \alpha_i, 
\end{equation}
for $m+1\le n'\le n-1$, and $\alpha_n=\beta_n=1$. We have \\

\noindent \underline{Property 1.} The inequality
\begin{align}
\|T^{\lambda} g\|_{\mathrm{BL}_{k, A}^{p}\left(B_{R}\right)} & \lessapprox M(\vec{r}_{n'}, \vec{D}_{n'})\|g\|_{L^{2}}^{1-\beta_{n'}}\\
& \left(
\sum_{S_{n'} \in \mf{S}_{n'}}
\sum_{\iota}
\left\|
T^{\lambda} g^*_{\iota, S_{n'}}
\right\|
_{
\mathrm{BL}_
{k, A_{n'}}
^{p_{n'}}(B_{r_{n'}})
}
^
{p_{n'}}
\right)^{\frac{\beta_{n'}}{p_{n'}}},
\end{align}
where $B_{r_{n'}}$ is the ball of radius $r_{n'}$ that contains $S_{n'}$ and 
\begin{equation}
    \vec{r}_{n'}:=(r_n, r_{n-1}, \dots, r_{n'}), \ \ \vec{D}_{n'}:=(D_n, D_{n-1}, \dots, D_{n'}),
\end{equation}
holds for
\begin{align}
M(\vec{r}_{n'}, \vec{D}_{n'}):=\left(\prod_{i=n'}^{n-1} D_{i}\right)^{(n-n') \delta}
\left(\prod_{i=n'}^{n-1} r_{i}^{\left(\beta_{i+1}-\beta_{i}\right) / 2} D_{i}^{\left(\beta_{i+1}-\beta_{n'}\right) / 2}\right)
\end{align}

\noindent \underline{Property 2.} For $n'\le n-1$, we have 
\begin{align}
\sum_{S_{n'} \in \mf{S}_{n'}}
\left\|g^*_{\iota, S_{n'}}\right\|_{2}^{2}
\lessapprox
D_{n'}^{1+\delta} 
\sum_{
S_{n'+1}\in \mf{S}_{n'+1}
}
\norm{
g^*_{
\iota, S_{n'+1}
}
}_2^2,
\end{align}
for every $B_{\iota}\subset B_R$ of radius $R^{1-\delta}$. Here when $n'=n-1$, $g^*_{\iota, S_{n'+1}}$ was not defined before and we simply set $g^*_{\iota, S_{n'+1}}=g$. \\

\noindent \underline{Property 3.} For $n'\le n-1$, we have 
\begin{align}
\max _{S_{n'} \in \mf{S}_{n'}}
\left\|
g^*_{\iota, S_{n'}}
\right\|_{2}^{2}
\lessapprox
\pnorm{
\frac{r_{n'+1}}{r_{n'}}
}^{-\frac{n-n'-1}{2}}
D_{n'}^{-n'+\delta}
\max_{
S_{n'+1}\in \mf{S}_{n'+1}}
\norm{
g^*_{\iota, S_{n'+1}}
}_2^2
\end{align}
and 
\begin{align}
& \max _{S_{n'} \in \mf{S}_{n'}}
\max_{\theta}
\left\|
g^*_{\iota, S_{n'}}
\right\|_{\avetwo(\theta)}^{2}\\& 
\lessapprox
\pnorm{
\frac{r_{n'+1}}{r_{n'}}
}^{-\frac{n-n'-1}{2}}
D_{n'}^{\delta}
\max_{
S_{n'+1}\in \mf{S}_{n'+1}}
\max_{\theta}
\norm{
g^*_{\iota, S_{n'+1}}
}_{\avetwo(\theta)}^2,
\end{align}
where $\theta$ is a frequency cap defined in Subsection \ref{220619subsection4_2} of side length $\rho^{-1/2}$, hold for all $1\le \rho\le r_{n'}$. \\

\noindent \underline{Property 4.} For $n'\le n''\le n$, it holds that 
\begin{equation}
    \norm{
    g^*_{\iota, S_{n'}}
    }_2^2 
    \lesim_{\epsilon} 
    r_{n'}^{\frac{n-n'}{2}} 
    \pnorm{
    \prod_{i=n'}^{n-1} r_i^{-\frac{1}{2}}
    }
    r_{n''}^{-\frac{n-n''}{2}}
    \pnorm{
    \prod_{i=n''}^{n-1}
    r_i^{\frac{1}{2}}
    }
    R^{O(\epsilon_{\circ})}
    \norm{
    g_{\iota, S_{n'}}^{*(n'')}
    }_2^2,
\end{equation}
where 
\begin{equation}\label{220726e4_109}
    g_{\iota, S_{n'}}^{*(n'')}:=
    \sum_{
    T\in 
    \T_{
    r_{n''}
    }[\vec{S}_{n'}]
    }
    (g_{\iota})_T,
\end{equation}
and $\T_{r_{n''}}[\vec{S}_{n'}]$ is from Definition \ref{220726def5_10}. 

If one takes $n''=n$, then $g_{\iota, S_{n'}}^{*(n)}$ becomes $g_{S_{n'}}^{\#}$ in the last equation in \cite[page 9]{hickman2020note}. If $n''=n'$, then $g^*_{\iota, S_{n'}}=g^{*(n'')}_{\iota, S_{n'}}$. \\

These four properties are taken essentially from \cite[page 9]{hickman2020note}. The only main difference is that Hickman and Zahl \cite{hickman2020note} only introduced and used $n''=n$ in \eqref{220726e4_109}. 
The proofs of the first three properties are given in \cite{HR2019}, the proof of the fourth property is the same as that of Property iv) in \cite[page 10]{hickman2020note} and relies on transverse equidistribution properties (for our setting, the needed property is in \cite{MR4047925}), and therefore we will not repeat. 

The only explanation that is needed is as follows. In the last equation in \cite[page 255]{HR2019}, the authors there used the fact that $\rho_{j+1}\simeq \rho_j$, where the implicit constant is universal. In our case, these two radius parameters differ by $d$, which is a large constant. Consequently, Property $(\mathrm{III})_j$ in \cite[page 250]{HR2019} may not hold as is written there. However, we can still obtain some good substitute for it. For a node $\mf{n}$, let $\mf{n}^{\Uparrow}$ denote the closest ancestor (itself included) that is an $R$-child. Let $\mf{n}_{\ell}$ be a node in $\mf{M}'_{\ell}\cup \mf{M}_{\ell}$ with $\mf{n}_{\ell}^{\Uparrow}=\mf{S}_{n'}$. We will prove 
\begin{equation}\label{220710e5_101}
    \norm{
    g_{\iota, O_{\ell}}}_2^2 \le 
    C^{\mathrm{III}}_{\ell, \delta}\cdot 
    \pnorm{
    \frac{r_{n'}}{\rho(\mf{n}_{\ell})}
    }^{-\frac{n-n'}{2}}
    d^{-\#_c(
    \mf{j}(\mf{n}_{\ell})
    )
    (n'-1)
    }
    \norm{g}_2^2,
\end{equation}
for every $O_{\ell}\in \mf{n}_{\ell}$ and ball $B_{\iota}$ of radius $R^{1-\delta}$,  where 
\begin{equation}\label{220710e5_102}
    C^{\mathrm{III}}_{\ell, \delta}:=d^{
    \#_c(
    \mf{j}(\mf{n}_{\ell})
    ) \delta
    +
    \#_a(
    \mf{j}(\mf{n}_{\ell})
    ) \delta
    }
    (r_{n'})^
    {
    \#_a(
    \mf{j}(\mf{n}_{\ell})
    ) 
    O(\delta_{n'-1/2})
    + 
    \#_{a'}(
    \mf{j}(\mf{n}_{\ell})
    ) O(\delta_{n'})
    }
\end{equation}
and 
\begin{equation}\label{220710e5_103}
    \#_{a'}(\mf{j}(
    \mf{n}_{\ell}
    )):=\#\{
    \mf{n}: \mf{n}_{\ell}\preccurlyeq \mf{n}\preccurlyeq \mf{S}_{n'}, \mf{n}\in \mf{M}'_{\ell'} \text{ for some } \ell'
    \}.
\end{equation}
Note that 
\begin{equation}
    \#_a(\mf{j}
    (\mf{n}_{\ell})
    ) \lesim \frac{|\log \delta_{n'-1}|}{\delta_{n'-1}}, \ \ \#_{a'}(\mf{j}
    (\mf{n}_{\ell})
    ) \lesim \frac{|\log \delta_{n'-1/2}|}{\delta_{n'-1/2}},
\end{equation}
and therefore by applying Lemma \ref{220708lemma5_9}, we always have $\eqref{220710e5_102} \lessapprox 1$. \\

Let us prove \eqref{220710e5_101}. Let $\ell_1$ be the largest integer smaller than $\ell$ such that there exists $\mf{n}_{\ell_1}\in \mf{M}'_{\ell_1}\cup \mf{M}_{\ell_1}$ with $\mf{n}_{\ell}\preccurlyeq \mf{n}_{\ell_1}$ and  $\mf{n}_{\ell_1}^{\Uparrow}=\mf{S}_{n'}$; if no such nodes exist, then we simply take $\mf{n}_{\ell_1}=\mf{S}_{n'}$. Assume that \eqref{220710e5_101} has been proved for $\mf{n}_{\ell_1}$, and we will prove it for $\mf{n}_{\ell}$. There are two cases: $\mf{n}_{\ell}\in \mf{M}_{\ell}$ or $\mf{n}_{\ell}\in \mf{M}'_{\ell}$. We only prove the latter case, and the former case can be done in a similar way. List all the nodes between $\mf{n}_{\ell_1}$ and $\mf{n}_{\ell}$ in a descending order:
\begin{equation}
    \mf{n}_{\ell_1}, \mf{n}_{\ell_1+1}, \dots, \mf{n}_{\ell-1}, \mf{n}_{\ell}. 
\end{equation}
Note that $\mf{n}_{\ell'}$ is an $L$-child for every $\ell_1<\ell'< \ell$. By orthogonality between wave packets and the fundamental theorem of algebra, we have 
\begin{equation}
    \norm{g_{\iota, O_{\ell'}}}_2^2 \lesim d^{-(n'-1)} \norm{
    g_{\iota, O_{\ell'-1}}
    }_2^2,
\end{equation}
for every $\ell'<\ell$, $O_{\ell'}\in \mf{n}_{\ell'}, O_{\ell'-1}\in \mf{n}_{\ell'-1}$ and $O_{\ell'}\subset O_{\ell'-1}$. This further implies 
\begin{equation}\label{220711e5_107}
    \norm{
    g_{\iota, O_{\ell-1}}}_2^2 \le 
    C^{\mathrm{III}}_{\ell-1, \delta}\cdot 
    \pnorm{
    \frac{r_{n'}}{\rho(\mf{n}_{\ell_1})}
    }^{-\frac{n-n'}{2}}
    d^{-\#_c(
    \mf{j}(\mf{n}_{\ell-1})
    )
    (n'-1)
    }
    \norm{g}_2^2.
\end{equation}
Here note that the denominator is $\rho(
\mf{n}_{\ell_1}
)$
instead of $\rho(
\mf{n}_{\ell-1}
)$, as remarked at the beginning of Subsection \ref{220607subsection6_2}. When passing from $\mf{n}_{\ell-1}$ to $\mf{n}_{\ell}$, recall that in \eqref{220707e5_43} and \eqref{220707e5_49z}, we cut the neighborhood $\mc{N}_{
\rho(
\mf{n}_{\ell-1}^{\uparrow}
)^{1/2+\delta_m}
}
=
\mc{N}_{
\rho(
\mf{n}_{\ell_1}
)^{1/2+\delta_m}
}$
into thinner layers $\mc{N}_{
\rho(
\mf{n}_{\ell}
)^{1/2+\delta_m}
}$. Therefore by transverse equidistribution properties (for instance Lemma 8.4 in  \cite{MR4047925}), we have 
\begin{equation}
    \norm{g_{\iota, O_{\ell}}}_2^2 \lesim (r_{n'})^{O(\delta_{n'})} d^{-(n'-1)} \pnorm{
    \frac{
    \rho(\mf{n}_{\ell_1})
    }
    {
    \rho(\mf{n}_{\ell})
    }
    }^{-\frac{n-n'}{2}} \norm{
    g_{\iota, O_{\ell-1}}
    }_2^2,
\end{equation}
for $O_{\ell}\in \mf{n}_{\ell}$ with $O_{\ell}\subset O_{\ell-1}$. This, combined with \eqref{220711e5_107} and the choice of the constant in \eqref{220710e5_102}, gives us the desired bound.

\section{Strong polynomial Wolff axioms}\label{220704section4}

\begin{lemma}\label{220223lemma6.4}
Let $\vec{S}_{n'}$ be the multi-grain given in Definition \ref{220726def5_10} with complexity at most $d$. We have 
\begin{equation}\label{SPWAestimateineq}
    \# \Theta_{r_i}[\vec{S}_{n'}] \lesssim_{\epsilon_{\circ}, d}
    \left(\prod_{j=n'}^{i-1} 
    r_{j}^{-1 / 2}\right) 
    r_i^{\frac{i-1}{2}+\epsilon_{\circ}},
\end{equation}
for all $n'\le i\le n$, where $\epsilon_{\circ}$ is given in \eqref{admissable_parameters}. 
\end{lemma}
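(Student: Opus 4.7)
The plan is to prove Lemma~\ref{220223lemma6.4} by induction on $i$ from the base case $i=n'$ up to $i=n$, with the generalized polynomial Wolff axiom (Theorem~\ref{main_thm_2}) serving as the main input at each scale. The strategy parallels that of Hickman-Rogers-Zhang~\cite{HRZ} and Zahl~\cite{MR4205111} for the paraboloid, with Theorem~\ref{main_thm_2} replacing the Katz-Rogers polynomial Wolff axiom so as to handle H\"ormander operators whose phase satisfies Bourgain's condition.

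For the base case $i=n'$ the product $\prod_{j=n'}^{i-1}$ is empty and the target reduces to $\#\Theta_{r_{n'}}[\vec{S}_{n'}]\lesim r_{n'}^{(n'-1)/2+\epsilon_\circ}$. I would rescale $B_{r_{n'}}$ to the unit ball so that tubes in $\T_{r_{n'}}[\vec{S}_{n'}]$ become $r_{n'}^{-1/2}$-thin tubes contained in the semialgebraic set $\mc{N}_{r_{n'}^{-1/2+\delta_{n'}}}(\widetilde{S}_{n'})\cap B_1$, whose measure is $\lesim r_{n'}^{-(n-n')/2+O(\delta_{n'})}$. A direct application of Theorem~\ref{main_thm_2} then yields the base case after choosing $\delta_{n'}$ to be absorbed into $\epsilon_\circ$.

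For the inductive step from $i-1$ to $i$, I would group each $\theta\in\Theta_{r_i}[\vec{S}_{n'}]$ by its parent direction $\theta(T_{i-1})\in\Theta_{r_{i-1}}[\vec{S}_{n'}]$ (with $|\theta-\theta(T_{i-1})|\lesim r_{i-1}^{-1/2}$ supplied by Definition~\ref{220726def5_10}), giving
$$\#\Theta_{r_i}[\vec{S}_{n'}]\,\le\, \#\Theta_{r_{i-1}}[\vec{S}_{n'}]\cdot \max_{T_{i-1}}\#\big\{\theta\in\Theta_{r_i}[\vec{S}_{n'}]:|\theta-\theta(T_{i-1})|\lesim r_{i-1}^{-1/2}\big\}.$$
The inductive hypothesis handles the first factor, and a short computation shows that the remaining job is to bound the per-parent count by $\lesim (r_i/r_{i-1})^{(i-1)/2+O(\epsilon_\circ)}$; telescoping such bounds produces exactly the factor $\prod_{j=n'}^{i-1}r_j^{-1/2}$. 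This per-parent bound matches the heuristic that the tangent directions of the $i$-dimensional variety $S_i$ cut out an $(i-1)$-dimensional piece of diameter $\lesim r_{i-1}^{-1/2}$ inside the parent direction ball, whose $r_i^{-1/2}$-covering number is $\lesim r_{i-1}^{-(i-1)/2}r_i^{(i-1)/2}$. To make this rigorous I would localize around $T_{i-1}$ and perform an anisotropic rescaling (dilating directions by $r_{i-1}^{1/2}$ around $\theta(T_{i-1})$, with a matching rescaling in $\bfx$) that turns the parent direction ball into a unit ball, and then apply Theorem~\ref{main_thm_2} to the rescaled problem. The rescaled phase is still a H\"ormander phase satisfying Bourgain's condition because the latter is invariant under diffeomorphisms in $\bfx$ and in $\xi$ separately, by Theorem~\ref{211016thm2.1} and Corollary~\ref{bourgain_defined}.

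The main obstacle will be the explicit bookkeeping for the rescaling in the inductive step: one must write down a concrete change of variables so that the rescaled phase function satisfies (H1), (H2), and Bourgain's condition with constants independent of $\vec{r}_{n'}$ and the degree $d$, and so that the rescaled variety stays a transverse complete intersection of complexity $O_{\epsilon_\circ,d}(1)$. Once this is in place, combining the per-parent bound with the inductive hypothesis and iterating over $j=n',\ldots,i-1$ gives the required product $\prod_{j=n'}^{i-1}r_j^{-1/2}\cdot r_i^{(i-1)/2+\epsilon_\circ}$ and closes the induction.
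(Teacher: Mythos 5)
Your overall plan — induct on the scale index $i$ and feed each step into the \emph{basic} polynomial Wolff axiom (Theorem~\ref{main_thm_2}) after an anisotropic parabolic rescaling — is a genuinely different route from the paper's. The paper does not run an induction at the level of Lemma~\ref{220223lemma6.4} at all: it first proves a separate \emph{strong} (nested) polynomial Wolff axiom, Theorem~\ref{nestedgeomlemofPhi} (via Theorem~\ref{nestedgeomlemwithderivativecond}), whose internal induction is carried out on the Lebesgue measure of the time-slices $S_{l,t}$ of $\Phi$-curves passing through the first $l$ nested constraints, and only \emph{then} reduces Lemma~\ref{220223lemma6.4} to it by a single rescaling of $B_{r_{n'}}$ to the unit ball and a direct choice of the parameters $s_j$, $\kappa_j$, $\varphi_j$, $d_j$, $E$. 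The paper's key move in that reduction is to associate to each $\theta(T)\in\Theta_{r_i}[\vec{S}_{n'}]$ a single $\Phi$-curve extended from the smallest-scale witness $T_{n'}$, so that all the nested containment conditions in Definition~\ref{220726def5_10} become constraints on that one curve. This is exactly the step your decomposition tries to replace with an iterated factorization.

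There is a genuine gap in your factorization step. You write
\begin{equation*}
\#\Theta_{r_i}[\vec{S}_{n'}]\le\#\Theta_{r_{i-1}}[\vec{S}_{n'}]\cdot\max_{T_{i-1}}\#\bigl\{\theta\in\Theta_{r_i}[\vec{S}_{n'}]:\ |\theta-\theta(T_{i-1})|\lesssim r_{i-1}^{-1/2}\bigr\},
\end{equation*}
but this presupposes that the witness tube $T_{i-1}$ supplied by Definition~\ref{220726def5_10} actually has $\theta(T_{i-1})\in\Theta_{r_{i-1}}[\vec{S}_{n'}]$. Definition~\ref{220726def5_10} does not deliver this: it only asks that $T_{i-1}\subset\mathcal{N}_{r_{i-1}^{1/2+\delta_{i-1}}}(S_{i-1})$, not that $T_{i-1}$ be \emph{tangent} to $S_{i-1}$ in the sense of Definition~\ref{220613definition6_2} (the angle condition is absent). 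More seriously, for $\theta(T_{i-1})$ to belong to $\Theta_{r_{i-1}}[\vec{S}_{n'}]$ one needs witnesses $T_j'$ for $j<i-1$ satisfying $\operatorname{dist}(T_j',\,T_{i-1}\cap B_{r_j})\lesssim r_{i-1}^{(1+\delta)/2}$. The only position-closeness Definition~\ref{220726def5_10} gives you for the $T_j$'s is relative to $T_i$ at the coarser tolerance $r_i^{(1+\delta)/2}$, and $\operatorname{dist}(T_{i-1},T_i\cap B_{r_{i-1}})\lesssim r_i^{(1+\delta)/2}$ is itself much larger than $r_{i-1}^{(1+\delta)/2}$. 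So the natural triangle-inequality argument produces $T_j$'s that are $O(r_i^{(1+\delta)/2})$-close to $T_{i-1}$, not $O(r_{i-1}^{(1+\delta)/2})$-close, and your inductive hypothesis does not cover the resulting larger direction set. The paper sidesteps this precisely because the strong PWA is phrased for a single $\Phi$-curve constrained to lie in all the nested neighborhoods, so no ``parent must itself be a good tube at scale $r_{i-1}$'' hypothesis is ever needed. If you want to rescue the factorization, you would have to prove the lemma for a modified, $r_i$-dependent family $\widetilde{\Theta}_{r_{i-1},r_i}$ in which the closeness tolerances are all measured at scale $r_i^{(1+\delta)/2}$, and restart the induction with that as the object — at which point you are essentially re-deriving the slice-volume induction of Theorem~\ref{nestedgeomlemwithderivativecond}. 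The remaining steps (base case, the parabolic rescaling preserving Bourgain's condition via Theorem~\ref{211016thm2.1} and Corollary~\ref{bourgain_defined}, and the per-parent covering-number heuristic) are plausible, though the uniformity of the constants in Theorem~\ref{main_thm_2} under the rescaling would still need to be tracked carefully through $O(n)$ iterations.
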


As with Theorem \ref{main_thm_2}, we will deduce Lemma \ref{220223lemma6.4} from a geometric theorem.

\begin{theorem}\label{nestedgeomlemofPhi}[Strong Polynomial Wolff Axiom for our $\phi$]
Let $n\ge 3$. If Bourgain's condition holds for the phase function $\phi$ at every $(\bfx_0; \xi_0)\in \supp(a)$, then the following strong polynomial Wolff axiom for $\phi$ holds: Let $E\ge 2$ be an integer and fix an integer $k$. For every $\epsilon>0$, there exists $C(n, E, k, \epsilon)>0$ such that for balls
\begin{equation}
    B(\bfx_1, s_1) \subset \frac{1}{2} B(\bfx_2, s_2) \subset \cdots \subset \frac{1}{2^{k-1}}  B(\bfx_k, s_k) \subset \frac{1}{2^k}   B^n
\end{equation}
numbers $\kappa^{C_0} \leq \kappa_1 \leq \ldots \leq \kappa_k \leq \kappa$ and for $S_j \subset B(\bfx_j, s_j)$, $j = 1, 2, \ldots, k$ satisfying:
\begin{itemize}
    \item $\kappa_j \leq s_j, \forall 1 \leq j \leq k$ and $\varphi_j := \frac{s_j}{\kappa_j}$ satisfy $\varphi_1 \geq \varphi_2 \geq \cdots \geq \varphi_k$,
    \item $S_j$ is a semialgebraic set of complexity $\le E$ whose $\kappa_j$-neighborhood has volume $\simeq |S_j|$,
    \item The intersection between $S_j$ and any ball of radius $r\in[\kappa_j, s_j]$ has volume $\leq C_j r^{d_j} \kappa_j^{n-d_j}$,
\end{itemize}
the following holds uniformly:

For every collection $\T$ of $\kappa$-tubes pointing in different directions (defined before Theorem \ref{main_thm_2}), if we use $c(T)$ to denote the core curve of $T$, then
\begin{align}\label{concluofSPWA}
& \#\{T\in \T: c(T) \bigcap \mc{B}(\bfx_j, \frac{1}{2} s_j) \subset S_j, \forall 1 \leq j \leq k\}\nonumber\nonumber\\
\le & C(C_0, n, E, k, \epsilon)\prod_{j=1}^k (\frac{\varphi_{j-1}}{\varphi_{j}})^{d_j-1}(\frac{\varphi_k}{\kappa})^{n-1}   \kappa^{-\epsilon}
\end{align}
where $\varphi_0 = 1$ and the \emph{horizontal slab}\footnote{In general, we call a set to be a \emph{horizontal slab} if it is $\pi_t^{-1} (I)$ for some interval $I$.} $\mc{B}(\bfx_j, \frac{1}{2} s_j)$ is defined to be $\pi_t^{-1} (\pi_t (B(\bfx_j, \frac{1}{2} s_j)))$. Here $\pi_t: \R^n \to \R$ is the orthogonal projection to the $t$-variable.

Moreover the implied constant only depends on bounds of finitely many (depending on $C_0, n, E, k, \epsilon$) derivatives of $\phi$.
\end{theorem}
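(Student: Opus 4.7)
My plan is to prove Theorem \ref{nestedgeomlemofPhi} by induction on $k$, adapting the strong polynomial Wolff axiom argument of Hickman-Rogers-Zhang \cite{HRZ} and Zahl \cite{MR4205111} (which handles straight tubes) to our setting of $\phi$-tubes, in the same spirit as how Theorem \ref{main_thm_2} and Theorem \ref{GPWAthm} generalize the Katz-Rogers polynomial Wolff axiom \cite{MR3881832}. As a preparatory step, I would replace $\phi$ by its Taylor polynomial of degree $K = K(n, E, k, \epsilon)$, so that the implicit map $\Phi$ defined by $\nabla_\xi \phi(\Phi(v, t, \xi), t; \xi) = v$ becomes semialgebraic of bounded complexity. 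Consequently the core curves $c(T_{\xi_\theta, v}) = \{(\Phi(v, t, \xi_\theta), t)\}$ are semialgebraic curves and the condition ``$c(T) \cap \mc{B}(\bfx_j, s_j/2) \subset S_j$'' becomes a semialgebraic condition on $(\xi_\theta, v)$.

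For the base case $k = 1$, I would apply Theorem \ref{main_thm_2} (or rather the underlying Theorem \ref{GPWAthm}) after rescaling the ball $B(\bfx_1, s_1)$ to unit size; this rescaling preserves (H1), (H2), and Bourgain's condition, and the rescaled $\widetilde{S}_1$ has volume at most $\varphi_1^{-(n-d_1)}$, which upon converting $(\kappa/s_1)$-separated directions back to $\kappa$-separated directions yields exactly the desired bound $\varphi_1^{n-d_1} \kappa^{1-n-\epsilon}$ in \eqref{concluofSPWA} with room to spare.

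For the inductive step, I would describe the set of admissible parameters $(\xi_\theta, v)$ as a semialgebraic set $L$, apply Tarski-Seidenberg to obtain a semialgebraic section of complexity $O_{n,E,k,\epsilon}(1)$, and invoke Gromov's lemma (as on pages 1711-1712 of \cite{MR3881832} and in the proof of Theorem \ref{GPWAthm}) to parametrize $L$ by polynomial maps $F, G \colon [0, \kappa^{\epsilon_1}]^{n-1} \to \R^{n-1}$ with controlled Jacobians. The volume of the image is then estimated by a multi-scale analogue of \eqref{integralineq}, where the integration in $t$ splits across the nested slabs $\mc{B}(\bfx_j, s_j/2)$: on the $j$-th slab of length $\sim s_j$ the Jacobian factor is controlled using $|S_j \cap B_r| \le C_j r^{d_j} \kappa_j^{n-d_j}$ together with Lemma \ref{polymatriceslem}, producing the factor $(\varphi_{j-1}/\varphi_j)^{d_j-1}$ at the $j$-th level.

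The main obstacle will be the scale-by-scale bookkeeping of the averaged-determinant lower bound \eqref{averagelargenessofJacobianeq}: to invoke Lemma \ref{polymatriceslem} at each of the $k$ nested scales, one needs a uniform lower bound on the ``new direction'' contributed to $\nabla_\xi^2 \phi(\Phi(v,t,\xi), t; \xi)$ when passing between consecutive scales. In the straight-tube case of \cite{HRZ, MR4205111} this is automatic from the non-degeneracy of the paraboloid, and in our setting it is supplied by Claim $(\ast)$ established in the proof of Theorem \ref{main_thm_2}: the matrix $A(t; \xi) - A(0; \xi)$ is proportional to a fixed non-degenerate matrix $B(\xi)$, which is precisely where Bourgain's condition enters through Lemma \ref{211016rem2.2}. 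Once this uniformity is established at all $k$ scales simultaneously, the multiplicative structure $\prod_j (\varphi_{j-1}/\varphi_j)^{d_j-1} \cdot (\varphi_k/\kappa)^{n-1}$ on the right-hand side of \eqref{concluofSPWA} falls out of the multi-scale Bezout count, and the $\kappa^{-\epsilon}$ loss absorbs the cumulative $\delta^{-C\epsilon_1}$ factors exactly as at the end of the proof of Theorem \ref{GPWAthm}, after choosing $\epsilon_1$ a suitable multiple of $\epsilon/k$.
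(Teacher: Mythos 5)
Your proposal captures the right high-level strategy — Taylor approximation to reduce to polynomial $\Phi$, Tarski-Seidenberg to pass to a semialgebraic section, Gromov's lemma to parametrize, B\'{e}zout to control degree, and Bourgain's condition via Claim $(\ast)$ — and this does match the spirit of the paper's argument. However, there are two genuine gaps.

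First, the crucial analytic input is not Lemma \ref{polymatriceslem}. The paper factors the result through an intermediate statement, Theorem \ref{nestedgeomlemwithderivativecond}, whose hypothesis (a) contains the comparison inequality \eqref{averagecompareofJacobianeq}: the Jacobian determinant at any time $t$ is bounded by the average over any subinterval $I$ up to a factor $\left(1 + \tfrac{\dist(t, I)}{|I|}\right)^{n-1}$. This is a much stronger and structurally different statement than the one-sided lower bound of Lemma \ref{polymatriceslem}, and it is exactly the mechanism that lets one transfer information between a slice $\{t = t_0\}$ and a distant slab $\mc{B}_{l+1}$ of length $s_{l+1}$, producing the factor $d_l(t)^{n-1}$ in the paper's inductive volume bound \eqref{inductivenbhdvolineq}. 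This inequality is precisely where Bourgain's condition is consumed: by Claim $(\ast)$, the matrix $A(t;\xi)$ moves along a one-parameter family $f(t;\xi)B(\xi) + A(0;\xi)$, so $\det(M - A(t;\xi))$ is (up to reparametrization) a polynomial of degree $n-1$ in $t$, and \eqref{averagecompareofJacobianeq} follows from a Remez-type estimate (Lemma 3.8 of \cite{HRZ}). Without stating this comparison inequality, the ``Jacobian factor controlled on each slab'' idea cannot close: Lemma \ref{polymatriceslem} gives a lower bound on the average, but you also need an upper bound on the pointwise determinant at a slice that is comparable to that average even when the slice lies far from the slab.

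Second, you omit the stability and perturbation properties — conditions (b) and (c) in Theorem \ref{nestedgeomlemwithderivativecond}. Condition (b) (forward stability: a $\mu$-perturbation of the direction $\xi$ moves the curve by $O(\mu|t_1 - t_2|)$) and condition (c) (backward surjectivity: any $\mu|t_1-t_2|$-shift at time $t_2$ is realized by a $O(\mu)$-perturbation of $\xi$) are used in the inductive step to show that the union of $\varphi_l s_{l+1}$-balls covering $(\bigcup_t S_{l,t}) \cap \mc{B}_{l+1}$ is still contained in $\bigcup_t \tilde{S}_{l, t}$, the slightly thickened companion set. This containment is what allows one to apply the induction hypothesis to the thickened set and then chop down by the volume bound on $S_{l+1}$. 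Your sketch proposes splitting the $t$-integral across all $k$ slabs in a single B\'{e}zout count, but the paper's induction is slice-wise: it proves a bound for $m_{n-1}^*(S_{l,t})$ for each fixed $t$ and each level $l$, passing between levels by integrating over one slab at a time. The slice $\to$ slab step is trivial (integrate) and the slab $\to$ slice step is where \eqref{averagecompareofJacobianeq} is used. Your proposed single ``multi-scale'' integral does not obviously produce the factor $(\varphi_{j-1}/\varphi_j)^{d_j-1}$ at each level, because the thinning from $S_j$ happens at the scale $\kappa_j$ in each slab independently, which requires the ball-covering/thickening argument to be done inductively level by level rather than all at once.
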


Like in Section \ref{220728section3}, we are going to deduce Lemma \ref{nestedgeomlemofPhi} when the phase function satisfies a concrete derivative condition. Then we simply check that the condition is satisfied by our phase function.

\begin{theorem}
\label{nestedgeomlemwithderivativecond}[Generalized Strong Polynomial Wolff Axiom]
Let $n\ge 3$. Suppose for a $\Phi$ as in the beginning of Section \ref{220728section3}:

\begin{enumerate}[label=({\alph*})]
    \item For every choice of $v \in [-1, 1]^{n-1}$, $\xi \in [-1, 1]^{n-1}$, subinterval $I \subset [-1, 1]$ and $t \in [-1, 1]$,  we have both \begin{align}\label{averagecompareofJacobianeq}
    & |\det (\nabla_{v} \Phi(v, t, \xi)\cdot M+\nabla_{\xi} \Phi(v, t, \xi))|\nonumber\\
    \lesssim & \left(1+ \frac{\mathrm{dist}(t, I)}{|I|}\right)^{n-1}\frac{1}{|I|}\int_{I} |\det (\nabla_{v} \Phi(v, s, \xi)\cdot M+\nabla_{\xi} \Phi(v, s, \xi))|\mathrm{d} s%\nonumber\\
    %\gtrsim_{\epsilon} & \kappa^{C\epsilon}, \forall M \in \mathrm{Mat}_{(n-1)\times (n-1)} (\R)
    \end{align}
    and \eqref{averagelargenessofJacobianeq} for some implied constants independent of the choices of $v, \xi, I$ and $M$.
    \item If $t_1 \neq t_2$, $\Phi(v, t_1, \xi) = x_{1}$, and  $\Phi(v', t_1, \xi')) = x_{1}$, then
    \begin{equation}
        |\Phi(v', t_2, \xi')) - \Phi(v, t_2, \xi))| \lesssim |t_1 - t_2|\cdot |\xi - \xi'|.
    \end{equation}
    \item If $t_1 \neq t_2$ and $\Phi(v, t_1, \xi)) = x_{1}$, $\Phi(v, t_2, \xi)) = x_{2}$, then for $x_{2}'$ with distance $\mu |t_1-t_2|$ from $x_{2}$ $(\mu \leq 10)$, there are $v'$ and $\xi'$ with $\Phi(v', t_1, \xi')) = x_{1}$, $\Phi(v', t_2, \xi')) = x_2 '$ and $|\xi' - \xi| \lesssim \mu$.
\end{enumerate}

Then the conclusion of Lemma \ref{nestedgeomlemofPhi} (with the notion ``pointing in different directions'' now defined as in the beginning of Section \ref{220728section3}) holds with the implied constant only depends on bounds of finitely many (depending on $C_0, n, E, k, \epsilon$) derivatives of $\Phi$ and the implied constants in (a) - (c).
\end{theorem}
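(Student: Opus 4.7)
The plan is to adapt the Tarski--Seidenberg/Gromov argument from the proof of Theorem \ref{GPWAthm} to the multi-scale nested setting. Hypothesis (a) strengthens \eqref{averagelargenessofJacobianeq} precisely so that the averaged Jacobian lower bound holds on \emph{arbitrary} sub-intervals $I \subset [-1, 1]$---this is what allows one to rescale to each ball $B(\bfx_j, s_j)$, whose time projection need not be centered at $0$. First I would, exactly as in the proof of Theorem \ref{GPWAthm}, replace $\Phi$ by its Taylor polynomial of order $K$, with $K$ taken sufficiently large in terms of $C_0, k, n, E, \epsilon$; after this reduction, the set
\[
L := \{(\xi, v): c(T_{\xi, v, \Phi}(\kappa, 1)) \cap \mc{B}(\bfx_j, s_j/2) \subset S_j, \ \forall \, 1 \leq j \leq k\}
\]
is semialgebraic of complexity $O_{n, E, k, \epsilon}(1)$, and the quantitative Tarski--Seidenberg theorem (as cited in \cite{MR3881832}) produces a semialgebraic section $L'$ of dimension $\leq n-1$ capturing, up to $\kappa^{O(\epsilon)}$ losses, the tube count $N$ we wish to bound.

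The core of the proof is a scale-by-scale Gromov-type volume comparison. At each scale $j$, I would apply Gromov's lemma to a sub-section of $L'$ adapted to that scale, producing polynomial maps $F_j, G_j : [0,1]^{n-1} \to [-1,1]^{n-1}$ of degree $O_{n, E, k, \epsilon}(1)$ with $\|F_j\|_{C^1}, \|G_j\|_{C^1} \leq 1$, such that the ``$j$-th slab image''
\[
\Delta_j := \{(\Phi(F_j(x), t, G_j(x)), t): x \in B_j, \ t \in \pi_t(\mc{B}(\bfx_j, s_j/2))\}
\]
is contained in $S_j \cap \mc{B}(\bfx_j, s_j/2)$, and $|G_j(B_j)|$ captures the tube count at scale $j$. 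The hypothesis $|S_j \cap B_r| \leq C_j r^{d_j} \kappa_j^{n-d_j}$ (summed over an $O(1)$-cover of the slab) gives an upper bound on $|\Delta_j|$; the chain-rule/B\'{e}zout computation used in \eqref{integralineq}, combined with the Jacobian lower bound from hypothesis (a) applied to $\pi_t(\mc{B}(\bfx_j, s_j/2))$ and the matrix $\nabla_x F_j \cdot (\nabla_x G_j)^{-1}$, gives a matching lower bound on $|\Delta_j|$ in terms of $|G_j(B_j)|$. Each such pair of bounds yields the marginal factor $(\varphi_{j-1}/\varphi_j)^{d_j - 1}$ appearing in \eqref{concluofSPWA}, with the outermost scale contributing the $(\varphi_k/\kappa)^{n-1}$ baseline exactly as in the classical polynomial Wolff axiom.

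The main obstacle is the synchronization across scales: producing a coherent \emph{nested} family $F_1, G_1 \prec F_2, G_2 \prec \cdots \prec F_k, G_k$ of polynomial parameterizations whose combined data recovers the product bound $\prod_{j=1}^k (\varphi_{j-1}/\varphi_j)^{d_j-1}(\varphi_k/\kappa)^{n-1}$, rather than just each single-scale bound separately. This is where conditions (b) and (c) enter. Condition (b)---the non-fanning estimate---ensures that any tube confined to $\mc{N}_{\kappa_j}(S_j)$ on the $s_j$-slab really does not deviate transversely by more than $\kappa_j$, so the $j$-th containment is a genuine constraint and not automatically implied by the smaller-scale ones. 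Condition (c)---the perturbative surjectivity---is what lets one re-initialize the parameterization at each scale after rescaling, so that an application of Gromov's lemma at scale $j+1$ can be refined into a compatible application at scale $j$. Both conditions play exactly the role of the analogous nested perturbation steps in HRZ \cite{HRZ} and Zahl \cite{MR4205111}. The careful bookkeeping required to turn the $k$ scale-by-scale Gromov comparisons into the precise product bound in \eqref{concluofSPWA}, while only losing an overall factor of $\kappa^{-\epsilon}$ and respecting the constants from (a)--(c), is expected to be the most delicate step of the argument.
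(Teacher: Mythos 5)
Your proposal correctly identifies the reduction to polynomial $\Phi$, the Tarski--Seidenberg/Gromov machinery, and the broad roles of hypotheses (a)--(c), but the actual organization of the argument has a gap that would be hard to fill in the form you propose.

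The central step you flag as ``the most delicate'' --- producing a coherent nested family of polynomial parameterizations $F_1, G_1 \prec \cdots \prec F_k, G_k$ whose combined data gives the full product bound --- is not a route Gromov's lemma supports directly: the lemma gives, for a \emph{single} semialgebraic set, a single pair of polynomial maps capturing most of its volume, and there is no obvious mechanism to refine this consistently across $k$ scales. The paper avoids this entirely by working with the time-slice sets
\[
S_{l, t} := \{y : \exists\ \Phi\text{-curve } c_{v, \xi} \text{ with } c_{v, \xi} \cap \mc{B}_j \subset S_j \ (j \le l) \text{ and } (y, t) \in c_{v, \xi}\}
\]
and proving the inductive slice-measure bound
\[
m_{n-1}^*(S_{l, t}) \le C\, (d_l(t))^{n-1} \prod_{j=1}^l \Bigl(\tfrac{\varphi_{j-1}}{\varphi_j}\Bigr)^{d_j-1} \varphi_l^{n-1} \kappa^{-2^{l-k-1}\epsilon}
\]
by induction on $l$, where $d_l(t)$ grows linearly in the distance from $t$ to the $l$-th slab. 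That $d_l(t)^{n-1}$ factor is where hypothesis (a) truly earns its keep: it is not just ``Jacobian lower bound on arbitrary subintervals'' for rescaling purposes, but a polynomial-in-time \emph{growth} estimate --- \eqref{averagecompareofJacobianeq} compares the Jacobian at a potentially distant time $t_0$ to its average on a short interval $I = \pi_t(\mc{B}_{l+1})$, which is exactly what lets one pass from a bound on the $n$-dimensional swept volume inside $\mc{B}_{l+1}$ to a bound on the $(n-1)$-dimensional slice at arbitrary $t_0$. Your proposal uses (a) only to restart the single-scale argument at each $j$, which misses this.

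Conditions (b) and (c) also play a more concrete and local role than ``non-fanning'' and ``re-initialization'': in the inductive step, the paper covers $(\bigcup_t S_{l,t}) \cap \mc{B}_{l+1}$ by a union $U_l$ of $\varphi_l s_{l+1}$-balls, applies the volume hypothesis on $S_{l+1}$ to pass to $(\bigcup_t S_{l+1,t}) \cap \mc{B}_{l+1}$, and then must justify that $U_l$ is still controlled by the companion set $\tilde{S}_{l,t}$ with thickened $S_j$'s. That is precisely the place where (b) and (c) are invoked: perturbing a $\Phi$-curve to pass through a nearby point (by (c)), while (b) shows the resulting perturbation at slab $\mc{B}_j$ stays within $O(\kappa_j)$, so the nesting constraints are preserved up to a constant thickening. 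Your ``compatible re-initialization of Gromov applications across scales'' is a vaguer paraphrase that does not capture this geometric content. So while your instincts about where each hypothesis enters are roughly right, the inductive slice-measure structure --- which is what actually turns the per-scale volume hypotheses into the product bound, with $\kappa^{-\epsilon}$ losses geometrically distributed as $\kappa^{-2^{l-k-1}\epsilon}$ --- is missing from your plan, and the ``nested polynomial parameterization'' surrogate you propose for it is not a workable replacement.
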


\begin{remark}\label{explainingb}
We explain the intuition behind (b) and (c) a bit. For convenience we introduce the following notation. For fixed $v \in \R^{n-1}$ and $\xi \in \R^{n-1}$, we call the curve
\begin{equation}
    c_{v, \xi} = \{(x, t) \in \R^{n-1} \times [-1, 1]: x = \Phi(v, t, \xi)\}
\end{equation}
to be a \emph{$\Phi$-curve}. Intuitively, if we know a $\Phi$-curve passes through $(x_1, t_1)$ and want to perturb the ``direction variable'' $\xi$ and the ``initial position variable'' $v$ so that $(x_1, t_1)$ is still on the curve, then (b) says whenever the perturbation on the direction $\xi$ is $O(\mu)$ we always have the perturbation of the curve at time $t= t_2$ is $O(|t_1 - t_2| \mu)$, and (c) says if we want the $x$ coordinate at time $t=t_2$ to be shifted by a distance $\simeq \mu |t_1 - t_2|$, we can always succeed with the amount of the perturbation needed on $\xi$ being $O(\mu)$.
\end{remark}

\begin{proof}[Proof of Theorem \ref{nestedgeomlemwithderivativecond}]

Similarly to the proof of Theorem \ref{GPWAthm}, we only do the proof when $\Phi$ is fixed and after seeing the proof it will be clear that the estimate only depends on finitely many derivatives of $\Phi$ (in particular since the smallest scale ($\kappa^{C_0}$) we consider is polynomial in $\kappa$, in the approximation argument described below one only needs a Taylor approximation of order $O_{C_0, n, E, k, \epsilon} (1)$).

Like the proof of Theorem \ref{GPWAthm}, we can reduce the situation to the case where $\Phi$ is a polynomial of degree $O_{C_0, n, E, k, \epsilon} (1)$ by a Taylor approximation argument. For general $\Phi$ by this argument one reduces to a problem with two modified conditions: (a') a slightly weaker condition than \eqref{averagecompareofJacobianeq} in (a) (similar to \eqref{avgbigJacrestreq} versus \eqref{averagelargenessofJacobianeq}) that has very small error term (of the form $\kappa^{-1000n(1+C_0)}$) which makes no difference (see the proof below and how to deal with this issue in the similar situation in the proof of Theorem \ref{GPWAthm}), and (b') (c') two slightly weaker conditions than (b) and (c) with an error term $\kappa^{1+C_0}$ for $|t| \leq \kappa^{\epsilon}$, both not affecting our framework (for (b) and (c), note that the only place they are needed is the verification of a claim in the beginning of the induction step). From now on we always assume $\Phi$ is a polynomial of degree $O_{C_0, n, E, k, \epsilon} (1)$.

Let us assume $\kappa$ and $\kappa_j$ all all sufficiently small (allowed to depend on derivatives of $\Phi$). Otherwise we simply ignore some constraints. This assumption will enable us to use the implicit function theorem at scales $\kappa$ or $\kappa_j$ freely.

We make one more comment before starting: the present theorem is a generalization of Lemma 3.7 in \cite{hickman2020note}, which was in turn developed based on Theorem 1.4 in \cite{HRZ} or Theorem 1.9 in \cite{MR4205111}). Our proof will have a lot in common with these, and will be a natural generalization of Theorem \ref{GPWAthm}.

We also refine the slabs $\mc{B}(\bfx_j, \frac{1}{2}s_j)$ a bit before starting. Since each $B (\bfx_j, s_j)$ is contained in $\frac{1}{2}B (\bfx_{j+1}, s_{j+1})$ ($\forall j < k$), we can take horizontal slabs $\mc{B}_1, \ldots, \mc{B}_k$ such that:
\begin{enumerate}[label=(\roman*)]
    \item $\mc{B}_j \subset \mc{B}(\bfx_j, \frac{1}{2}s_j)$.
    \item The thickness of $\mc{B}_j$ is $\simeq s_j$.
    \item The distance between $\mc{B}_{j_1}$ and $\mc{B}_{j_2}$ is $\gtrsim s_{j_2}$ for all pairs $j_1 < j_2$.
\end{enumerate}

For $1 \leq l \leq k$ and $t \in [-1, 1]$, we define
\begin{equation}\label{defnofSlt}
    S_{l, t} = \{y \in \R^{n-1}: \exists \text{ a } \Phi-\text{curve } c_{v, \xi} \text{ s.t. } c_{v, \xi} \bigcap \mc{B}_j \subset S_j, \forall 1 \leq j \leq l \text{ and } (y, t) \in c_{v, \xi}\}
\end{equation}
and we are going to prove inductively that
\begin{equation}\label{inductivenbhdvolineq}
m_{n-1}^*(S_{l, t}) \le C(C_0, n, E, k, \epsilon) (d_l (t))^{n-1} \prod_{j=1}^l (\frac{\varphi_{j-1}}{\varphi_{j}})^{d_j-1}\varphi_l^{n-1}   \kappa^{-2^{l-k-1}\epsilon}, \forall t \in [-1, 1]
\end{equation}
where $d_l (t)$ is defined to be $s_l$ plus the distance between $B(\bfx_l, s_l)$ and the hyperplane $\{x_n = t\}$, and $m_{n-1}^*(\cdot)$ is the $(n-1)$-dimensional Lebesgue outer measure.

For convenience, we choose a large constant $K$ depending on the implied constant in (b) and (c) and also consider a companion set
\begin{equation}\label{defneqoftildeSl}
    \tilde{S}_{l, t} = \{y \in \R^{n-1}: \exists \text{ a } \Phi-\text{curve } c_{v, \xi} \text{ s.t. } c_{v, \xi} \bigcap \mc{B}_j \subset \mc{N}_{K\kappa_j} (S_j), \forall 1 \leq j \leq l \text{ and } (y, t) \in c_{v, \xi}\}
\end{equation}

Note that once we prove \eqref{inductivenbhdvolineq}, by the same reasoning and the assumption about $\mc{N}_{\kappa_j} (S_j)$, we also prove the same upper bound for $m_{n-1}^* (\tilde{S}_{l, t})$.

\noindent \underline{Base case.} Our base case is when $l=0$. Since $\Phi$ has a $C^1$-derivative bound, we observe that all $S_{l, t}$ lie in a uniformly bounded set $\Omega_0$. We make the convention that $S_{0, t}$ is the part of the set defined in \eqref{defnofSlt} with $l=0$ (hence with a vacuous condition) lying in $\Omega_0$. \eqref{inductivenbhdvolineq} then trivially holds for $l=0$ with the convention $d_0 (t) = 1$. We will see the  first induction step is similar to the steps afterward with this setup.

\noindent \underline{The inductive step.}  Suppose we have  \eqref{inductivenbhdvolineq} for some $l$ in $[1, k)$. Next we prove it for $l+1$.

Integrate the induction hypothesis over $t$ and temporally ignore measurability issues, by Fubini we formally deduce
\begin{eqnarray}
& m_{n}^*((\bigcup_{t} S_{l, t})\bigcap \mc{B}_{l+1}) \le  C(C_0, n, E, k, \epsilon) \prod_{j=1}^l (\frac{\varphi_{j-1}}{\varphi_{j}})^{d_j-1}\varphi_l^{n-1} s_{l+1}^{n}  \kappa^{-2^{l-k-1}\epsilon}.
\end{eqnarray}

We assert a stronger conclusion: In fact, $(\bigcup_{t} S_{l, t})\bigcap \mc{B}_{l+1}$ is contained in a set $U_l$ such that $U_l$ is a union of ($n$-dim) balls of radii $\varphi_l s_{l+1}$ and that 
\begin{eqnarray}\label{Ulvolbound}
m_{n}^*(U_l) \le C(C_0, n, E, k, \epsilon) \prod_{j=1}^l (\frac{\varphi_{j-1}}{\varphi_{j}})^{d_j-1}\varphi_l^{n-1} s_{l+1}^{n}  \kappa^{-2^{l-k-1}\epsilon}.
\end{eqnarray}

To construct such a $U_l$, we take the $\varphi_l s_{l+1}$-neighborhood of $(\bigcup_{t} S_{l, t})\bigcap \mc{B}_{l+1}$ and cover it by a finitely overlapping collection of balls of radii $\varphi_l s_{l+1}$. Define the union of these balls to be $U_l$.

It remains to derive the volume bound \eqref{Ulvolbound}.  We claim that we can take $K$ in \eqref{defneqoftildeSl} large (and the constraint here will be the only one affecting the choice of $K$) such that this $U_l$ is contained in $\bigcup_{t} \tilde{S}_{l, t}$. 

To verify  this claim, by definition we see $U_l$ is contained in the $3\varphi_l s_{l+1}$-neighborhood of $(\bigcup_{t} S_{l, t})\bigcap \mc{B}_{l+1}$. This means for every point $(\tilde{y}, t)$ in $U_l$, we can find a $\Phi$-curve that is $3\varphi_l s_{l+1}$-close to this point and the part of that curve in $\mc{B}_j$ completely lies in $S_j$, $\forall 1 \leq j \leq l$. Now keep a point of that $\Phi$-curve in $\mc{B}_1$ fixed and by assumption (c) (see Remark \ref{explainingb} for more intuition), one can change the ``direction'' $\xi$ by up to $O(\varphi_l)$ so that the new $\Phi$-curve now passes through $(\tilde{y}, t)$. By assumption (b) for each $t \in \pi_t \mc{B}_j (1 \leq j \leq l)$, the perturbation amount of the $x$ variable is $\lesssim \varphi_l s_j \lesssim \varphi_j s_j = \kappa_j$. Hence the intersection between the new $\Phi$-curve and $\mc{B}_j$ lies in $\mc{N}_{K\kappa_j} (S_j)$ if $K$ is sufficiently large depending on the implied constants in (b) and (c). For this choice of $K$ we just proved that the claim holds. Applying the induction hypothesis to each $t$-slice of $\tilde{S}_{l, t}$ and integrate, we deduce \eqref{Ulvolbound}. %Now by the non-degeneracy condition \eqref{Jacobianvlowerbdeqn} we can find a $\Phi$-curve through $(\tilde{\bfy}, t)$ by changing the parameter $v$ by $O(\phi_l s_{l+1})$. The new $\Phi$-curve and the old $\Phi$-curve will have distance $O(\|\Phi\|_{C^1} \phi_l s_{l+1})$ on all $t$-hyperplane ($\forall t \in [-1, 1]$). Hence if we choose $K$ large, proportionally to $\|\Phi\|_{C^1}$, the part of the new $\Phi$-curve in $\mc{B}_j$ completely lies in $\mc{N}_{K\kappa_j} (S_j)$, $\forall 1 \leq j \leq l$

Our $U_l$ is a union of $\varphi_l s_{l+1}$-balls that contains $(\bigcup_{t} S_{l, t})\bigcap \mc{B}_{l+1}$ and obeys the volume bound \eqref{Ulvolbound}. By a covering lemma we may assume without loss of generality that the $\varphi_l s_{l+1}$-balls are finite-overlapping. Now we use the volume upper bound of the intersection between $S_{l+1}$ and $r$-balls in the assumption of Theorem \ref{nestedgeomlemofPhi}. We deduce
\begin{align}\label{Ulplus1volbound}
& m_{n}^*((\bigcup_{t} S_{l+1, t})\bigcap \mc{B}_{l+1})\nonumber\\ \le & C(C_0, n, E, k, \epsilon) \prod_{j=1}^l (\frac{\varphi_{j-1}}{\varphi_{j}})^{d_j-1}\varphi_l^{n-1} s_{l+1}^{n} (\frac{\kappa_{l+1}}{\varphi_l s_{l+1}})^{n-d_{l+1}}  \kappa^{-2^{l-k-1}\epsilon}\nonumber\\
= & C(C_0, n, E, k, \epsilon) \prod_{j=1}^l (\frac{\varphi_{j-1}}{\varphi_{j}})^{d_j-1}\varphi_l^{n-1} s_{l+1}^{n} (\frac{\varphi_{l+1}}{\varphi_l})^{n-d_{l+1}}  \kappa^{-2^{l-k-1}\epsilon}\nonumber\\
= & C(C_0, n, E, k, \epsilon) \prod_{j=1}^{l+1} (\frac{\varphi_{j-1}}{\varphi_{j}})^{d_j-1}\varphi_{l+1}^{n-1} s_{l+1}^{n} \kappa^{-2^{l-k-1}\epsilon}.
\end{align}

We will use \eqref{Ulplus1volbound} to close the induction step by an argument developed in \cite{HRZ} and \cite{MR4205111}. Below we fix an arbitrary $t=t_0$ to do the proof. 
This step is a lot similar to the proof of Theorem \ref{GPWAthm}. So we will present some steps in sketch only. 

Define the set
\begin{equation}
    L_{l+1, t} = \{(v, \xi):  c_{v, \xi} \bigcap \mc{B}_j \subset S_j, \forall 1 \leq j \leq l+1\}.
\end{equation}

We already assumed $\Phi$ is a polynomial of degree $O_{C_0, n, E, k, \epsilon} (1)$. Thus in the expression of a $\Phi$-curve, $x$ is polynomial in $v, t, \xi$ with the same degree bound. For simplicity we use $b_{t_0}$ to denote the hyperplane $\{x_n = t_0\}$.

By effective quantifier elimination (i.e.  the Tarski-Seidenberg theorem) we find a  semialgebraic subset $L_{l+1, t} ' \subset L_{l+1, t}$ of complexity $O_{n, E, \epsilon_1, K} (1)$ such that all $c_{v, \xi} \bigcap b_{t_0}$ are distinct for $(v, \xi) \in L_{l+1, t} '$, and that $\{c_{v, \xi} \bigcap b_{t_0}: (v, \xi) \in L_{l+1, t} '\} = \{c_{v, \xi} \bigcap b_{t_0}: (v, \xi) \in L_{l+1, t}\}$. (To see this one can first add $(n-1)$ more coordinates to each $(\xi, v) \in L_{l+1, t}$ denoting the ``position'', i.e. the first $(n-1)$ coordinates, of the intersection $c_{v, \xi} \bigcap b_{t_0}$. This is still a semialgebraic set of bounded complexity. Then one applies the quantifier elimination to find a subset such that the last $(n-1)$ coordinates are distinct among different points in the subset and the set of the last$(n-1)$ coordinates does not change. From the construction we see easily that $L_{l+1, t} '$ has dimension $\leq n-1$)

Using Gromov's lemma to approximate $L_{l+1, t} '$ by images of smooth maps in the same way as we did to prove Theorem \ref{GPWAthm}, %(and notice that $\mc{N}_{O(\kappa)} (S_j)$ and $S_j$ satisfy the same volume upper bound), 
we see (for arbitrary $\epsilon_1>0$) there exist two polynomial maps $F$ and $G: [0, \kappa^{\epsilon_1}]^{n-1} \to \R^{n-1}$ (whose images are the $v$ and the $\xi$ variables, respectively) with $\deg F, \deg G = O_{n, E, \epsilon_1} (1)$ and $\|F\|_{C^1}, \|G\|_{C^1} \leq 1$ such that
\begin{equation}\label{curveinvarietySPWA}
    c_{{F(x)}, G(x)} \bigcap \mc{B}_j \subset \mc{N}_{\kappa_j} (S_j), \forall x, \forall 1 \leq j \leq l+1
\end{equation}
and that
\begin{equation}\label{imofcubelargeeqinSPWA}
    \mc{H}^{n-1} (\{c_{{F(x)}, G(x)} \bigcap b_{t_0}: x \in  [0, \kappa^{\epsilon_1}]^{n-1}\}) \gtrsim_{n, E, \epsilon_1, K} \kappa^{C\epsilon_1} m_{n-1}^*(S_{l+1, t_0})
\end{equation}
where $\mc{H}^{n-1} (\cdot)$ stands for the $(n-1)$-dimensional Hausdorff measure on the hyperplane $b_{t_0}$.

Now look at the $n$-dimensional volume of
\begin{equation}
    M = \{(\Phi (F(x), t, G(x)), t): x \in [0, \kappa^{\epsilon_1}]^{n-1}\}\bigcap \mc{B}_{l+1} = (\bigcup c_{{F(x)}, G(x)}) \bigcap \mc{B}_{l+1}
\end{equation}
and the $(n-1)$-dimensional volume of
\begin{equation}
    H = \{(\Phi (F(x), t_0, G(x)), t_0): x \in [0, \kappa^{\epsilon_1}]^{n-1}\} = (\bigcup c_{{F(x)}, G(x)}) \bigcap b_{t_0}
\end{equation}
and compare them.

Suppose the time interval (i.e. the range of the last coordinate) of $\mc{B}_{l+1}$ is $I_{l+1}$. Then use B\'{e}zout like in the proof of Theorem \ref{GPWAthm}, we have
\begin{align}\label{integralofDelta}
    & |M| \sim_{n, E, \epsilon_1} \int_{I_{l+1}} \int_{[0, \kappa^{\epsilon_1}]^{n-1}} |\nabla_x (\Phi (F(x), t, G(x)))|  \mathrm{d}x\mathrm{d}t\nonumber\\
    = & \int_{I_{l+1}} \int_{[0, \kappa^{\epsilon_1}]^{n-1}} |\det(\nabla_v \Phi (F(x), t, G(x))\cdot \nabla_x F + \nabla_{\xi} \Phi (F(x), t, G(x))\cdot \nabla_x G)|  \mathrm{d}x\mathrm{d}t\nonumber\\
    = & \int_{[0, \kappa^{\epsilon_1}]^{n-1}} |\det(\nabla_x G)|\nonumber\\
    \cdot & \int_{I_{l+1}}  |\det(\nabla_v \Phi (F(x), t, G(x))\cdot (\nabla_x F\cdot (\nabla_x G)^{-1}) + \nabla_{\xi} \Phi (F(x), t, G(x)))|  \mathrm{d}t\mathrm{d}x.
\end{align}

On the other hand,
\begin{align}\label{integralofH}
    & \mc{H}^{n-1} (H) \sim_{n, E, \epsilon_1} \int_{[0, \kappa^{\epsilon_1}]^{n-1}} |\nabla_x (\Phi (F(x), t_0, G(x)))|  \mathrm{d}x\nonumber\\
    = & \int_{[0, \kappa^{\epsilon_1}]^{n-1}} |\det(\nabla_v \Phi (F(x),  t_0, G(x))\cdot \nabla_x F + \nabla_{\xi} \Phi (F(x),  t_0, G(x))\cdot \nabla_x G)|  \mathrm{d}x\nonumber\\
    = & \int_{[0, \kappa^{\epsilon_1}]^{n-1}} |\det(\nabla_x G)|\cdot  |\det(\nabla_v \Phi (F(x), t_0, G(x))\cdot (\nabla_x F\cdot (\nabla_x G)^{-1}) + \nabla_{\xi} \Phi (F(x), t_0, G(x)))|\mathrm{d}x.
\end{align}

Now by assumption (a), the left hand side of \eqref{integralofDelta} is $\gtrsim |I_{l+1}|^n \cdot d_{l+1} (t_0)^{-(n-1)} \simeq s_{l+1}^n \cdot d_{l+1} (t_0)^{-(n-1)}$ times the left hand side of \eqref{integralofH}. Note that (the $\tilde{S}_{l+1, t}$ version of) \eqref{Ulplus1volbound} gives an upper bound of the left hand side of \eqref{integralofDelta}.  Moreover \eqref{imofcubelargeeqinSPWA} gives a lower bound of the left hand side of \eqref{integralofH} in terms of $m_{n-1}^*(S_{l+1, t_0})$. Combining everything, we can take $\epsilon_1$ to be a sufficiently small multiple of $\epsilon$ to finish the induction step and \eqref{inductivenbhdvolineq} is proved.

From \eqref{inductivenbhdvolineq} the conclusion will follow easily. Take the union of all $S_{k, t}$ for $t \in [-1, 1]$. We notice by the definition and effective quantifier elimination that this is a semialgebraic set of complexity $O_{n, E, \epsilon} (1)$. Use  \eqref{inductivenbhdvolineq}, we see its measure is
\begin{equation}
    \leq C(C_0, n, E, k, \epsilon)  \prod_{j=1}^k (\frac{\varphi_{j-1}}{\varphi_{j}})^{d_j-1}\varphi_k^{n-1}   \kappa^{-2^{-1}\epsilon}.
\end{equation}
 Note that we already have \eqref{averagelargenessofJacobianeq} holds. In exactly the same way as we proved Theorem \ref{GPWAthm} (the only difference is that Theorem \ref{GPWAthm} was stated for tubes and we need a version for $\Phi$-curves but notice that in Theorem \ref{GPWAthm} in fact a $\Phi$-curves version was proven), we can bound the left hand side of \eqref{concluofSPWA} by
 \begin{equation}
     C(C_0, n, E, k, \epsilon)  \prod_{j=1}^k (\frac{\varphi_{j-1}}{\varphi_{j}})^{d_j-1}\varphi_k^{n-1}   \kappa^{(1-n)-\epsilon}.
 \end{equation}
 This concludes the proof.
\end{proof}

\begin{proof}[Proof of Theorem \ref{nestedgeomlemofPhi}]
The proof will be similar to the proof of Theorem \ref{main_thm_2} in \S\ref{220728section3}. As with that Theorem, take unique smooth $\Phi = \Phi(v, t, x)$ near $0$ such that \eqref{defnofPhi} holds.
We can assume the above can be done for all $(v, t, \xi)\in [-1.5, 1.5]^{2n-1}$ without loss of generality like in the other proof. %The non-degenerate assumption of $\phi$  also  implies \eqref{Jacobianvlowerbdeqn}.
It suffices to show that our $\Phi$ satisfies conditions (a)-(c) in Theorem \ref{nestedgeomlemwithderivativecond}. That Theorem then immediately leads to the desired conclusion. When reading the proof one naturally sees the implicit constants in (b) and (c) only depend on finitely many derivatives of $\phi$. We also note that (b) and (c) comes from the non-degeneracy property of $\phi$, and (a) comes from Bourgain's condition.

For (a), \eqref{averagelargenessofJacobianeq}   is already verified in the proof of Theorem \ref{main_thm_2}. Recall we defined $A(t; \xi)= \nabla^2_{\xi} \phi(X_t(\xi), t; \xi)$ in \eqref{defnofA} and deduced $A(t; \xi) = f(t; \xi)B(\xi) + A(0; \xi)$ and the time derivative of $f$ is $1$ at $t=0$ around \eqref{scalarcondofA}. We make one more harmless assumption that time derivative of $f$ is always in $(\frac{1}{2}, 2)$ since otherwise we can do a constant rescaling that only causes loss of a constant. Now we can do the reduction to both sides of \eqref{averagecompareofJacobianeq} like in the proof of Theorem  \ref{main_thm_2} and reduces \eqref{averagecompareofJacobianeq} to proving that for every polynomial $P(t)$ of degree $n-1$,
\begin{equation}
    |P(t)| \lesssim \left(1+ \frac{\mathrm{dist}(t, I)}{|I|}\right)^{n-1}\frac{1}{|I|}\int_{I} |P(s)|\mathrm{d}s,
\end{equation}
which is an elementary Theorem proved in e.g. Lemma 3.8 in \cite{HRZ}. We have completed the verification of (a). 

As before, (b) and (c) are more general properties that do not depend on Bourgain's condition. Next we verify them. %To verify them, recall that $\Phi$ is defined in \eqref{defnofPhi}. Without loss of generality we only need to check (b) and (c) for $t_1 = 0$ (otherwise we do a translation on $t$ and the proof will be identical). Assume $t_1 = 0$ henceforth.

%Since \eqref{defnofPhi} asserts $(\nabla_{\xi} \phi)(\Phi, t; \xi)=v$. We see around the origin the taylor expansion of $v$ in terms of 
Differentiate \eqref{defnofPhi}  with respect to $t$, we see
\begin{equation}
    \nabla_x \nabla_{\xi} \phi \cdot \partial_t \Phi + \partial_t\nabla_{\xi} \phi = 0.
\end{equation}

Hence $\Phi$-curves can be viewed as integral curves of the vector field $V_{\xi} (\bfx, t) = (\nabla_x \nabla_{\xi} \phi (\bfx, t, \xi)^{-1}\cdot \partial_t\nabla_{\xi} \phi (\bfx, t, \xi), 1)$ parameterized by $\xi$. Note that the non-degeneracy of $\phi$ implies $|\nabla_x \nabla_{\xi} \phi| \simeq 1$ and a $\mu$-perturbation on $\xi$ only causes a $O(\mu)$-perturbation of the above vector field. We see (b) follows from stability of ODE solutions.

Next we check (c), which is the ``opposite direction'' to (b). When checking it we can assume $t_1$ and $t_2$ are sufficiently close and that $x_1$ is sufficiently close to $0$ (and in application the honest (c) will always be satisfied after a harmless constant-rescaling of $(x, t)$). (c) basically asks: if we start from 
\begin{equation}
    x_0 = \Phi(v_0, t_1, \xi_0)
\end{equation}
and start to change $\xi$ and solve $v$ from the equation
\begin{equation}\label{constraintofvandxi}
x_0 = \Phi(v, t_1, \xi),
\end{equation}
how would $y = \Phi(v, t_2, \xi)$ change? For convenience denote $y_0 = \Phi(v_0, t_2, \xi_0)$. We use differentiation to compute this change. Differentiate \eqref{constraintofvandxi}, we see
\begin{equation}
    \nabla_v \Phi|_{(v_0, t_1, \xi_0)} \cdot \nabla_{\xi} v|_{v_0} + \nabla_{\xi} \Phi|_{(v_0, t_1, \xi_0)} = 0.
\end{equation}

Hence by the chain rule,
\begin{equation}
    \nabla_{\xi} y = \nabla_v \Phi|_{(v_0, t_2, \xi_0)} \cdot (-\nabla_v \Phi|_{(v_0, t_1, \xi_0)})^{-1} \cdot \nabla_{\xi} \Phi|_{(v_0, t_1, \xi_0)} + \nabla_{\xi} \Phi|_{(v_0, t_2, \xi_0)}.
\end{equation}

By \eqref{nablaxiPhieq} and \eqref{nablavPhieq}, this simplifies to
\begin{equation}
    \nabla_{\xi} y = (\nabla x \nabla_{\xi} \phi|_{y_0, t_2, \xi_0})^{-1}\cdot (\nabla_{\xi}^2 \phi|_{x_0, t_1, \xi_0} - \nabla_{\xi}^2 \phi|_{y_0, t_1, \xi_0}).
\end{equation}

The first factor $(\nabla x \nabla_{\xi} \phi|_{y_0, t_2, \xi_0})^{-1}$ has entries $\lesssim 1$ and determinant $\simeq 1$ and is harmless. We focus on the second factor. It is equal to $(t_2-t_1)$ times some  $(\sum_{j=1}^{n-1} c_j\partial_{x_j}\nabla_{\xi}^2 \phi + \partial_t\nabla_{\xi}^2 \phi)|_{x_0, t_1, \xi_0}$ plus a higher order term in $(t_2 - t_1)$, where each $|c_j|\lesssim 1$. By a familiar technique of parabolic rescaling (see for example the reduction Lemmas 4.1-4.3 in \cite{MR4047925}), one can assume all $\|\partial_{x_j}\nabla_{\xi}^2 \phi\|$ are uniformly very small and since we have the nondegeneracy condition on $\partial_t\nabla_{\xi}^2 \phi$ from \eqref{211003e1.7}, we see  $\nabla_{\xi} y$ is equal to $(t_2 - t_1)$ times a nondegenerate matrix of bounded entries. From here we see (c) holds by an application of the implicit function theorem.

Now that (a)-(c) are all verified, we apply Theorem \ref{nestedgeomlemwithderivativecond} and conclude the proof.
\end{proof}

\begin{proof}[Proof of Lemma \ref{220223lemma6.4}]
At this point, the Lemma is just a straightforward consequence of Theorem \ref{nestedgeomlemofPhi}. We rescale the whole $B_{r_{n'}}$ to the unit ball and rescale all $\mc{N}_{r_{j}^{1 / 2+\delta_{j}}} S_{j}$ in Definition \ref{220726def5_10} accordingly (to be our $S_j$ in Theorem \ref{nestedgeomlemofPhi}). For each possible $\theta(T)$ in $\Theta_{r_i}[\vec{S}_{n'}]$, we pick the core curve of the corresponding $T_{n'}$, rescale it into the unit ball and extend it into a $\Phi$-curve ($\Phi$ defined from $\phi$ as in the beginning of \S\ref{220728section3}). Then we choose $\kappa = R^{-\frac{1}{2}}$ and see by Definition \ref{220726def5_10} that the set of $\kappa$-tubes around all above $\Phi$-curves satisfy the assumption of Theorem \ref{nestedgeomlemofPhi} with the balls having radii $s_j = \frac{r_{i-1+j}}{r_{n'}}$ and corresponding $\kappa_j = \frac{r_{i-1+j}^{\frac{1}{2}+\delta_{i-1+j}}}{r_{n'}}$. Hence $\varphi_j = r_{n'+1-j}^{-\frac{1}{2}+\delta_{i-1+j}}$ By Wongkew's theorem \cite{Wongkew} of intersections between neighborhood of algebraic varieties we can take $d_j = i-1+j$. Lastly we can surely take $E = O_{d, n}(1)$ to be a constant by the definition of $S_j$ in Definition \ref{220726def5_10}.

By Theorem \ref{nestedgeomlemofPhi}, we see the left hand side of \eqref{SPWAestimateineq} is
\begin{equation}
    \lesssim r_i^{\delta_i} r_{n'}^{\frac{n'-1}{2}}\prod_{j=n'+1}^{i} \left(\frac{r_j}{r_{j-1}}\right)^{\frac{j-1}{2}}
\end{equation}
and is thus bounded by the right hand side. 
\end{proof}

As a corollary of Lemma \ref{220223lemma6.4}, we obtain 
\begin{corollary}\label{220711lemma5_5}
For $m\le n'\le n''$, we have 
\begin{equation}
    \Norm{
    g_{\iota, S_{n'}}^{*(n'')}
    }_2^2 \lessapprox 
    \pnorm{
    \prod_{j=n'}^{n''} r_j^{-\frac{1}{2}}
    }
    r_{n''}^{-\frac{n-n''-1}{2}}
    \max_{\tau: \ell(\tau)=r_{n''}^{-1/2}}
    \Norm{
    g_{\iota, S_{n'}}^{*(n'')}
    }_{
    L^2_{\mathrm{avg}}(\tau)
    }^2
\end{equation}
\end{corollary}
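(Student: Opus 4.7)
The plan is to combine the bound on the direction set from Lemma \ref{220223lemma6.4} with orthogonality of wave packets across distinct frequency caps.

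First I would decompose $g_{\iota, S_{n'}}^{*(n'')}$ according to the caps $\theta$ of side length $r_{n''}^{-1/2}$. By construction, a tube $T \in \T_{r_{n''}}[\vec S_{n'}]$ has $\theta(T) \in \Theta_{r_{n''}}[\vec S_{n'}]$, and the wave packets $(g_\iota)_T$ are (up to rapidly decaying tails) Fourier-supported in a slight enlargement of $\theta(T)$. Hence the caps appearing in $g_{\iota, S_{n'}}^{*(n'')}$ are essentially disjoint in frequency and by almost-orthogonality,
\begin{equation}
    \bigl\|g_{\iota, S_{n'}}^{*(n'')}\bigr\|_2^2
    \lessapprox
    \sum_{\theta \in \Theta_{r_{n''}}[\vec S_{n'}]}
    \bigl\|g_{\iota, S_{n'}}^{*(n'')} \cdot \mathbbm{1}_\theta\bigr\|_2^2,
\end{equation}
up to admissible $R^{-100n}$ errors coming from the tails, which can be absorbed into the $\lessapprox$.

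Next, each cap $\theta$ has volume $|\theta| \sim r_{n''}^{-(n-1)/2}$, so
\begin{equation}
    \bigl\|g_{\iota, S_{n'}}^{*(n'')} \cdot \mathbbm{1}_\theta\bigr\|_2^2
    = |\theta| \cdot \bigl\|g_{\iota, S_{n'}}^{*(n'')}\bigr\|_{L^2_{\mathrm{avg}}(\theta)}^2
    \lesssim r_{n''}^{-(n-1)/2} \max_{\tau:\ell(\tau) = r_{n''}^{-1/2}} \bigl\|g_{\iota, S_{n'}}^{*(n'')}\bigr\|_{L^2_{\mathrm{avg}}(\tau)}^2.
\end{equation}
Bounding the number of surviving caps by Lemma \ref{220223lemma6.4} with $i = n''$, namely
\begin{equation}
    \#\Theta_{r_{n''}}[\vec S_{n'}] \lessapprox \Bigl(\prod_{j=n'}^{n''-1} r_j^{-1/2}\Bigr) r_{n''}^{\frac{n''-1}{2}},
\end{equation}
and combining yields
\begin{equation}
    \bigl\|g_{\iota, S_{n'}}^{*(n'')}\bigr\|_2^2
    \lessapprox
    \Bigl(\prod_{j=n'}^{n''-1} r_j^{-1/2}\Bigr) r_{n''}^{-(n-n'')/2}
    \max_\tau \bigl\|g_{\iota, S_{n'}}^{*(n'')}\bigr\|_{L^2_{\mathrm{avg}}(\tau)}^2.
\end{equation}
Finally, a short arithmetic identity
\begin{equation}
\Bigl(\prod_{j=n'}^{n''-1} r_j^{-1/2}\Bigr) r_{n''}^{-(n-n'')/2}
= \Bigl(\prod_{j=n'}^{n''} r_j^{-1/2}\Bigr) r_{n''}^{-(n-n''-1)/2}
\end{equation}
rearranges this into the form stated in the corollary.

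The main (and essentially only) substantive obstacle is justifying the almost-orthogonality across distinct caps in the first step; the rest is counting plus algebra. In this setting the wave packets $(g_\iota)_T$ are, by construction of the wave packet decomposition in Subsection \ref{220619subsection4_2}, Fourier-localized to $\tfrac{11}{9}\theta(T)$ with rapid decay outside, so the almost-orthogonality follows from Plancherel together with the bounded overlap of the $\tfrac{11}{9}\theta$'s; the tail errors are controlled using the assumption that every wave packet $g_T$ has $\|g_T\|_2$ of comparable size (after the pigeonholing at the end of Section \ref{220717section3}) together with $R^{-100n}$-type decay away from $\theta$.
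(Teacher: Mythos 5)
Your proof is correct and matches the approach the paper implicitly takes (the paper states the corollary follows directly from Lemma \ref{220223lemma6.4} without spelling out the details, and your computation supplies exactly those details: cap decomposition, almost-orthogonality, cap counting via the strong polynomial Wolff axiom, and the arithmetic rearrangement). The arithmetic identity in the last display checks out.
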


\section{Brooms}\label{220706section6}

\subsection{Definition of brooms}\label{220926sub7_1}

Let $(S, B(\bfx_0, r))$ be a grain of dimension $n'$ with $S\in \mf{S}_{n'}$ and assume that it is the last entry of a multi-grain $\vector{S}$. Throughout this section, we always assume that 
\begin{equation}
    r\ge \sqrt{R}.
\end{equation}
Recall Definition \ref{220613definition6_2} and Definition \ref{220726def5_10}. Define $\T[S]\subset \T[B(\bfx_0, r)]$ to be the collection of tubes that are tangent to $S$ in the ball $B(\bfx_0, r)$. Define 
\begin{equation}
    \Theta[S]:=\{\theta(T):  T\in \T[S]\}. 
\end{equation}
Moreover, define $\T_R[S]:=\T_R[\vec{S}]$.

Before we define brooms, we cut each $S$ into $O_{d, n}(1)$ many pieces so that the tangent spaces of each piece form a small angle with each other. Let us be more precise. For each $\bfz\in S$, let $T_{\bfz} S$ denote the tangent space of $S$ at $\bfz$. We cut $S$ into $O_{d, n}(1)$ many pieces $\{S', S'', \dots\}$ so that for each such piece, say $S'$, it holds that 
\begin{equation}\label{220613e8_1}
    \measuredangle(T_{\bfz_1} S', T_{\bfz_2} S')\le \frac{1}{100n}, 
\end{equation}
for $\bfz_1, \bfz_2\in S'$. Similarly, we define $\T[S'], \Theta[S']$ and $\T_R[S']$.  Such a decomposition only appears in this section. To simplify notation, in the rest of this section we will still use $S$ to refer to each such piece, and still call it a grain. \\

Fix $\tau\in \Theta[S]$ and a grain $S$ satisfying \eqref{220613e8_1}. Define 
\begin{equation}
    \T_{\tau, R}[S]:=\{T\in \T_R[S]: \theta(T)\subset \tau\}.
\end{equation}
We let $R$-tubes $T\in \T_{\tau, R}[S]$ intersect $S$. Morally speaking, $T\cap S$ can be thought of as a ``curved'' rectangular box of dimensions 
\begin{equation}
    r\times \underbrace{R^{1/2+\delta}\times\dots\times R^{1/2+\delta}}_{(n'-1) \mathrm{ copies}}\times \underbrace{r^{1/2+\delta_m}\times\dots\times r^{1/2+\delta_m}}_{(n-n') \mathrm{ copies}}.
\end{equation}
For two tubes $T_1, T_{2}\in \T_{\tau, R}[S]$, we say that 
\begin{equation}
    T_1\cap S \cap B(\mathbf{x}_0, r)\approx T_{2}\cap S\cap B(\mathbf{x}_0, r)
\end{equation}
if 
\begin{equation}
    T_1\cap S\cap B(\mathbf{x}_0, r) \subset (10 n T_{2})\cap S \cap B(\mathbf{x}_0, r),
\end{equation}
or the other way around. Before we study the geometry of $S_{\Box}$, let us assume without loss of generality that the tangent space $T_{\bfz}(S)$ forms an angle $\le 1/(100n)$ with the subspace spanned by $\{\vector{e}_1, \dots, \vector{e}_{n'-1}, \vector{e}_n\}$, the first $(n'-1)$ vectors from the orthonormal basis and the vertical $t$ coordinate direction $\vector{e}_n$, for every $\bfz\in S$.  
\begin{lemma}\label{220608lemma8_1}
\begin{enumerate}
    \item[(1)] We can write 
\begin{equation}\label{eq: SBox}
    S\supseteq \bigcup_{\Box}S_{\Box}
\end{equation}
where $S_{\Box}=T\cap S\cap B(\mathbf{x}_0, r)$ for some $T\in \T_{\tau, R}[S]$ and    $\{S_{\Box}\}_{\Box}$ is a disjoint collection. 
%finitely overlapping (bounded by $R^{O(\delta)}$).
Moreover, for every $T\in \T_{\tau, R}[S]$, we can find $S_{\Box}$ such that $T\cap S \cap B(\mathbf{x}_0, r)\approx S_{\Box}$. 
\item[(2)] Take $(x_1, t_1)\in S$. For each $S_{\Box}$, we can find an algebraic variety $Z\subset \{t=t_1\}$ of dimension $n'-1$ and complexity $O(\deg(S))$ satisfying that the angle between $T_{\bfz}(Z)$ and the subspace $\{\vec{e}_1, \dots, \vec{e}_{n'-1}\}$ is $\le 1/(100n)$ for every $\bfz\in Z\cap S_{\Box}$, such that 
\begin{equation}
    (B(x_1, r)\times \{t_1\})\cap S_{\Box}\subset \mc{N}_{r^{1/2}}(Z).
\end{equation}
Here $B(x_1, r)$ is the ball in $\R^n$ of radius $r$ centered at $x_1$. 
\end{enumerate}
\end{lemma}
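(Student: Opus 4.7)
The plan is to handle the two assertions separately. Part (1) will follow from a greedy packing argument at the tube level, and part (2) will reduce to taking a slice of the underlying algebraic variety while exploiting the near-horizontal tangent condition on $S$.

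For part (1), I will exploit that all tubes in $\T_{\tau, R}[S]$ share a common direction cap $\tau$ of side $R^{-1/2}$ and are tangent to $S$, whose tangent spaces are pinched within $1/(100n)$ of the fixed $n'$-plane $V := \mathrm{span}\{\vector{e}_1, \dots, \vector{e}_{n'-1}, \vector{e}_n\}$; consequently such tubes are essentially translates of one another in directions transverse to $V$. I will then select a maximal family $\{T_\Box\}\subset \T_{\tau, R}[S]$ whose core curves $\gamma_{T_\Box}$ are separated transversely by $\gtrsim R^{1/2+\delta}$, which automatically makes the slabs $S_\Box := T_\Box\cap S\cap B(\bfx_0, r)$ pairwise disjoint. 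Maximality will force every remaining $T\in \T_{\tau, R}[S]$ to have its core within $O(R^{1/2+\delta})$ of some $\gamma_{T_\Box}$, hence $T\cap S\cap B(\bfx_0,r)\subset (10n T_\Box)\cap S\cap B(\bfx_0, r)$, which is exactly the statement $T\cap S\cap B(\bfx_0,r)\approx S_\Box$.

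For part (2), the natural candidate for $Z$ is the connected component of the horizontal slice $S_{\mathrm{alg}}\cap \{t=t_1\}$ nearest to $S_\Box$, where $S_{\mathrm{alg}}$ denotes the underlying $n'$-dimensional algebraic variety whose neighborhood is (identified with) $S$. Such a $Z$ automatically has dimension $n'-1$ and complexity $O(\deg S)$, and the angle bound at every $\bfz\in Z\cap S_\Box$ will follow by intersecting the hypothesis $\measuredangle(T_{\bfz}S_{\mathrm{alg}}, V)\le 1/(100n)$ with the hyperplane $\{t=t_1\}$, which kills the $\vector{e}_n$-direction and leaves tangents close to $\mathrm{span}\{\vector{e}_1,\dots,\vector{e}_{n'-1}\}$. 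The inclusion $(B(x_1,r)\times\{t_1\})\cap S_\Box\subset \mc{N}_{r^{1/2}}(Z)$ will follow by noting that any point of $S_\Box\cap\{t=t_1\}$ lies within $r^{1/2+\delta_m}$ of $S_{\mathrm{alg}}$, and the near-horizontal tangent condition on $S_{\mathrm{alg}}$ permits flowing along the variety back to the slice $\{t=t_1\}$ with horizontal displacement of the same order, the cosmetic loss $r^{O(\delta_m)}$ being absorbable in applications.

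The main obstacle I anticipate lies in part (2): carefully handling branches and singularities of the slice $S_{\mathrm{alg}}\cap\{t=t_1\}$ so that $Z$ is well-defined with the claimed complexity, and picking out the correct branch when $S_{\mathrm{alg}}$ has multiple sheets crossing $\{t=t_1\}$ so that $Z$ tracks $S_\Box$. A small perturbation of $t_1$ together with a standard real-algebraic projection/complexity argument should settle this once the geometric picture is in place.
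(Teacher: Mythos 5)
Your proposal is correct and follows essentially the same route as the paper. For (1) the paper runs the same greedy argument but phrases the key step as: if $T,T'\in\T_{\tau,R}[S]$ meet at a point $\bfx$ of $S\cap B(\bfx_0,r)$, then $T\cap B(\bfx,10r)\approx T'\cap B(\bfx,10r)$, because the directions within $\tau$ (which has side $r^{-1/2}$, not $R^{-1/2}$ as you write) differ by $\lesssim r^{-1/2}$ and so the cores drift apart by $\lesssim r\cdot r^{-1/2}=r^{1/2}\le R^{1/2+\delta}$ over the relevant length; this is exactly the computation needed to upgrade your maximality step (cores of $T$ and $T_\Box$ within $O(R^{1/2+\delta})$ at a single time) into the containment $T\cap S\cap B\subset(10nT_\Box)\cap S\cap B$ on the whole ball, so you should spell it out rather than gesture at \emph{essentially translates}. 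For (2) the paper simply takes $Z=S\cap\{t=t_1\}$, the full slice of the underlying $n'$-dimensional variety, so the branch-selection and $t_1$-perturbation issues you raise do not arise: the angle hypothesis is required only at $\bfz\in Z\cap S_\Box$ (where $S$ has already been cut to have pinched tangent planes), and the dimension and complexity bounds are automatic from transversality and B\'ezout.
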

\begin{proof}[Proof of Lemma \ref{220608lemma8_1}]
If $\mathbb{T}_{\tau, R}[S]$ is empty, then define the right-hand side of \eqref{eq: SBox} as an  empty set.  Now assume that $\mathbb{T}_{\tau, R}[S]$ is nonempty. Pick  $T\in \mathbb{T}_{\tau, R}[S]$ and define $S_{\Box}= T\cap S\cap B(\mathbf{x}_0, r)$. If there exists $T'\in \mathbb{T}_{\tau, R}[S]$ and $T\cap T'\cap S\cap B(\mathbf{x}_0, r)\neq \emptyset$, then $T\cap S\cap B(\mathbf{x}_0, r) \approx T'\cap S \cap B(\mathbf{x}_0, r)$.  To see this, if $\mathbf{x}\in T\cap T'$ and $\theta(T), \theta(T')\subset \tau$, then %\todo{Shaoming: did we prove this somewhere?}
\[T\cap B(\mathbf{x}, 10 r) \approx T'\cap B(\mathbf{x}, 10r)\]
and $B(\mathbf{x}_0, r)\subset B(\mathbf{x},r)$. 

If there exists $T'\in \mathbb{T}_{\tau, R}[S]$, then we add $S_{\Box}':=T'\cap S \cap B(\mathbf{x}_0, r)$ to the right-hand side of \eqref{eq: SBox}. Continue until for every $T\in \mathbb{T}_{\tau, R}[S]$, there exists $S_{\Box}$ from the right-hand side of \eqref{eq: SBox} such that $S_{\Box}=T\cap S \cap B(\mathbf{x}_0, r)$.  

For each $S_{\Box}$, define $Z=S\cap \{t=t_1\}$. \end{proof}

To define brooms, we fix $(S, B(\bfx_0, r)), \vector{S}$, $\tau$ and $S_{\Box}$. Write $\bfx_0=(x_0, t_0)$. Define 
\begin{equation}\label{220615e8_8}
    \T_{\tau, R}[S_{\Box}]:=\{T\in \T_{\tau, R}[S]: T\cap S_{\Box}\neq \emptyset\}. 
\end{equation}
Let us record the following lemma that will be useful later. 
\begin{lemma}\label{220608lemma8_2}
Under the above notation, we have that
\begin{equation}
    \bigcup_{T\in \T_{\tau, R}[S_{\Box}]} \Big(T\cap \{(x, t_1)\in \R^n: x\in \R^{n-1}\}\Big)
\end{equation}
is contained in an $(n-1)$ dimensional ball of radius $R^{1+\delta} r^{-1/2}$, for every $|t_1-t_0|\le R$. 
\end{lemma}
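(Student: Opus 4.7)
The plan is to fix the reference tube $T_0$ used in the definition of $S_{\Box}$ and bound, for every other $T \in \T_{\tau, R}[S_{\Box}]$, the spatial distance between the cores of $T$ and $T_0$ at time $t_1$. The geometric input is that $S_{\Box} \subset T_0 \cap B(\mathbf{x}_0, r)$ sits in an $R$-tube of length $\le 2r$ and cross-sectional radius $\le R^{1/2+\delta}$, while the angular constraint $\theta(T) \subset \tau$ forces $|\omega_{\theta(T)} - \omega_{\theta(T_0)}| \lesssim r^{-1/2}$ because $\tau$ has side length $\simeq r^{-1/2}$. Concretely, by \eqref{220615e8_8} one can pick $\mathbf{p}_T = (x_T, t_T) \in T \cap S_{\Box}$; the inclusions $\mathbf{p}_T \in T \cap T_0 \cap B(\mathbf{x}_0, r)$ give $|t_T - t_0| \le r$ and $|\gamma_T(t_T) - \gamma_{T_0}(t_T)| \lesssim R^{1/2+\delta}$ via the triangle inequality applied to $\gamma_T(t_T)$, $x_T$ and $\gamma_{T_0}(t_T)$.

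Next I will propagate this closeness from time $t_T$ to time $t_1$. The cores $\gamma^{\lambda}_{\theta, v}$ introduced in Section~\ref{220717section3} satisfy a smooth ODE (implicit in the definition of $\Phi$) whose coefficients depend smoothly on $(\theta, v)$; since the $\lambda$-rescaling makes the Lipschitz constant of the vector field $O(\lambda^{-1})$ and we operate on the time scale $R \le \lambda$, a Gronwall-type stability argument---essentially the same computation used in verifying hypothesis~(b) of Theorem~\ref{nestedgeomlemofPhi}---yields $|\gamma_T'(s) - \gamma_{T_0}'(s)| \lesssim r^{-1/2}$ uniformly on $|s - t_0| \le R$. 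Integrating gives
\[
|\gamma_T(t_1) - \gamma_{T_0}(t_1)| \lesssim R^{1/2+\delta} + R \cdot r^{-1/2} \lesssim R^{1+\delta}/r^{1/2},
\]
using $r \le R$ in the last step (which guarantees both $R^{1/2+\delta}$ and $R \cdot r^{-1/2}$ are dominated by $R^{1+\delta}/r^{1/2}$). Adding the cross-sectional radius $R^{1/2+\delta}$ of the tube $T$ at time $t_1$ (also absorbed by the same right-hand side since $r \le R$) shows that $\bigcup_{T} T \cap \{t = t_1\}$ lies in an $(n-1)$-dimensional ball of the claimed radius centered at $\gamma_{T_0}(t_1)$.

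The only mild technical issue will be justifying the linear-in-time stability of the core curves, namely that the Gronwall exponential remains $O(1)$ over the entire range $|t_1 - t_T| \le 2R$. This is precisely the regime $R \le \lambda$ under which all the estimates in Section~\ref{220717section3} are carried out; once it is set up explicitly, the remainder of the argument reduces to the triangle inequality and the stability estimate above.
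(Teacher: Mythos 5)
Your proof is correct and takes a genuinely different route from the paper's. The paper's argument translates $S_{\Box}$ to the origin and then Taylor-expands the defining identity $\nabla_{\omega}\phi(X_{\omega}(t),t;\omega)-\nabla_{\omega}\phi(X_{\omega_0}(t),t;\omega_0)=0$ to obtain an implicit-function-theorem bound $|X_{\omega}(t)-X_{\omega_0}(t)|\lesssim|\omega-\omega_0|\lesssim r^{-1/2}$ directly, at a single fixed time slice, using the nondegeneracy of $\nabla_x\nabla_{\omega}\phi$. You instead fix a reference tube $T_0$, use the triangle inequality through a common point $\mathbf{p}_T\in T\cap T_0\cap S_{\Box}$ to establish closeness of cores at some time $t_T$, and then propagate that closeness to $t_1$ via ODE stability (Gronwall) for the core curves, the same technique the paper itself uses to verify hypothesis (b) of Theorem~\ref{nestedgeomlemofPhi}. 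Both approaches are sound. What yours buys: it explicitly accounts for the fact that tubes in $\T_{\tau,R}[S_{\Box}]$ do not all pass through a single common point but only through the curved box $S_{\Box}$ (which the paper's ``assume $S_{\Box}$ contains the origin'' treatment glosses over), and it makes the time-dependence of the divergence visible, which is what actually produces the $R\cdot r^{-1/2}$ term. What the paper's buys: it is a one-line identity, avoiding any need for a Gronwall loop.

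One presentational caveat, not a gap: you state ``Gronwall yields $|\gamma_T'(s)-\gamma_{T_0}'(s)|\lesssim r^{-1/2}$'' as an intermediate conclusion and then integrate it. As written this is slightly circular, because the velocity difference at time $s$ has two sources: the angular spread $|\omega_T-\omega_{T_0}|\lesssim r^{-1/2}$ and the positional spread $|\gamma_T(s)-\gamma_{T_0}(s)|$ fed back through the $O(\lambda^{-1})$ Lipschitz constant of the vector field, so the velocity bound is a consequence of (not an input to) the position bound. The cleaner statement of your own argument is: Gronwall applied directly to the positions gives $|\gamma_T(t)-\gamma_{T_0}(t)|\le e^{O(|t-t_T|/\lambda)}\big(|\gamma_T(t_T)-\gamma_{T_0}(t_T)|+|t-t_T|\,r^{-1/2}\big)$, and since $|t-t_T|\lesssim R\le\lambda$ the exponential is $O(1)$, so $|\gamma_T(t_1)-\gamma_{T_0}(t_1)|\lesssim R^{1/2+\delta}+Rr^{-1/2}\lesssim R^{1+\delta}r^{-1/2}$ (the last step using $r\le R$). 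With that reformulation the argument is airtight.
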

\begin{proof}[Proof of Lemma \ref{220608lemma8_2}]
By translation, we assume that $S_{\Box}$ contains the origin. Let $\omega_0$ be the center of $\tau$. For $\omega\in \tau$, let $x=X_{\omega}(t)$ denote the solution to 
\begin{equation}
    \nabla_{\omega} \phi(x, t; \omega)=0,
\end{equation}
for $|t|\le 1$. Then we need to show that 
\begin{equation}
    |X_{\omega_0}(t)-X_{\omega}(t)|\lesim r^{-1/2},
\end{equation}
for every $t$. Note that 
\begin{equation}
    \nabla_{\omega} \phi(X_{\omega}(t), t; \omega)-\nabla_{\omega} \phi(X_{\omega_0}(t), t; \omega_0)=0.
\end{equation}
By Taylor's expansion, this further implies 
\begin{equation}
    \nabla_x \nabla_{\omega} \phi(x', t; \omega)(X_{\omega}(t)-X_{\omega_0}(t))+\nabla^2_{\omega} \phi(X_{\omega_0}(t), t; \omega')(\omega-\omega_0)=0 
\end{equation}
for some $x', \omega'$. The desired bound follows from the fact that $\nabla_x \nabla_{\omega} \phi$ is non-degenerate. 
\end{proof}

We apply the following algorithm. Initialize 
\begin{equation}
    \T_0:=\T_{\tau, R}[S_{\Box}], \ 
    \D_0=\{T\cap \{t=t_0+R\}: T\in \T_{\tau, R}[S_{\Box}]\}. 
\end{equation}
Suppose we are at the $\ell'$-th step of the algorithm. Let $Z\subset \{t=t_0+R\}$ be an algebraic variety of dimension $n'-1$ and complexity $O(\deg(S))$ and satisfy that the angle between $T_{\bfz}(Z)$ and the subspace spanned by $\{\vector{e}_1, \dots, \vector{e}_{n'-1}\}$ is $\le 1/(100n)$, for every $\bfz\in Z$. Find such a $Z$ that maximizes 
\begin{equation}\label{220608e8_10}
    \#\{D_0\in \D_0: D_0\subset \mc{N}_{10n R^{1/2+\delta}}(Z)\};
\end{equation}
use $b_{\ell'}$ to refer to the number in \eqref{220608e8_10}. 
Remove the discs \eqref{220608e8_10} from $\D_0$, use $\T_{\ell'}$ to collect the tubes $T$ from $\T_0$ for which $T\cap \{t=t_0+R\}$ is removed from $\D_0$ in this step, and repeat this process until there is no any discs left. 

Suppose this algorithm terminates after $L$ steps. We obtain a collection of positive integers 
\begin{equation}
    b_1\ge b_2\ge \dots \ge b_{L}
\end{equation}
and a collection of tubes 
\begin{equation}\label{220608e8_13}
    \T_{1}, \T_{2}, \dots, \T_{L}.
\end{equation}
%By losing a $\log$ factor, we can assume that $b_{\ell'}/2\le b_{\ell''}\le 2 b_{\ell'}$ for two arbitrary $\ell', \ell''$. 
We group $\{b_{\ell'}\}_{\ell'}$ by checking which interval from
\begin{equation}
    [1, R^{\delta}), (R^{\delta}, R^{2\delta}], (R^{2\delta}, R^{3\delta}], \dots
\end{equation}
they belong to: 
\begin{equation}
    \{b_{\ell_0}, \dots, b_{\ell_1}\}, \{b_{\ell_1+1}, \dots, b_{\ell_2}\}, \dots
\end{equation}
with $\ell_0=1$, and therefore two $b_{\ell'}, b_{\ell''}$ in the same group are comparable up to factor $R^{\delta}$. Now we are ready to define brooms.

\begin{definition}[Brooms]
Fix $(S, B(\bfx_0, r)), \vector{S}, \tau$ and $S_{\Box}$. Each 
\begin{equation}\label{220608e8_14}
    \mc{B}_{\ell, b}:=\bigcup_{\ell_{m}+1\le \ell'\le \ell_{m+1}}\{\T_{\ell'}\},
\end{equation}
(see \eqref{220608e8_13} for definition of $\T_{\ell'}$), with level 
\begin{equation}\label{220717e6_19}
    \ell:=R^{w\delta}, \text{ with } w\in \N, R^{w\delta}\le \ell_{m+1}-\ell_m< R^{(w+1)\delta},
\end{equation}
is called a broom. Here
\begin{equation}
    b:=R^{w' \delta} \text{ with } w'\in \N, R^{w'\delta}\le b_{\ell_m+1}< R^{(w'+1)\delta},
\end{equation}
will be called the length of the broom, and in the definition of $\ell$, we used $[a]$ to denote the largest integer $\le a$.  For the broom $\mc{B}_{\ell, b}$ in \eqref{220608e8_14}, we say that it is rooted at $S_{\Box}$. 
\end{definition}

In the previous definition, we used tubes from $\T_R[S]$. For some perhaps technical reasons, we need to introduce the notion of brooms by using a sub-collection of tubes from $\T_R[S]$. Fix $(S, B(\bfx_0, r)), \vector{S}, \tau$ and $S_{\Box}$, and a sub-collection $\T'_R[S]\subset \T_R[S]$. We repeat the above definition of brooms with $\T_R[S]$ replaced by $\T'_R[S]$, and obtain a unique decomposition
\begin{equation}\label{220608e8_16}
    \T'_R[S]=\bigcup_{\ell, b} \mc{B}_{\ell, b}(\T'_R[S]),
\end{equation}
where each $\mc{B}_{\ell, b}(\T'_R[S])$ is called a broom of level $\ell$ and length $b$, and generated by tubes from $\T'_R[S]$.

\subsection{Definition of the two-ends relation}

Recall the algorithm in Subsection \ref{220607subsection6_2}. For each node $\mf{n}\in \cup_{\iota} \mf{R}_{\iota}$,  we will define a relation $\sim_{\mf{n}}$; Lemma \ref{220608lemma6_5} guarantees that we have a small number of these relations. \\

We define a few auxiliary functions $\chi_{\mf{n}, \kappa}=\chi_{\kappa}$, taking values $0$ or $1$, where $\kappa=((\ell_1, b_1), \mu_1, \dots, (\ell_{\iota}, b_{\iota}), \mu_{\iota})$, $\iota\in \N$ and $\ell_{\iota'}, b_{\iota'}, \mu_{\iota'}\in \{R^{w\delta}: w\in \N\}$ for every $1\le \iota'\le \iota$. Denote 
\begin{equation}
    r=\rho(\mf{n}), \ \ m=\dim(\mf{n}). 
\end{equation}
\noindent \underline{Step 1.} For $S\in \mf{n}$ and a tube $T\in \T[B_R]$, 
we say that 
\begin{equation}
    \chi_{(\ell_1, b_1)}(S, T)=1
\end{equation}
if $T$ belongs to a broom rooted at some $S_{\Box}\subset S$ with level $\ell_1$ and length $b_1$. Moreover, we say that 
\begin{equation}
    \chi_{(\ell_1, b_1), \mu_1}(S, T)=1
\end{equation}
if 
\begin{equation}
    \chi_{(\ell_1, b_1)}(S, T)=1
\end{equation}
and 
\begin{equation}
     \mu_1\le \sum_{S'\in \mf{n}}\chi_{(\ell_1, b_1)}(S', T)< \mu_1 R^{\delta}. 
\end{equation}

\noindent \underline{A general step.}  Suppose we have defined $\chi_{\kappa}$ for $\kappa=((\ell_1, b_1), \mu_1, \dots, (\ell_{\iota}, b_{\iota}), \mu_{\iota})$, and $\iota\ge 1$. Let us define 
\begin{equation}
    \chi_{\kappa, (\ell_{\iota+1}, b_{\iota+1})}, \ \  \chi_{\kappa, (\ell_{\iota+1}, b_{\iota+1}), \mu_{\iota+1}}. 
\end{equation}
For fixed $S\in \mf{n}$, define 
\begin{equation}\label{220615e8_26}
    \T_{S, \kappa}:=\{T'\in \T[B_R]: \chi_{\kappa}(S, T')=1\}.
\end{equation}
Recall \eqref{220608e8_16}. Write 
\begin{equation}
    \T_{S, \kappa}=\bigcup_{\ell_{\iota+1}, b_{\iota+1}} \{ \mc{B}_{\ell_{\iota+1}, b_{\iota+1}, \tau, S_{\Box}}( \T_{S, \kappa})\}_{\tau, S_{\Box}},
\end{equation}
where $\tau$ runs through all frequency caps of side length $\rho(\mf{n})^{-1/2}$,  $S_{\Box}$ is as given in Lemma \ref{220608lemma8_1}, $\mc{B}_{\ell_{\iota+1}, b_{\iota+1}, \tau, S_{\Box}}( \T_{S, \kappa})$ is a broom of level $\ell_{\iota+1}$, length $b_{\iota+1}$, rooted at $S_{\Box}$ and generated by tubes from $\T_{S, \kappa}$. We then say that 
\begin{equation}
    \chi_{\kappa, (\ell_{\iota+1}, b_{\iota+1})}(S, T)=1 \text{ if } T\in \mc{B}_{\ell_{\iota+1}, b_{\iota+1}, \tau, S_{\Box}}( \T_{S, \kappa, \tau}),
\end{equation}
for some $\tau$ and $S_{\Box}$. Next, set 
\begin{equation}
    \chi_{\kappa, (\ell_{\iota+1}, b_{\iota+1}), \mu_{\iota+1}}=1
\end{equation}
if 
\begin{equation}
    \chi_{\kappa, (\ell_{\iota+1}, b_{\iota+1})}(S, T)=1
\end{equation}
and 
\begin{equation}
    \mu_{\iota+1}\le \sum_{S'\in \mf{n}} \chi_{\kappa, (\ell_{\iota+1}, b_{\iota+1}), \mu_{\iota+1}}(S', T)< \mu_{\iota+1} R^{\delta}.
\end{equation}
This finishes the definition of the auxiliary functions we need. \\

For $\kappa=((\ell_1, b_1), \mu_1, \dots, (\ell_{\iota}, b_{\iota}), \mu_{\iota})$, we say that $\kappa$ is admissible if there exists exactly one pair $(\iota_1, \iota_2)$ with $\iota_1\neq \iota_2$ such that 
\begin{equation}
    ((\ell_{\iota_1}, b_{\iota_1}), \mu_{\iota_1})=((\ell_{\iota_2}, b_{\iota_2}), \mu_{\iota_2}).
\end{equation}
\begin{lemma}\label{220608lemma8_5}
The number of admissible $\kappa$ is $O_{\delta}(1)$. 
\end{lemma}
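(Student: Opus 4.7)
The plan is to reduce the lemma to a purely combinatorial counting problem. First I would observe that each of the parameters $\ell_{\iota'}, b_{\iota'}, \mu_{\iota'}$ takes values in a set of size $O(\delta^{-1})$. Indeed, each such parameter has the form $R^{w\delta}$ with $w\in \mathbb{N}$, and the geometric constraints built into the broom construction (levels $\ell$ and lengths $b$ from \eqref{220717e6_19} and the surrounding discussion are bounded by the total number of $R$-tubes inside a ball of radius $r\leq R$, while the multiplicities $\mu$ are bounded by the total number of grains in $\mf{n}$, which is also polynomial in $R$) force $w\leq C\delta^{-1}$ for an absolute constant $C$. Hence the triple $((\ell_{\iota'}, b_{\iota'}), \mu_{\iota'})$ ranges over a set of cardinality $M:=O(\delta^{-3})$.

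Next I would exploit the admissibility condition to bound the length $\iota$ of the sequence $\kappa$. By definition of admissibility, exactly one pair of distinct indices $(\iota_1,\iota_2)$ yields equal triples, and all other pairs yield distinct triples. This means that among the $\iota$ triples $((\ell_{\iota'}, b_{\iota'}), \mu_{\iota'})$ in $\kappa$, exactly $\iota-1$ distinct values appear, with one appearing twice and the rest appearing once. In particular, the pigeonhole principle forces
\begin{equation}
    \iota - 1 \leq M = O(\delta^{-3}),
\end{equation}
i.e., $\iota$ is bounded by a quantity depending only on $\delta$.

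Finally I would count admissible $\kappa$ of a fixed length $\iota\leq M+1$ directly: choose the $\iota-1$ distinct triples appearing in $\kappa$ (at most $\binom{M}{\iota-1}$ ways), choose which one is repeated (at most $\iota-1$ ways), and arrange the resulting multiset in a sequence of length $\iota$ (at most $\iota!/2$ ways). Summing over $\iota$ gives
\begin{equation}
    \#\{\kappa \text{ admissible}\}\leq \sum_{\iota=2}^{M+1}\binom{M}{\iota-1}(\iota-1)\frac{\iota!}{2}\leq (M+1)!\cdot 2^M = O_{\delta}(1).
\end{equation}
This completes the proof. The only step requiring any care is the first one, i.e., verifying that each of $\ell_{\iota'}, b_{\iota'}, \mu_{\iota'}$ is bounded by $R^{O(1)}$ (so that $w\leq C\delta^{-1}$); this is immediate from the construction in Subsection \ref{220926sub7_1}, since each is a counting quantity associated to subcollections of $\T[B_R]$ and $\mf{n}$, which themselves have cardinality polynomial in $R$.
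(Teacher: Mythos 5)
Your proposal is correct and follows essentially the same route as the paper: the paper's proof simply observes that each of $\ell_{\iota'}, b_{\iota'}, \mu_{\iota'}$ takes $O(\delta^{-1})$ values and then asserts "the lemma follows," leaving implicit exactly the two steps you spell out (admissibility bounding the length $\iota$ by $O(\delta^{-3})$, and the final combinatorial count). Your write-up is just a more explicit version of the intended argument.
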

\begin{proof}[Proof of Lemma \ref{220608lemma8_5}]
Note that by \eqref{220717e6_19}, the number of values that $\ell_{\iota'}$ can take is $\delta^{-1}$; the same is true is for $b_{\iota'}$ and $\mu_{\iota'}$, for each $\iota'$. The lemma follows. 
\end{proof}

\begin{definition}
For a ball $B\subset B_R$ of radius $R^{1-\delta}$, a tube $T\in \T[B_R]$, a node $\mf{n}\in \cup_{\iota}\mf{R}_{\iota}$ and an admissible multi-index $\kappa$, we say that $B\sim_{\mf{n}, \kappa} T$ if $B$ maximizes 
\begin{equation}
    \#\{S'\in \mf{n}: S'\subset B', \chi_{\mf{n}, \kappa}(S', T)=1\},
\end{equation}
among all $B'$ of radius $R^{1-\delta}$. 
\end{definition}

\begin{definition}[Relation]
For a ball $B$ of radius $R^{1-\delta}$ and a tube $T\in \T[B_R]$, we say that $B\sim T$ if 
\begin{equation}
    B\sim_{\mf{n}, \kappa} T,
\end{equation}
for some node $\mf{n}\in \cup_{\iota} \mf{R}_{\iota}$ and admissible $\kappa$. 
\end{definition}

By a simple inductive argument on $\kappa$, we have 
\begin{lemma}\label{220615lemma8_7}
Given a multi-grain $\vector{S}$ with the last component $S$. For every $T\in \T_R[S]$, there exists exactly one admissible $\kappa$ such that $T\in \T_{S, \kappa}$. 
\end{lemma}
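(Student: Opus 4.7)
The plan is to establish both existence and uniqueness simultaneously by building $\kappa$ through an iterative procedure that is deterministic at each stage. The key input is that the broom decomposition in \eqref{220608e8_16} canonically assigns each tube to a unique broom, so that the triples $((\ell_i,b_i),\mu_i)$ are forced step by step once the first-step choice is pinned down.

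For existence, I start with $T\in\T_R[S]$ and build $\kappa$ one triple at a time. At step one, the cap $\tau$ is forced to be the unique one containing $\theta(T)$, and the slab $S_\Box$ is chosen via Lemma \ref{220608lemma8_1}(1) so that $T\cap S\cap B(\bfx_0,r)\approx S_\Box$. The broom algorithm preceding \eqref{220608e8_13} then places $T$ into exactly one class $\T_{\ell'}$, which in turn lies in exactly one broom $\mc{B}_{\ell_1,b_1,\tau,S_\Box}$, producing a determined pair $(\ell_1,b_1)$; the dyadic counting condition then forces $\mu_1$. Iteratively, given $\kappa_\iota=((\ell_1,b_1),\mu_1,\dots,(\ell_\iota,b_\iota),\mu_\iota)$ with $T\in\T_{S,\kappa_\iota}$, applying the same reasoning to the broom decomposition of $\T_{S,\kappa_\iota}$ in \eqref{220608e8_16} yields a unique next triple $((\ell_{\iota+1},b_{\iota+1}),\mu_{\iota+1})$. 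By Lemma \ref{220608lemma8_5} the set of possible triples has cardinality $O_\delta(1)$, so by pigeonhole the iteration produces a repeated triple after boundedly many steps. Terminating at the first such repetition gives a multi-index $\kappa$ in which all triples before the last are distinct and the last matches exactly one earlier triple, so $\kappa$ is admissible with $T\in\T_{S,\kappa}$.

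For uniqueness, suppose $\kappa$ and $\kappa'$ are admissible with $T\in\T_{S,\kappa}\cap\T_{S,\kappa'}$. I argue by induction on $\iota$ that their first $\iota$ triples agree. The first triple of any admissible multi-index containing $T$ in its tube set must be the canonical triple produced above; this forces agreement at position one. Assuming agreement through position $\iota$, the sets $\T_{S,\kappa_\iota}$ defined in \eqref{220615e8_26} coincide for both sides, and the canonical broom decomposition of this common set forces agreement at position $\iota+1$. Finally, admissibility demands exactly one repeated pair of triples, which forces both $\kappa$ and $\kappa'$ to stop at the earliest repetition; this common stopping time is shared by both, giving $\kappa=\kappa'$.

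The main obstacle is justifying that the assignment $T\mapsto((\ell,b),\mu)$ is single-valued at each step. This amounts to showing that once $\tau$ and $S_\Box$ are pinned down by $T$, the broom decomposition between \eqref{220608e8_10} and \eqref{220608e8_14} partitions the relevant tubes into disjoint classes $\T_{\ell'}$ and hence into disjoint brooms indexed by dyadic scales. The only subtlety is that $T$ may a priori belong to $\T_{\tau,R}[S_\Box']$ for more than one $S_\Box'$, but Lemma \ref{220608lemma8_1}(1) selects the unique $S_\Box$ satisfying $T\cap S\cap B(\bfx_0,r)\approx S_\Box$, and this canonical choice removes the ambiguity; the counting condition then forces $\mu$ as the dyadic scale of an integer determined by $T$ and the previously chosen triples.
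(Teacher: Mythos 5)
The paper does not actually spell out a proof of this lemma: it merely writes ``By a simple inductive argument on $\kappa$, we have'' and then states the result. Your proposal supplies exactly that inductive argument — building $\kappa$ one triple at a time, using that the broom decomposition at each level assigns $T$ to one $(\ell,b)$ and the dyadic count then pins down $\mu$, and invoking pigeonhole to terminate at the first repeated triple — so you are filling in a proof the paper only asserts, and the approach matches what the authors clearly have in mind. Your treatment of the potential multiplicity of $S_\Box$ is also consistent with the paper's implicit convention that the broom decomposition in \eqref{220608e8_16} is a genuine partition.

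One point worth flagging, since it affects both your writeup and the paper's: the stated definition of an admissible $\kappa$ (``there exists exactly one pair $(\iota_1,\iota_2)$ with equal triples'') does not by itself force the repetition to occur at the last index, whereas your uniqueness paragraph uses ``admissibility demands \dots which forces both $\kappa$ and $\kappa'$ to stop at the earliest repetition.'' As written, a string like $(t_1,t_2,t_1,t_3)$ with $t_3$ distinct from $t_1,t_2$ would satisfy the stated definition even though the repeated pair is not terminal, which would break uniqueness. However, the proof of Lemma~\ref{lem: counting} already silently assumes that in an admissible $\kappa$ the last triple is the repeated one, so the intended (and needed) definition is ``the first $\iota-1$ triples are pairwise distinct and the last triple equals exactly one of them.'' Under that convention your existence and uniqueness arguments go through cleanly: the canonical sequence of triples is forced, its first repetition determines the unique termination point, and two admissible $\kappa$'s containing $T$ must agree position by position and hence coincide. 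You may want to state this reading of admissibility explicitly when you invoke it.
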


\subsection{Broom estimates}\label{220926sub7_3}

Let $\vector{S}_{n'}$ be a multigrain from Definition \ref{220726def5_10} with the last component given by $S_{n'}$. Let $B_{\iota}\subset B_{R}$ be the ball of radius $R^{1-\delta}$ that contains $S_{n'}$. Recall the definition of $f^*_{\iota, S_{n'}}$ from Subsection \ref{220610subsection6_4} and the definition of $f_{\iota, S_{n'}}^{*(n'')}$ with $n'\le n''\le n$ from \eqref{220726e4_109}. The notation $*(n'')$ means that we start with the function $f^*_{\iota, S_{n'}}$, which is defined via wave packets from $\T[B_{r_{n''}}]$, and ``trace" back by Definition \ref{220726def5_10} along the nodes in \eqref{220711e5_88} to wave packets in $\T[B_{r_{n''}}]$. 
Note that
\begin{equation}
    f_{\iota, S_{n'}}^{*(n'')}=f^*_{\iota, S_{n'}}, \text{ when } n''=n'.
\end{equation}
Define 
\begin{equation}\label{220617e7_36}
    f^{\not\sim}_{S_{n'}, \tau}:=(f_{\tau})^*_{\iota, S_{n'}}. 
\end{equation}
Here we have suppressed the dependence on $\iota$ and replaced it by $\not\sim$, as $B_{\iota}$ is uniquely determined by $S$, and we would also like to emphasize that we are in the non-related case. Moreover, define 
\begin{equation}\label{220617e7_36zz}
    f^{\not\sim(n'')}_{S_{n'}, \tau}:=(f_{\tau})^{*(n'')}_{\iota, S_{n'}}.
\end{equation}
The main goal of this subsection is to prove the following broom estimate. 

\begin{theorem}\label{220615thm8_7}
Let
$\vector{S}_{n'}=(S_n, \dots, S_{n'})$ be a multi-grain and
$S_{n'}=\mc{N}_{r_{n'}^{1/2+\delta_{n'}}}(Z_{n'})\cap B_{r_{n'}}$ with $r_{n'}\ge \sqrt{R}$, $Z_{n'}$ is an $n'$-dimensional algebraic variety of degree $\lesssim_{n'} 1$ in $\mathbb{R}^n$, then for $\tau$ of scale $r_{n'}^{-1/2}$ and $n'\le n''<n$, it holds that 
\begin{equation}
\|f^{\nsim(n'')}_{S_{n'}, \tau}\|_{L^2}^2 \lessapprox \Big(\frac{r_{n''}}{R}\Big)^{\frac{n-n'}{2}}
\|f_{\tau}\|_{L^2}^2.
\end{equation}
Here $r_i=\rho(S_i)$ for each $n'\le i\le n$. 
\end{theorem}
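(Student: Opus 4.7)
The plan is to reduce the broom estimate, via the broom decomposition of Subsection \ref{220926sub7_1}, to the Variety Uncertainty Principle of Lemma \ref{220614lemma8_8}. First I would apply Lemma \ref{220615lemma8_7} to decompose $f^{\nsim(n'')}_{S_{n'},\tau}$ into pieces indexed by admissible multi-indices $\kappa$: for each wave packet $T\in \T[B_R]$ contributing to the sum, there is a unique $\kappa$ with $T\in \T_{S_{n'},\kappa}$ (see \eqref{220615e8_26}). Since Lemma \ref{220608lemma8_5} provides only $O_\delta(1)$ admissible $\kappa$, I fix one and treat each contribution separately. Next, I further decompose $f^{\nsim(n'')}_{S_{n'},\tau}$ according to the innermost broom parameters $(\ell, b)$ in $\kappa$, pigeonholing so that every wave packet lies in exactly one broom rooted at some $S_\Box \subset S_{n'}$ of level $\ell$ and length $b$. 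The non-related hypothesis $T \not\sim B_\iota$ translates at this step into a bound on the multiplicity $\mu$ associated with $B_\iota$ that falls short of its maximum by a factor $R^{\delta}$; this is the crucial two-ends input that keeps the argument from being vacuous.

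The next step is to apply the generalized pseudo-conformal transformation of Lemma \ref{220608lemma8_9}, which exchanges the roles of physical and frequency space in a way compatible with the curved tube geometry coming from a phase satisfying (H1) and $(\mathrm{H2}^+)$. Under this transformation the root $S_\Box$ (a short segment of $S_{n'}$ of thickness $\sim r_{n''}^{1/2+\delta_{n'}}$) is sent to a neighborhood $\Omega_1 = \mc{N}_{\sqrt{R_1}}(Z_1)$ of an $(n'-1)$-dimensional transverse complete intersection $Z_1 \subset \R^{n-1}$, while the collection of heads of the broom's $R$-tubes is sent to a neighborhood $\Omega_2 = \mc{N}_{1/\sqrt{R_2}}(Z_2)$ of another $(n'-1)$-dimensional transverse complete intersection $Z_2$. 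Both varieties inherit bounded degree from $Z_{n'}$; the preliminary reduction \eqref{220613e8_1} ensures that their tangent spaces make small angles with the coordinate subspace spanned by $\{\vec e_1,\dots,\vec e_{n'-1}\}$, precisely as required by the hypothesis of Lemma \ref{220614lemma8_8}. The scales are arranged so that $R_1/R_2 \simeq r_{n''}/R$.

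With this geometric picture in place, I would invoke the counting lemma, Lemma \ref{lem: counting}, to organize the sum over brooms. It controls both the number of wave packets inside a single broom and the total overlap of brooms, and it recasts the broom estimate, after the pseudo-conformal transformation, as an $L^2$-inequality of the shape $\|\widehat{F}\|_{L^2(\Omega_1)}^2 \lesssim (\text{overlap factor})\cdot \|F\|_{L^2}^2$, where the reconstructed $F$ is essentially Fourier-supported in $\Omega_2$. Applying Lemma \ref{220614lemma8_8} with $m = n'$ (so that $(n-m)/2 = (n-n')/2$) yields a gain of $(R_1/R_2)^{(n-n')/2-\delta} \simeq (r_{n''}/R)^{(n-n')/2-\delta}$. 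Summing over the $O_\delta(1)$ dyadic broom parameters $(\ell, b, \mu)$ and over the $O_\delta(1)$ admissible $\kappa$ produces at most $R^{O(\delta)}$ loss, absorbed by $\lessapprox$.

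The hardest part will be the bookkeeping, in particular verifying that the two-ends factor from $T\not\sim B_\iota$ exactly cancels the overlap multiplicity produced by the counting lemma. A second delicate point is to confirm that after the pseudo-conformal transformation the resulting sets really are neighborhoods of transverse complete intersections in the sense of Definition \ref{221010defi5_1}, at all the relevant scales, so that Lemma \ref{220614lemma8_8} applies without further hypotheses; the decomposition of $S_{n'}$ into pieces $S_\Box$ in Lemma \ref{220608lemma8_1}, together with Lemma \ref{220608lemma8_2}, is introduced precisely to enable this verification. Finally, one must track that the telescoping of scales through the multi-grain $\vec{S}_{n'}$ (via Definition \ref{220726def5_10}) is consistent with the broom decomposition at scale $R$, so that the exponent $(n-n')/2$ --- rather than some larger or smaller power --- is exactly what the Variety Uncertainty Principle delivers.
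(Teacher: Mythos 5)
Your outline is essentially the paper's route: decompose by admissible $\kappa$ via Lemma \ref{220615lemma8_7} and Lemma \ref{220608lemma8_5}, localize over the pieces $S_\Box$ of Lemma \ref{220608lemma8_1}, invoke the pseudo-conformal Lemma \ref{220608lemma8_9} to trade an $L^2$ norm on a thin slice at time $t_1$ for one on the far slice at $t_2$, and close with the counting Lemma \ref{lem: counting} together with the assumption that wave packets carry comparable coefficients. However there is one genuine gap in the middle of your plan, at exactly the step you flag as "hardest."

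You apply the pseudo-conformal transformation once, with scales set so that $R_1/R_2 \simeq r_{n''}/R$. A single application of Lemma \ref{220608lemma8_9} (or equivalently Lemma \ref{220614lemma8_8}) is \emph{not} valid here, because the function whose $L^2$ norm you want to control is not a single wave packet sum at scale $R$: the superscript $*(n'')$ in $f^{\nsim(n'')}_{S_{n'},\tau}$ records that the wave packets have been repeatedly refined through the intermediate scales $R=R_0>R_1>\dots>R_W=r_{n'}$ of the multi-grain, with some packets discarded at each step of the partitioning algorithm (tangential/transverse sorting). A one-shot transformation has no way to see this refinement, and without it you would be estimating a much larger function than the theorem allows. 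The paper's proof of Claim \ref{220616claim8_9} handles this by \emph{iterating} Lemma \ref{220608lemma8_9} across the scales $R_w$: at each step it applies the lemma with $R_1'=R_{w+1}$, $R_2'=R_w$, then re-does the wave packet decomposition at scale $R_{w+1}$, discards the packets that the partitioning algorithm would discard, and checks — this is the nontrivial verification \eqref{220728e6_126} — that the refined function still has $T^\lambda f_{R_{w+1}}(\cdot,t_2)$ essentially supported on the correspondingly thinner set $\Omega^{(2)}_{R_{w+1}}$, so the next iteration of Lemma \ref{220608lemma8_9} can go through. The telescoping product of $(R_{w+1}/R_w)^{(n-n')/2}$ factors then produces the exponent you want. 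Your proposal would need to spell out this iteration and the support propagation property; without it the argument does not close.

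A smaller point: you describe Lemma \ref{lem: counting} as "recasting the broom estimate, after the pseudo-conformal transformation, as an $L^2$-inequality." In the paper it plays a purely combinatorial role, applied after Claim \ref{220616claim8_9} has been established: it bounds $\#\T^{\nsim}_{S,\kappa,\tau}$ by $\ell_j^{-1}R^{O(n\delta)}\#\T_\tau$, and since wave packets have comparable coefficients this cancels exactly the $\ell_j$ factor incurred by Cauchy-Schwarz when expanding the broom $\mc{B}_{\ell_j,b_j}$ into its layers $\T_{\ell'}$. It is worth being precise about this because the two-ends/admissibility structure of $\kappa$ is used \emph{only} inside that counting argument; if you phrase it as part of the uncertainty-principle step, the cancellation between the Cauchy-Schwarz loss and the $\mu$-multiplicity bound becomes hard to verify.
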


\begin{proof}[Proof of Theorem \ref{220615thm8_7}] We will first write down the details for the case $n''=n'$, as it requires less notation and contains all the ideas of the proof; the general case $n''\ge n'$ will be remarked in the end. Our goal is to prove 
\begin{equation}
\|f^{\nsim}_{S_{n'}, \tau}\|_{L^2}^2 \lessapprox \Big(\frac{r_{n'}}{R}\Big)^{\frac{n-n'}{2}}
\|f_{\tau}\|_{L^2}^2.
\end{equation}
To simplify notation, we will abbreviate $S_{n'}$ to $S$, $\vector{S}_{n'}$ to $\vector{S}$ and $r_{n'}$ to $r$. For the ball $B_{\iota}$ of radius $R^{1-\delta}$ containing $S$, the node $\mf{n}$ containing $S$ and each admissible multi-index $\kappa$, recall the notation in \eqref{220615e8_26} and 
define \begin{equation}\label{eq: Tnsimkappatau}
\mathbb{T}^{\nsim}_{S, \kappa,\tau}:=\{ T \in \mathbb{T}_{S, \kappa}: \theta(T)\subset \tau,  B_{\iota}\nsim_{\mf{n}, \kappa} T\},
\end{equation}
and 
\[
f^{\nsim}_{\kappa, S, \tau}:= \sum_{T\in \mathbb{T}^{\nsim}_{S, \kappa,\tau}}f_T \quad \text{and} \quad 
f^{\nsim, *}_{\kappa, S, \tau} := (f^{\nsim}_{\kappa, S, \tau} )^*_{\iota, S}.
\]
Then we claim that
\begin{equation}\label{220615e8_36}
    f^{\nsim}_{S,\tau} = \sum_{\kappa}  f^{\nsim, *}_{\kappa, S, \tau}.
\end{equation}
To see this, note that for every $T\in \T[B_R]$ that is tangent to $S$, by Lemma \ref{220615lemma8_7}, there exists exactly one admissible $\kappa$ such that $T\in \T_{S, \kappa}$. Moreover, if $T\not\sim B_{\iota}$, then for the above given $\mf{n}$ and $\kappa$, we also have $T\not\sim_{\mf{n}, \kappa} B_{\iota}$. \\

By the Cauchy-Schwarz inequality, 
\begin{equation}\label{220617e7_38}
    \norm{f^{\nsim}_{S,\tau}}^2_2 \lesim_{\delta}  \sum_{\kappa} \norm{ f^{\nsim, *}_{\kappa, S, \tau}}^2_2.
\end{equation}
Here we used Lemma \ref{220608lemma8_5}. Next, we localize $f^{\sim}_{\kappa, S, \tau}$ further in space. Recall from Lemma \ref{220608lemma8_1} that $S=\cup_{\Box} S_{\Box}$. Denote 
\begin{equation}
    \mathbb{T}^{\nsim}_{S_{\Box}, \kappa,\tau}:=\{ T \in \mathbb{T}_{S, \kappa}: T\cap S_{\Box}\neq\emptyset, \theta(T)\subset \tau,  B_{\iota}\nsim_{\mf{n}, \kappa} T\},
\end{equation}
and then 
\begin{equation}
    f^{\nsim}_{\kappa, S_{\Box}, \tau}:= \sum_{T\in \mathbb{T}^{\nsim}_{S_{\Box}, \kappa,\tau}}f_T \quad \text{and} \quad 
f^{\nsim, *}_{\kappa, S_{\Box}, \tau} := (f^{\nsim}_{\kappa, S_{\Box}, \tau} )^*_{\iota, S}.
\end{equation}
By spatial disjointness of $\{S_{\Box}\}_{\Box}$, we have 
\begin{equation}
    \eqref{220617e7_38}\lesim \sum_{\kappa} 
    \sum_{\Box} \norm{
    f^{\nsim, *}_{\kappa, S_{\Box}, \tau}
    }_2^2.
\end{equation}
Let $(x_1, t_1)$ be a point in $S$, then 
\begin{equation}\label{220627e7_42}
\|f^{\nsim, *}_{\kappa, S_{\Box},  \tau}\|_{L^2}^2 \lesim 
\|T^{\lambda} f^{\nsim, *}_{\kappa, S_{\Box},\tau}\|_{L^2( (B(x_1, 2r_{n'}) \times \{t_1\})\cap S_{\Box})}^2
\end{equation}
Here $B(x_1, 2r_{n'})$ is an $n-1$ dimensional ball. Indeed, inequalities of the form \eqref{220627e7_42} hold for more general data. Let $h_{\tau}$ be supported on $\tau$. Assume that $T^{\lambda} h_{\tau}(\cdot, t_1)$ is essentially supported on $B_{\sqrt{R}}\subset \R^{n-1}$, a ball of radius $\sqrt{R}$. Then 
\begin{equation}\label{220627e7_44}
    \norm{h_{\tau}}_2^2 \lesim \norm{T^{\lambda} h_{\tau}}_{L^2
    (
    B_{\sqrt{R}}\times \{t_1\}
    )
    }.
\end{equation}
To see this, we first apply the change of variables 
\begin{equation}\label{220627e7_45}
    x\mapsto x+x_0, t\mapsto t+t_1, \xi\mapsto \xi+\xi_0,
\end{equation}
where $x_0$ is the center of $B_{\sqrt{R}}$ and $\xi_0$ is the center of $\tau$. We obtain 
\begin{equation}
    T^{\lambda} h_{\tau}(x+x_0, t+t_1)=\int h_{\tau}(\xi+\xi_0) e^{i \phi^{\lambda}(x+x_0, t+t_1; \xi+\xi_0)}a^{\lambda}(x+x_0, t+t_1; \xi+\xi_0) d\xi.
\end{equation}
Its absolute value can be written as the absolute value of 
\begin{equation}
    \begin{split}
        & \int \widetilde{h}_{\tau}(\xi) e^{i\phi^{\lambda}_0(x, t; \xi)} a^{\lambda}(x+x_0, t+t_1; \xi+\xi_0) d\xi
    \end{split}
\end{equation}
where 
\begin{equation}\label{220612e8_43}
\begin{split}
    \phi^{\lambda}_0(x, t; \xi):=
    & \phi^{\lambda}(x+x_0, t+t_1; \xi+\xi_0)-\phi^{\lambda}(x_0, t_1; \xi+\xi_0)\\
    & - \phi^{\lambda}(x+x_0, t+t_1; \xi_0)+ \phi^{\lambda}(x_0, t_1; \xi_0)
\end{split}
\end{equation}
and 
\begin{equation}
    \widetilde{h}_{\tau}(\xi):=h_{\tau}(\xi+\xi_0) e^{i\phi^{\lambda}(x_0, t_1; \xi+\xi_0)}
\end{equation}
This change of variables tells us that, if we denote 
\begin{equation}\label{220611e8_44}
    \widetilde{T}^{\lambda} h_{\tau}(x, t):=\int h_{\tau}(\xi) e^{i \phi_0^{\lambda}(x, t; \xi)}a^{\lambda}(\bfx; \xi) d\xi,
\end{equation}
then to prove \eqref{220627e7_44}, it suffices to prove it for the operator as defined in \eqref{220611e8_44} with $t_1=0, x_0=0, \xi_0=0$. 

We apply Taylor expansion to $\phi_0^{\lambda}$ in the $x$ variable about the origin, and write 
\begin{equation}\label{220627e7_51}
    \phi_0^{\lambda}(x, 0; \xi)=\nabla_{x}\phi_0^{\lambda}(0; \xi)\cdot x+ w_1(x; \xi).
\end{equation}
Note that 
\begin{equation}
    |\nabla_x^2 \phi_0^{\lambda}| \lesim 1/\lambda, \ \ |x|\lesim \sqrt{R},
\end{equation}
and therefore $|w_1(x; \xi)|\lesim 1$. We apply Taylor expansion in the $\xi$ variable about the origin, and obtain 
\begin{equation}\label{220627e7_53}
    \phi_0^{\lambda}(x, 0; \xi)=\inn{x}{\xi}+\underbrace{w_2(x; \xi)+ w_1(x; \xi)}_{=: w(x; \xi)}.
\end{equation}
Note that 
\begin{equation}
    |\nabla_x\nabla^{\beta}_{\xi} \phi_0^{\lambda}|\lesim_{\beta} 1, 
\end{equation}
for all multi-indices $\beta$ and therefore $|w_2(x; \xi)|\lesim r^{-1} \sqrt{R}\lesim 1$. Now the claimed estimate \eqref{220627e7_44} follows from Plancherel's theorem and Taylor's expansion. \\

Suppose that
\begin{equation}
    \kappa=\{ (\ell_1, b_1), \mu_1, \dots, (\ell_{j}, b_{j}), \mu_{j}).
\end{equation}
For each plank $S_{\Box}$, 
\[
\mathbb{T}_{\tau, R}[S_{\Box}] \cap \mathbb{T}_{S, \kappa},
\]
where $\T_{\tau, R}[S_{\Box}]$ was defined in \eqref{220615e8_8}, is contained in a broom $\mathcal{B}_{\ell_{j}, b_{j}}$. Recall the definition of brooms and the notation in \eqref{220608e8_13}. Write 
\begin{equation}\label{220628e7_57}
    \mathcal{B}_{\ell_{j}, b_{j}}=\bigcup_{1\le \ell'\le \ell_j} \T_{\ell'},
\end{equation}
and 
\begin{equation}\label{220628e7_58}
    \bigcup_{T\in \T_{\ell'}} T \cap \{t=t_1+R\}\subset \mathcal{N}_{10nR^{1/2+\delta}}(Z_{\ell'})
\end{equation}
for an algebraic variety $Z_{\ell'}$  of dimension $n'-1$ and complexity $O(\deg(S))$ satisfying that the angle between $T_{\mathbf{z}}(Z_{\ell'})$ and the space spanned by $\{\vec{e}_1, \dots, \vec{e}_{n'-1}\}$ is $\leq 1/(100n)$, for every $\mathbf{z}\in Z_{\ell'}$.  Write 
\[
f^{\nsim}_{\kappa, S_{\Box}, \tau, \ell'}=\sum_{T\in \mathbb{T}^{\nsim}_{S_{\Box}, \kappa, \tau}\cap \mathbb{T}_{\ell'}} f_T
\]
and 
\[
f^{\nsim, *}_{\kappa, S_{\Box}, \tau, \ell'} :=  (f^{\nsim}_{\kappa, S_{\Box}, \tau, \ell'})^*_{\iota, S}. 
\]
Then by the triangle inequality and Cauchy-Schwarz inequality, 
\begin{equation}\label{220616e8_41}
\begin{split}
& \| T^{\lambda} f^{\nsim, *}_{\kappa, S_{\Box}, \tau} \|_{L^2( (B(x_1, 2r) \times \{t_1\}) \cap S_{\Box})}^2  \\
& \lesssim \ell_{j} \sum_{1\le \ell'\le \ell_j} \|T^{\lambda} f^{\nsim, *}_{\kappa, S_{\Box}, \tau, \ell'} \|_{L^2( (B(x_1, 2r) \times \{t_1\}) \cap S_{\Box})}^2.
\end{split}
\end{equation}
\begin{claim}\label{220616claim8_9}
For each $\ell'$, it holds that 
\begin{equation}\label{220616e8_42}
\begin{split}
& \|T^{\lambda} f^{\nsim, *}_{\kappa, S_{\Box}, \tau, \ell'}\|_{L^2( (B(x_1, 2r) \times \{t_1\}) \cap S_{\Box})}^2 \\
& \lessapprox  (\frac{r}{R})^{\frac{n-n'}{2}}
\|T^{\lambda}f^{\nsim}_{\kappa, S_{\Box}, \tau, \ell'}\|_{L^2(\{ t=t_2\})}^2
\end{split}
\end{equation}
where $t_2:=t_1+R$. 
\end{claim}
We first accept Claim \ref{220616claim8_9}, and continue with the $L^2$ estimate: 
\begin{equation}
    \eqref{220616e8_42} \lessapprox
    (\frac{r}{R})^{\frac{n-n'}{2}} 
\|f^{\nsim}_{\kappa, S_{\Box}, \tau, \ell'}\|^2_{L^2}.
\end{equation}
Summing over all $\ell'$ and $S_{\Box}$, we obtain 
\[
\|T^{\lambda} f^{\nsim}_{S, \tau}\|_{L^2 (B(x_1, 2r)\times \{t_1\}) }^2 \lessapprox \ell_{j}  (\frac{r}{R})^{\frac{n-n'}{2}}
\|f^{\nsim}_{\kappa, S, \tau}\|_{L^2}^2. 
\]
By Lemma~\ref{lem: counting} below and the assumption that all wave packets have comparable coefficients, we conclude that 
\[
\|f^{\nsim}_{\kappa, S, \tau}\|_{L^2}^2 \lesssim \ell_{j}^{-1} R^{O(n\delta)}\|f_{\tau}\|_{L^2}^2. 
\]
This finishes the proof of the theorem, modulo the proofs of Lemma \ref{lem: counting} and Claim \ref{220616claim8_9}. 
\end{proof}

\begin{lemma}\label{lem: counting}
Let $\kappa= ( (\ell_1, b_1), \mu_1, \dots, (\ell_{j}, b_{j}),\mu_{j}) $ be an admissible multi-index and 
$\mathbb{T}^{\nsim}_{S, \kappa, \tau}$ be defined as in \eqref{eq: Tnsimkappatau}. We have  
\[
| \mathbb{T}^{\nsim}_{S, \kappa, \tau} |\lesssim  \ell_{j}^{-1} R^{O(n\delta)} |\mathbb{T}_{\tau}|,
\]
where $\T_{\tau}$ collects tubes $T\in \T[B_R]$ with $\theta(T)\subset \tau$. 
\end{lemma}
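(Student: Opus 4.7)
My plan is a double counting argument on the set of pairs
$$\mathcal{P} := \{(S', T) : S' \in \mathfrak{n},\ T \in \mathbb{T}_\tau,\ \chi_{\mathfrak{n}, \kappa}(S', T) = 1,\ B_{\iota}(S') \nsim_{\mathfrak{n},\kappa} T\},$$
where $B_{\iota}(S')$ denotes the radius $R^{1-\delta}$ ball of the relevant cover that contains $S'$. First I would perform a standard dyadic pigeonholing on $S' \in \mathfrak{n}$ to reduce to the case in which $|\mathbb{T}^{\nsim}_{S', \kappa, \tau}|$ is comparable up to a factor $R^{O(\delta)}$ for all $S'$ in a refined sub-collection; then a bound on $|\mathcal{P}|$ yields the desired bound on $|\mathbb{T}^{\nsim}_{S, \kappa, \tau}|$ after dividing by the number of grains.

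Next I would sum $|\mathcal{P}|$ over tubes $T$. Unwinding the recursive definition of $\chi_{\mathfrak{n}, \kappa}$ and the multiplicity parameters $\mu_1, \ldots, \mu_j$, one obtains that for each $T \in \mathbb{T}_\tau$ the total grain count $N(T):=\#\{S' \in \mathfrak{n} : \chi_{\mathfrak{n}, \kappa}(S', T) = 1\}$ lies in $[\mu_j, \mu_j R^\delta)$ whenever it is nonzero, by the $\mu_j$-constraint at the last step. The non-maximality condition $B_\iota(S') \nsim T$ removes the ``best'' ball containing at least $N(T)/K$ grains (where $K \lesssim R^{O(n\delta)}$ counts the relevant balls), so the contribution from such $T$ to $|\mathcal{P}|$ is at most $\mu_j R^{O(\delta)}$. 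Summing gives the preliminary bound $|\mathcal{P}| \lesssim \mu_j R^{O(\delta)} |\mathbb{T}_\tau|$, and hence
$$|\mathbb{T}^{\nsim}_{S, \kappa, \tau}| \lesssim \frac{\mu_j}{|\mathfrak{n}|} R^{O(\delta)} |\mathbb{T}_\tau|.$$

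The main obstacle—and the essential step that converts this crude bound into the claimed $\ell_j^{-1}$ saving—is to prove the combinatorial inequality $\mu_j \ell_j \lesssim |\mathfrak{n}| \, R^{O(n\delta)}$. Here the admissibility condition becomes crucial: the unique repeated triple $(\ell_{\iota_1}, b_{\iota_1}, \mu_{\iota_1}) = (\ell_{\iota_2}, b_{\iota_2}, \mu_{\iota_2})$ in $\kappa$ witnesses a ``self-similar'' broom structure at two different scales in the iterative construction. Geometrically, a broom of level $\ell_j$ has its far ends concentrated on $\ell_j$ well-separated $(n'-1)$-dimensional algebraic varieties (by Lemma~\ref{220608lemma8_1} and the algorithm producing \eqref{220608e8_13}), so a tube $T$ that witnesses $\chi_{\mathfrak{n},\kappa}(S',T)=1$ with the repeated triple is forced to interact with $\ell_j$ distinct varieties whose corresponding grains are necessarily distributed among many distinct $B_{\iota}$'s of $B_R$. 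Interpreting the two occurrences of the repeated triple as nested broom structures produced by the same algorithm, I would then deduce that the grains $S'$ which $T$ sees at level $j$ must lie in at least $\ell_j$ different clusters (one per directional group of the outer broom), each contributing to a different ball $B_{\iota}$; summing across tubes shows that $\mu_j \ell_j$ copies of grains are needed in total, giving the required $\mu_j\ell_j \lesssim |\mathfrak{n}| R^{O(n\delta)}$. Combining this inequality with the preliminary bound produces the claimed estimate. I expect the careful verification of this last geometric/combinatorial step—showing that the repeat in the admissible $\kappa$ indeed forces $\ell_j$-fold directional spreading among the grains counted by $\mu_j$—to be the technically delicate part of the argument.
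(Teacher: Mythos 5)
Your approach diverges from the paper's in a way that creates a genuine gap. You set up a double count over pairs $(S', T)$ with $\chi_{\mathfrak{n},\kappa}(S', T) = 1$ (the \emph{full} index $\kappa$ on both the $S$-side and the $S'$-side). After using the $\mu_j$-constraint to bound the fiber over each $T$, you arrive at $|\mathbb{T}^{\nsim}_{S,\kappa,\tau}| \lesssim (\mu_j/|\mathfrak{n}|) R^{O(\delta)} |\mathbb{T}_\tau|$, and you then need the separate combinatorial inequality $\mu_j \ell_j \lesssim |\mathfrak{n}|\, R^{O(n\delta)}$. That inequality is the crux, you have not proven it, and I do not believe it holds in the required generality: $\mu_j$ can be comparable to $|\mathfrak{n}|$ (a tube can satisfy $\chi_{\kappa\setminus\mu_j}(S',T)=1$ for essentially every grain $S'$ in $\mathfrak{n}$), while $\ell_j$ is an independent parameter that can be large; nothing in the recursive definitions forces $\mu_j\ell_j$ to be controlled by the total grain count. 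Your heuristic — that the repeated triple ``forces $\ell_j$-fold directional spreading among the grains'' — does not translate into a bound against $|\mathfrak{n}|$, because the grains seen via $\kappa$ from different tubes need not be disjoint.

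The paper's proof avoids this entirely. Admissibility provides a proper prefix $\kappa' = ((\ell_1,b_1),\mu_1,\dots,(\ell_{j'},b_{j'}),\mu_{j'})$ of $\kappa$ with $(\ell_{j'},b_{j'},\mu_{j'}) = (\ell_j,b_j,\mu_j)$, and the double count is over pairs $(S',T)$ with $\chi_{\mathfrak{n},\kappa'}(S',T) = 1$ (the \emph{prefix} on the $S'$-side) and $\chi_{\mathfrak{n},\kappa}(S,T)=1$ for the fixed $S$. The factor $\ell_{j'}^{-1}=\ell_j^{-1}$ comes from a pointwise geometric bound, not a global count against $|\mathfrak{n}|$: for a fixed $S'\not\subset 2B$, the tubes with $\chi_{\kappa'}(S',T)=1$ and $\theta(T)\subset\tau$ organize into a broom at $S'$ with $\ell_{j'}$ separated directional subfamilies; since $\chi_\kappa(S,T)=1$ confines $T$ near a single $S_{\Box}\subset S$ at distance $\gtrsim R^{1-\delta}$, only $R^{O(\delta)}$ of those $\ell_{j'}$ subfamilies can contribute, yielding
$\sum_{T\nsim B}\chi_{\kappa'}(S',T)\chi_\kappa(S,T) \lesssim R^{O(n\delta)}\ell_{j'}^{-1}\sum_{T}\chi_{\kappa'}(S',T)$.
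The lower bound $\sum_{S'\not\subset 2B}\chi_{\kappa'}(S',T)\gtrsim \mu_j R^{-\delta}$ for $T\nsim B$ with $\chi_\kappa(S,T)=1$ (which uses that $\chi_{\kappa'}$ is a prefix of $\chi_\kappa$ and the $\nsim$ condition) then closes the double count to give exactly $\ell_j^{-1} R^{O(n\delta)}$. The asymmetry — $\chi_{\kappa'}$ on $S'$, $\chi_\kappa$ on $S$ — is what makes the repeated triple useful; your symmetric setup with $\chi_\kappa$ on both sides cannot extract the $\ell_j^{-1}$ without the unverified (and, as far as I can tell, false in general) bound $\mu_j\ell_j\lesssim|\mathfrak{n}|R^{O(n\delta)}$.
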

\begin{proof}[Proof of Lemma \ref{lem: counting}]
Since $\kappa$ is admissible, there exists
\begin{equation}
    \kappa'= ( (\ell_1, b_1), \mu_1, \dots, (\ell_{j}, b_{j'}),\mu_{j'})
\end{equation}
such that
\begin{equation}
    \kappa= (\kappa', (\ell_{j'+1}, b_{j'+1}), \mu_{j'+1}, \dots (\ell_{j}, b_{j}), \mu_j )
\end{equation}
and 
\begin{equation}\label{eq: admissible}
((\ell_{j'}, b_{j'}),\mu_{j'})=((\ell_{j}, b_{j}), \mu_{j}).
\end{equation}
Let $B$ be a ball of radius $R^{1-\delta}$ containing $S$ and $\mf{n}$ be the node containing $S$, then for each $S\in \mf{n}$ and $S'\nsubset 2B$, we have 
\begin{equation}
\sum_{T\nsim B, T\in \mathbb{T}_{\tau}} \chi_{\mf{n}, \kappa'} (S', T)\chi_{\mf{n}, \kappa}(S, T) \lesssim R^{O(n \delta)}  \ell_{j'}^{-1} \sum_{ T\in \mathbb{T}_{\tau}} \chi_{\mf{n}, \kappa'} (S', T)
\end{equation}
Summing over all $S'\in \mf{n}, S'\nsubset 2B$,
\begin{equation}
    \begin{split}
            & \sum_{S\in \mf{n}, S\not\subset 2B} \sum_{ T\nsim B,  T\in \mathbb{T}_{\tau}} \chi_{\mf{n}, \kappa}(S, T)\chi_{\mf{n}, \kappa'}(S', T)  \\
            & \leq  R^{O(n\delta)}\ell_{j'}^{-1} \sum_{S\in \mf{n}, S\not\subset 2B}  \sum_{  T\in \mathbb{T}_{\tau} }\chi_{\mf{n}, \kappa'} (S', T).
    \end{split}
\end{equation}
On the other hand, for each  $T\nsim B$ and $\chi_{\mf{n, \kappa}}(S, T)=1$, 
\[
\sum_{S'\in \mf{n}, S'\not\subset 2B} \chi_{\mf{n}, \kappa'} (S', T) \geq \mu_{j} R^{-\delta}. 
\]
As a consequence, 
\[
|\mathbb{T}^{\nsim}_{S, \kappa, \tau}| \leq R^{\delta} \mu_{j}^{-1}  \sum_{S'\in \mf{n}, S'\not\subset 2B} \sum_{ T\nsim B,  T\in \mathbb{T}_{\tau}} \chi_{\mf{n}, \kappa}(S, T)\chi_{\mf{n}, \kappa'}(S', T).
\]
Moreover, note that 
\[
\sum_{S'\in \mf{n}, S'\not\subset 2B}  \sum_{  T\in \mathbb{T}_{\tau}}\chi_{\mf{n}, \kappa'} (S', T)
\lesssim R^{\delta} \mu_{j'}  |\mathbb{T}_{\tau}|
\]
The conclusion follows from \eqref{eq: admissible}.
\end{proof}

In the rest of this section, we will prove Claim \ref{220616claim8_9}. We will start with the proof of Lemma \ref{220614lemma8_8}, and this lemma will be an important ingredient in the forthcoming proof of Claim \ref{220616claim8_9}.

\begin{proof}[Proof of Lemma \ref{220614lemma8_8}]
Denote $\sigma:=\sqrt{R_1}/\sqrt{R_2}$, $\Omega'_1=\mc{N}_{1}(Z_1)$ and $\Omega'_2:=\mc{N}_{\sigma}(Z_2)$. By scaling, let us assume that $\supp(F)\subset \Omega'_2$, and we need to prove 
\begin{equation}
    \norm{\widehat{F}}_{L^2(\Omega'_1)}^2 \lesim \sigma^{n-m-\delta} \norm{F}_{L^2}^2.
\end{equation}
Let $K$ be a large number depending on $n, \deg(Z_2), \delta$ and is to be determined.  We cut $Z_2$ into $O_{n, \deg(Z_2), \delta}(1)$ many pieces so that for each piece $Z'_2\subset Z_2$, there exists a linear subspace of dimension $m-1$ satisfying that the angle between $T_{\bfz}(Z'_2)$ and the linear subspace is $\le 1/K$ for every $\bfz\in Z'_2$. As our constant is allowed to depend on $K$, we only need to prove Lemma \ref{220614lemma8_8} for each $Z'_2$.
\begin{claim}\label{221010claim7_11}
Fix $Z'_2$ as above and $K'\ge K$. Let $V\subset \R^{n-1}$ be an $(n-m)$-dimensional affine subspace such that the angle between $T_{\bfz}(Z'_2)$ and $V^{\perp}$ is $\le 1/K$ for every $\bfz\in Z'_2$. Let $S_V$ denote the $1/K'$-neighbourhood of $V$. Then  $Z'_2\cap S_V$ is contained in a union of $O(\deg(Z_2)^{n-1})$ many rectangular boxes of dimensions 
\begin{equation}
    \underbrace{\frac{1}{K'}\times \dots \frac{1}{K'}}_{(m-1) \text{ copies}} \times \underbrace{\frac{1}{K K'}\times \dots \frac{1}{K K'}}_{(n-m) \text{ copies}}
\end{equation}
whose long sides are parallel to $V^{\perp}$. 
\end{claim}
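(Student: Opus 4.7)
The plan is to exploit that $Z_2'$ is, to within angle $1/K$, tangent to $V^\perp$ everywhere, so locally $Z_2'$ is the graph of a map from $V^\perp$ to $V$ with Lipschitz constant $\lesssim 1/K$; this automatically confines each graph branch of $Z_2'\cap S_V$ to a box of the required shape, and counting the number of such branches by a Bezout/Milnor--Thom argument gives the bound $O(\deg(Z_2)^{n-1})$.

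First I would choose coordinates so that $V^\perp$ is spanned by $e_1,\ldots,e_{m-1}$ and $V$ is parallel to $\mathrm{span}(e_m,\ldots,e_{n-1})$. Writing a point as $(y,z)\in \R^{m-1}\times \R^{n-m}$, the affine subspace $V$ takes the form $\{y=y_0\}$ and $S_V=\{|y-y_0|\le 1/K'\}$. The angle hypothesis $\measuredangle(T_\bfz(Z_2'),V^\perp)\le 1/K$ says that near every $\bfz\in Z_2'$ the projection $\pi:(y,z)\mapsto y$ is a local diffeomorphism, and its inverse on $Z_2'$ can be written as a smooth graph $z=f(y)$ with $|\nabla f|\lesssim 1/K$. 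Restricting such a branch to $S_V$ means restricting $y$ to the ball $B(y_0,1/K')\subset V^\perp$, and the corresponding $z$-values then vary by at most $(1/K)(1/K')=1/(KK')$. The branch therefore fits inside a rectangular box of dimensions $(1/K')^{m-1}\times (1/(KK'))^{n-m}$ with long axis parallel to $V^\perp$, exactly as in the conclusion of the claim.

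It remains to bound the number of such graph branches meeting $S_V$ by $O(\deg(Z_2)^{n-1})$. By the angle condition the projection $\pi\colon Z_2'\to V^\perp$ has no critical points, so on each branch the implicit function theorem continues the graph $z=f(y)$ without obstruction until $y$ either reaches $y_0$ (i.e.\ the branch meets $V$) or runs into the semialgebraic boundary introduced when $Z_2'$ was cut out of $Z_2$. The set $Z_2'\cap V$ together with this boundary locus is a zero-dimensional semialgebraic subset of $\R^{n-1}$ cut out by at most $n-1$ polynomial equations of degree $\lesssim \deg(Z_2)$ (the defining equations of $Z_2$, the linear equations for $V$, and the linear equations for the angle cut). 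The Milnor--Thom bound on the number of connected components of such a set yields a cardinality of $O(\deg(Z_2)^{n-1})$, which upper-bounds the number of graph branches and hence the number of boxes needed.

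The main technical obstacle is the careful semialgebraic bookkeeping of the piece $Z_2'\subset Z_2$: one must verify that the tangent-cone truncation used at the start of the proof of Lemma \ref{220614lemma8_8} contributes only $O(\deg(Z_2)^{n-1})$ additional boundary branches, and that along each graph branch the Lipschitz bound $|\nabla f|\lesssim 1/K$ persists uniformly across the entire $V^\perp$-ball $B(y_0,1/K')$. Once this is in place, assembling the graph-plus-gradient estimate from the second paragraph with the branch count from the third gives exactly the $O(\deg(Z_2)^{n-1})$ boxes of dimensions $(1/K')^{m-1}\times (1/(KK'))^{n-m}$ claimed.
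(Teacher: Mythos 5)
Your overall strategy — exploit the graph structure coming from the angle condition, then count branches algebraically — is the same as the paper's, but your execution has a gap precisely where the paper instead delegates to an external lemma. After an anisotropic rescaling to $K'=1$, the paper reduces the claim to showing that the portion of $Z_2'$ that is quantitatively transverse to $V$ and lies in the unit neighborhood of $V$ is covered by $O(\deg(Z_2)^{n-1})$ unit balls, and it cites (and, for $n-m>1$, proposes to adapt) Guth's Lemma 5.7 from \cite{guth2018} for that covering statement; the box shape then follows from the graph observation, which you also make. So the outline matches.

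The gap is in your second and third paragraphs. You assert that each connected component of $Z_2'\cap S_V$ restricted to the $y$-ball has $z$-oscillation $\lesssim 1/(KK')$ because it is a Lipschitz graph with slope $\lesssim 1/K$. But the angle condition only makes $\pi$ a \emph{local} diffeomorphism on $Z_2'$; it does not, by itself, force a connected component of $Z_2'\cap S_V$ to be a \emph{single-valued} graph over a sub-region of $B(y_0,1/K')$. A spiral-ramp-shaped component would satisfy the angle condition everywhere, project onto $B(y_0,1/K')$ as a many-to-one covering, and have $z$-oscillation a large multiple of $1/(KK')$. Ruling that out is exactly where the degree bound on $Z_2$ must enter, and this is what Guth's Lemma 5.7 encapsulates; your proof never invokes the degree to control the topology of the slice, only to count a putatively zero-dimensional set. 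Relatedly, your claim that ``$Z_2'\cap V$ together with this boundary locus is a zero-dimensional semialgebraic subset'' is not correct: $Z_2'\cap V$ is indeed generically zero-dimensional, but the locus where $Z_2'$ was trimmed from $Z_2$ by the tangent-cone condition has codimension one in $Z_2'$, hence dimension $m-2$, and its intersection with $S_V$ is generically $(m-2)$-dimensional. So the Milnor--Thom count you apply is not counting the object you need. To repair the argument you would need to either (i) bound the number of connected components of the full semialgebraic slab $Z_2'\cap S_V$ and separately prove each has $z$-diameter $\lesssim 1/(KK')$ using the degree bound, or (ii) follow the paper and adapt Guth's inductive covering-by-balls argument, which bundles both steps together.
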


\begin{proof}[Proof of Claim \ref{221010claim7_11}]
Without loss of generality, we may assume $K = e^{-n}$ (all we need here is a small constant)  and $K' = 1$ since we can do an (anisotropic) rescaling otherwise. Since the angle between every $T_{\bfz}(Z'_2)$ and $V^{\perp}$ is $\le e^{-n}$, we see the angle between every $T_{\bfz}(Z'_2)$ and $V$ is $\gtrsim 1$. Hence if we take the union of all points $\bfz \in Z_2 \bigcap S_V$ such that the angle between $T_{\bfz}(Z_2)$ and $V$ is $\gtrsim 1$, it suffices to prove this whole set can be contained in a union of $O(\deg(Z_2)^{n-1})$ many unit balls. Note that when $n-m = 1$, this is proved by Guth as a special case of Lemma 5.7 in \cite{guth2018} (when one takes $r \simeq \alpha \simeq 1$ there). For general $n$ and $m$ this can also be proved by induction on dimension exactly in the same way as in the proof of Lemma 5.7 in \cite{guth2018} and  we thus omit the details.
\end{proof}

We continue to prove Lemma \ref{220614lemma8_8}. We start with a trivial estimate: 
\begin{equation}\label{220614e8_42}
    \norm{\widehat{F}}_{L^2(\mc{N}_{1}(Z_1))} \le \norm{\widehat{F}}_{L^2(\mc{N}_{K}(Z_1))}.
\end{equation}
For a given $K'\ge 1$, let $\mc{P}_{K'}$ be the partition of $\Omega'_2$ into disjoint pieces $\{\Omega'_{2, K'}\}$ and the orthogonal projection of each $\Omega'_{2, K'}$ to $\spa\{\vector{e}_1, \dots, \vector{e}_{m-1}\}$ is a dyadic cube of side length $1/K'$. Denote \begin{equation}
    F_{\Omega'_{2, K'}}=F\cdot \mathbbm{1}_{\Omega'_{2, K'}}.
\end{equation}
By $L^2$ orthogonality and Claim \ref{221010claim7_11}, 
\begin{equation}\label{220614e8_43}
    \eqref{220614e8_42}\lesim \sum_{\Omega'_{2, K}} 
    \norm{
    \widehat{F_{\Omega'_{2, K}}}
    }_{L^2(\mc{N}_K(Z_1))}^2
\end{equation}
By the assumption that the direction of $T_{\bfz}(Z'_2)$ falls in a small angle of size $1/K$ for every $\bfz$, we see that $\Omega'_{2, K}$ is contained in a rectangular box of dimension 
\begin{equation}\label{220614e8_44}
    \underbrace{\frac{1}{K}\times \dots \frac{1}{K}}_{(m-1) \text{ copies}} \times \underbrace{\frac{1}{K^2}\times \dots \frac{1}{K^2}}_{(n-m) \text{ copies}}.
\end{equation}

We now use the uncertainty principle to analyze $\widehat{F_{\Omega'_{2, K}}}$ again with the help of (the $K\simeq K'\simeq 1$ version of)  Claim \ref{221010claim7_11}. Each $|\widehat{F_{\Omega'_{2, K}}}|$ is essentially a constant on dual boxes of dimensions $\underbrace{K\times \dots K}_{(m-1) \text{ copies}} \times \underbrace{K^2\times \dots K^2}_{(n-m) \text{ copies}}$. If we use  Claim \ref{221010claim7_11}, set all parameters in that claim to be $\simeq 1$ and rescale the conclusion by $K$ times, we see that if we tile any given $\underbrace{K\times \dots K}_{(m-1) \text{ copies}} \times \underbrace{K^2\times \dots K^2}_{(n-m) \text{ copies}}$ box as above by $K$-balls, then $Z_1$ only intersects $\simeq 1$ of them.

Therefore, the uncertainty principle and the above geometric observation imply that
\begin{equation}
   \begin{split}
        \eqref{220614e8_43}& \lesim \frac{1}{K^{n-m}} \sum_{\Omega'_{2, K}} \norm{
        \widehat{F_{\Omega'_{2, K}}}
        }_{L^2(\mc{N}_{K^2}(Z_1))}^2\\
        & \lesim \frac{1}{K^{n-m}} \sum_{\Omega'_{2, K^2}} \norm{
        \widehat{F_{\Omega'_{2, K^2}}}
        }_{L^2(\mc{N}_{K^2}(Z_1))}^2,
   \end{split} 
\end{equation}
where in the second inequality we use $L^2$ orthogonality. Similarly to \eqref{220614e8_44}, $\Omega'_{2, K^2}$ is contained in a rectangular box of dimension 
\begin{equation}\label{220614e8_46}
    \underbrace{\frac{1}{K^2}\times \dots \frac{1}{K^2}}_{(m-1) \text{ copies}} \times \underbrace{\frac{1}{K^3}\times \dots \frac{1}{K^3}}_{(n-m) \text{ copies}}
\end{equation}
We continue this process repeatedly, and in the end arrive at 
\begin{equation}
    \norm{
    \widehat{F}
    }_{L^2(\mc{N}_1(Z))}\le C^W \pnorm{\frac{1}{K^W}}^{n-m} \norm{F}_{L^2}^2,
\end{equation}
where $K^W=1/\sigma$. In the end, we pick $K$ to be large enough.  
\end{proof}

Next, we will prove a simpler version of Claim \ref{220616claim8_9}, see Lemma \ref{220608lemma8_9} below. The proof of this lemma contains the main idea of that of Claim \ref{220616claim8_9}, and indeed we will apply Lemma \ref{220608lemma8_9} iteratively to prove Claim \ref{220616claim8_9}.  To simplify notation, let us denote 
\begin{equation}
    F(x, t):=\int f_{\tau}(\xi) e^{i \phi^{\lambda}(x, t; \xi)}a^{\lambda}(\bfx; \xi) d\xi,
\end{equation}
where $\tau$ is a frequency cap.

\begin{lemma}\label{220608lemma8_9}
Let $\sqrt{R}\le r\le R'_1\le R'_2\le R$. Let $\tau$ be a frequency cap of side length $r^{-1/2}$. Given $t_1, t_2\in [0, \lambda]$ with $R= |t_1-t_2|$. Given two $(m-1)$-dimensional algebraic varieties $Z_1\subset \{t=t_1\}$ and $Z_2\subset \{t=t_2\}$ satisfying that the angle formed by $T_{\bfz_i}(Z_i)$ and the space spanned by $\{\vector{e}_1, \dots, \vector{e}_{m-1}\}$ is $\le 1/(100n)$, for every $i=1, 2$ and every $\bfz_i\in Z_i$. Here $T_{\bfz_i}(Z)$ refers to the tangent space. Denote 
\begin{equation}
    \Omega_1=\mc{N}_{\sqrt{R'_1}}(Z_1)\cap B_{\sqrt{R}},
\end{equation}
for a given ball $B_{\sqrt{R}}$ of radius $\sqrt{R}$; denote 
\begin{equation}
    \Omega_2=\mc{N}_{R/\sqrt{R'_2}}(Z_2)\cap B_{R/\sqrt{r}}. 
\end{equation}
Assume that $F(x, t_1)$ is essentially supported on $B_{\sqrt{R}}$ and $F(x, t_2)$ is essentially supported on $\Omega_2$, then 
\begin{equation}
    \norm{F(x, t_1)}_{L^2(\Omega_1)}^2 \lesim \pnorm{\frac{R'_1}{R'_2}}^{\frac{n-m}{2}-O(\delta)}\norm{F(x, t_2)}_{L^2(\Omega_2)}^2.
\end{equation}
\end{lemma}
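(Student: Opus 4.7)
The plan is to reduce the estimate to the Variety Uncertainty Principle (Lemma~\ref{220614lemma8_8}) via the generalized pseudo-conformal transformation mentioned in the introduction, which after normalization identifies $F(\cdot,t_1)$ with $\widehat{f_\tau}$ and $F(\cdot,t_2)$ with $f_\tau$ itself (up to a diffeomorphic change of variable in $x$ and multiplication by a unimodular factor). Since $|t_2 - t_1| = R \geq \sqrt{R} \geq r$, the phase at time $t_2$ oscillates fast enough in $\xi$ to enable a stationary-phase analysis; moreover the assumption $r \leq R_2'$ ensures that a neighborhood of thickness $1/\sqrt{R_2'}$ fits inside the frequency cap $\tau$, so Lemma~\ref{220614lemma8_8} will actually yield content.

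First I would translate so that $t_1 = 0$ and $B_{\sqrt{R}}$ is centered at the origin, and then perform the normalizations of $\phi^\lambda$ from~\eqref{220627e7_45}--\eqref{220627e7_53}, bringing the normalized phase to the form $\phi_0^\lambda(x,t;\xi) = x\cdot\xi + tQ(\xi) + \text{terms negligible on our scales}$, where $Q$ is a non-degenerate quadratic form coming from (H2). This gives $F(\cdot,t_1) \approx \widehat{f_\tau}$ on $B_{\sqrt{R}}$ directly. At $t_2 = R$, I would then apply stationary phase in $\xi$ to $\int f_\tau(\xi)e^{i[x\cdot\xi + RQ(\xi)]}\tilde{a}(x, R; \xi)\,d\xi$: the Hessian $R\nabla^2 Q$ is non-degenerate of size $R$, the unique critical point satisfies $x = -R\nabla Q(\xi_\star(x))$, so $\xi_\star(x) = \Psi(x/R)$ for a smooth diffeomorphism $\Psi$ with bounded, non-degenerate Jacobian, and stationary phase produces
\begin{equation*}
F(x, R) \approx C R^{-(n-1)/2}\, e^{i\Phi(x)} f_\tau(\Psi(x/R))\, \tilde{a}(x,R;\Psi(x/R))
\end{equation*}
for some smooth phase $\Phi$.

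The support hypothesis $F(\cdot,t_2)\subset\Omega_2 = \mc{N}_{R/\sqrt{R_2'}}(Z_2)\cap B_{R/\sqrt{r}}$ then translates, under $x\mapsto \Psi(x/R)$, into $f_\tau$ being essentially supported on $\mc{N}_{1/\sqrt{R_2'}}(\widetilde{Z}_2)$, where $\widetilde{Z}_2 := \Psi(Z_2/R)$ inherits the tangent-space angle condition on $Z_2$ because $\Psi$ is essentially linear (up to a harmless orthonormal-basis change). Applying Lemma~\ref{220614lemma8_8} to the Fourier pair $(f_\tau, \widehat{f_\tau})$ with $Y_1 = Z_1$, $Y_2 = \widetilde{Z}_2$, $R_1 = R_1'$ and $R_2 = R_2'$ then yields $\|\widehat{f_\tau}\|^2_{L^2(\mc{N}_{\sqrt{R_1'}}(Z_1))} \lesim (R_1'/R_2')^{(n-m)/2-\delta} \|f_\tau\|^2$, and combining this with $F(\cdot,t_1)\approx \widehat{f_\tau}$ on $\Omega_1$ together with the change-of-variables identity $\|f_\tau\|^2 \sim \|F(\cdot,t_2)\|^2_{L^2(\Omega_2)}$ (the factor $R^{-(n-1)}$ from $|F(x,R)|^2$ cancels the $R^{n-1}$ from $dx = R^{n-1}\,d(x/R)$) gives the desired inequality. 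The main obstacle will be carrying out the stationary-phase step precisely enough that (a) the pullback $\widetilde{Z}_2$ retains the tangent-angle hypothesis required by Lemma~\ref{220614lemma8_8}, and (b) the stationary-phase error terms are truly negligible at our scales. Point (a) follows from the normal-form reduction and (H2$^+$), which keep $\Psi$ close to a linear map with uniformly bounded derivatives; point (b) reduces to the standard fact that higher-order stationary-phase remainders decay by powers of $R^{-1}$ and are thus absorbed in the $O(\delta)$ slack in the exponent of the conclusion.
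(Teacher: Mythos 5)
Your plan is in the right spirit and broadly tracks the paper's proof: normalize so that $t_1=0$, identify the $t_1$-slice with $\widecheck{f}_\tau$, use a stationary-phase analysis at $t=t_2$, and feed the resulting Fourier-dual pair into Lemma~\ref{220614lemma8_8}. But there are two concrete gaps in the proposal as written.

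First, the pointwise stationary-phase identity $F(x,t_2)\approx C R^{-(n-1)/2}e^{i\Phi(x)}f_\tau(\Psi(x/R))\tilde a$ is not licensed by the ``standard fact'' you invoke, because $f_\tau$ is merely an $L^2$ function on $\tau$ — the usual stationary-phase remainder bounds require derivative control of the amplitude at scale $R^{-1/2}$, which $f_\tau$ need not possess a priori. Such control is actually available, but only after one uses the support hypothesis $\supp F(\cdot,t_1)\subset B_{\sqrt R}$ to deduce (as the paper does around~\eqref{220627e7_51}--\eqref{220627e7_53}) that $\widecheck{f}_\tau$ is essentially supported on a ball of radius $O(\sqrt R)$, i.e.\ that $f_\tau$ is band-limited to scale $R^{-1/2}$; you state the premise but never draw this consequence before invoking stationary phase. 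The paper instead sidesteps the issue entirely by first inserting $f_\tau=\widehat{\widecheck f_\tau}$, swapping the order of integration, and applying stationary phase to the \emph{smooth} inner kernel $\int e^{-iy\cdot\xi}e^{i\phi_0^\lambda(x,t_2;\xi)}a^\lambda\,d\xi$. That representation is valid for an arbitrary $L^2$ data $f_\tau$ and is what actually produces the bilinear phase $\psi_{t_2}^\lambda(x,y)$.

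Second, and more seriously, your pullback $\widetilde Z_2=\Psi(Z_2/R)$ need not be an algebraic variety (or a transverse complete intersection of bounded degree), because $\Psi=(\nabla Q)^{-1}(-\,\cdot)$ is genuinely nonlinear once the normal form's higher-order $\xi$-terms are present, and at the relevant scales ($|y|\sim r^{-1/2}$, thickness $R_2'^{-1/2}$) the quadratic correction to $\Psi$ is $O(r^{-1})$, which can be comparable to the thickness of $\Omega_2$ when $r\simeq\sqrt R$ and $R_2'\simeq R$. So Lemma~\ref{220614lemma8_8} does not apply to the pair $(Z_1,\widetilde Z_2)$ as stated. The paper's fix is to Taylor-expand $\psi^\lambda_{t_2}(x,y)$ in $y$ and then in $x$ and retain only the bilinear term $y\cdot\nabla_x\nabla_y\psi^\lambda_{t_2}(x_0,0)(x-x_0)$ with a \emph{constant} matrix close to $\tfrac1{t_2}I$, so the substitution is affine (preserving algebraicity and degree of $Z_2$), while the genuinely nonlinear remainders $w_3,w_4$ are shown to be $O(1)$ and absorbed into the amplitude by Fourier series. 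You should linearize $\Psi$ in this way rather than push the full nonlinear map through the Variety Uncertainty Principle.
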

\begin{proof}[Proof of Lemma \ref{220608lemma8_9}] Let $x_0$ be the center of $B_{\sqrt{R}}$ and $\xi_0$ the center of $\tau$. We apply the same change of variables as in \eqref{220627e7_45}. Recall the new phase function in \eqref{220612e8_43}. If we denote \footnote{Here we still use $F$ just to avoid new notation.}
\begin{equation}\label{220611e7_79}
    F(x, t):=\int f_{\tau}(\xi) e^{i \phi_0^{\lambda}(x, t; \xi)}a^{\lambda}(\bfx; \xi) d\xi,
\end{equation}
then to prove the lemma, we need to prove it for the function $F(x, t)$ as in \eqref{220611e7_79} with $t_1=0$, $x_0=0$ and $\xi_0=0$. Before we finish this reduction step, let us do another linear change of variables so that $\nabla_x\nabla_{\xi} \phi^{\lambda}_0(0; 0)$ is an identity matrix. \\

We claim that $\widecheck{f}_{\tau}$ is essentially supported on $2B_{\sqrt{R}}$. By the same Taylor expansion as in \eqref{220627e7_51}--\eqref{220627e7_53}, we can write
\begin{equation}
    \begin{split}
        \widecheck{f}_{\tau}(x)& =\int f_{\tau}(\xi) e^{ix\cdot \xi}d\xi=\int f_{\tau}(\xi) e^{i\phi_0^{\lambda}(x, 0; \xi)} e^{-iw(x; \xi)}d\xi\\
        &=\sum_{k\in \N} \frac{(-i)^k}{k!} \int f_{\tau}(\xi) e^{i\phi_0^{\lambda}(x, 0; \xi)} w^k(x; \xi) d\xi
    \end{split}
\end{equation}
Next, we do Fourier expansion for $w^k(x; \xi) a^{\lambda}(x, 0; \xi)$ in the $\xi$ variable at the unit scale, and write it as 
\begin{equation}
    \sum_{\beta\in \N^{n-1}} c_{k, \beta}(x) e^{i\beta\cdot \xi},
\end{equation}
where the coefficients satisfy 
\begin{equation}\label{220626e7_80}
    |c_{k, \beta}(x)| \lesim_N 2^k |\beta|^{-N},
\end{equation}
for every large $N$, uniformly in $x$. Now the claim that $\widecheck{f}_{\tau}$ is essentially supported on $2B_{\sqrt{R}}$ follows from the assumption on $F$ and the rapid decay in \eqref{220626e7_80}. Moreover, by a similar Taylor expansion, we obtain 
\begin{equation}
    \norm{F(x, t_1)}_{L^2(\Omega_1)}^2 \lesim \norm{\widecheck{f}_{\tau}}_{L^2(\Omega_1)}^2. 
\end{equation}
It therefore remains to control $\norm{\widecheck{f}_{\tau}}_{L^2(\Omega_1)}^2$ by $\norm{F(x, t_2)}_{L^2(\Omega_2)}^2$. \\

Consider $t=t_2$, and write 
\begin{equation}
\begin{split}
    F(x, t_2) &= \int f_{\tau}(\xi) e^{i\phi^{\lambda}(x, t_2; \xi)} a^{\lambda}(x, t_2; \xi) d\xi\\
    & =\int \widecheck{f}_{\tau}(y) \big( \int  e^{-iy\cdot \xi}  e^{i\phi_0^{\lambda}(x, t_2; \xi)}a^{\lambda}(x, t_2; \xi) d\xi  \big) dy
\end{split}
\end{equation}
Consider the critical point of the phase function: 
\begin{equation}\label{220612e8_55}
\nabla_\xi \phi_0^{\lambda} (x, t_2; \xi)=y. 
\end{equation}
Let $\xi_c=\xi_c(x, t_2; y)$ denote the critical point. Note that by \eqref{220612e8_43} and the assumption that $\phi^{\lambda}$ is in its normal form, we see that 
\begin{equation}\label{220612e8_56}
    \nabla_{\xi}^2 \phi_0^{\lambda}(x, t_2; \xi)=t_2\cdot  I_{(n-1)\times (n-1)}+ \text{small perturbation}
\end{equation}
where $I_{(n-1)\times (n-1)}$ is the identity matrix of order $(n-1)$.  Denote 
\begin{equation}\label{eq: critical point}
\psi_{t_2}^{\lambda}(x, y) = \phi_0^{\lambda}(x, t_2; \xi_c) -y\cdot \xi_c.
\end{equation}
Then by the stationary phase principle (see for instance Sogge \cite[Theorem 1.2.1]{MR1205579}), we obtain 
% Recall stationary phase lemma.  Let $\phi(t)$ be such that $\phi'(t_0)-0$ and $\phi''(t_0)\neq 0$, then 
% \[
% \int e^{i \lambda \phi(t) } a(t) dt = \frac{a(t_0) e^{i\phi(t_0)}}{\sqrt{\lambda} \sqrt{ \phi''(t_0)}} + Error.
% \]
\begin{equation}\label{eq: stationary phase}
F(x, t_2) = t_2^{-\frac{n-1}{2}}\int \widecheck{f}_{\tau}(y) e^{i \psi_{t_2}^{\lambda}(x, y)} a_{t_2}^{\lambda}(x, y) dy
\end{equation}
where 
\begin{equation}
    a_{t_2}^{\lambda}(x, y):=a_{t_2}(\frac{x}{\lambda}, \frac{y}{\lambda}),
\end{equation}
and $a_{t_2}$ is a compactly supported smooth function in both variables. To continue, we apply Taylor expansion of $\psi_{t_2}^{\lambda}(x, y) $ in $y$. 
% \[
% \psi^{\lambda}(x, T; y)= \psi^{\lambda}(x, T; 0) - \nabla_y \psi^{\lambda}(x, T; y) + \nabla_y^2 \psi^{\lambda}(x, T;y) |y|^2 + \cdots
% \]
Take $\nabla_y$ on both sides of \eqref{eq: critical point}:
\begin{equation}\label{220627e7_87z}
\nabla_y \psi_{t_2}^{\lambda}(x, y)= \nabla_{\xi} \phi_0^{\lambda}(x, t_2; \xi_c) \cdot \nabla_y \xi_c -\xi_c -y \cdot \nabla_y \xi_c=-\xi_c.
\end{equation}
Hence
\[
\nabla_y^2 \psi_{t_2}^{\lambda} =-\nabla_y \xi_c.
\]
We claim that 
\begin{equation}\label{220612e8_60}
    |\nabla_y \xi_c|\leq \frac{1}{t_2}.
\end{equation}
Here by $|\cdot|$ of the matrix $\nabla_{y}\xi_c$, we mean the maximum of all the entries. To see this, we go back to the definition of $\xi_c$ in \eqref{220612e8_55}, and differentiate both side: 
\begin{equation}\label{220612e8_61}
\nabla_\xi^2 \phi_0^{\lambda}(x, t_2; \xi_c) \nabla_y \xi_c = I_{(n-1)\times (n-1)}.
\end{equation}
The claim now follows from \eqref{220612e8_56}. Moreover, taking $\nabla_x$ on both sides of $\nabla_{\xi} \phi_0^{\lambda}(x, t_2; \xi_c) =y$, we obtain
\begin{equation}\label{220612e8_64}
\nabla_x \nabla_{\xi} \phi_0^{\lambda} + \nabla_{\xi}^2 \phi_0^{\lambda} \cdot \nabla_x \xi_c =0.
\end{equation}
If we keep differentiating both sides of \eqref{220612e8_61} and \eqref{220612e8_64} in $x, y$, then we will be able to obtain
\begin{equation}\label{220612e8_63}
    |\nabla^{\alpha}_x\nabla^{\alpha'}_y \xi_c| \lesim_{\alpha} t_2^{-\alpha} t_2^{-\alpha'},
\end{equation}
for every $\alpha, \alpha'\in \N$. The proof is left out.\\

From \eqref{220612e8_60} and the fact that $\widecheck{f}_{\tau}$ is essentially supported on $B_{\sqrt{R}}$, we see that we can ``ignore" the quadratic term in the Taylor expansion of $\psi^{\lambda}_{t_2}(x, y)$ in the $y$ variable. Write
\begin{equation}\label{220626e7_91}
F(x, t_2)= e^{i\psi_{t_2}^{\lambda} (x, 0)} \int \widecheck{f}_{\tau}(y) e^{i y \cdot \nabla_y \psi_{t_2}^{\lambda}(x, 0)+iw_{3, t_2}(x, y)} a^{\lambda}_{t_2}(x, y) dy,
\end{equation}
for some error function $w_{3, t_2}(x, y)$.  Next, we do Taylor's expansion for $\nabla_y \psi^{\lambda}_{t_2}(x, 0)$ in the $x$ variable. Recall our assumption that $F(x, t_2)$ is essentially supported on a ball of radius $R/\sqrt{r}$; let $x_0$ denote its center. Write 
\begin{equation}
\begin{split}
        y\cdot \nabla_y \psi^{\lambda}_{t_2}(x, 0)
    & =
    y\cdot \nabla_y \psi^{\lambda}_{t_2}(x_0, 0)+
    y\cdot \nabla_x \nabla_y \psi^{\lambda}_{t_2}(x_0, 0)x+w_{4, t_2}(x, y),
\end{split}
\end{equation}
for some error function $w_{4, t_2}(x, y)$. In particular, 
\begin{equation}
    |w_{4, t_2}(x, y)|\lesim \frac{1}{R^2} \pnorm{\frac{R}{\sqrt{r}}}^2 |y|\lesim 1. 
\end{equation}
Next, by \eqref{220627e7_87z}, \eqref{220612e8_64} and \eqref{220612e8_56}, we see that $\nabla_x\nabla_y \psi^{\lambda}_{t_2}(x_0, 0)$ is a small perturbation of $\frac{1}{t_2}I_{(n-1)\times(n-1)}$. The desired bound 
\begin{equation}
    \norm{\widecheck{f}_{\tau}}_{L^2(\Omega_1)}^2 \lesim 
    \pnorm{
    \frac{R'_1}{R'_2}
    }^{\frac{n-m}{2}-O(\delta)} \norm{F(x, t_2)}_{L^2(\Omega_2)}^2
\end{equation}
now follows from Taylor's expansion and Lemma \ref{220614lemma8_8}. 
\end{proof}

In the end of this section, we prove Claim \ref{220616claim8_9}. As mentioned above, the main idea is already explained in Lemma \ref{220614lemma8_8} and the proof of Lemma \ref{220608lemma8_9}. The extra work we need to do is to take care of the refinement process of wave packets in the polynomial partitioning algorithm. In other words, in the partitioning algorithm, each time we have an algebraic dominating case, we will need to remove certain wave packets, and therefore the input function $f^{\not\sim}_{\kappa, S_{\Box}, \tau, \ell'}$ also changes as the algorithm proceeds. 

\begin{proof}[Proof of Claim \ref{220616claim8_9}] To simplify notation, let us write 
\begin{equation}
    f_R:=f^{\not\sim}_{\kappa, S_{\Box}, \tau, \ell'}.
\end{equation}
Here we use $R$ to emphasize that the function $f_{R}$ is built on wave packets from $\T[B_R]$. Our goal is to prove 
\begin{equation}\label{220627e7_102}
    \|T^{\lambda} (f_R)^*_{\iota, S}\|_{L^2( (B(x_1, 2r) \times \{t_1\}) \cap S_{\Box})}^2 
    \lessapprox 
    (\frac{r}{R})^{\frac{n-n'}{2}} \|T^{\lambda}f_R \|_{L^2(\{ t=t_2\})}^2
\end{equation}
We apply Lemma \ref{220608lemma8_1} and find an algebraic variety $Z_1\subset \{t=t_1\}$ satisfying that the angle between $T_{\bfz_1}(Z_1)$ and $\{\vec{e}_1, \dots, \vec{e}_{n'-1}\}$ is $\le 1/(100n)$, such that 
\begin{equation}
    (B(x_1, 2r)\times \{t_1\})\cap S_{\Box}\subset \mc{N}_{\sqrt{r}}(Z_1)\cap B_{\sqrt{R}}\cap \{t=t_1\}=:\Omega_1,
\end{equation}
for some ball $B_{\sqrt{R}}$ of radius $\sqrt{R}$. Moreover, by Lemma \ref{220608lemma8_2} and the definition of brooms, we can find an algebraic variety $Z_2\subset \{t=t_1+R\}$ satisfying that the angle between $T_{\bfz_2}(Z_2)$ and $\{\vec{e}_1, \dots, \vec{e}_{n'-1}\}$ is $\le 1/(100n)$ for every $\bfz_2\in Z_2$, such that $T^{\lambda} f_R(\cdot, t_2)$ is essentially supported on \begin{equation}
    \mc{N}_{\sqrt{R}}(Z_2)\cap B_{R^{1+\delta}/\sqrt{r}}\cap \{t=t_2\}=:\Omega_2. 
\end{equation} 
Under the above notation, \eqref{220627e7_102} can be written as
\begin{equation}\label{220627e7_105}
    \|T^{\lambda} (f_R)^*_{\iota, S}\|_{L^2(\Omega_1)}^2 
    \lessapprox (\frac{r}{R})^{\frac{n-n'}{2}} \|T^{\lambda}f_R \|_{L^2(\Omega_2)}^2.
\end{equation}
To proceed, we need to recall notation and definitions from Subsection \ref{220610subsection6_4}. Let $\mf{n}_r^*\in \{\mf{n}^*_0, \mf{n}^*_1, \dots\}$ be such that $S\in \mf{n}_r^*$. Collect all the ancestors of $\mf{n}^*_r$ that are in $\cup_j (\mf{M}_j\cup \mf{R}_j)$, and list them in descending order 
\begin{equation}
    \mf{n}^*_{R_1}, \mf{n}_{R_2}^*, \dots, \mf{n}^*_{R_{W-1}},
\end{equation}
where $R>R_1> R_2>\dots> R_{W-1}> r$. Moreover, denote $R_0:=R$, $\mf{n}^*_{R_0}:=\mf{n}^*_0$, $R_W:=r$ and $\mf{n}^*_{R_W}:=\mf{n}^*_r$. Note that we have the trivial bound $W\le \delta_n^{-10}$. Next,  find $S_{R_w}\in \mf{n}^*_{R_w}$ for each $1\le w\le W$ such that 
\begin{equation}
    S=S_{R_W}\subset S_{R_{W-1}}\subset \dots\subset S_{R_1}\subset S_{R_0}=B_R,
\end{equation}
and each $S_{R_w}$ is contained in a ball $B_{R_w}$ of radius $R_w$. \\

We will prove Claim \ref{220616claim8_9} by applying Lemma \ref{220608lemma8_9} iteratively. Denote 
\begin{equation}
\mc{N}_{\sqrt{R_{w}}}(\Omega_1)\cap \{t=t_1\}=:\omegaone_{R_w}, \ \ 
    \mc{N}_{R/\sqrt{R_{w}}}(\Omega_2)\cap \{t=t_1+R\}=:\omegatwo_{R_w},
\end{equation}
for every $w$. Recall that $T^{\lambda} f_{R_0}(\cdot, t_2)$ is supported on $\omegatwo_{R_0}$. By Lemma \ref{220608lemma8_9} with $m=n'$, $R'_1=R_{1}$ and $R'_2=R_0$, we obtain 
\begin{equation}
    \norm{T^{\lambda} f_{R_0}}_{
    L^2(
    \omegaone_{R_{1}}
    )
    }
    \lesim 
    \pnorm{
    \frac{R_{1}}{R_0}
    }^{\frac{n-n'}{2}-O(\delta)}
    \norm{
    T^{\lambda} f_{R_0}
    }_{
    L^2(
    \omegatwo_{R_{0}}
    )
    }
\end{equation}
On the ball $B_{R_{1}}$, we have the new wave packet decomposition 
\begin{equation}
    f_{R_0}=\sum_{T\in \T[B_{R_{1}}]} (f_{R_0})_T.
\end{equation}
Let $\T'[B_{R_{1}}]$ be a subset of $\T[B_{R_{1}}]$ such that 
\begin{equation}
    (f_{R_0})^*_{\iota, S_{R_{1}}}=\sum_{T\in \T'[B_{R_{1}}]} (f_{R_0})_T,
\end{equation}
that is, we throw away certain wave packets, depending on whether we are in the tangential case or the transverse case. Denote 
\begin{equation}\label{220728e6_120}
    f_{R_1}:=
    \sum_{
    \substack{
    T\in \T'[B_{R_{1}}]\\
    T\cap \omegaone_{R_{1}}\neq \emptyset
    }
    } (f_{R_0})_T.
\end{equation}
Therefore by $L^2$ orthogonality, we have 
\begin{equation}
    \norm{
    T^{\lambda} f_{R_1}
    }_{
    L^2(\omegaone_{R_{1}})
    } \lesim 
    \norm{
    T^{\lambda} f_{R_0}
    }_{
    L^2(\omegaone_{R_{1}})
    }
\end{equation}
Note that by the construction of $f_{R_1}$, the function $T^{\lambda} f_{R_1}(\cdot, t_1)$ is essentially supported on $\omegaone_{R_{1}}$. Therefore, by \eqref{220627e7_44} and $L^2$ bounds for H\"ormander's operator at fixed time (see for instance H\"ormander \cite{MR340924}), we have  
\begin{equation}
    \norm{
    T^{\lambda} f_{R_1}(x, t_2)
    }_{L^2_x(\R^{n-1})} \lesim \norm{f_{R_1}}_2 \lesim \norm{
    T^{\lambda} f_{R_1}
    }_{
    L^2(\omegaone_{R_{1}})
    }. 
\end{equation}
The main observation is that $T^{\lambda} f_{R_1}(\cdot, t_2)$ is essentially supported on $\omegatwo_{R_{1}}$. Once this is proven, we see that we can repeat the above process: By Lemma \ref{220608lemma8_9} with $R'_1=R_{2}$ and $R'_2=R_{1}$, we obtain 
\begin{equation}
    \begin{split}
        \norm{T^{\lambda} f_{R_1}}_{
    L^2(
    \omegaone_{R_{2}}
    )
    }
    & 
    \lesim 
    \pnorm{
    \frac{R_{2}}{R_{1}}
    }^{\frac{n-n'}{2}-O(\delta)}
    \norm{
    T^{\lambda} f_{1}
    }_{
    L^2(
    \omegatwo_{R_{1}}
    )
    }\\
    & \lesim 
    \pnorm{
    \frac{R_{2}}{R_{0}}
    }^{\frac{n-n'}{2}-O(\delta)}
    \norm{
    T^{\lambda} f_{R_0}
    }_{
    L^2(
    \omegatwo_{R_{0}}
    )
    }.
    \end{split}
\end{equation}
We define $f_{R_2}$ similarly as above, and then observe that $T^{\lambda} f_{R_2}(\cdot, t_2)$ is essentially supported on $\omegatwo_{R_{2}}$. This allows us to repeat the above iteration one more time. In the end, we will obtain the desired bound \eqref{220627e7_105}.\\

It remains to prove that $T^{\lambda} f_{R_w}(\cdot, t_2)$ is essentially supported on $\omegatwo_{R_w}$ for every $w$. We will only prove the case $w=1$, and the other cases are the same. Recall the definition of $f_{R_1}$ from \eqref{220728e6_120}. Write 
\begin{equation}
    f_{R_0}=\sum_{T_0\in \T'[B_{R_0}]} f_{T_0},
\end{equation}
for some $\T'[B_{R_0}]\subset \T[B_{R_0}]$. Denote 
\begin{equation}\label{220728e6_120zz}
    f_{T_0, R_1}:=
    \sum_{
    \substack{
    T\in \T'[B_{R_{1}}]\\
    T\cap \omegaone_{R_{1}}\neq \emptyset
    }
    } (f_{T_0})_T.
\end{equation}
It suffices to prove that for each $T_0$, we have 
\begin{equation}\label{220728e6_126}
    \supp{
    T^{\lambda} f_{T_0, R_1}(\cdot, t_2)
    } \subset 
    \mc{N}_{R/\sqrt{R_1}}(
    \supp{
    T^{\lambda} f_{T_0}(\cdot, t_2)
    }
    ),
\end{equation}
where by $\mc{N}$ we mean neighborhood in $(n-1)$ dimensions. Note that $f_{T_0, R_1}$ consists of wave packets of frequency scale $R_1^{-1/2}$. In order to see where $T^{\lambda} f_{T_0, R_1}(\cdot, t_2)$ is supported, we do a wave packet decomposition for $f_{T_0, R_1}$ by using wave packets of frequency scale $R_0^{-1/2}$: 
\begin{equation}
    f_{T_0, R_1}=\sum_{T'_0\in \T[B_{R_0}]} (f_{T_0, R_1})_{T'_0}.
\end{equation}
In order for $T'_0\in \T[B_{R_0}]$ to have non-trivial contribution, we need that $T_0\cap T'_0\neq \emptyset$ and $\mathrm{dist}(\theta(T_0), \theta(T'_0))\lesim R_1^{-1/2}$. Under these two conditions, by the same Taylor expansion argument as in Lemma \ref{220608lemma8_2}, \eqref{220728e6_126} follows immediately. 
\end{proof}

\section{Bushes: Small grains}\label{220717section7}

Let $\vector{S}$ be a grain with its last component given by $(S, B(\bfx_0, r))$. In the previous section, we considered the case $r\ge \sqrt{R}$. In this section, we will consider the case $r\le \sqrt{R}$. As the grain $S$ is small, in particular, it is smaller than the scale $\sqrt{R}$ of a wave packet $T\in \T[B_R]$, we will see that this case is much easier to handle.

The goal of this section is to prove the following result. 
\begin{theorem}\label{220617theorem8_1}
Let $\vector{S}_{n'}=(S_n, \dots, S_{n'})$ be a multi-grain with $S_{n'}=\mc{N}_{r_{n'}^{1/2+\delta_{n'}}}(Z_{n'})\cap B_{r_{n'}}$, $r_{n'}\le \sqrt{R}$ and $Z_{n'}$ an $n'$-dimensional algebraic variety of degree $\lesim_{n'} 1$ in $\R^n$. Then for frequency caps $\tau$ of side length $r_{n'}^{-1/2}$ and $n'\le n''\le n$, we have
\begin{equation}
\|f^{\nsim(n'')}_{S_{n'}, \tau}\|_{L^2}^2 \lessapprox \Big(\frac{r_{n''}}{R}\Big)^{\frac{n-n'}{2}}
\|f_{\tau}\|_{L^2}^2,
\end{equation}
where $f^{\not\sim(n'')}_{S_{n'}, \tau}$ is defined in \eqref{220617e7_36} and \eqref{220617e7_36zz}, with the definition of $\not\sim$ given in Definition \ref{220617definition8_3} below.  
\end{theorem}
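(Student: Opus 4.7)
The plan is to exploit the fact that $r_{n'} \le \sqrt R$ makes the grain $S_{n'}$ smaller than the cross-sectional scale $\sqrt R$ of an $R$-wave packet. Consequently, every tube $T$ contributing to $f^{\nsim(n'')}_{S_{n'},\tau}$ must be tangent (via Definition \ref{220726def5_10}) to all the nested grains, hence must meet the small ball $B_{r_{n'}}$. So the relevant tubes form a bush concentrated at one small region. This is a qualitative simplification of Section \ref{220706section6}: the generalized pseudo-conformal transformation (Lemma \ref{220608lemma8_9}) and the Variety Uncertainty Principle (Lemma \ref{220614lemma8_8}) are no longer needed, because the geometric concentration is obtained for free.

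First I will reduce to counting. After the pigeonholing at the end of Section \ref{220717section3} that equates all $\|f_T\|_2$, applying $L^2$ orthogonality at scale $r_{n''}$ to both sides turns the theorem into the tube-counting estimate
\[
\#\T^{\nsim(n'')}_{S_{n'},\tau} \lessapprox \bigl(r_{n''}/R\bigr)^{(n-n')/2}\,\#\{T \in \T[B_{r_{n''}}] : \theta(T) \subset \tau\}.
\]
For the numerator I will bound the number of tangent directions using Lemma \ref{220223lemma6.4} at level $i=n''$, which yields
\[
\#\Theta_{r_{n''}}[\vec{S}_{n'}] \lesssim_{\epsilon_\circ,d} \Bigl(\prod_{j=n'}^{n''-1} r_j^{-1/2}\Bigr)\, r_{n''}^{(n''-1)/2+\epsilon_\circ}.
\]
Because each such direction supports only $O(R^{O(\delta)})$ positional translates intersecting the small ball $B_{r_{n'}}$, the same bound controls the total tube count. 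Comparing with the standard wave-packet count $\#\{T : \theta(T)\subset\tau\} \simeq (r_{n''}/r_{n'})^{(n-1)/2}\cdot r_{n''}^{(n-1)/2}$ and telescoping the product of $r_j^{-1/2}$ factors produces precisely the codimension factor $(r_{n''}/R)^{(n-n')/2}$.

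Then I will handle the $\nsim$ refinement by adapting the broom counting lemma (Lemma \ref{lem: counting}) essentially verbatim, using the analog of the two-ends relation from Section \ref{220706section6} in the small-grain regime. This costs only an $R^{O(\delta)}$ loss absorbed into $\lessapprox$, and it is what allows the bush configuration to be declustered without inflating the constant.

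The hard part will be purely bookkeeping: matching the exponents from the strong polynomial Wolff axiom with the bush position count through the full nested multi-grain structure, particularly for $n'' > n'$, where one must correctly pass through the intermediate scales and verify that the $\nsim$ multi-index $\kappa$ interacts cleanly with the small-grain geometry. Unlike Section \ref{220706section6}, no new geometric ingredient is required; once the multi-scale combinatorics is tracked correctly, the theorem follows from the strong polynomial Wolff axiom of Section \ref{220704section4} together with elementary $L^2$ orthogonality and pigeonholing.
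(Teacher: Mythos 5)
Your proposal conflates two genuinely separate ingredients of the argument, and as a result the key factor $(r_{n''}/R)^{(n-n')/2}$ is obtained for the wrong reason.

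The paper feeds \emph{two} distinct estimates into the concluding Section \ref{220706section8}: (i)~Corollary \ref{220711lemma5_5}, which is the consequence of the strong polynomial Wolff axiom (Lemma \ref{220223lemma6.4}) and gives a bound involving $\prod_j r_j^{-1/2}$ and $L^2_{\mathrm{avg}}(\tau)$; and (ii)~the broom/bush estimate (Theorems \ref{220615thm8_7} and \ref{220617theorem8_1}), which produces precisely the $(r_{n''}/R)^{(n-n')/2}$ factor. What you propose is to re-derive (ii) from the ingredients of (i). But the bush estimate does \emph{not} come from direction counting. In the paper's proof of Theorem \ref{220617theorem8_1}, after expanding $f^{\nsim,*}_{\kappa,S,\tau}$ in $R$-wave packets, the factor appears from the elementary pointwise localization
\begin{equation*}
    \norm{T^{\lambda} (f_T)^*_{\iota,S}}^2_{L^2(\{t=t_1\}\cap S)} \lesim \Big(\frac{r}{R}\Big)^{\frac{n-n'}{2}} \norm{f_T}^2_{L^2},\qquad T\in\T[B_R],
\end{equation*}
i.e.\ a $\sqrt{R}$-thick wave packet passing through the grain $S$ (thin of order $r^{1/2+\delta_{n'}}$ in $n-n'$ transverse directions) can only deposit that small fraction of its $L^2$ mass there. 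This is then combined, after Cauchy--Schwarz with loss $b_{\ell}$, with the bush analogue of the counting Lemma \ref{lem: counting} at scale $R$, which supplies the compensating $b_{\ell}^{-1}$. The bush size $b_{\ell}$ cancels and the stated bound falls out. Your approach never introduces this per-wave-packet overlap inequality, so it has no mechanism to produce the localization factor.

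There are two more concrete problems with the proposed route. First, your reduction-to-counting runs at the intermediate scale $r_{n''}$ and tacitly needs $\norm{f_{T'}}_2$ to be comparable over $T'\in\T[B_{r_{n''}}]$; the pigeonholing set up at the end of Section \ref{220717section3} only gives this at scale $R$, so for $n''<n$ the step from $L^2$ norm to tube count is not justified. Second, and more structurally, invoking Lemma \ref{220223lemma6.4} inside the proof of Theorem \ref{220617theorem8_1} would make the bush estimate depend on Bourgain's condition, because the strong polynomial Wolff axiom is precisely where Bourgain's condition enters. The paper explicitly emphasizes that the broom and bush estimates hold for all phase functions satisfying (H1) and $(\mathrm{H2}^+)$ without Bourgain's condition, which is important to the logical architecture: the SPWA and the broom/bush estimates are independent inputs, combined only in Section \ref{220706section8}. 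Finally, if you work out the ``telescoping'' claim explicitly, the exponents do not in fact collapse to $(r_{n''}/R)^{(n-n')/2}$ (e.g.\ the left-over powers depend on $r_{n'}$ in a way that the target does not), which is a symptom of the substitution being the wrong one rather than a bookkeeping issue.
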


In Theorem \ref{220617theorem8_1}, the scale $r_{n'}$ is so small that we do not see the broom structure; we will replace brooms by bushes. 

\begin{definition}[Bushes]
Given a grain $(S, B(\bfx_0, r))$ with $r\le \sqrt{R}$, a collection of tubes $\T'[B_R]\subset \T[B_R]$ and a frequency cap $\tau$ of side length $r^{-1/2}$, the collection of tubes 
\begin{equation}
    \mc{B}(\T'[B_R]):=\{T\in \T'[B_R]: \theta(T)\subset \tau, T\cap S\neq\emptyset\}
\end{equation}
is called a bush generated by $\T'[B_R]$. The size $b$ of the bush $\mc{B}(\T'[B_R])$ is defined to be $R^{w\delta}$ with $w\in \N$, $R^{w\delta}\le \#\mc{B}(\T'[B_R])< R^{(w+1)\delta}$. Often $\mc{B}(\T'[B_R])$ will be written as $\mc{B}_b(\T'[B_R])$. In particular, if $T'[B_R]=\T[B_R]$, then we will simply write $\mc{B}_{b}$ for a bush. 
\end{definition}

Next, we define a two-ends relation $\sim_{\mf{n}}$ for nodes $\mf{n}\in \cup_{\iota}\mf{R}_{\iota}$ with $\rho(\mf{n})\le \sqrt{R}$.  Similarly as above, we start by introducing a few auxiliary functions $\chi_{\mf{n}, \kappa}=\chi_{\kappa}$, taking values $0$ or $1$, where $\kappa=(b_1, \mu_1, \dots, b_{\ell}, \mu_{\ell})$ and $b_{\ell'}, \mu_{\ell'}\in \{R^{w\delta}: w\in \N\}$ for every $1\le \ell'\le \ell$. Denote $r=\rho(\mf{n})$.  For $S\in \mf{n}$ and a tube $T\in \T[B_R]$, we say that 
\begin{equation}
    \chi_{b_1}(S, T)=1,
\end{equation}
if $T$ belongs to a bush of size $b_1$ rooted at $S$. Moreover, we say that 
\begin{equation}
    \chi_{b_1, \mu_1}(S, T)=1
\end{equation}
if 
\begin{equation}
    \chi_{b_1}(S, T)=1, \ \ \mu_1\le \sum_{S'\in \mf{n}} \chi_{b_1}(S', T)<\mu_1 R^{\delta}.
\end{equation}
Now let us assume that we have defined $\chi_{\kappa}$ with $\kappa=(b_1, \mu_1, \dots, b_{\ell}, \mu_{\ell})$ already, and we would like to define $\chi_{\kappa, b_{\ell+1}}$ and $\chi_{\kappa, b_{\ell+1}, \mu_{\ell+1}}$. For a fixed $S\in \mf{n}$, define 
\begin{equation}
    \T_{S, \kappa}:=\{T'\in \T[B_R]: \chi_{\kappa}(S, T')=1\}.
\end{equation}
We write $\T_{S, \kappa}$ as a disjoint union of bushes 
\begin{equation}
    \bigcup_{b_{\ell+1}} \{\mc{B}_{b_{\ell+1}, \tau}(\T_{S, \kappa})\}_{\tau} 
\end{equation}
where $\tau$ runs through all caps of side length $\rho(\mf{n})^{-1/2}$, and $\mc{B}_{b_{\ell+1}, \tau}(\T_{S, \kappa})$ is a bush of size $b_{\ell+1}$ with tubes coming from $\tau$. We say that 
\begin{equation}
    \chi_{\kappa, b_{\ell+1}}(S, T)=1, \text{ if } T\in \mc{B}_{b_{\ell+1}, \tau}(\T_{S, \kappa}), 
\end{equation}
for some $\tau$. Moreover, 
\begin{equation}
    \chi_{\kappa, b_{\ell+1}, \mu_{\ell+1}}(S, T)=1,
\end{equation}
if 
\begin{equation}
  \chi_{\kappa, b_{\ell+1}}(S, T)=1, \ \ \mu_{\ell+1}\le \sum_{S'\in\mf{n}} \chi_{\kappa, b_{\ell+1}}(S', T)<  \mu_{\ell+1}R^{\delta}.   
\end{equation}
This finishes the definition of the auxiliary functions. 

For $\kappa=(b_1, \mu_1, \dots, b_{\ell}, \mu_{\ell})$, we say that $\kappa$ is admissible if there exists exactly one pair $(\ell_1, \ell_2)$ with $\ell_1\neq \ell_2$ such that \begin{equation}
    (b_{\ell_1}, \mu_{\ell_1})=(b_{\ell_2}, \mu_{\ell_2}). 
\end{equation}
Similarly to Lemma \ref{220608lemma8_5}, it is elementary to see that the number of admissible $\kappa$ is $O_{\delta}(1)$. 
\begin{definition}\label{220617definition8_3}
For a ball $B\subset B_R$ of radius $R^{1-\delta}$, a tube $T\in \T[B_R]$, a node $\mf{n}\in \cup_{\iota} \mf{R}_{\iota}$ with $\rho(\mf{n})\le \sqrt{R}$ and an admissible $\kappa$, we say that $T\sim_{\mf{n}, \kappa} B$ if $B$ maximizes 
\begin{equation}
    \#\{S'\in \mf{n}: S'\subset B', \chi_{\mf{n}, \kappa}(S', T)=1\},
\end{equation}
among all $B'$ of radius $R^{1-\delta}$. Moreover, we say that $T\sim_{\mf{n}} B$ if $T\sim_{\mf{n}, \kappa}$ for some admissible $\kappa$; we say that $T\sim B$ if $T\sim_{\mf{n}} B$ for some node $\mf{n}\in \cup_{\iota} \mf{R}_{\iota}$. 
\end{definition}

Now we are ready to prove Theorem \ref{220617theorem8_1}. \\

\begin{proof}[Proof of Theorem \ref{220617theorem8_1}.] Similarly to what we did in the proof of Theorem \ref{220615thm8_7}, we will write down more details for the case $n''=n'$, and the case $n''>n'$ is essentially the same.

We abbreviate $S_{n'}$ to $S$, $\vec{S}_{n'}$ to $\vec{S}$ and $r_{n'}$ to $r$. Let $B_{\iota}$ be the ball of radius $R^{1-\delta}$ containing $S$; let $\mf{n}$ be the node such that $S\in \mf{n}$. For an admissible multi-index $\kappa$ of the form $(b_1, \mu_1, \dots, b_{\ell}, \mu_{\ell})$, denote 
\begin{equation}
\T_{S, \kappa, \tau}:=\{T\in \T_{S, \kappa}: \theta(T)\subset \tau\},  
\end{equation}
\begin{equation}
\T^{\not\sim}_{S, \kappa, \tau}:=\{T\in \T_{S, \kappa}: \theta(T)\subset \tau, B_{\iota}\not\sim_{\mf{n}, \kappa} T\},  
\end{equation}
and 
\begin{equation}
f^{\nsim}_{\kappa, S, \tau}:= \sum_{T\in \mathbb{T}^{\nsim}_{S, \kappa,\tau}}f_T \quad \text{and} \quad 
f^{\nsim, *}_{\kappa, S, \tau} := (f^{\nsim}_{\kappa, S, \tau} )^*_{\iota, S}.
\end{equation}
Then similarly to \eqref{220615e8_36}, we have 
\begin{equation}
    f^{\not\sim}_{S, \tau}=\sum_{\kappa} f^{\not\sim, *}_{\kappa, S, \tau}. 
\end{equation}
Next, similarly to \eqref{220617e7_38} and \eqref{220627e7_42}, we have 
\begin{equation}\label{220630e8_17}
    \norm{f^{\not\sim}_{S, \tau}}_2^2 \lesim_{\delta} \sum_{\kappa} \norm{T^{\lambda} f^{\not\sim, *}_{\kappa, S, \tau}}_{L^2
    (
    \{t=t_1\}
    \cap S
    )
    }^2,
\end{equation}
where $(x_1, t_1)$ is a point in $S$. By the definition of $\kappa$, $\T_{S, \kappa, \tau}$ is contained in a bush $\mc{B}_{b_{\ell}}$. Write \begin{equation}
    f^{\not\sim, *}_{\kappa, S, \tau}=
    \sum_{
    T\in \T^{\not\sim}_{S, \kappa, \tau}\cap \mc{B}_{b_{\ell}}
    } (f_T)^*_{\iota, S}. 
\end{equation}
By the Cauchy-Schwarz inequality, we have 
\begin{equation}
    \norm{T^{\lambda} f^{\not\sim, *}_{\kappa, S, \tau}}_{L^2
    (
    \{t=t_1\}
    \cap S
    )
    }^2
    \lesim b_{\ell} 
    \sum_{
    T\in \T^{\not\sim}_{S, \kappa, \tau}\cap \mc{B}_{b_{\ell}}
    }
    \norm{T^{\lambda} 
    (f_T)^*_{\iota, S}
    }_{L^2(\{t=t_1\}\cap S)}^2
\end{equation}
It is elementary to see that\footnote{This can be viewed as a trivial version of Claim \ref{220616claim8_9}. }
\begin{equation}
    \norm{T^{\lambda} 
    (f_T)^*_{\iota, S}
    }_{L^2(\{t=t_1\}\cap S)}^2 \lesim 
    \pnorm{
    \frac{r}{R}
    }^{\frac{n-n'}{2}}
    \norm{f_T}_{L^2}^2.
\end{equation}
By summing over $T$, we obtain 
\begin{equation}
    \norm{T^{\lambda} f^{\not\sim, *}_{\kappa, S, \tau}}_{L^2
    (
    \{t=t_1\}
    \cap S
    )
    }^2 \lesim b_{\ell} \pnorm{
    \frac{r}{R}
    }^{\frac{n-n'}{2}}
    \norm{f^{\not\sim}_{\kappa, S, \tau}}_{L^2}^2. 
\end{equation}
In the end, we just need to show that 
\begin{equation}\label{220703e7_22}
| \mathbb{T}^{\nsim}_{S, \kappa, \tau} |\lesssim  b_{\ell}^{-1} R^{O(n\delta)} |\mathbb{T}_{\tau}|,
\end{equation}
which is an analogue of Lemma \ref{lem: counting}.  As the proof of \eqref{220703e7_22} is also more or less the same as that of Lemma \ref{lem: counting} (indeed simpler), we leave it out. \end{proof}

\section{Finishing the proof of Theorem \ref{201204thm5_1}}\label{220706section8}

In the last section, we combine the polynomial Wolff axiom in Lemma \ref{220223lemma6.4} and Corollary \ref{220711lemma5_5}, Property 1-4 in Subsection  \ref{220610subsection6_4}, the broom estimate in Theorem \ref{220615thm8_7} and the bush estimate in Theorem \ref{220617theorem8_1} to finish the proof of Theorem \ref{201204thm5_1}. \\

First of all, by Property 1 and repeated application of Property 2 in Subsection  \ref{220610subsection6_4} in the same way as how \cite[page 269]{HR2019} obtain equation (56) and (57), we obtain 
\begin{align}\label{220714e8_1}
    \|T^{\lambda} g\|_{\mathrm{BL}_{k, A}^{p}\left(B_{R}\right)} & \lessapprox \prod_{i=m-1}^{n-1} r_{i}^{\frac{\beta_{i+1}-\beta_{i}}{2}} D_{i}^{\frac{\beta_{i+1}}{2}-\left(\frac{1}{2}-\frac{1}{p_{n}}\right)}\\
    & \|g\|_{2}^{\frac{2}{p_{n}}} 
    \max _{O \in \mf{n}_{\ell_0}^*}
    \left\|g_{\iota, O}\right\|_{2}^{1-\frac{2}{p_{n}}},
\end{align}
where $\iota$ refers to $B_{\iota}\subset B_R$, the ball of radius $R^{1-\delta}$ containing $O$, the parameter $m$ comes from \eqref{220711e5_88} and $\mf{n}_{\ell_0}^*$ is as in \eqref{220708e5_78}. Next, by repeated application of Property 3, we obtain 
    \begin{align}\label{220711e8_3}
\max _{O \in 
\mf{n}_{\ell_0}^*
}
\left\|g_{\iota, O}\right\|_{2}^{2} \lessapprox r_{n'}^{-\frac{n-n'}{2}} \prod_{i=m-1}^{n'-1} r_{i}^{-1 / 2} D_{i}^{-i+\delta} 
\max_{
S_{n'}\in \mf{S}_{n'}
}
\left\|g^*_{\iota, S_{n'}}\right\|_{2}^{2}.
\end{align}
By Property 4, 
\begin{equation}
    \norm{
    g^*_{\iota, S_{n'}}
    }_2^2 
    \lessapprox 
    r_{n'}^{\frac{n-n'}{2}} 
    \pnorm{
    \prod_{i=n'}^{n-1} r_i^{-\frac{1}{2}}
    }
    r_{n''}^{-\frac{n-n''}{2}}
    \pnorm{
    \prod_{i=n''}^{n-1}
    r_i^{\frac{1}{2}}
    }
    R^{O(\epsilon_{\circ})}
    \norm{
    g_{\iota, S_{n'}}^{*(n'')}
    }_2^2.
\end{equation}
We then apply Corollary \ref{220711lemma5_5} and obtain 
\begin{equation}\label{220711e8_4}
    \Norm{
    g_{\iota, S_{n'}}^{*(n'')}
    }_2^2 \lessapprox 
    \pnorm{
    \prod_{j=n'}^{n''} r_j^{-\frac{1}{2}}
    }
    r_{n''}^{-\frac{n-n''-1}{2}}
    \max_{\tau: \ell(\tau)=r_{n''}^{-1/2}}
    \Norm{
    g_{\iota, S_{n'}}^{*(n'')}
    }_{
    L^2_{\mathrm{avg}}(\tau)
    }^2
\end{equation}
Recall the notation \eqref{220617e7_36} and \eqref{220617e7_36zz}. By the broom estimate in Theorem \ref{220615thm8_7} and the bush estimate in Theorem \ref{220617theorem8_1}, we have
\begin{equation}
\Norm{
    g_{\iota, S_{n'}}^{*(n'')}
    }_{
    L^2_{\mathrm{avg}}(\tau)
    }^2
    \lessapprox
    \pnorm{
    \frac{R}{r_{n''}}
    }^{-\frac{n-n'}{2}} \norm{g}_{\infty}^2.
\end{equation}
Putting everything together, we obtain 
\begin{align}
    \max_{O}
    \norm{
    g_{\iota, O}
    }_2^2
    & \lessapprox
    R^{O(\epsilon_{\circ})}
    \pnorm{\prod_{i=m-1}^{n'-1} D_{i}^{-i}
    }
    \pnorm{
    \prod_{j=m-1}^{n'-1} r_j^{\frac{1}{2}}
    }
    \pnorm{
    \prod_{i=m-1}^{n-1} r_i^{-1}
    }\\
    & 
    r_{n''}^{-(n-n''-1)}
    \pnorm{
    \prod_{i=n''+1}^{n-1}
    r_i
    }
    \pnorm{
    \frac{R}{r_{n''}}
    }^{-\frac{n-n'}{2}} 
    \norm{g}_{\infty}^2.
\end{align}
We pick $n''$ such that 
\begin{equation}
    n-n''=[\frac{n-n'}{3}]+1,
\end{equation}
bound $r_i$ by $R$ and obtain
\begin{align}
    \max_{O}
    \norm{g_{\iota, O}}_2^2 
    \lessapprox
    & \pnorm{
    \prod_{i=m-1}^{n-1}
    r_i^{-1}
    }
    \pnorm{
    \prod_{i=m-1}^{n'-1}
    r_i^{1/2}
    }
    \pnorm{
    \prod_{i=m-1}^{n'-1}
    D_i^{-i}
    }
    \pnorm{
    \frac{R}{r_{n''}}
    }^{-\frac{n-n'}{6}} 
    \norm{g}_{\infty}^2.
\end{align}
Note that when
\begin{equation}
    n'\le n/100+99m/100=:W_{m}^n,
\end{equation}
it holds that 
\begin{equation}
    n''\le \frac{2n}{3}+\frac{n'}{3}\le \frac{67n}{100}+\frac{33m}{100}
    =:M,
\end{equation}
and therefore we have 
\begin{align}
    \max_{O}
    \norm{g_{\iota, O}}_2^2 
    \lessapprox
    & \pnorm{
    \prod_{i=m-1}^{n-1}
    r_i^{-1}
    }
    \pnorm{
    \prod_{i=m-1}^{n'-1}
    r_i^{1/2}
    }
    \pnorm{
    \prod_{i=m-1}^{n'-1}
    D_i^{-i}
    }\\
    & \times 
    \begin{cases}
    \pnorm{
    \frac{R}{r_M}
    }^{-\frac{33(n-m)}{200}} 
    \norm{g}_{\infty}^2 & \text{ if } n'\le W_m^n,\\
    \norm{g}_{\infty}^2, & \text{otherwise}
    \end{cases}
\end{align}
By taking a weighted geometric average in $n'\in \{m, m+1, \dots, n-1\}$, and substituting into \eqref{220714e8_1}, we obtain 
\begin{equation}\label{220714e8_8}
    \norm{T^{\lambda}g}_{
    \BLka^p(B_R)
    }\lessapprox
    R^{-\Lambda} \pnorm{
    \prod_{i=m-1}^{n-1} r_i^{X_i} D_i^{Y_i}
    }
    \norm{g}_2^{\frac{2}{p}} \norm{g}_{\infty}^{1-\frac{2}{p}},
\end{equation}
where 
\begin{align}
    & \Lambda=\pnorm{\sum_{j=m}^{W_m^n} 
    \gamma_j
    }  \pnorm{\frac{1}{2}-\frac{1}{p}}\frac{33(n-m)}{200},\\
    & X'_i=\frac{\beta_{i+1}-\beta_i}{2}-\pnorm{\frac{1}{2}-\frac{1}{p}}+\frac{1}{2}\pnorm{1-\sum_{j=m}^i \gamma_j}\pnorm{\frac{1}{2}-\frac{1}{p}},\\
    & X_i=X'_i +
    \begin{cases}
    0 \text{ if } i\neq M,\\
    \Lambda  \text{ if } i=M,
    \end{cases}\\
    & Y_i=\frac{\beta_{i+1}}{2}-\pnorm{
    1+i(1-\sum_{j=m}^i \gamma_j) \pnorm{\frac{1}{2}-\frac{1}{p}}
    }
\end{align}
and 
\begin{align}
    & \gamma_{m-1}:=0, \  0\le \gamma_m, \dots, \gamma_n\le 1, \ \gamma_m+\dots+\gamma_n=1,\\
    & r_{m-1}=1, \ D_n=1, \ \beta_n=1, \ \beta_n\ge \beta_{n-1}\ge \dots\ge \beta_{m}.\label{220727e8_20}
\end{align}
Lemma \ref{220708lemma5_9} suggests that we write the coefficients on the right hand side of \eqref{220714e8_8} as 
\begin{align}
    R^{-\Lambda} (D_{m-1})^
    {
    \frac{\beta_m}{2}-m(\frac{1}{2}-\frac{1}{p})
    } 
    \pnorm{
    \prod_{i=m}^{n-1}
    D_i^{
    Y_i-
    (\sum_{j=m}^i X_j)
    }
    }
    \prod_{i=m}^{n-1} 
    \Big(r_i \prod_{j=i}^{n-1} D_j\Big)^{X_i} 
\end{align}
We pick $\gamma_i$ and $\beta_i$ so that 
\begin{align}
    & Y_{n-1}-X_{n-1}-\dots-X_m=\dots=Y_m-X_m=\frac{\beta_m}{2}-m(\frac{1}{2}-\frac{1}{p})=0,\label{220727e8_22}\\
    & X_M=\Lambda, \ \ X_i=0, \text{ when } i\neq M.\label{220727e8_23}
    \end{align}
One can check directly that if we set $p_m=2m/(m-1)$, 
\begin{equation}\label{220727e8_24}
    \gamma'_i:=\frac{1}{2(i-1)}
    \prod_{j=m}^i \frac{2(j-1)}{2j+1}, \forall m\le i\le n-1,
\end{equation}
and let $\gamma_i$ be given by the solution to the system
\begin{equation}\label{220727e8_25}
    \begin{split}
        & \gamma_i=\gamma'_i, \ \forall m\le i\le M-1, \\
        & (M+\frac{1}{2})(\gamma_M-\gamma'_M)=\Gamma_m^n \frac{33(n-m)}{200}, \text{ with } \Gamma_m^n:=\sum_{j=m}^{W_m^n} \gamma_j,\\
        & (M+\frac{2i+1}{2})(\gamma_{M+i}-\gamma'_{M+i})+\frac{3}{2}\sum_{j=M}^{M+i-1} (\gamma_j-\gamma'_j)=0, \ \ \forall 1\le i\le n-M-1,
    \end{split}
\end{equation}
then \eqref{220727e8_22} is satisfied. 
\begin{claim}\label{220727claim8_1}
Let $\gamma_i$ be given as above. Then $ \gamma_i\ge 0$ for every $m\le i\le n-1$ and 
\begin{equation}
    \sum_{i=m}^{n-1} \gamma_i\le 1. 
\end{equation}
\end{claim}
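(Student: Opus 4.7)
The plan is to solve the linear recursion in \eqref{220727e8_25} explicitly and then verify both properties by direct computation. Nonnegativity is immediate for $m \leq i < M$: the product formula \eqref{220727e8_24} is a product of positive factors, so $\gamma_i = \gamma'_i > 0$. For $i = M$, one checks that $W_m^n < M$ (since $n/100 + 99m/100 < 67n/100 + 33m/100$ follows from $m<n$), so $\Gamma_m^n = \sum_{j=m}^{W_m^n}\gamma_j = \sum_{j=m}^{W_m^n}\gamma'_j$ is a sum of nonnegative known quantities, and the second line of \eqref{220727e8_25} gives $\gamma_M \geq \gamma'_M > 0$.

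For $i > M$ I would introduce the perturbation $\delta_i := \gamma_i - \gamma'_i$, so that $\delta_i = 0$ for $i < M$ and $\delta_M \geq 0$ is known explicitly. Rewriting the third line of \eqref{220727e8_25} in terms of $\delta$ gives
\[
\delta_{M+i} = -\frac{3}{2M+2i+1}\sum_{j=M}^{M+i-1}\delta_j, \quad 1 \leq i \leq n-M-1.
\]
Setting $S_k := \sum_{j=M}^{M+k}\delta_j$ and combining this with $S_k = S_{k-1} + \delta_{M+k}$ yields the first-order relation $S_k = S_{k-1}\cdot \frac{2M+2k-2}{2M+2k+1}$, which telescopes to the closed form
\[
S_k = \delta_M \prod_{j=1}^{k}\frac{2M+2j-2}{2M+2j+1},
\]
and hence $\delta_{M+i} = -\frac{3}{2M+2i+1}S_{i-1} < 0$ for $i \geq 1$. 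To obtain $\gamma_{M+i} \geq 0$ I would then compare $|\delta_{M+i}|$ with $\gamma'_{M+i}$ term-by-term using the two explicit product formulas: up to a bounded multiplicative factor the two products are the same, and the prefactor $\delta_M$ is of size $O((n-m)/(nM))$, which is enough to absorb the discrepancy.

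For the sum bound I would write
\[
\sum_{i=m}^{n-1}\gamma_i \;=\; \sum_{i=m}^{n-1}\gamma'_i \;+\; S_{n-M-1}.
\]
The first term on the right is the Hickman--Zahl baseline and satisfies $\sum_{i=m}^{n-1}\gamma'_i \leq 1$; this is exactly the computation behind the exponent $\lambda_{\mathrm{HZ}}$ in \cite{hickman2020note}. The second term is positive but is $\delta_M$ times a product of factors each strictly less than $1$, so it is of order $(n-m)/n$ times a small quantity in $M$. I expect the main obstacle to be this last step: verifying quantitatively that the improvement constant $33/200$ appearing in \eqref{220727e8_25} is small enough relative to the slack in $\sum \gamma'_i \leq 1$ so that the total does not exceed $1$. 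This is precisely the numerical content that produces the improved exponent $2.5921$ in Theorem \ref{220130thm1.2} (versus Hickman--Zahl's $2.59607$), and carrying it out requires working through the explicit product formulas carefully and tracking the constants throughout.
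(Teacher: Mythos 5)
The perturbation bookkeeping in your proposal is correct: the substitution $\delta_i = \gamma_i - \gamma'_i$, the resulting recursion $\delta_{M+i} = -\tfrac{3}{2M+2i+1}S_{i-1}$ with $S_k = \sum_{j=M}^{M+k}\delta_j$, and the telescoped closed form $S_k = \delta_M\prod_{j=1}^k\tfrac{2M+2j-2}{2M+2j+1}$ all check out, and this is also the structure underlying the paper's (very terse) verification. However, you leave both quantitative inputs unverified, and one of them is stated too weakly to close the argument.

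For the sum bound, you write $\sum_{i=m}^{n-1}\gamma'_i \le 1$ and call it the ``Hickman--Zahl baseline.'' That inequality is trivially true (it is just $\gamma_n\ge0$ in the normalization $\sum_{i=m}^{n}\gamma_i=1$) but it is useless here: since $S_{n-M-1}>0$, you would need actual slack below $1$, not just $\le 1$. What the paper uses is the much stronger estimate $\sum_{i=m}^{n-1}\gamma'_i\le 1/3$, obtained by taking logarithms of the product formula \eqref{220727e8_24} and Taylor expanding. With that, since $0< S_{n-M-1}<S_0=\delta_M$ and $\delta_M\le 1/24$ (itself a consequence of $\Gamma_m^n\le\sum\gamma'_i\le 1/3$, $M+\tfrac12\gtrsim \tfrac{4n}{5}$ and $n-m\le\tfrac{3n}{5}$), you get $\sum_{i=m}^{n-1}\gamma_i\le 1/3+1/24<1$ with a factor of roughly $16$ to spare. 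So, contrary to your closing remark, the sum bound is \emph{not} the delicate step.

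For nonnegativity when $i>M$, the phrase ``up to a bounded multiplicative factor the two products are the same'' is not the right comparison and will not produce a proof: the product defining $\gamma'_{M+i}$ and the product defining $S_{i-1}$ are different objects with different starting points and ratios. What actually works, and what the paper does, is to produce a uniform \emph{lower} bound $\gamma'_i\ge \tfrac{1}{2(i-1)}\cdot\tfrac{2(m-1)\cdot 2m}{(2i-1)(2i+1)}$ (by pairing the middle factors of the product, each of which exceeds $1$), which together with $m\ge 2n/5$ gives $\gamma'_i\gtrsim \tfrac{2}{25n}$, and an \emph{upper} bound $|\delta_{M+i}|\le \delta_M\cdot\tfrac{3}{2M+2i+1}$ from the closed form. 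The comparison $|\delta_{M+i}|\le \gamma'_{M+i}$ then reduces to $\delta_M\le\tfrac{2}{25n}\cdot\tfrac{2M+2i+1}{3}$, which holds with essentially no room to spare once one inserts $2M\ge 1.6n$ and $\delta_M\le 1/24$. This is the genuinely tight numerical step, and it is where the constants $33/200$ in \eqref{220727e8_25} and the threshold $k\ge 2n/5$ actually bind --- not the sum bound as you suggested. In short: the skeleton of your argument matches the paper, but the missing numerical facts ($\sum\gamma'_i\le 1/3$, $\gamma'_i\gtrsim 1/n$, $\delta_M\le 1/24$) are the whole content of the claim and need to be supplied, and your diagnosis of which inequality is tight is inverted.
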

\begin{proof}[Proof of Claim \ref{220727claim8_1}]
By taking the logarithm and Taylor's expansion, we see that 
\begin{equation}
    \sum_{i=m}^{n-1} \gamma'_i\le 1/3.
\end{equation}
Moreover, under the assumption that $k\ge 2n/5$ in Theorem \ref{201204thm5_1}, it holds that 
\begin{equation}
\begin{split}
    \gamma'_i& =\frac{1}{2(i-1)} \frac{2(m-1) 2m \dots 2(i-1)}{(2m+1)(2m+3)\dots(2i+1)}\\
    & \ge \frac{1}{2(i-1)} \frac{2(m-1) 2m}{(2i-1)(2i+1)}\ge \frac{2n}{25},
\end{split}
\end{equation}
and 
\begin{equation}
     \gamma_M-\gamma'_M=(M+1/2)^{-1} \Gamma_m^n \frac{33(n-m)}{200}\le  \frac{1}{24}.
\end{equation}
The claim now follows from checking the system \eqref{220727e8_25}. 
\end{proof}

So far we have picked the values for $\gamma_i$ with $m\le i< n-1$ and $\beta_m$. By Claim \ref{220727claim8_1}, we can choose $\gamma_n=1-\gamma_m-\dots-\gamma_{n-1}$. Now we pick $\beta_i$ satisfying \eqref{220727e8_20} so that \eqref{220727e8_23} is satisfied. Elementary computation shows that  
\begin{align}\label{220727e8_30}
    \frac{1}{2}=\pnorm{\frac{1}{2}-\frac{1}{p}}
    \pnorm{
    \frac{n+m}{2}+\frac{1}{2} \sum_{j=m}^{n-1} (n-j)\gamma_j
    }
\end{align}
Therefore, to prove Theorem \ref{201204thm5_1}, it remains to prove that $p\le p_n(k)$, where $p$ is given in \eqref{220727e8_30}. This can done by using the trick in the appendix of \cite{hickman2020note}, together with elementary but tedious computation, which will be skipped.

%%%%%%%%%%%%%%%%%%%%%%%%%%%%%%%%%%%%%%%%%%%%%%%%%%%%%%%%%%%%%%%%%%%%%%%%%%%%%%%%%%%%%%%%%%%%%%%%

%REFERENCES

%%%%%%%%%%%%%%%%%%%%%%%%%%%%%%%%%%%%%%%%%%%%%%%%%%%%%%%%%%%%%%%%%%%%%%%%%%%%%%%%%%%%%%%%%%%%%%%%

%\def\bibfont{\small}

	\bibliography{reference.bib}
	\bibliographystyle{alpha}

\vspace{1cm}
	
	\noindent Shaoming Guo. UW Madison.
	Email address: shaomingguo@math.wisc.edu\\

\noindent Hong Wang. UCLA. Email address: hongwang@math.ucla.edu\\

\noindent Ruixiang Zhang.  UC Berkeley. Email address: ruixiang@berkeley.edu

\end{document}